\newtheorem{theorem}{Theorem}
\newtheorem{lemma}[theorem]{Lemma}
\newtheorem{corollary}[theorem]{Corollary}
\newtheorem{proposition}[theorem]{Proposition}
\newtheorem{definition}[theorem]{Definition}
\newtheorem{remark}[theorem]{Remark}
\numberwithin{equation}{section}
\numberwithin{theorem}{section}
\newcommand{\op}[1]{\operatorname{\text{\rm #1}}}
\def\A{\mathcal A}
\def\B{\mathcal B}
\def\K{\mathcal K}
\def\Mass{\mathbf M}
\def\R{\mathbb R}
\def\ball{\mathbb{B}^{m+1}}
\def\sphere{\mathbb{S}^m}
\begin{document}

\title[Relative isoperimetric inequality]{Higher codimension relative isoperimetric inequality outside a convex set}
\author{Brian Krummel} 

\begin{abstract}
We consider an isoperimetric inequality for $(m+1)$-dimensional area minimizing submanifolds of arbitrary codimension which lie outside a convex set $\K \subset \R^{n+1}$ and are bounded by a submanifold of $\R^{n+1} \setminus \K$ and the convex set $\K$.  We show that the least value of the isoperimetric ratio is attained for an $(m+1)$-dimensional flat half-disk of $\R^{n+1}_+$.  This extends prior work of Choe, Ghomi, and Ritor\'{e} in codimension one and proves a conjecture of Choe in the case of relative area minimizers.
\end{abstract}

\maketitle

\setcounter{tocdepth}{1}
\tableofcontents

\section{Introduction}

\subsection{Overview}  We consider isoperimetric inequalities for submanifolds in open domains of Euclidean space.  In particular, let $1 \leq m \leq n$ be integers and $\K \subset \R^{n+1}$ be a closed convex subset with nonempty interior and smooth boundary.  Consider an $(m+1)$-dimensional area minimizing submanifold $R$ in $\R^{n+1} \setminus \K$ whose boundary $\partial R$ consists of both a portion $T = \partial R \setminus \K$ in $\R^{n+1} \setminus \K$ and a portion lying on $\partial \K$.  We want to minimize the \textit{relative isoperimetric ratio} 
\begin{equation} \label{iso_rat_fig_ref}
	\frac{|T|^{\frac{m+1}{m}}}{|R|} 
\end{equation}
where $|T|$ and $|R|$ denote the $m$ and $m+1$ dimensional areas of $T$ and $R$ respectively, see 
\begin{figure}
  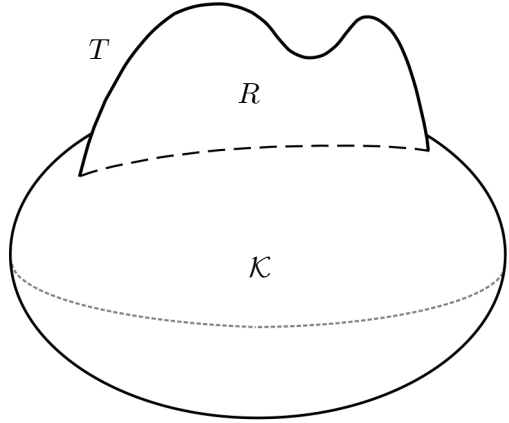\caption{{\small $(m+1)$-dimensional area minimizing submanifold $R$ lying outside a closed convex set $\K$ and bounded by an $m$-dimensional submanifold $T$ and a boundary portion on $\K$ (possibly not rectifiable).  The relative isoperimetric inequality concerns the minimum value of a scale-invariant ratio of the areas of $T$ and $R$ as in \eqref{iso_rat_fig_ref}.}}\label{introfigref}
\end{figure}
Figure \ref{introfigref}.  Note that the isoperimetric ratio is invariant under homotheties and rigid motions.  

One natural setting to consider such isoperimetric inequalities is the space of smooth submanifolds.  Unfortunately, the space of smooth submanifolds is not compact under smooth or weak measure theoretic limits, and area minimizers are known to admit singularities.  We will prove an isoperimetric inequality for integral currents of $\R^{n+1} \setminus \K$.  Integral currents are a certain generalization of oriented smooth submanifolds-with-boundary which allow for singularities and multiplicities greater than one.  Each $m$-dimensional integral current $T$ has an $(m-1)$-dimensional boundary $\partial T$ defined via Stoke's theorem and an area, or mass, $\Mass(T)$ which is the $m$-dimensional Hausdorff measure of $T$ counting multiplicity.  The space of integral currents is a natural setting for studying geometric variational problems such as isoperimetric inequalities and in particular has the advantage of a compactness theorem due to Federer and Fleming~\cite{FF60} which can be used to prove the existence of minimizers.  We will discuss the basic theory of integral currents in Subsection \ref{sec:prelims_currents}. 

The isoperimetric inequality for integral currents of $\R^{n+1}$ of higher codimension was proven by Almgren in~\cite{Alm86} and can be stated as follows.  

\noindent {\bf Isoperimetric inequality.}  {\it Let $1 \leq m \leq n$ be integers.  Let $T$ be an $m$-dimensional integral current of $\R^{n+1}$ with $\partial T = 0$ in $\R^{n+1}$ and $R$ be an $(m+1)$-dimensional area minimizing integral current of $\R^{n+1}$ with $\partial R = T$ in $\R^{n+1}$.  Then 
\begin{equation} \label{iso ineq}
	\frac{\Mass(T)^{\frac{m+1}{m}}}{\Mass(R)} 
		\geq \frac{\mathcal{H}^m(\partial \ball)^{\frac{m+1}{m}}}{\mathcal{H}^{m+1}(\ball)}, 
\end{equation}
where $\ball$ is the unit ball in $\R^{m+1}$ centered at the origin.  Equality holds true in \eqref{iso ineq} if and only if $R$ is a multiplicity one $(m+1)$-dimensional flat disk.}  

Now let us carefully consider the setting of the relative isoperimetric inequality.  Let $\K$ be any closed subset of $\R^{n+1}$.  Let $T$ is an $m$-dimensional integral current with zero boundary in $\R^{n+1} \setminus \K$, i.e.~$T$ lies in $\R^{n+1} \setminus \K$, $\partial T$ lies on $\K$, and $\partial T$ is possibly not rectifiable.  Consider the $(m+1)$-dimensional integral currents $R$ such that $\partial R = T$ in $\R^{n+1} \setminus \K$, i.e.~$R$ lies in $\R^{n+1} \setminus \K$ and is bounded by $T$ and a free boundary portion in $\K$ which is possibly not rectifiable, see Figure \ref{introfigref}.  For topological reasons, it is not true for a general closed set $\K$ and integral current $T$ that such a current $R$ exists.  In the special case that $\K$ is a closed convex subset of $\R^{n+1}$, we will show in Lemma \ref{isoper lower bound lemma} that there is at least one $(m+1)$-dimensional integral current $R$ with $\partial R = T$ in $\R^{n+1} \setminus \K$ and moreover we can choose $R$ so that $\Mass(R) \leq C(m,n,\K) \,\Mass(T)^{\frac{m+1}{m}}$.  Using the Federer-Fleming compactness theorem and semi-continuity of mass, one can show that if $\K$ is a closed subset of $\R^{n+1}$ and $T$ is an $m$-dimensional integral current bounding some $(m+1)$-dimensional integral current in $\R^{n+1} \setminus \K$, then there always exists at least one $(m+1)$-dimensional integral current $R$ with $\partial R = T$ in $\R^{n+1} \setminus \K$ which minimizes area.  It is known that an area minimizing integral current $R$ can admit interior singularities; for instance, consider the holomorphic variety $R = \{ (z,w) \in \mathbb{C}^2 : w^2 = z^3 \}$.  The interior of an $(m+1)$-dimensional area minimizing integral current $R$ is a locally real-analytic submanifold away from a singular set which has Hausdorff dimension at most $m-1$ due to a famous result of Almgren~\cite{Alm83}, also see~\cite{DeLSpa1}~\cite{DeLSpa2}~\cite{DeLSpa3}.  Little known about the boundary regularity of area minimizing integral currents in general. 

In the special case that $\K$ is a half-space, one can reflect $R$ across $\partial \K$ and apply the isoperimetric inequality in $\R^{n+1}$ to obtain 
\begin{equation} \label{rel iso ineq}
	\frac{\Mass(T)^{\frac{m+1}{m}}}{\Mass(R)} 
		\geq 2^{-\frac{1}{m}} \,\frac{\mathcal{H}^m(\partial \ball)^{\frac{m+1}{m}}}{\mathcal{H}^{m+1}(\ball)} 
\end{equation}
with equality if and only if $R$ is a multiplicity one $(m+1)$-dimensional flat half-disk with hemispherical boundary $T$ in $\R^{n+1} \setminus \K$ and meeting $\partial \K$ orthogonally.  It is natural to conjecture that a similar relative isoperimetric inequality might hold true for other closed sets $\K$.  In particular, Choe~\cite{ChoeSurvey} conjectured that \eqref{rel iso ineq} holds true in higher codimension for any closed convex set $\K$ with nonempty interior.  

In the codimension one setting $m = n$, the relative isoperimetric inequality reduces to the case where $R = \llbracket \Omega \rrbracket$ is a multiplicity one integral current associated with a Lebesgue measurable subset $\Omega \subset \R^{n+1} \setminus \K$ with finite perimeter.  Partial results for the codimension one relative isoperimetric inequality outside a convex set were previously obtained by Kim~\cite{Kim00} in the special case that $\K = U \times \R$ where $U$ is the epigraph of a convex $C^2$-function and by Choe~\cite{Choe03} in the special case that $\partial R \cap \partial \K$ is symmetric about $n$ hyperplanes of $\R^{n+1}$.  The codimension one relative isoperimetric inequality outside a convex set was ultimately proved by Choe, Ghomi, and Ritor\'{e} in~\cite{CGR07}. 

In higher codimension $m < n$, the relative isoperimetric inequality outside of a convex set was conjectured by Choe in~\cite[Open Problem 12.6]{ChoeSurvey}.  (Note that~\cite{ChoeSurvey} primarily concerns isoperimetric inequalities such as \eqref{iso ineq} and \eqref{rel iso ineq} in the case that $R$ is a minimal submanifold, not necessarily area minimizing.  Even the isoperimetric inequality \eqref{iso ineq} in $\R^{n+1}$ remains largely open outside of the case when $R$ is area minimizing, see~\cite[Open Problem 8.5]{ChoeSurvey}.)  Prior to~\cite{ChoeSurvey}, a partial result was obtained by Kim in~\cite{Kim98} in the special case of two-dimensional piece-wise smooth surfaces of $\R^{n+1}$, and also for two-dimensional piece-wise smooth surfaces of hyperbolic space $\mathbb{H}^{n+1}$.  Otherwise, the relative isoperimetric inequality in higher codimension has remained open.  We prove the sharp relative isoperimetric inequality for integral currents of arbitrary codimension in $\R^{n+1} \setminus \K$ for a convex set $\K$.  We thereby extend the main result of~\cite{CGR07} to higher codimension and prove the conjecture of Choe from~\cite{ChoeSurvey} in the special case that $R$ is relatively area minimizing.

\noindent {\bf Theorem A (Relative isoperimetric inequality).}  {\it Let $1 \leq m \leq n$ be integers.  Let $\K$ be a proper convex subset of $\R^{n+1}$, i.e.~a convex subset which has nonempty interior and is not equal to $\R^{n+1}$.  Let $T$ be an $m$-dimensional integral current of $\R^{n+1} \setminus \K$ with $\partial T = 0$ in $\R^{n+1} \setminus \K$ and $R$ be an $(m+1)$-dimensional area minimizing integral current in $\R^{n+1} \setminus \K$ with $\partial R = T$ in $\R^{n+1} \setminus \K$.  Then \eqref{rel iso ineq} holds true. 

If we additionally assume $\K$ is bounded and has $C^2$-boundary, equality holds true in \eqref{rel iso ineq} if and only if $T$ is a multiplicity one $m$-dimensional hemisphere and $R$ is a multiplicity one $(m+1)$-dimensional flat half-disk which meets $\partial \K$ orthogonally.}  

Note that the lower bound in \eqref{rel iso ineq} is sharp and, for a general proper convex set $\K$, is attained in the limit case of $T$ and $R$ concentrating at a boundary point of $\K$, as we will discuss below.

\subsection{Outline of the method} \label{sec:intro method}  To describe our approach, it will be useful to have the following notation and terminology. 

\begin{definition} \label{gamma defn}
Let $1 \leq m \leq n$ be integers and $\K$ be any closed subset of $\R^{n+1}$ with $\K \neq \R^{n+1}$.  We define  
\begin{equation*}
	\gamma_{m,n}(\K) = \inf_{T,R} \frac{\Mass(T)^{\frac{m+1}{m}}}{\Mass(R)}
\end{equation*}
where the infimum is over all $m$-dimensional integral currents $T$ and $(m+1)$-dimensional integral currents $R$ of $\R^{n+1} \setminus \K$ such that $\partial R = T$ in $\R^{n+1} \setminus \K$ and $R$ is relatively area minimizing in $\R^{n+1} \setminus \K$. 
\end{definition}

\begin{definition} 
Let $1 \leq m \leq n$ be integers and $\K$ be any closed subset of $\R^{n+1}$ with $\K \neq \R^{n+1}$.  Given an $m$-dimensional integral current $T_0$ and $(m+1)$-dimensional integral current $R_0$ of $\R^{n+1} \setminus \K$, we say the ordered pair of currents $(T_0,R_0)$ is a relative isoperimetric minimizer if $\partial R_0 = T_0$ in $\R^{n+1} \setminus \K$, $R_0$ is area minimizing in $\R^{n+1} \setminus \K$, and 
\begin{equation*}
	\frac{\Mass(T_0)^{\frac{m+1}{m}}}{\Mass(R_0)} \leq \frac{\Mass(T)^{\frac{m+1}{m}}}{\Mass(R)}
\end{equation*}
for every $m$-dimensional integral current $T$ and $(m+1)$-dimensional integral current $R$ of $\R^{n+1} \setminus \K$ such that $\partial R = T$ in $\R^{n+1} \setminus \K$ and $R$ is area minimizing in $\R^{n+1} \setminus \K$.
\end{definition}

Our approach to proving Theorem A follows a similar strategy as~\cite{Alm86} and~\cite{CGR07}.  In particular, one proves the existence of an isoperimetric minimizer $(T,R)$.  Then one proves an area-mean curvature inequality and uses it to characterize all isoperimetric minimizers $(T,R)$. 

Notice that there might not exist currents $T$ and $R$ which minimize the relative isoperimetric ratio in $\R^{n+1} \setminus \K$.  In particular, if $\partial \K$ is a $C^2$-submanifold with strictly positive principal curvatures, then it is impossible to attain the equality case from Theorem A.  Rather, the least value of the relative isoperimetric ratio is attained in the limit case of currents $T$ and $R$ concentrating at a point in $\partial \K$.  Let $\K$ be a proper convex set and $x_0 \in \partial \K$ be a regular point of $\K$ at which $\partial \K$ has a unique tangent plane.  For each $\rho > 0$ let $D(x_0,\rho)$ be an $(m+1)$-dimensional flat disk such that $D(x_0,\rho)$ has center $x_0$ and radius $\rho$ and $D(x_0,\rho)$ is orthogonal to the tangent space to $\partial \K$ at $x_0$.  As $\rho \downarrow 0$, $\rho^{-1} (\K - x_0)$ blows up to a half-space and $\rho^{-1} (D(x_0,\rho) \setminus \K - x_0)$ converges weakly to an $(m+1)$-dimensional unit flat disk with center $0$ and radius one and meeting the boundary of the half-space orthogonally.  Moreover, 
\begin{equation*}
	\lim_{\rho \downarrow 0} \frac{\mathcal{H}^m((\partial D(x_0,\rho)) \setminus \K)^{\frac{m+1}{m}}}{
		\mathcal{H}^{m+1}(D(x_0,\rho) \setminus \K)} 
		= 2^{-\frac{1}{m}} \,\frac{\mathcal{H}^m(\partial \ball)^{\frac{m+1}{m}}}{\mathcal{H}^{m+1}(\ball)}. 
\end{equation*}
Thus in the limit as $\rho \downarrow 0$ we obtain a relative isoperimetric minimizer for a half-space.  To rule out this limit case, we will assume by way of contradiction that \eqref{existence hyp} below holds true.

Let $\K$ be as in Theorem A and by approximation assume that $\K$ is bounded and has $C^2$-boundary.  Our existence result is as follows.

\noindent {\bf Theorem B (Existence result).}  {\it Let $1 \leq m \leq n$ be integers and $\K$ be a bounded proper convex subset of $\R^{n+1}$ with $C^2$-boundary.  Assume the least value $\gamma_{m,n}(\K)$ of the relative isoperimetric ratio (see Definition \ref{gamma defn} above) satisfies 
\begin{equation} \label{existence hyp}
	\gamma_{m,n}(\K) < 2^{-\frac{1}{m}} \,\frac{\mathcal{H}^m(\partial \ball)^{\frac{m+1}{m}}}{\mathcal{H}^{m+1}(\ball)}.
\end{equation}
Then there exists an $m$-dimensional integral current $T_0$ and an $(m+1)$-dimensional integral current $R_0$ of $\R^{n+1} \setminus \K$ such that $\partial R_0 = T_0$ in $\R^{n+1} \setminus \K$, $R_0$ is relatively area minimizing in $\R^{n+1} \setminus \K$, and 
\begin{equation} \label{existence concl}
	\frac{\Mass(T_0)^{\frac{m+1}{m}}}{\Mass(R_0)} \leq \frac{\Mass(T)^{\frac{m+1}{m}}}{\Mass(R)} 
\end{equation}
for every $m$-dimensional integral current $T$ and an $(m+1)$-dimensional integral current $R$ of $\R^{n+1} \setminus \K$ such that $\partial R = T$ in $\R^{n+1} \setminus \K$ and $R$ is relatively area minimizing in $\R^{n+1} \setminus \K$.}

The proof of Theorem B proceeds by the direct method.  In other words, we take a sequence of $m$-dimensional integral currents $T_j$ and an $(m+1)$-dimensional integral currents $R_j$ such that $R_j$ is relatively area minimizing with boundary $T_j$ in $\R^{n+1} \setminus \K$ and 
\begin{equation*}
	\lim_{j \rightarrow \infty} \frac{\Mass(T_j)^{\frac{m+1}{m}}}{\Mass(R_j)} = \gamma_{m,n}(\K). 
\end{equation*}
After passing to a subsequence, it suffices to consider the following three cases: 
\begin{enumerate}
	\item[(a)] $\lim_{j \rightarrow \infty} \Mass(R_j) = \infty$, 
	\item[(b)] $\lim_{j \rightarrow \infty} \Mass(R_j) = 0$, and  
	\item[(c)] $\lim_{j \rightarrow \infty} \Mass(R_j)$ exists as a positive real number. 
\end{enumerate}
In each case, we rescale $\K$, $T_j$, and $R_j$ to normalize the mass of $R_j$ before taking limits of $\K$, $T_j$, and $R_j$.  In case (a), we translate so that the origin is a point in $\K$.  Then we rescale, causing $\K$ to collapse to the origin as $j \rightarrow \infty$.  Thus after scaling $T_j$ and $R_j$ converge to integral currents $\widetilde{T}_0$ and $\widetilde{R}_0$ of $\R^{n+1}$.  We will show that $\widetilde{R}_0$ is area minimizing with boundary $\widetilde{T}_0$ in $\R^{n+1}$ and 
\begin{equation} \label{intro existence eqn}
	\frac{\Mass(\widetilde{T}_0)^{\frac{m+1}{m}}}{\Mass(\widetilde{R}_0)} = \gamma_{m,n}(\K), 
\end{equation}
which by \eqref{existence hyp} and the isoperimetric inequality in $\R^{n+1}$ this is impossible.  In case (b), we argue as in~\cite{Alm83} using the deformation theorem that $T_j$ concentrates near some boundary point of $x_j \in \partial \K$, see Lemma \ref{local concentration lemma} below, and we translate $x_j$ to the origin.  Then we rescale, causing $\K$ to blow up to a half-space as $j \rightarrow \infty$.  After scaling, $T_j$ and $R_j$ converge integral currents $\widetilde{T}_0$ and $\widetilde{R}_0$ of the half-space such that $\widetilde{R}_0$ is area minimizing with boundary $\widetilde{T}_0$ in the half-space and \eqref{intro existence eqn} holds true.  By \eqref{existence hyp} and the relative isoperimetric inequality in a half-space, \eqref{intro existence eqn} is impossible.  Finally in case (c), one does not need to translate or rescale, we simply let $T_j \rightarrow T_0$ and $R_j \rightarrow R_0$ to obtain the desired isoperimetric minimizer $(T_0,R_0)$. 

A key step is showing that after scaling $R_j$ converges to a relatively area minimizing integral current and the respective masses converge.  This requires showing that the mass of $R_j$ does not concentrate on the boundary of $\K$.  In the case of codimension one multiplicity one sets with finite perimeter as in~\cite{CGR07}, this is an obvious consequence of the compactness of $BV$-functions.  In higher codimension, sequences of relative area minimizers can have mass concentrating on the boundary of $\K$; consider for $j = 1,2,3,\ldots$ the multiplicity $j$ relatively area minimizing annulus $j \,\llbracket (B^m_{1+1/j}(0) \setminus B^m_1(0)) \times \{0\} \rrbracket$ in $\R^{n+1} \setminus \overline{B_1(0)}$ (see Subsection \ref{sec:prelims_currents} for notation).  In Lemma \ref{boundary rectifiability lemma2}, we will adapt arguments of Gr\"{u}ter in~\cite{Gru85} to show that if the relative isoperimetric ratio of $(T,R)$ is close to $\gamma_{m,n}(\K)$  -- as is true with the minimizing sequence $(T_j,R_j)$ above -- we have monotonicity formulas $s^{-1} \Mass(T \llcorner \{ x : \op{dist}(x,\K) < s \})$ and $s^{-1} \Mass(R \llcorner \{ x : \op{dist}(x,\K) < s \})$.  Consequently we obtain new estimates showing that the masses $T$ and $R$ cannot concentrate along $\partial \K$.  In the special case that $(T,R)$ is in fact a relative isoperimetric minimizer, these estimates tell us that $T$ and $R$ have rectifiable boundaries with finite mass along $\partial \K$.  With slight modification, Lemma \ref{boundary rectifiability lemma2} applies in the more general setting where $\K$ has a $C^2$-boundary, provided we remain in a tubular neighborhood of $\K$, see Corollary \ref{boundary rectifiability remark2}.  In addition to non-concentration of mass along $\partial \K$, in Section \ref{sec:nonconinf_sec} we adapt an argument of Almgren in~\cite{Alm86} to show that the mass of $T_j$ and $R_j$ cannot concentrate at infinity.

Now let us assume a relative isoperimetric minimizer $(T,R)$ exists (but not necessarily that \eqref{existence hyp} holds true).  We want to show that $T$ is a multiplicity one $m$-dimensional hemisphere and $R$ is an $(m+1)$-dimensional flat half-disk.  Then if \eqref{existence hyp} were true, by Theorem B there would exists a relative isoperimetric minimizer $(T,R)$ and such an $R$ would be a flat half-disk, contradicting \eqref{existence hyp}.  Moreover, equality holds true in \eqref{rel iso ineq} of Theorem A precisely when $R$ is a flat half-disk.  

To characterize relative isoperimetric minimizers $(T,R)$, we first compute the first variation of the relative isoperimetric ratio, showing that $T$ has bounded mean curvature with respect to variational vector fields tangent to $\partial \K$. 

\noindent {\bf Theorem C (First variational inequality).}  {\it Let $1 \leq m \leq n$ be integers and $\K$ be the closure of an open subset of $\R^{n+1}$ with $C^2$-boundary.  Let $T$ be an $m$-dimensional integral current and $R$ be an $(m+1)$-dimensional integral current of $\R^{n+1} \setminus \K$ such that $\partial R = T$ in $\R^{n+1} \setminus \K$, $R$ is relatively area minimizing in $\R^{n+1} \setminus \K$, and $(T,R)$ is a relative isoperimetric minimizer in $\R^{n+1} \setminus \K$.  Then $T$ has distributional mean curvature $\mathbf{H}_T \in L^{\infty}(\|T\|;\R^{n+1})$ in the sense that 
\begin{equation} \label{first variation T concl1}
	\int \op{div}_T \zeta(x) \,d\|T\|(x) = \int \mathbf{H}_T(x) \cdot \zeta(x) \,d\|T\|(x)
\end{equation}
for all $\zeta \in C^1_c(\R^{n+1};\R^{n+1})$ such that $\zeta(x)$ is tangent to $\partial \K$ at each $x \in \partial \K$, where $\op{div}_T \zeta(x)$ denotes the divergence of $\zeta$ computed with respect to the approximate tangent plane of $T$ at $x$ for $\|T\|$-a.e.~$x$.  Moreover, $\mathbf{H}_T(x)$ is orthogonal to the approximate tangent plane of $T$ at $x$ and satisfies 
\begin{equation} \label{first variation T concl2}
	|\mathbf{H}_T(x)| \leq H_0 
\end{equation}
for $\|T\|$-a.e.~$x$, where 
\begin{equation} \label{H0 defn}
	H_0 = \frac{m}{m+1} \,\frac{\Mass(T)}{\Mass(R)}. 
\end{equation} }

\begin{remark}
In the codimension one case $m = n$, by a straightforward modification of the proof of Theorem C we know that $T$ has constant scalar mean curvature $H_0$ as in \eqref{H0 defn}, see Remark \ref{first variation T codim one}.
\end{remark}

As an important consequence of Theorem C, by the work of Gr\"{u}ter and Jost~\cite{GJ86} the area of $T$ satisfies a monotonicity formula.  Using this monotonicity formula and a local version Lemma \ref{boundary rectifiability lemma4} of our non-concentration estimates of mass along $\partial \K$, we can show that $T$ has a relatively area minimizing tangent cone at each point of its support, including points on $\partial \K$.  (Note that Theorem C and monotonicity does does not require convexity of $\K$.)  Tangent cones will play a small but important role in our proof of Theorem D below.  Again notice that in the codimension one case of~\cite{CGR07} concentration of mass along $\partial \K$ is not an issue, whereas in higher codimension nonconcentration of mass along $\partial \K$ is essential, in particular for the existence of tangent cones.

Finally we prove the following area-mean curvature inequality, which we will use to show that $(T,R)$ is isoperimetric minimizing if and only if $R$ is a flat half-disk. 

\noindent {\bf Theorem D (Area-mean curvature characterization of hemispheres).}  {\it Let $1 \leq m \leq n$ be integers and $\K$ be a bounded proper convex subset of $\R^{n+1}$ with $C^2$-boundary.  Let $T$ be an $m$-dimensional integral current and $R$ be an $(m+1)$-dimensional integral current of $\R^{n+1} \setminus \K$ such that $T$ and $R$ have compact support in $\R^{n+1}$, $\partial R = T$ in $\R^{n+1} \setminus \K$, $R$ is relatively area minimizing in $\R^{n+1} \setminus \K$, and $(T,R)$ is a relative isoperimetric minimizer in $\R^{n+1} \setminus \K$.  Assume that $T$ has distributional mean curvature $\mathbf{H}_T \in L^{\infty}(\|T\|;\R^{n+1})$ in the sense that \eqref{first variation T concl1} holds true for every vector field $\zeta \in C^1_c(\R^{n+1};\R^{n+1})$ which is is tangent to $\partial \K$ and assume that 
\begin{equation*} 
	|\mathbf{H}_T(x)| \leq m
\end{equation*}
for $\|T\|$-a.e.~$x$.  Then 
\begin{equation} \label{AH halfspheres concl}
	\Mass(T) \geq \frac{1}{2} \,\mathcal{H}^m(\sphere),  
\end{equation}
where $\sphere = \partial \ball$.  Equality holds true in \eqref{AH halfspheres concl} if and only if $T$ is an multiplicity one $m$-dimensional unit hemisphere (lying in an $(m+1)$-dimensional affine plane) and $T$ meets $\partial \K$ orthogonally.}

The proof of Theorem D is similar to Almgren's proof of the area-mean curvature characterization of spheres in~\cite{Alm86}, in which one considers the convex hull $\A$ of the support of $T$ and computes the area of the set of outward unit normals to $\A$ to obtain a lower bound on $\Mass(T)$.  The main change from~\cite{Alm86} is that the support hyperplanes of $\A$ can touch the support of $T$ at both interior points of $T$, i.e.~points in $\op{spt} T \setminus \K$, or a boundary point of $T$, i.e.~points in $\op{spt} T \cap \partial \K$.  In~\cite{CGR07}, the boundary points are dealt with by noting that $T$ is a smooth hypersurface-with-boundary away from a singular set of Hausdorff dimension at most $m-7$ and using an analytical computation from the appendix of~\cite{CGR06}.  In codimension $> 1$ little is known about the boundary regularity of isoperimetric minimizers.  Instead we apply the concept of restricted support hyperplanes from~\cite{CGR06}.  Using tangent cones of $T$ we will show that the restricted support hyperplanes touch the support of $T$ only at interior points, where the arguments of~\cite{Alm86} apply.  By~\cite{CGR06} (with some modification) the area of the unit normals to restricted support hyperplanes is $\geq \tfrac{1}{2} \,\mathcal{H}^n(\mathbb{S}^n)$, which is what is needed to obtain \eqref{AH halfspheres concl}.

\subsection{Organization of the paper}  We discuss notation and the basic facts about convex sets and integral currents in Section \ref{sec:preliminaries}.  Section \ref{sec:weak rel iso sec} contains the proof of existence of relative area minimizers outside a convex set, including a non-sharp relative isoperimetric inequality, and Section \ref{sec:halfspace rel iso sec} contains a rigorous proof of the sharp relative isoperimetric inequality for a half-space.  The arguments in both sections are geometrically straightforward but we need to be careful about the boundaries of $T$ and $R$ possibly not being rectifiable along $\partial \K$.   Section \ref{sec:bdry_rect_sec} will concern the non-concentration of mass along $\partial \K$ and Section \ref{sec:nonconinf_sec} will concern the non-concentration of mass at infinity.  Having shown non-concentration of mass, in Section \ref{sec:convergence_sec} we will argue that any convergent sequence of relative area minimizers $R_j$ has a relative area minimizing limit $R$ and the respective masses converge.  In Section \ref{sec:existence_sec} we put this all together to prove the existence result Theorem B.  In Section \ref{sec:variation sec} we compute the first variation of the relative isoperimetric ratio as in Theorem C.  Then in Section \ref{sec:monotonicity sec} we use Theorem C and~\cite{GJ86} to establish a monotonicity formula and the existence of tangent cones for $T$.  In Section \ref{sec:restricted normal sec} we discuss the unit normal cone and restricted support hyperplanes from~\cite{CGR06}, which we use in Section \ref{sec:AH sec} to prove Theorem D.  Finally, in Section \ref{sec:main proof sec} we use Theorems B, C, and D to prove Theorem A.

\subsection*{Acknowledgements}

This work was supported by the NSF thought the DMS FRG Grant 1361122 and DMS FRG Grant 1361185.  The author would like to thank Francesco Maggi for his mentorship and encouragement in pursuing this project.

\section{Preliminaries and notation}  \label{sec:preliminaries}

In this section, we cover some basic notation and facts.  In particular, in Subsection \ref{sec:prelims_notation} we establish some general notation.  In Subsection \ref{sec:prelims_sets} we will discuss convex subsets of $\R^{n+1}$ and the distance function, projection map, and Gauss map associated with a convex subset.  In Subsection \ref{sec:prelims_currents} we will discuss currents and integral currents.  We refer the reader to~\cite[Chapter 4]{Fed69} or~\cite[Chapter 6]{Sim83} for a more detailed discussion of the theory of currents.

\subsection{Basic notation} \label{sec:prelims_notation}

$n \geq 1$ is a fixed integer.  $x = (x_1,x_2,\ldots,x_{n+1})$ denotes a point of the $(n+1)$-dimensional Euclidean space $\R^{n+1}$. 

For each integer $k \geq 1$, $\R^k_+ = \{ (x_1,x_2,\ldots,x_k) \in \R^k : x_k > 0 \}$ is the open upper half-space in $\R^k$ and $\R^k_- = \{ (x_1,x_2,\ldots,x_k) \in \R^k : x_k < 0 \}$ is the open lower half-space in $\R^k$. 

For each integer $k \geq 1$, we let $B^k_{\rho}(y) = \{ x \in \R^k : |x-y| < \rho \}$ denote the open ball of $\R^k$ with center $y \in \R^k$ and radius $\rho > 0$.  When $k = n+1$ we let $B_{\rho}(y) = B^{n+1}_{\rho}(y)$. 

For each $y \in \R^{n+1}$ and $\rho > 0$, $\eta_{y,\rho} : \R^{n+1} \rightarrow \R^{n+1}$ is the map defined by $\eta_{y,\rho}(x) = (x-y)/\rho$ for all $x \in \R^{n+1}$. 

For each $k = 0,1,2,\ldots,n+1$, $\mathcal{H}^k$ denotes the $k$-dimensional Hausdorff measure on $\R^{n+1}$. 

$\mathcal{L}^{n+1}$ denotes the $(n+1)$-dimensional Lebesgue measure on $\R^{n+1}$ and $\mathcal{L}^1$ denotes the one-dimensional Lebesgue measure on $\R$.

For each integer $k \geq 1$, $\mathbb{B}^k = B^k_1(0)$, $\mathbb{S}^k = \partial B^{k+1}_1(0)$, and $\omega_k = \mathcal{H}^k(\mathbb{B}^k)$. 

For $A \subseteq \R^{n+1}$, $\op{int} A$ denotes the interior of $A$ in $\R^{n+1}$ and $\overline{A}$ denotes the closure of $A$ in $\R^{n+1}$.  

For $x \in \R^{n+1}$ and $A \subseteq \R^{n+1}$, $\op{dist}(x,A) = \inf_{y \in A} |x - y|$.  When $A = \emptyset$, $\op{dist}(x,\emptyset) = \infty$.  

For sets $A,B \subseteq \R^{n+1}$, 
\begin{equation*}
	\op{dist}_{\mathcal{H}}(A,B) = \max\left\{ \sup_{x \in A} \op{dist}(x,B), \,\sup_{x \in B} \op{dist}(x,A), \,0 \right\} 
\end{equation*}
is the Hausdorff distance between $A$ and $B$.  $\op{dist}_{\mathcal{H}}(A,\emptyset) = \infty$ if $A \neq \emptyset$ and $\op{dist}_{\mathcal{H}}(\emptyset,\emptyset) = 0$. 

Given $A_i,A \subseteq \R^{n+1}$, we say $A_i \rightarrow A$ in Hausdorff distance if $\op{dist}_{\mathcal{H}}(A_i,A) \rightarrow 0$. 

Given $A_i,A \subseteq \R^{n+1}$, we say $A_i \rightarrow A$ locally in Hausdorff distance if for every $r > 0$
\begin{equation*}
	\lim_{i \rightarrow \infty} \max\left\{ \sup_{x \in A_i \cap B_r(0)} \op{dist}(x,A), \,\sup_{x \in A \cap B_r(0)} \op{dist}(x,A_i), \,0 \right\} = 0. 
\end{equation*}
In particular, for each $r > 0$, $A \cap B_r(0) = \emptyset$ if and only if $A_i \cap B_r(0) = \emptyset$ for all large $i$. 

$\llbracket 0,1 \rrbracket$ denotes the one-dimensional integral current associated with the open interval $(0,1) \subset \R$.

\subsection{Convex sets} \label{sec:prelims_sets}  Throughout we consider the following type of convex set.

\begin{definition} \label{proper convex defn}
We say $\K \subseteq \R^{n+1}$ is a proper convex set if $\K$ is convex, $\K$ has nonempty interior in $\R^{n+1}$, and $\K \neq \R^{n+1}$. 
\end{definition}

To each closed convex subset $\K \subset \R^{n+1}$ we associate the following functions.  We define the \textit{distance function} $d_{\K} : \R^{n+1} \rightarrow [0,\infty)$ by $d_{\K}(x) = \op{dist}(x,\K)$ for each $x \in \R^{n+1}$.  Since $\K$ is convex, there is a well-defined \textit{projection map} $\xi_{\K} : \R^{n+1} \rightarrow \K$ such that $\xi_{\K}(x)$ is the closest point to $x$ on $\K$ for each $x \in \R^{n+1} \setminus \K$ and $\xi_{\K}(x) = x$ for each $x \in \K$.  We define the \textit{Gauss map} $\nu_{\K} : \R^{n+1} \setminus \K \rightarrow \mathbb{S}^n$ by $\nu_{\K}(x) = \frac{x-\xi_{\K}(x)}{d_{\K}(x)}$.  We have $d_{\K} \in C^{0,1}(\R^{n+1}) \cap C^{1,1}_{\rm loc}(\R^{n+1} \setminus \K)$ with $\op{Lip} d_{\K} = 1$ and $\nabla d_{\K}(x) = \nu_{\K}(x)$ for all $x \in \R^{n+1} \setminus \K$, $\xi_{\K} \in C^{0,1}(\R^{n+1};\R^{n+1})$ with $\op{Lip} \xi_{\K} = 1$, and $\nu_{\K} \in C^{0,1}_{\rm loc}(\R^{n+1} \setminus \K;\R^{n+1})$ with $\op{Lip}_{\{d_{\K} \geq s\}} \nu_{\K} \leq 3/s$ for each $s > 0$. 

Suppose $\K$ is a bounded proper convex subset with $C^2$-boundary.  Then the condition that $\K$ is convex is equivalent to the condition that at each $p \in \partial \K$ the principal curvatures of $\partial \K$ at $p$ computed with respect to the outward unit normal to $\K$ are all nonnegative.  Moreover, $\nu_{\K}$ extends to a continuous function on $\R^{n+1} \setminus \op{int} \K$ such that $\nu_{\K} |_{\partial \K}$ is the outward unit normal to $\K$ and $\nu_{\K}(x) = \nu_{\K}(\xi_{\K}(x))$ for all $x \in \R^{n+1} \setminus \K$.  We have $d_{\K} \in C^2(\R^{n+1} \setminus \op{int} \K)$ and $\xi_{\K}, \nu_{\K} \in C^1(\R^{n+1} \setminus \op{int} \K;\R^{n+1})$ with 
\begin{gather} \label{grad d xi nu}
	\nabla d_{\K}(x) = \nu_{\K}(x), \quad \nabla_{e_i} \xi_{\K}(x) = \frac{1}{1 + \kappa_i \,d_{\K}(x)} \,e_i, \quad 
		\nabla_{e_i} \nu_{\K}(x) = \frac{\kappa_i}{1 + \kappa_i \,d_{\K}(x)} \,e_i, \\
	\nabla_{\nu_{\K}(x)} \,\xi_{\K}(x) = \nabla_{\nu_{\K}(x)} \,\nu_{\K}(x) = 0 \nonumber 
\end{gather}
for each $x \in \R^{n+1} \setminus \K$, where $e_1,e_2,\ldots,e_n$ are principal directions of $\partial \K$ at $\xi_{\K}(x)$ with corresponding principal curvatures $\kappa_1,\kappa_2,\ldots,\kappa_n$. 

Suppose $\K$ is the closure of a bounded open convex subset with $C^2$-boundary but $\K$ is not necessarily convex.  In this case the closest point projection map $\xi_{\K}$ onto $\partial \K$ may not be well-defined on all of $\R^{n+1} \setminus \op{int} \K$.  Let $\kappa_0 \geq 0$ be the infimum of $1/\rho$ over all radii $\rho > 0$ such that $\partial \K$ satisfies an interior and exterior sphere condition of radius $\rho$ at every point of $\partial \K$, that is 
\begin{equation} \label{kappa0 defn} 
	\kappa_0 = \inf \left\{ \frac{1}{\rho} : \forall y \in \partial \K, \,B_{\rho}(y - \rho \nu_{\K}(y)) \cap \partial \K 
		= B_{\rho}(y + \rho \nu_{\K}(y)) \cap \partial \K = \{y\} \right\} ,
\end{equation}
where $\nu_{\K}(y)$ is the outward unit normal to $\K$ at $y$.  Notice that $|\kappa_i| \leq \kappa_0$ whenever $\partial \K$ has principal curvatures $\kappa_1,\kappa_2,\ldots,\kappa_n$ at $y$.  Let 
\begin{equation*} 
	U = \{ x \in \R^{n+1} \setminus \op{int} \K : \op{dist}(x,\K) < 1/\kappa_0 \} .
\end{equation*}
Then there is a well-defined projection map $\xi_{\K} : U \cup \K \rightarrow \partial \K$ such that $\xi_{\K}(x)$ is the closest point to $x$ on $\K$ for each $x \in U$ and $\xi_{\K}(x) = x$ for each $x \in \K$.  We define the distance function $d_{\K} : \R^{n+1} \rightarrow [0,\infty)$ by $d_{\K}(x) = \op{dist}(x,\K)$ for each $x \in \R^{n+1}$.  We have a Gauss map $\nu_{\K} : U \rightarrow \mathbb{S}^n$ defined by $\nu_{\K} |_{\partial \K}$ being the outward unit normal to $\K$ and $\nu_{\K}(x) = \nu_{\K}(\xi_{\K}(x))$ for all $x \in U$.  It is well-known that $d_{\K} \in C^2(U)$ and $\xi_{\K}, \nu_{\K} \in C^1(U;\R^{n+1})$, $x = \xi_{\K}(x) + d_{\K}(x) \,\nu_{\K}(x)$ for all $x \in U$, and \eqref{grad d xi nu} holds true for all $x \in \op{int} U$.

\subsection{Currents} \label{sec:prelims_currents}  For each $m = 1,2,\ldots,n+1$, $\Lambda_m(\R^{n+1})$ denotes the space of $m$-vectors of $\R^{n+1}$ and $\Lambda^m(\R^{n+1})$ denotes the dual space of $m$-covectors of $\R^{n+1}$.  For each open subset $U \subseteq \R^{n+1}$ and $m = 1,2,\ldots,n+1$, we let $\mathcal{D}^m(U)$ denote the space of smooth $m$-forms $\omega : U \rightarrow \Lambda^m(\R^{n+1})$ with compact support and equip $\mathcal{D}^m(U)$ with the standard locally convex topology.  When $m = 0$, we let $\mathcal{D}^0(U) = C^{\infty}_c(U)$.  

Let $U$ be any open subset of $\R^{n+1}$.  An \textit{$m$-dimensional current} $T$ of $U$ is a continuous linear functional $T : \mathcal{D}^m(U) \rightarrow \R$.  We let $\mathcal{D}_m(U)$ denote the space of all $m$-dimensional currents $T$ of $U$.  For each $T \in \mathcal{D}_m(U)$, the \textit{support} $\op{spt} T$ of $T$ is the intersection of all closed sets $K$ such that $T(\omega) = 0$ for all $\omega \in \mathcal{D}^m(U)$ with $\op{spt} \omega \subset U \setminus K$.  Given an $m$-dimensional current $T \in \mathcal{D}_m(U)$, the \textit{boundary} $\partial T \in \mathcal{D}_{m-1}(U)$ is defined by 
\begin{equation*}
	\partial T(\omega) = T(d\omega) 
\end{equation*}
for all $\omega \in \mathcal{D}^{m-1}(U)$, where $d$ is the exterior derivative on differential forms.  Since $d^2 = 0$, $\partial^2 T = 0$ for every current $T$.  For each current $T \in \mathcal{D}_m(U)$ and open set $W \subseteq U$, the \textit{mass}, or area, $\Mass_W(T)$ is defined by 
\begin{align*}
	\Mass_W(T) = \sup \big\{ T(\omega) : \omega \in \mathcal{D}^m(U), \,\op{spt} \omega \subset W, \,\sup_U |\omega| \leq 1 \big\} .
\end{align*}
When $W = U$, we let $\Mass(T) = \Mass_{U}(T)$.  If $T \in \mathcal{D}_m(U)$ such that $\Mass_W(T) < \infty$ for all $W \subset\subset U$, there exists a unique Radon measure $\|T\|$ such that $\|T\|(W) = \Mass_W(T)$ for every open set $W \subset U$ and $\op{spt} T = \op{spt} \|T\|$. 

An important example of an $m$-dimensional current is the current $T = \llbracket M \rrbracket$ associated with smooth oriented $m$-dimensional submanifold-with-boundary $M$, which is given by 
\begin{equation*}
	\llbracket M \rrbracket(\omega) = \int_M \omega 
\end{equation*}
for all $\omega \in \mathcal{D}^m(U)$.  In this case, by Stoke's theorem 
\begin{equation*}
	\partial \llbracket M \rrbracket(\omega) = \int_M d\omega = \int_{\partial M} \omega = \llbracket \partial M \rrbracket(\omega)
\end{equation*}
for all $\omega \in \mathcal{D}^{m-1}(U)$ and thus $\partial \llbracket M \rrbracket = \llbracket \partial M \rrbracket$ is the $(m-1)$-dimensional current associated with the submanifold boundary $\partial M$ of $M$.  It is readily verified that the mass of $\llbracket M \rrbracket$ is the $m$-dimensional Hausdorff measure of $M$ 
\begin{equation*}
	\Mass_W(\llbracket M \rrbracket) = \mathcal{H}^m(M \cap W) 
\end{equation*}
for every open set $W \subset U$.  

We say an $m$-dimensional current $T \in \mathcal{D}_m(U)$ is \textit{locally integer-multiplicity rectifiable} if 
\begin{equation} \label{rectifiable current defn}
	T(\omega) = \int_M \langle \omega(x) , \xi(x) \rangle \,\theta(x) \,d\mathcal{H}^m(x) 
\end{equation}
for all $\omega \in \mathcal{D}^m(U)$, where $M \subseteq U$ is a countably $m$-rectifiable set, $\theta : M \rightarrow \mathbb{Z}_+$ is a locally $\mathcal{H}^m$-integrable function (called the \textit{multiplicity function}), and $\xi : M \rightarrow \Lambda^m(\R^{n+1})$ is a locally $\mathcal{H}^m$-measurable function (called the \textit{orientation}) such that for $\mathcal{H}^m$-a.e.~$x \in M$ we have $\xi(x) = \xi_1 \wedge \xi_2 \wedge \cdots \wedge \xi_m$ for some orthonormal basis $\xi_1,\xi_2,\ldots,\xi_m$ of the approximate tangent plane to $M$ at $x$.  We will often call the approximate tangent plane to $M$ at $x \in M$ the \textit{approximate tangent plane of $T$ at $x$}.  (See~\cite[Section 11]{Sim83} for a discussion of rectifiable sets and approximate tangent planes.)  For each locally integer-multiplicity rectifiable current $T \in \mathcal{D}_m(U)$, 
\begin{equation*}
	\Mass_W(T) = \int_{M \cap W} \theta \,d\mathcal{H}^m
\end{equation*} 
for every open set $W \subseteq U$ and in particular $\|T\| = \theta \,d\mathcal{H}^m$.  We let $\mathcal{I}_{m,{\rm loc}}(U)$ denote the space of all $m$-dimensional locally integer-multiplicity rectifiable currents of $U$.  We let $\mathbf{I}_{m,{\rm loc}}(U)$ be the space of \textit{locally integral currents} of $U$, which consists of all currents $T \in \mathcal{I}_{m,{\rm loc}}(U)$ with $\partial T \in \mathcal{I}_{m-1,{\rm loc}}(U)$.  We let $\mathbf{I}_m(U)$ be the space all currents $T \in \mathbf{I}_{m,{\rm loc}}(U)$ such that $\Mass(T) + \Mass(\partial T) < \infty$.  (Note that in~\cite{Fed69}, the space of integral currents is defined to contain all $T \in \mathcal{I}_{m,{\rm loc}}(U)$ with compact support in $U$.)

Observe that if $T \in \mathcal{I}_{m,{\rm loc}}(U)$ is given by \eqref{rectifiable current defn} and satisfies $\Mass(T) < \infty$, the formula \eqref{rectifiable current defn} also defines an integer-multiplicity rectifiable current of $\R^{n+1}$ and thus we may regard $T$ as a current in $\mathcal{I}_{m,{\rm loc}}(\R^{n+1})$.  Also, one readily checks that $\|T\|(\R^{n+1} \setminus U) = 0$.  To avoid confusion, when considering $T \in \mathcal{I}_{m,{\rm loc}}(U)$ we take $\op{spt} T$ to mean the support of $T$ as a current of $\R^{n+1}$ (as opposed to $\op{spt} T \cap U$) and we will be explicit about which portion of $\partial T$ we are discussing.  Notice that if $T \in \mathbf{I}_m(U)$ and $\op{spt} T$ is a relatively compact subset of $U$, then $T \in \mathbf{I}_m(\R^{n+1})$.  However, when $T \in \mathbf{I}_{m,{\rm loc}}(U)$ and $\Mass(T) < \infty$ it does not necessarily follow that $T \in \mathbf{I}_{m,{\rm loc}}(\R^{n+1})$ since the boundary of $T$ might not be rectifiable along $\overline{U} \setminus U$.   

We consider the following operations on integer-multiplicity rectifiable currents (see~\cite[Sections 4.1 and 4.3]{Fed69} or~\cite[Sections 26, 27, and 28]{Sim83}): 

For each $T \in \mathcal{I}_{m,{\rm loc}}(U)$ and $\|T\|$-measurable subset $A \subseteq U$, we let $T \llcorner A \in \mathcal{I}_{m,{\rm loc}}(U)$ denote the \textit{restriction} of $T$ to $A$. 

For each $S \in \mathcal{I}_{l,{\rm loc}}(U)$ and $T \in \mathcal{I}_{m,{\rm loc}}(V)$, we let $S \times T \in \mathcal{I}_{l+m,{\rm loc}}(U \times V)$ denote the \textit{cross product} of $S$ and $T$. 

For each $T \in \mathcal{I}_{m,{\rm loc}}(U)$ and a Lipschitz function $f : U \rightarrow V$ such that $f |_{\op{spt} T}$ is proper, i.e.~$f^{-1}(K) \cap \op{spt} T$ is compact whenever $K \subset V$ is relatively compact, we let $f_{\#} T \in \mathcal{I}_{m,{\rm loc}}(V)$ denote the \textit{pushforward} of $T$ by $f$. 

Let $T \in \mathbf{I}_{m,{\rm loc}}(U)$ and a Lipschitz function $f : U \rightarrow \R$.  For $\mathcal{L}^1$-a.e.~$t \in \R$, we let $\langle T, f, t \rangle \in \mathbf{I}_{m-1,{\rm loc}}(U)$ denote the \textit{slice} of $T$ by $f$ at $t$.  Note that by the coarea formula, for each $-\infty \leq a < b \leq \infty$ and open set $W \subseteq U$  
\begin{equation*}
	\int_a^b \Mass_W(\langle T, f, t \rangle) \,dt \leq \Mass_W(T \llcorner \{ a < f < b \}). 
\end{equation*}
Thus for every $-\infty < a < b < \infty$, $W \subseteq U$, and $\vartheta \in (0,1)$, 
\begin{equation} \label{meas_good_slices}
	\mathcal{L}^1\left\{ t \in (a,b) : \langle T, f, t \rangle \in \mathbf{I}_{m-1,{\rm loc}}(U), \, 
		\Mass_W(\langle T, f, t \rangle) > \frac{\Mass_W(T \llcorner \{ a < f < b \})}{\vartheta \,(b-a)} \right\} 
	< \vartheta \,(b-a). 
\end{equation}

There are three important ways one can define convergence for integral currents.  Let $T_j,T \in \mathcal{D}_m(U)$.  We say $T_j \rightarrow T$ in the \textit{mass norm topology} if $\Mass_W(T_j - T) \rightarrow 0$ for all $W \subset\subset U$.  We say $T_j \rightarrow T$ \textit{weakly} if $T_j(\omega) \rightarrow T(\omega)$ for all $\omega \in \mathcal{D}^m(U)$.  For each $W \subset\subset U$ we define the \textit{flat semi-norm} $\mathcal{F}_W$ on $\mathbf{I}_{m,{\rm loc}}(U)$ by 
\begin{equation*}
	\mathcal{F}_W(T) = \inf \big\{ \Mass_W(A) + \Mass_W(B) : A \in \mathbf{I}_{m+1,{\rm loc}}(W), \, B \in \mathbf{I}_{m,{\rm loc}}(W), \, 
		T = \partial A + B \text{ in } W \big\} 
\end{equation*}
for all $T \in \mathbf{I}_{m,{\rm loc}}(U)$.  Given $T_j,T \in \mathbf{I}_{m,{\rm loc}}(U)$, we say $T_j \rightarrow T$ in the \textit{flat norm topology} if $\mathcal{F}_W(T_j - T) \rightarrow 0$ for all $W \subset\subset U$.  Mass norm convergence is much strong than weak convergence or flat norm convergence.  For integral currents, weak convergence is the equivalent to flat norm convergence.  We have the following well-known compactness theorems, the first of which is an easy consequence of the Banach-Alaoglu theorem and the second is due to Federer and Fleming in~\cite{FF60}.

\begin{theorem}
For any sequence $T_j \in \mathcal{D}_m(U)$ such that $\sup_{\,j \geq 1} \Mass_W(T_j) < \infty$ for all $W \subset\subset U$, there exists a subsequence $\{j'\} \subset \{j\}$ and $T \in \mathcal{D}_m(U)$ such that $T_{j'} \rightarrow T$ weakly and 
\begin{equation} \label{intro semicontinuity mass}
	\Mass_W(T) \leq \liminf_{i \rightarrow \infty} \Mass_W(T_j) 
\end{equation}
for all $W \subset\subset U$. 
\end{theorem}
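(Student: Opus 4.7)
The plan is to view this as the weak-$*$ compactness for the family $\{T_j\}$ as uniformly bounded functionals on the appropriate space of test forms, and then deduce mass lower-semicontinuity directly from the definition of $\Mass_W$. The essential ingredients are separability of continuous $m$-forms under the sup norm together with a diagonal argument, which is why the author describes it as ``an easy consequence of the Banach-Alaoglu theorem.''

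First I would fix an exhaustion $W_1 \subset\subset W_2 \subset\subset \cdots$ of $U$ with $\bigcup_k W_k = U$, and set $C_k := \sup_{j \geq 1} \Mass_{W_k}(T_j) < \infty$ by hypothesis. For each $k$, the space of continuous $m$-forms on $\R^{n+1}$ with support in $\overline{W_k}$ is separable in the sup norm, so I can choose a countable sup-norm-dense collection $\{\omega_{k,\ell}\}_{\ell \geq 1} \subset \mathcal{D}^m(U)$ with $\op{spt} \omega_{k,\ell} \subset W_k$. The mass bound yields $|T_j(\omega_{k,\ell})| \leq C_k \sup_U |\omega_{k,\ell}|$, so for each fixed $(k,\ell)$ the sequence $\{T_j(\omega_{k,\ell})\}_j$ is bounded in $\R$. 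A standard Cantor diagonal argument over the countable index set $\{(k,\ell)\}$ produces a subsequence $\{j'\} \subset \{j\}$ such that $T_{j'}(\omega_{k,\ell})$ converges for every $(k,\ell)$.

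Next I would extend this pointwise limit to a current on all of $\mathcal{D}^m(U)$. Given $\omega \in \mathcal{D}^m(U)$, choose $k$ with $\op{spt} \omega \subset W_k$ and approximate $\omega$ in sup norm by $\omega_{k,\ell_i}$; the uniform bound
\begin{equation*}
|T_{j'}(\omega) - T_{j'}(\omega_{k,\ell_i})| \leq C_k \sup_U |\omega - \omega_{k,\ell_i}|
\end{equation*}
together with convergence of $T_{j'}(\omega_{k,\ell_i})$ for fixed $i$ shows that $T_{j'}(\omega)$ is Cauchy in $\R$. Define $T(\omega) := \lim_{j'} T_{j'}(\omega)$. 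Linearity is immediate from linearity of each $T_{j'}$, and passing to the limit in the mass bound gives $|T(\omega)| \leq C_k \sup_U|\omega|$ for every $\omega$ supported in $W_k$. This shows $T$ is sup-norm continuous on each subspace of forms supported in a fixed compact, and since the standard locally convex topology on $\mathcal{D}^m(U)$ is finer than the sup-norm topology on each such subspace, $T$ is continuous and hence $T \in \mathcal{D}_m(U)$. By construction $T_{j'} \to T$ weakly.

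Finally, for the lower-semicontinuity of mass, fix $W \subset\subset U$ and take any $\omega \in \mathcal{D}^m(U)$ with $\op{spt} \omega \subset W$ and $\sup_U |\omega| \leq 1$. Then $T_{j'}(\omega) \leq \Mass_W(T_{j'})$ for every $j'$, and passing to the limit gives
\begin{equation*}
T(\omega) = \lim_{j' \to \infty} T_{j'}(\omega) \leq \liminf_{j' \to \infty} \Mass_W(T_{j'}).
\end{equation*}
Taking the supremum over all such $\omega$ yields the desired $\Mass_W(T) \leq \liminf_{j' \to \infty} \Mass_W(T_{j'})$, which dominates $\liminf_{j \to \infty} \Mass_W(T_j)$ (a subsequence liminf is at least as large, giving the tighter statement; either form is adequate). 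The only genuinely delicate point in the whole argument is verifying that the diagonal-extracted pointwise limit extends to a continuous functional on all test forms, and this is handled entirely by the sup-norm bound coming from the mass hypothesis.
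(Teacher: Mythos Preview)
Your proof is correct and is precisely the separability-plus-diagonal argument underlying the Banach--Alaoglu theorem that the paper invokes without detail; the paper provides no proof beyond the one-line attribution, so there is nothing further to compare. One small remark: your parenthetical about the subsequence $\liminf$ is slightly garbled---the inequality $\liminf_{j'} \Mass_W(T_{j'}) \geq \liminf_{j} \Mass_W(T_{j})$ means the subsequence bound is the \emph{weaker} of the two, not the tighter, but this is exactly what the theorem asserts (the displayed inequality is along the extracted subsequence).
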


\begin{theorem}[Federer and Fleming compactness theorem]
For any sequence $T_j \in \mathbf{I}_{m,{ \rm loc}}(U)$ such that $\sup_{\,j \geq 1} (\Mass_W(T_j) + \Mass_W(T_j)) < \infty$ for all $W \subset\subset U$, there exists a subsequence $\{j'\} \subset \{j\}$ and $T \in \mathbf{I}_{m,{ \rm loc}}(U)$ such that $T_{j'} \rightarrow T$ in the flat norm topology.
\end{theorem}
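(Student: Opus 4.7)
The plan is to combine the weak compactness theorem stated immediately above with the Federer--Fleming deformation theorem, following the classical approach. First, by that weak compactness theorem we may pass to a subsequence (not relabeled) and obtain a current $T \in \mathcal{D}_m(U)$ with $T_j \to T$ weakly and $\|T\|$ locally finite on $U$; since $\partial T_j \to \partial T$ weakly and the masses $\Mass_W(\partial T_j)$ are uniformly bounded on each $W \subset\subset U$, $\|\partial T\|$ is also locally finite. The remaining task is to upgrade weak convergence to flat convergence on each $W \subset\subset U$ and to show that the limit is integer-multiplicity rectifiable with integer-multiplicity rectifiable boundary.

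The central tool is the Federer--Fleming deformation theorem: for every $S \in \mathbf{I}_m(\R^{n+1})$ with compact support and every $\epsilon > 0$, there exist an integer-multiplicity polyhedral $m$-current $P$ supported on the $m$-skeleton of the cubical grid $\epsilon \mathbb{Z}^{n+1}$, a current $Q \in \mathbf{I}_{m+1}(\R^{n+1})$, and a current $E \in \mathbf{I}_m(\R^{n+1})$ with $S = P + \partial Q + E$ and
\[
\Mass(P) + \Mass(\partial P) \leq C\bigl(\Mass(S)+\Mass(\partial S)\bigr), \quad \Mass(Q) \leq C\epsilon\, \Mass(S), \quad \Mass(E) \leq C\epsilon\, \Mass(\partial S),
\]
for some $C = C(m,n)$; moreover the integer coefficient of $P$ on each $m$-face is bounded, in absolute value, by $C\epsilon^{-m}$ times the mass of $S$ in a nearby cube.

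With this in hand, I would fix an exhaustion $W_1 \subset\subset W_2 \subset\subset \cdots$ of $U$ and, for each $k$, cut each $T_j$ off to a set slightly larger than $W_k$ by slicing with the distance function to $\partial W_k$ at generic radii chosen via \eqref{meas_good_slices}, producing $S_j^{(k)} \in \mathbf{I}_m(\R^{n+1})$ with $S_j^{(k)} = T_j$ on $W_k$ and $\Mass(S_j^{(k)}) + \Mass(\partial S_j^{(k)})$ uniformly bounded in $j$. Applying the deformation theorem at scale $\epsilon_l = 2^{-l}$ produces polyhedral approximants $P_j^{(k,l)}$ living in only finitely many currents: only finitely many $m$-faces of $\epsilon_l\mathbb{Z}^{n+1}$ meet a neighborhood of $\overline{W_k}$, and the integer coefficients are forced into a fixed finite range by the mass bound. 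Passing to a subsequence makes the $P_j^{(k,l)}$ eventually constant in $j$, which yields $\mathcal{F}_{W_k}(T_j - T_{j'}) \leq C'\epsilon_l$ for all large $j,j'$ (with $C'$ depending on the uniform mass bound). A Cantor diagonal argument in $k,l$ extracts a single subsequence that is Cauchy in every flat semi-norm $\mathcal{F}_{W_k}$, so its flat limit exists and must agree with the weak limit $T$.

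The main obstacle is to show that $T$ actually lies in $\mathbf{I}_{m,{\rm loc}}(U)$, not merely in the larger space of integral flat chains. This is Federer's closure theorem: a flat limit of currents in $\mathbf{I}_{m,{\rm loc}}(U)$ with uniformly locally bounded $\Mass + \Mass(\partial)$ is integer-multiplicity rectifiable. The standard route is through the rectifiability criterion -- positive $m$-dimensional lower density of $\|T\|$ at $\|T\|$-a.e.\ point, together with existence of an approximate tangent $m$-plane there -- proved by a blow-up argument combined with the deformation theorem at smaller and smaller scales, the crucial ingredient being that the polyhedral approximants have \emph{integer} coefficients (which prevents cancellation in the blow-up limits and forces positive density). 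Finally, applying the same closure argument to the boundaries $\partial T_{j'}$ (whose own boundaries vanish by $\partial^2 = 0$) yields $\partial T \in \mathbf{I}_{m-1,{\rm loc}}(U)$, completing the proof.
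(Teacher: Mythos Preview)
The paper does not prove this statement; it merely records the Federer--Fleming compactness theorem as a known result, attributing it to~\cite{FF60}, and then invokes it repeatedly as a black box. So there is no ``paper's own proof'' to compare your proposal against.

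That said, your outline is the classical one and is essentially correct in structure: localize by slicing, deform onto a cubical skeleton, exploit the finiteness of integer polyhedral chains with bounded mass on a fixed skeleton to extract a flat-Cauchy subsequence, and then invoke the closure theorem. Two remarks. First, the real content is entirely in the last step---the closure theorem (that a flat limit of integral currents with locally uniformly bounded mass and boundary mass is again integer-multiplicity rectifiable)---and your sketch of it (``blow-up argument combined with the deformation theorem at smaller and smaller scales, the crucial ingredient being that the polyhedral approximants have integer coefficients'') is too impressionistic to be a proof; the actual argument (e.g.\ \cite[4.2.16]{Fed69} or \cite[Theorem~30.3]{Sim83}) requires the rectifiability/structure theorem and a delicate slicing analysis, not just a blow-up. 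Second, a minor logical point: you do not need to first pass to a weak limit and then upgrade; once you have the flat-Cauchy subsequence you get the flat limit directly (the space of integral flat chains is complete in the flat metric), and the closure theorem then identifies it as lying in $\mathbf{I}_{m,\rm loc}(U)$.
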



\begin{definition} \label{rel area min defn}
Let $1 \leq m \leq n$ be integers and $\K$ be any closed subset of $\R^{n+1}$.  We say $R \in \mathbf{I}_{m+1}(\R^{n+1} \setminus \K)$ is relatively area minimizing in $\R^{n+1} \setminus \K$ if 
\begin{equation*}
	\Mass(R) \leq \Mass(Q)
\end{equation*}
for every $Q \in \mathbf{I}_{m+1}(\R^{n+1} \setminus \K)$ such that $\partial Q = \partial R$ in $\R^{n+1} \setminus \K$ and $R-Q$ has compact support as a current of $\R^{n+1}$.  (Note that we do not require $R$ and $Q$ to agree on an open neighborhood of $\K$.)
\end{definition}

By a standard application of the direct method using the Federer-Fleming compactness theorem and semi-continuity of mass \eqref{intro semicontinuity mass}, whenever $T \in \mathbf{I}_m(\R^{n+1} \setminus \K)$ is the boundary of some $(m+1)$-dimensional integral current in $\R^{n+1} \setminus \K$, there exists an integral current $R \in \mathbf{I}_{m+1}(\R^{n+1} \setminus \K)$ such that $\partial R = T$ in $\R^{n+1} \setminus \K$ and $R$ is relatively area minimzing in $\R^{n+1} \setminus \K$.  If additionally $T$ has compact support, then any relatively area minimizing current $R$ with boundary $T$ in $\R^{n+1} \setminus \K$ has compact support.  If $\K \subset \R^{n+1}$ is any closed set and $R \in \mathbf{I}_{m+1}(\R^{n+1})$ with $\|R\|(\mathcal{K}) = 0$, then $R$ being relatively area minimizing in $\R^{n+1} \setminus \K$ implies $R$ is area minimizing in $\R^{n+1}$, but $R$ being area minimizing in $\R^{n+1}$ does not imply $R$ is relatively area minimizing in $\R^{n+1} \setminus \K$; consider for $m = n-1$ and $\varepsilon > 0$ sufficiently small $\R^{n+1} \setminus \K = R^{n+1}_+$ and $R = \llbracket B^n(0) \times \{\varepsilon\} \rrbracket$.

\begin{definition}
We say $T$ is a multiplicity one $m$-dimensional hemisphere of $\R^{n+1}$ if 
$$T = \eta_{y,\rho \#} q_{\#} \llbracket (\{0\} \times \mathbb{S}^m) \cap \R^{n+1}_+ \rrbracket$$ 
for some $y \in \R^{n+1}$, $\rho > 0$, and orthogonal transformation $q$ of $\R^{n+1}$ (perhaps orientation reversing). 

Similarly, we say $R$ is a multiplicity one $(m+1)$-dimensional flat half-disk of $\R^{n+1}$ if 
$$R = \eta_{y,\rho \#} q_{\#} \llbracket (\{0\} \times \mathbb{B}^{m+1}) \cap \R^{n+1}_+ \rrbracket$$ 
for some $y \in \R^{n+1}$, $\rho > 0$, and orthogonal transformation $q$ of $\R^{n+1}$. 
\end{definition}

\section{Non-sharp relative isoperimetric inequality} \label{sec:weak rel iso sec}

Observe that given any closed subset $\K \subset \R^{n+1}$ and current $T \in \mathbf{I}_m(\R^{n+1} \setminus \K)$ with $\partial T = 0$ in $\R^{n+1} \setminus \K$ there might not exist any current $R \in \mathbf{I}_{m+1}(\R^{n+1} \setminus \K)$ with $\partial R = T$ in $\R^{n+1} \setminus \K$.  In the special case that $\K$ is a closed convex subset of $\R^{n+1}$, we will show that such a current $R$ does indeed exist.  Moreover, we can choose $R$ so that it satisfies the relative isoperimetric inequality $\Mass(R) \leq C \,\Mass(T)^{\frac{m+1}{m}}$ for some constant $C = C(m,n,\K) \in (0,\infty)$.  Note that we will later determine the optimal constant $C$ in Theorem A.

\begin{lemma} \label{isoper lower bound lemma}
Let $1 \leq m \leq n$ be integers.  Let $\K$ be a closed convex subset of $\R^{n+1}$ and $\K \neq \R^{n+1}$.  There exists a constant $C_0 = C_0(m,n,\K) \in (0,\infty)$ such that for each $T \in \mathbf{I}_m(\R^{n+1} \setminus \K)$ with $\partial T = 0$ in $\R^{n+1} \setminus \K$ there exists $R \in \mathbf{I}_{m+1}(\R^{n+1} \setminus \K)$ such that $\partial R = T$ in $\R^{n+1} \setminus \K$ and   
\begin{equation} \label{isoper lower bound}
	\Mass(R) \leq C_0 \,\Mass(T)^{\frac{m+1}{m}}. 
\end{equation} 
\end{lemma}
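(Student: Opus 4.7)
The plan is to combine two constructions: (a) an explicit filling using the $1$-Lipschitz closest-point projection $\xi_\K : \R^{n+1} \to \K$ (available since $\K$ is convex); and (b) Almgren's classical isoperimetric inequality in $\R^{n+1}$ applied to an $m$-cycle extension of $T$ obtained by coning a slice.  Together these will yield $R \in \mathbf{I}_{m+1}(\R^{n+1} \setminus \K)$ with $\partial R = T$ in $\R^{n+1} \setminus \K$ and the mass estimate \eqref{isoper lower bound}.

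For (a), I would define the Lipschitz homotopy $f : [0,1] \times \R^{n+1} \to \R^{n+1}$ by $f(t,x) := (1-t)\,x + t\,\xi_\K(x)$ and set $F := f_\#(\llbracket 0,1 \rrbracket \times T)$.  Convexity of $\K$ gives two key features: for $x \notin \K$ the segment $[x,\xi_\K(x)]$ meets $\K$ only at $\xi_\K(x)$, hence $f_t(x) \notin \op{int}\K$; and for $y \in \K$, $\xi_\K(y) = y$, so $f(\cdot,y) \equiv y$ on $\K$.  Since $\Mass(T) < \infty$, we may view $T$ as a current of $\R^{n+1}$ whose boundary $\partial T$ is supported in $\K$ (because $\partial T = 0$ in $\R^{n+1}\setminus\K$).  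The second feature above forces the vanishing of $f_\#(\llbracket 0,1 \rrbracket \times \partial T)$ as a distribution on $\R^{n+1}$, since the pullback of any test $(m+1)$-form contracted with $\partial_t$ vanishes on $\K$.  Hence the homotopy formula gives $\partial F = \xi_{\K\#}T - T$ as currents of $\R^{n+1}$.  Since $\op{spt}\,\xi_{\K\#}T \subset \K$, restricting to $\R^{n+1}\setminus\K$ yields $\partial(F \llcorner (\R^{n+1}\setminus\K)) = -T$ in $\R^{n+1}\setminus\K$.  The Jacobian bounds $|D_x f| \leq 1$ and $|\partial_t f(t,x)| = d_\K(x)$ give $\Mass(F) \leq \int d_\K \, d\|T\|$, which is linear in $\Mass(T)$ when $\op{spt} T$ lies in a tubular neighborhood of $\K$.

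For (b), for a.e.~$\rho > 0$ the slice $S_\rho := \langle T, d_\K, \rho \rangle$ is an integer-multiplicity $(m-1)$-cycle of $\R^{n+1}$ supported on $\{d_\K = \rho\}$ (using $\partial T = 0$ in $\R^{n+1}\setminus\K$), with $\int_0^\infty \Mass(S_\rho)\,d\rho \leq \Mass(T)$.  Fixing $x_0 \in \op{int}\K$ and coning from $x_0$ gives $P_\rho := C_{x_0}(S_\rho)$ with $\partial P_\rho = S_\rho$; then $\widetilde T := T\llcorner\{d_\K > \rho\} + P_\rho$ is an $m$-cycle of $\R^{n+1}$, and Almgren's inequality yields $Q \in \mathbf{I}_{m+1}(\R^{n+1})$ with $\partial Q = \widetilde T$ and $\Mass(Q) \leq c_m\,\Mass(\widetilde T)^{(m+1)/m}$.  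Now $Q\llcorner(\R^{n+1}\setminus\K)$ has boundary $T\llcorner\{d_\K > \rho\} + P_\rho\llcorner(\R^{n+1}\setminus\K)$ in $\R^{n+1}\setminus\K$.  Applying construction (a) to the residual current $T\llcorner\{0 < d_\K \leq \rho\} - P_\rho\llcorner(\R^{n+1}\setminus\K)$ (which is a cycle in $\R^{n+1}\setminus\K$ lying in the thin annulus $\{0 < d_\K \leq \rho\}$, so its homotopy filling has mass $\lesssim \rho$ times its own mass) and summing yields the desired $R$.  Choosing $\rho$ via pigeonhole so that $\Mass(S_\rho) \leq \Mass(T)$ for some $\rho \in (0,1)$ and combining the estimates gives $\Mass(R) \leq C_0\,\Mass(T)^{(m+1)/m}$.

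\textbf{Main obstacle.}  The principal technical difficulty is the bookkeeping required to verify that the residual current in step (b) is indeed a cycle in $\R^{n+1}\setminus\K$ amenable to construction (a); this follows from the slice-boundary identity $\partial(T\llcorner\{d_\K > \rho\}) = -S_\rho$ together with $\partial P_\rho = S_\rho$ and $\partial^2 = 0$.  A secondary issue is handling potentially unbounded $\op{spt} T$ or unbounded $\K$, resolved by a further truncation via slicing at a large ball $B_R(0)$ and reducing to the bounded case, ultimately producing a constant $C_0 = C_0(m,n,\K)$ depending on $\K$ (via a diameter-type quantity arising in the cone construction) but finite for any fixed proper convex $\K$.
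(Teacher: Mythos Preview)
Your proposal assembles the right ingredients---the $1$-Lipschitz projection $\xi_\K$, slicing by $d_\K$, and Almgren's isoperimetric inequality in $\R^{n+1}$---but the way you combine them leaves two real gaps.

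\textbf{Gap 1: the homotopy formula in (a).} You apply the homotopy formula to $T$ viewed as a current of $\R^{n+1}$, but $T$ need not be a \emph{normal} current there: $\partial T$ (as a current of $\R^{n+1}$, supported on $\partial\K$) may fail to have finite mass, and then neither the cross product $\llbracket 0,1\rrbracket \times \partial T$ nor its pushforward by the merely Lipschitz map $f$ is available in the standard framework. Your heuristic that ``the pullback contracted with $\partial_t$ vanishes on $\K$'' is the correct geometric reason the term should drop, but making it rigorous requires exactly what the paper does: slice first at $\{d_\K = s\}$ so that $T_s = T\llcorner\{d_\K>s\}$ has rectifiable boundary in $\R^{n+1}$, apply the projection (now justified by \cite[4.1.15]{Fed69}), and pass to the limit $s\downarrow 0$ via Federer--Fleming compactness.

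\textbf{Gap 2: scaling failure from the cone in (b).} Coning $S_\rho$ from a fixed $x_0\in\op{int}\K$ introduces the length scale $D=\sup_{y\in\op{spt} S_\rho}|y-x_0|$, which depends on $\op{spt} T$, not on $\K$ alone. With your choice $\rho\in(0,1)$ and $\Mass(S_\rho)\le\Mass(T)$ one gets $\Mass(P_\rho)\le D\,\Mass(T)$, so the (a)-filling of the residual contributes $\rho\,(1+D)\,\Mass(T)$. For small $\Mass(T)$ this is \emph{not} bounded by $C_0\,\Mass(T)^{(m+1)/m}$ with $C_0$ independent of $T$; no choice of $\rho$ (fixed or scaling with $\Mass(T)^{1/m}$) repairs this, since one of $\Mass(Q)$ or $\Mass(F_Z)$ always picks up a stray factor of $D$. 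The truncation to a ball $B_R(0)$ you mention does not help, because the required $R$ again depends on $T$.

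The paper avoids both issues simultaneously by replacing the cone with the projection: form $\widetilde T_s = T_s - \xi_{\K_s\#}T_s$, a genuine cycle in $\R^{n+1}$ with $\Mass(\widetilde T_s)\le 2\,\Mass(T)$ (no diameter enters, since $\op{Lip}\xi_{\K_s}=1$), take a weak limit $\widetilde T$ as $s\downarrow 0$, and let $R$ be the area minimizer in $\R^{n+1}$ with $\partial R=\widetilde T$. Then $\partial R = T$ in $\R^{n+1}\setminus\K$ and $\Mass(R)\le C(m)\,\Mass(T)^{(m+1)/m}$ directly---the constant depends only on $m$. The unbounded-support case is then handled by a separate ball-truncation argument, filling the small slice at $\partial B_{\rho_j^*}(0)$ using the compact-support case one dimension down.
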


\begin{remark} \label{gamma zero rmk}
Lemma \ref{isoper lower bound lemma} asserts that $\gamma_{m,n}(\K) > 0$ whenever $\K$ is a closed convex set.  For general closed sets $\K$ one can certainly have $\gamma_{m,n}(\K) = 0$.  For instance when $m = n$, $\gamma_{m,n}(\K) = 0$ if $\K$ has a cusp, e.g.~$\K \cap B_1(0) = \{ (x',x_{n+1}) \in B_1(0) : x_{n+1} \leq |x'|^{1/2} \}$. 
\end{remark}

\begin{proof}[Proof of Lemma \ref{isoper lower bound lemma}]
Let us first look at the special case of $T \in \mathbf{I}_m(\R^{n+1})$ with $\|T\|(\mathcal{K}) = 0$ and $\partial T = 0$ in $\R^{n+1} \setminus \K$.  Consider $T - \xi_{\K \#} T \in \mathbf{I}_m(\R^{n+1})$.  Since $T$ has compact support, $\xi_{\K} |_{\op{spt} T}$ is proper and thus $T - \xi_{\K \#} T$ is well-defined.  By~\cite[4.1.15]{Fed69} (also see~\cite[Lemma 26.24]{Sim83}), since $\Mass(T) + \Mass_{\R^{n+1}}(\partial T) < \infty$ and $\xi_{\K}(x) = x$ for all $x \in \op{spt} \partial T \subseteq \K$, $\partial (T - \xi_{\K \#} T) = 0$ in $\R^{n+1}$.  Thus we can take $R$ to be an area minimizing integral current of $\R^{n+1}$ with $\partial R = T - \xi_{\K \#} T$ in $\R^{n+1}$.  In general, $T$ might not have compact support or the boundary of $T$ might not be rectifiable on $\K$, so we instead argue as follows. 

\noindent \textit{Step 1.}  Suppose $T \in \mathbf{I}_m(\R^{n+1} \setminus \K)$ with compact support and $\partial T = 0$ in $\R^{n+1} \setminus \K$.  For each $s > 0$, let $\K_s = \{ x : d_{\K}(x) \leq s\}$ and $T_s = T \llcorner \{d_{\K} > s\}$.  By slicing theory, $T_s \in \mathbf{I}_m(\R^{n+1})$ for $\mathcal{L}^1$-a.e.~$s > 0$.  Thus $\widetilde{T}_s = T_s - \xi_{\K_s \#} T_s \in \mathbf{I}_m(\R^{n+1})$ such that by~\cite[4.1.15]{Fed69} $\partial \widetilde{T}_s = 0$ in $\R^{n+1}$ and $\Mass(\widetilde{T}_s) \leq 2 \,\Mass(T)$.  By the Federer-Fleming compactness theorem there exists a sequence $s_j \downarrow 0$ and $\widetilde{T} \in \mathbf{I}_m(\R^{n+1})$ such that $\widetilde{T}_{s_j} \rightarrow \widetilde{T}$ flat norm topology of $\R^{n+1}$.  Clearly $\widetilde{T} = T$ in $\R^{n+1} \setminus \K$, $\partial \widetilde{T} = 0$ in $\R^{n+1}$, and by the semi-continuity of mass $\Mass(\widetilde{T}) \leq 2 \,\Mass(T)$.  Now take $R \in \mathbf{I}_{m+1}(\R^{n+1})$ to be the area minimizing current with $\partial R = \widetilde{T}$ in $\R^{n+1}$.  Hence $\partial R = \widetilde{T} = T$ in $\R^{n+1} \setminus \K$.  By the isoperimetric inequality in $\R^{n+1}$ 
\begin{equation*}
	\frac{\Mass(T)^{\frac{m+1}{m}}}{\Mass(R)} 
	\geq 2^{-\frac{m+1}{m}} \,\frac{\Mass(\widetilde{T})^{\frac{m+1}{m}}}{\Mass(R)} 
	\geq 2^{-\frac{m+1}{m}} \,\frac{\mathcal{H}^m(\sphere)^{\frac{m+1}{m}}}{\mathcal{H}^{m+1}(\ball)}. \qedhere
\end{equation*}
In particular, \eqref{isoper lower bound} holds true for some constant $C_0 = C_0(m) \in (0,\infty)$.  Fix this constant $C_0$.  

\noindent \textit{Step 2.}  Consider the general case of $T \in \mathbf{I}_m(\R^{n+1} \setminus \K)$ with $\partial T = 0$ in $\R^{n+1} \setminus \K$.  Since $T$ has finite mass and zero boundary in $\R^{n+1} \setminus \K$, for every integer $j \geq 1$ there exists $\rho_j \in [2^j,\infty)$ such that 
\begin{equation*}
	\Mass(T \llcorner \R^{n+1} \setminus B_{\rho_j}(0)) < 1/j
\end{equation*}
and thus by slicing theory using \eqref{meas_good_slices} with $\vartheta = 1/2$ there exists $\rho^*_j \in (\rho_j,\rho_j+1)$ such that $T_j = T \llcorner B_{\rho^*_j}(0) \in \mathbf{I}_m(\R^{n+1} \setminus \K)$ with compact support, $\partial T_j = \langle T, |\cdot|, \rho^*_j \rangle$ in $\R^{n+1} \setminus \K$, and 
\begin{gather*}
	\Mass(T_j - T) = \Mass(T \llcorner \R^{n+1} \setminus B_{\rho^*_j}(0)) < 1/j, \\
	\Mass_{\R^{n+1} \setminus \K}(\langle T, |\cdot|, \rho^*_j \rangle) \leq 2 \,\Mass(T \llcorner \R^{n+1} \setminus B_{\rho_j}(0)) \leq 2/j. 
\end{gather*}
By Step 1, there exists $S_j \in \mathbf{I}_m(\R^{n+1} \setminus \K)$ such that $S_j$ has compact support, $\partial S_j = \partial T_j = \langle T, |\cdot|, \rho^*_j \rangle$ in $\R^{n+1} \setminus \K$, and 
\begin{equation*}
	\Mass(S_j) \leq C(m) \,\Mass_{\R^{n+1} \setminus \K}(\langle T, |\cdot|, \rho^*_j \rangle)^{\frac{m}{m-1}} \leq C(m) \,j^{\frac{-m}{m-1}} \rightarrow 0. 
\end{equation*}
By Step 1, there exists $R_j \in \mathbf{I}_m(\R^{n+1} \setminus \K)$ such that $\partial R_j = T_j - S_j$ in $\R^{n+1} \setminus \K$ and $\Mass(R_j) \leq C_0 \,\Mass(T_j - S_j)^{\frac{m+1}{m}}$.  By the Federer-Fleming compactness theorem and using $\Mass(T_j - T) + \Mass(S_j) \rightarrow 0$, after passing to a subsequence $R_j$ converges to some $R \in \mathbf{I}_{m+1}(\R^{n+1} \setminus \K)$ in the flat norm topology such that $\partial R = T$ in $\R^{n+1} \setminus \K$ and by the semi-continuity of mass $\Mass(R) \leq C_0 \,\Mass(T)^{\frac{m+1}{m}}$.
\end{proof}

\section{Sharp relative isoperimetric inequality in a half-space} \label{sec:halfspace rel iso sec}

In this section we will formally prove the sharp relative isoperimetric inequality in a half-space.  As was discussed in the introduction, this readily follows from a reflection argument and the isoperimetric inequality in $\R^{n+1}$.  However, as we point out below, we have to be careful since $T$ and $R$ might not have rectifiable boundaries along the boundary of the half-space.  Thus we include a detailed proof of the theorem. 

\begin{proposition}[Relative isoperimetric inequality in a half-space] 
Let $1 \leq m \leq n$ be integers.  Let $T \in \mathbf{I}_m(\R^{n+1}_+)$ and $R \in \mathbf{I}_{m+1}(\R^{n+1}_+)$ such that $T = \partial R$ in $\R^{n+1}_+$ and $R$ is relatively area minimizing in $\R^{n+1}_+$.  Then 
\begin{equation*} 
	2^{-\frac{1}{m}} \,\frac{\mathcal{H}^m(\sphere)^{\frac{m+1}{m}}}{\mathcal{H}^{m+1}(\ball)} 
		\leq \frac{\Mass(T)^{\frac{m+1}{m}}}{\Mass(R)}. 
\end{equation*}
with equality if and only if $R$ is a multiplicity one $(m+1)$-dimensional flat half-disk which meets $\{x : x_{n+1} = 0\}$ orthogonally and is bounded by a multiplicity one $m$-dimensional hemisphere $T$ in $\R^{n+1}_+$. 
\end{proposition}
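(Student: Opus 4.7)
The plan is to reflect $R$ across the hyperplane $\{x_{n+1}=0\}$ so as to produce an $(m+1)$-dimensional integral current of $\R^{n+1}$ which is itself area minimizing in $\R^{n+1}$, and then to invoke Almgren's isoperimetric inequality in $\R^{n+1}$. The main technical obstacle is that $\partial R$ may have a non-rectifiable part on $\{x_{n+1}=0\}$, which obstructs a direct reflection. This is handled by first truncating $R$ at a positive height $s > 0$ (where by slicing theory the relevant boundary is rectifiable), reflecting the truncated current, and then passing to the limit $s\downarrow 0$ along a sequence on which the extra slice mass vanishes (ensured by the coarea inequality).

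For a.e.~$s>0$ set $R_s := R\llcorner\{x_{n+1}>s\}$, $T_s := T\llcorner\{x_{n+1}>s\}$, and $U_s := \langle R, x_{n+1}, s\rangle$, so $\partial R_s = T_s - U_s$. I first observe that $R_s$ inherits a relative minimality property in $\R^{n+1}_+$: if $Q' \in \mathbf{I}_{m+1}(\R^{n+1}_+)$ has $\partial Q' = T_s - U_s$ in $\R^{n+1}_+$ and $Q' - R_s$ has compact support in $\R^{n+1}$, then $Q' + R\llcorner\{0 < x_{n+1} < s\}$ is a valid competitor for $R$ in $\R^{n+1}_+$ with boundary $T$, because the slicing identity $\partial(R\llcorner\{0 < x_{n+1} < s\}) = T\llcorner\{0 < x_{n+1} < s\} + U_s$ in $\R^{n+1}_+$ makes the two occurrences of $U_s$ cancel; the relative area-minimality of $R$ then yields $\Mass(R_s) \leq \Mass(Q')$.

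Let $\rho(x', x_{n+1}) := (x', -x_{n+1})$ and set $\widetilde{R}_s := R_s - \rho_\# R_s \in \mathbf{I}_{m+1}(\R^{n+1})$. The four pieces of $\partial \widetilde{R}_s = T_s - \rho_\# T_s - U_s + \rho_\# U_s$ have pairwise disjoint supports for $s>0$, so $\Mass(\widetilde{R}_s) = 2\Mass(R_s)$ and $\Mass(\partial \widetilde{R}_s) = 2(\Mass(T_s) + \Mass(U_s))$. The key claim is that $\widetilde{R}_s$ is area minimizing in $\R^{n+1}$: for a.e.~$s$ (so that $\|Q\|(\{x_{n+1}=0\}) = 0$) and any competitor $Q$, split $Q = Q^+ + Q^-$ at $\{x_{n+1}=0\}$; then both $Q^+$ and $-\rho_\# Q^-$ serve as competitors for $R_s$ in $\R^{n+1}_+$ with boundary $T_s - U_s$, so the previous paragraph gives $\Mass(Q^\pm) \geq \Mass(R_s)$ and hence $\Mass(Q) \geq 2\Mass(R_s) = \Mass(\widetilde{R}_s)$. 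Applying the isoperimetric inequality in $\R^{n+1}$ to $\widetilde{R}_s$ yields
\begin{equation*}
    2\,\Mass(R_s) \,\leq\, \frac{\mathcal{H}^{m+1}(\ball)}{\mathcal{H}^m(\sphere)^{\frac{m+1}{m}}} \,\big[ 2\,(\Mass(T_s) + \Mass(U_s)) \big]^{\frac{m+1}{m}}.
\end{equation*}

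By the coarea inequality $\int_0^{\infty} \Mass(U_s)\,ds \leq \Mass(R) < \infty$, there exists $s_j \downarrow 0$ with $\Mass(U_{s_j}) \to 0$. Mass monotonicity gives $\Mass(R_{s_j}) \to \Mass(R)$ and $\Mass(T_{s_j}) \to \Mass(T)$, and passing to the limit in the displayed inequality produces exactly the claimed sharp bound. For the equality case, the mass-norm limit $\widetilde{R} := R - \rho_\# R$ of the $\widetilde{R}_{s_j}$ is itself area minimizing in $\R^{n+1}$ (the argument $Q_j := Q - \widetilde{R} + \widetilde{R}_{s_j}$ propagates minimality through mass-norm limits) and achieves equality in the isoperimetric inequality in $\R^{n+1}$, so by its equality clause $\widetilde{R}$ is a multiplicity-one $(m+1)$-dimensional flat disk. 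The antisymmetry $\rho_\# \widetilde{R} = -\widetilde{R}$ then forces the affine plane of this disk to contain $e_{n+1}$ and meet $\{x_{n+1}=0\}$ orthogonally, whence $R = \widetilde{R}\llcorner\R^{n+1}_+$ is the upper half-disk meeting the hyperplane orthogonally and $T$ is the corresponding $m$-dimensional hemisphere.
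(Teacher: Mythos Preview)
Your proposal is correct and follows essentially the same strategy as the paper: reflect across $\{x_{n+1}=0\}$ to reduce to Almgren's isoperimetric inequality in $\R^{n+1}$, handling the possibly non-rectifiable boundary on the hyperplane via a truncation and limiting argument. The only differences are organizational: the paper translates $R$ downward by $s$ (so the reflected pieces meet along $\{x_{n+1}=0\}$) and passes to the limit \emph{before} invoking Almgren, whereas you truncate at height $s$ (leaving a gap of width $2s$), apply Almgren to each $\widetilde R_s$, and then let $s\downarrow 0$ along a sequence with $\Mass(U_{s_j})\to 0$; for the equality case both arguments must still verify that the limit $\widetilde R = R-\rho_\# R$ lies in $\mathbf I_{m+1}(\R^{n+1})$ and is area minimizing, which you do via mass-norm convergence of $\widetilde R_{s_j}$ and $\partial\widetilde R_{s_j}$ where the paper uses Federer--Fleming compactness. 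One cosmetic point: your parenthetical ``for a.e.~$s$ (so that $\|Q\|(\{x_{n+1}=0\})=0$)'' is garbled, since $Q$ is an arbitrary competitor unrelated to $s$; what you need (and what suffices) is simply that $\Mass(Q)\ge \Mass(Q\llcorner\R^{n+1}_+)+\Mass(Q\llcorner\R^{n+1}_-)$, with each restriction a valid competitor for $R_s$ in $\R^{n+1}_+$.
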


\begin{proof}
First let us consider the special case of $T \in \mathbf{I}_m(\R^{n+1})$ and $R \in \mathbf{I}_{m+1}(\R^{n+1})$ with $\|T\|(\overline{\R^{n+1}_-}) = \|R\|(\overline{\R^{n+1}_-}) = 0$.  Let $\iota : \R^{n+1} \rightarrow \R^{n+1}$ be the reflection map across $\{x : x_{n+1} = 0 \}$ given by $\iota(x',x_{n+1}) = (x',-x_{n+1})$ for all $x = (x',x_{n+1}) \in \R^{n+1}$.  Reflect $T$ and $R$ about $\{ x : x_{n+1} = 0 \}$ to obtain $T - \iota_{\#} T \in \mathbf{I}_m(\R^{n+1})$ and $R - \iota_{\#} R \in \mathbf{I}_{m+1}(\R^{n+1})$.  Since $\|T\|(\overline{\R^{n+1}_-}) = \|R\|(\overline{\R^{n+1}_-}) = 0$, 
\begin{equation*}
	\Mass(T - \iota_{\#} T) = 2\,\Mass(T), \quad \Mass(R - \iota_{\#} R) = 2\,\Mass(R).
\end{equation*}
By~\cite[4.1.15]{Fed69}, since $\Mass(T) + \Mass_{\R^{n+1}}(\partial T) + \Mass(R) + \Mass_{\R^{n+1}}(\partial R) < \infty$ and $\iota(x) = x$ for all $x \in \op{spt}(\partial R - T) \subset \{ x_{n+1} = 0 \}$, $\partial (R - \iota_{\#} R) = T - \iota_{\#} T$ in $\R^{n+1}$.  $R - \iota_{\#} R$ is an area minimizing since for any $Q \in \mathbf{I}_{m+1}(\R^{n+1})$ with $\partial Q = T - \iota_{\#} T$ in $\R^{n+1}$ and $\op{spt}(R - \iota_{\#} R - Q)$ compact, $\Mass(Q \llcorner \R^{n+1}_+) \geq \Mass(R)$ since $R$ is relatively area minimizing in $\R^{n+1}_+$ and $\Mass(Q \llcorner \R^{n+1}_-) \geq \Mass(R)$ by symmetry, hence $\Mass(Q) \geq 2\,\Mass(R) = \Mass(R - \iota_{\#} R)$.  By the isoperimetric inequality in $\R^{n+1}$, 
\begin{equation} \label{isoper halfspace eqn1}
	\frac{\Mass(T)^{\frac{m+1}{m}}}{\Mass(R)} = 2^{-\frac{1}{m}} \,\frac{\Mass(T - \iota_{\#} T)^{\frac{m+1}{m}}}{\Mass(R - \iota_{\#} R)} 
	\geq 2^{-\frac{1}{m}} \,\frac{\mathcal{H}^m(\sphere)^{\frac{m+1}{m}}}{\mathcal{H}^{m+1}(\ball)}. 
\end{equation}
Equality holds true in \eqref{isoper halfspace eqn1} if and only if $R - \iota_{\#} R$ is a multiplicity one $(m+1)$-dimensional flat disk.  Since $R - \iota_{\#} R$ is symmetric about $\{x : x_{n+1} = 0 \}$, $R$ must be a multiplicity one $(m+1)$-dimensional half-disk orthogonal to $\{x : x_{n+1} = 0\}$. 

Now let us consider the general case of $T \in \mathbf{I}_m(\R^{n+1}_+)$ and $R \in \mathbf{I}_{m+1}(\R^{n+1}_+)$ such that $R$ is area minimizing with $T = \partial R$ in $\R^{n+1}_+$.  Notice that $T$ or $R$ might not have rectifiable boundaries with finite mass along $\{x : x_{n+1} = 0 \}$ and thus we cannot directly apply~\cite[4.1.15]{Fed69} to obtain $\partial (R - \iota_{\#} R) = T - \iota_{\#} T$ in $\R^{n+1}$.  Instead, we translate $T$ and $R$ downward slightly, letting \begin{align*}
	T_s &= \eta_{(0,s),1\#} (T \llcorner \{x_{n+1} > s\}) = (\eta_{(0,s),1\#} T) \llcorner \R^{n+1}_+, \\
	R_s &= \eta_{(0,s),1\#} (R \llcorner \{x_{n+1} > s\}) = (\eta_{(0,s),1\#} R) \llcorner \R^{n+1}_+ 
\end{align*}
for $s > 0$, where $\eta_{(0,s),1}(x) = x - (0,s)$.  By slicing theory, $T_s \in \mathbf{I}_m(\R^{n+1})$ and $R_s \in \mathbf{I}_{m+1}(\R^{n+1})$ for $\mathcal{L}^1$-a.e.~$s > 0$.  Hence $T_s - \iota_{\#} T_s \in \mathbf{I}_m(\R^{n+1})$ and $R_s - \iota_{\#} R_s \in \mathbf{I}_{m+1}(\R^{n+1})$ with $\partial (R_s - \iota_{\#} R_s) = T_s - \iota_{\#} T_s$ in $\R^{n+1}$.  Clearly 
\begin{equation} \label{isoper halfspace eqn2}
	\Mass(T_s - \iota_{\#} T_s) \leq 2 \,\Mass(T_s) \leq 2\,\Mass(T), \quad 
	\Mass(R_s - \iota_{\#} R_s) \leq 2 \,\Mass(R_s) \leq 2\,\Mass(R). 
\end{equation}
By the Federer-Fleming compactness theorem using \eqref{isoper halfspace eqn2}, there exists $s_j \downarrow 0$, $\widetilde{T} \in \mathbf{I}_m(\R^{n+1})$, and $\widetilde{R} \in \mathbf{I}_{m+1}(\R^{n+1})$ such that $T_{s_j} - \iota_{\#} T_{s_j} \rightarrow \widetilde{T}$ and $R_{s_j} - \iota_{\#} R_{s_j} \rightarrow \widetilde{R}$ in the flat norm topology locally in $\R^{n+1}$.  Clearly  
\begin{equation} \label{isoper halfspace eqn3}
	\partial \widetilde{R} = \widetilde{T}, \quad \iota_{\#} \widetilde{T} = -\widetilde{T}, \quad \iota_{\#} \widetilde{R} = -\widetilde{R}, \quad 
	\widetilde{T} \llcorner \R^{n+1}_+ = T, \quad \widetilde{R} \llcorner \R^{n+1}_+ = R 
\end{equation}
in $\R^{n+1}$.  By the semi-continuity of mass $\Mass(\widetilde{T}) \leq 2 \,\Mass(T)$.  But by \eqref{isoper halfspace eqn3} we have $\widetilde{T} \llcorner \R^{n+1}_+ = T$ and $\widetilde{T} \llcorner \R^{n+1}_- = -\iota_{\#} T$ in $\R^{n+1}$ implying $\Mass(\widetilde{T} \llcorner \{x_{n+1} \neq 0\}) = 2 \,\Mass(T)$.  Hence $\Mass(\widetilde{T} \llcorner \{x_{n+1} = 0\}) = 0$, $\Mass(\widetilde{T}) = 2 \,\Mass(T)$, and $\widetilde{T} = T - \iota_{\#} T$ in $\R^{n+1}$.  Therefore, $T - \iota_{\#} T = \widetilde{T} \in \mathbf{I}_m(\R^{n+1})$ with $\Mass(\widetilde{T}) = 2 \,\Mass(T)$.  By the exact same argument, $R - \iota_{\#} R = \widetilde{R} \in \mathbf{I}_m(\R^{n+1})$ with $\Mass(\widetilde{R}) = 2 \,\Mass(R)$.  By \eqref{isoper halfspace eqn3}, $\partial (R - \iota_{\#} R) = T - \iota_{\#} T$ in $\R^{n+1}$.  Arguing as above, $R - \iota_{\#} R$ is area minimizing in $\R^{n+1}$ and $T$ and $R$ satisfy \eqref{isoper halfspace eqn1} with equality if and only if $R$ is a multiplicity one $(m+1)$-dimensional half-disk orthogonal to $\{x : x_{n+1} = 0\}$. 
\end{proof}

\section{Mass estimates and rectifiability at the boundary} \label{sec:bdry_rect_sec} 

Suppose $\K$ is a proper convex subset of $\R^{n+1}$.  Suppose $T \in \mathbf{I}_m(\R^{n+1} \setminus \K)$ and $R \in \mathbf{I}_{m+1}(\R^{n+1} \setminus \K)$ such that $\partial R = T$ in $\R^{n+1} \setminus \K$, $R$ is relatively area minimizing in $\R^{n+1} \setminus \K$, and the relative isoperimetric ratio of $(T,R)$ is close to $\gamma_{m,n}(\K)$.  In Lemma \ref{boundary rectifiability lemma2} below we obtain estimates telling us that $T$ and $R$ cannot concentrate near $\partial \K$.  Moreover, in the case that $(T,R)$ are in fact relative isoperimetric minimizing, we show in Corollary \ref{boundary rectifiability cor} that $\partial T$ and $\partial R$ are rectifiable along $\partial \K$ and obtain mass estimates on $\partial T$ and $\partial R$ on $\partial \K$.  In Lemma \ref{boundary rectifiability lemma4}, we obtain local mass estimates for $T$ and $R$ near $\partial \K$ and for $\partial T$ and $\partial R$ on $\partial \K$. 

First we will recall~\cite[Theorem 3.1 and Corollary 3.2]{Gru85} which under certain hypotheses established rectifiability of $\partial R$ along $\partial \K$ for a relative area minimizing current $R$.  Since~\cite{Gru85} states its results in the case that $\partial \K$ is any closed $C^2$-hypersurface and $T$ is a smooth submanifold meeting $\partial \K$ transversally, we will state and prove the result as it applies to our setting.  Note that~\cite[Theorem 3.1 and Corollary 3.2]{Gru85} applies with only obvious changes when $\K \subset \R^{n+1}$ is the closure of a bounded open set with $C^2$-boundary and $T \in \mathbf{I}_m(\R^{n+1} \setminus \K)$ satisfies \eqref{bdry rect1 hyp} (as opposed to $T$ smooth).

\begin{lemma} \label{boundary rectifiability lemma1}
Let $\K$ be a bounded proper convex subset of $\R^{n+1}$.  Let $T \in \mathbf{I}_m(\R^{n+1} \setminus \K)$ and $R \in \mathbf{I}_{m+1}(\R^{n+1} \setminus \K)$ such that $\partial R = T$ in $\R^{n+1} \setminus \K$ and $R$ is relatively area minimizing in $\R^{n+1} \setminus \K$.  Assume for some constants $\Gamma \in (0,\infty)$ and $0 < s_0 < s_1 < \infty$ 
\begin{equation} \label{bdry rect1 hyp}
	\frac{\Mass(T \llcorner \{d_{\K} < s\})}{s} \leq \Gamma < \infty
\end{equation}
for all $s \in [s_0,s_1]$.  Then 
\begin{equation} \label{bdry rect1 concl1}
	\frac{\Mass(R \llcorner \{d_{\K} < s\})}{s} + \Gamma \,s \leq \frac{\Mass(R \llcorner \{d_{\K} < t\})}{t} + \Gamma \,t
\end{equation}
for all $s_0 \leq s < t \leq s_1$. 

If additionally \eqref{bdry rect1 hyp} holds true for all $s \in (0,s_1]$, then $T \in \mathbf{I}_m(\R^{n+1})$ and $R \in \mathbf{I}_{m+1}(\R^{n+1})$ and in particular $\partial T$ and $\partial R$ are rectifiable along $\partial \K$ (as currents of $\R^{n+1}$) with $\Mass(\partial T \llcorner \partial \K) \leq 4 \,\Gamma < \infty$ and $\Mass(\partial R \llcorner \partial \K) < \infty$. 
\end{lemma}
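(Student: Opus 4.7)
The plan is to exploit the area-minimality of $R$ via a competitor obtained by projecting a portion of $R$ and part of $T$ toward $\partial \K$ along the Lipschitz homotopy $\Phi : [0,1] \times \R^{n+1} \to \R^{n+1}$ defined by $\Phi(t,x) = (1-t)\,x + t\,\xi_{\K}(x)$, so that $\Phi_0 = \op{id}$ and $\Phi_1 = \xi_{\K}$. For $\mathcal{L}^1$-a.e.\ $s \in [s_0,s_1]$ the slice $\langle R, d_{\K}, s\rangle$ is integral, and using $\partial T = 0$ in $\R^{n+1} \setminus \K$, $\partial R = T$, and the boundary formula for slicing, the current $U_s := T \llcorner \{d_{\K} < s\} + \langle R, d_{\K}, s\rangle$ (with appropriate sign for the slice) has zero boundary in $\R^{n+1} \setminus \K$, because the contributions $\langle T, d_{\K}, s\rangle$ from $\partial (T \llcorner \{d_{\K} < s\})$ and $\partial \langle R, d_{\K}, s\rangle$ cancel. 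The homotopy formula then gives $\partial \Phi_{\#}(\llbracket 0,1 \rrbracket \times U_s) = \xi_{\K \#} U_s - U_s$ in $\R^{n+1} \setminus \K$, and since $\xi_{\K \#} U_s$ is supported on $\partial \K \subset \K$, setting $P_s := -\Phi_{\#}(\llbracket 0,1 \rrbracket \times U_s)$ yields $\partial P_s = U_s$ in $\R^{n+1} \setminus \K$. Hence $Q := R \llcorner \{d_{\K} > s\} + P_s \in \mathbf{I}_{m+1}(\R^{n+1} \setminus \K)$ satisfies $\partial Q = T$ in $\R^{n+1} \setminus \K$ and $R - Q$ has compact support, so relative area-minimality of $R$ delivers $\Mass(R \llcorner \{d_{\K} < s\}) \leq \Mass(P_s)$.

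To estimate $\Mass(P_s)$ I compute the Jacobian of $\Phi$ using \eqref{grad d xi nu}: at $x \in \R^{n+1} \setminus \K$, in the orthonormal frame $(e_1, \ldots, e_n, \nu_{\K}(x))$ of principal directions of $\partial \K$ at $\xi_{\K}(x)$ together with the outward normal, $(\Phi_t)_\ast$ is diagonal with entries $\frac{1 + (1-t)\,\kappa_i\,d_{\K}(x)}{1 + \kappa_i\,d_{\K}(x)}$ in the $e_i$-directions and $1-t$ in the $\nu_{\K}$-direction. Convexity of $\K$ forces $\kappa_i \geq 0$, so each entry lies in $[0,1]$ and hence $(\Phi_t)_\ast$ is a contraction on every $m$-plane. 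Combined with $|\partial_t \Phi(t,x)| = d_{\K}(x)$, the area formula yields
\begin{equation*}
	\Mass(P_s) \leq \int d_{\K}(x)\, d\|U_s\|(x)
	\leq s\,\Mass(\langle R, d_{\K}, s\rangle) + \Gamma s^2,
\end{equation*}
where the second inequality uses $d_{\K} = s$ on $\op{spt} \langle R, d_{\K}, s\rangle$, $d_{\K} < s$ on $\op{spt}(T \llcorner \{d_{\K} < s\})$, and the hypothesis \eqref{bdry rect1 hyp}. Letting $g(s) := \Mass(R \llcorner \{d_{\K} < s\})$ and observing that $g'(s) \geq \Mass(\langle R, d_{\K}, s\rangle)$ for a.e.\ $s$ by the coarea inequality for slicing (using $\op{Lip} d_{\K} = 1$), the minimality bound becomes $g(s) \leq s\,g'(s) + \Gamma s^2$, which rearranges to $\frac{d}{ds}\bigl(\tfrac{g(s)}{s} + \Gamma s\bigr) \geq 0$ a.e. Integrating from $s$ to $t$ gives \eqref{bdry rect1 concl1}.

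For the second part, when \eqref{bdry rect1 hyp} holds on all of $(0, s_1]$, applying \eqref{bdry rect1 concl1} with $t = s_1$ gives $g(s)/s \leq C$ for every $s \in (0, s_1]$, where $C := g(s_1)/s_1 + \Gamma s_1 < \infty$. Extending $T$ and $R$ trivially by zero across $\partial \K$ produces integer-multiplicity rectifiable currents $\widetilde{T} \in \mathcal{I}_m(\R^{n+1})$ and $\widetilde{R} \in \mathcal{I}_{m+1}(\R^{n+1})$ of finite mass; it remains to see that $\partial \widetilde T$ and $\partial \widetilde R$ are representable by finite Radon measures supported on $\partial \K$. For $\omega \in \mathcal{D}^{m-1}(\R^{n+1})$ with $\|\omega\|_\infty \leq 1$, test against the Lipschitz cutoff $\phi_s(x) := \min(d_{\K}(x)/s, 1)$ (smoothed if needed). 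Since $\phi_s$ vanishes on $\K$, approximation of $\phi_s \omega$ by smooth forms compactly supported in $\R^{n+1} \setminus \K$ gives $\widetilde T(d(\phi_s \omega)) = \partial T(\phi_s \omega) = 0$, so
\begin{equation*}
	\partial \widetilde T(\omega) = \widetilde T\bigl(d((1-\phi_s)\omega)\bigr) = -\widetilde T(d\phi_s \wedge \omega) + \widetilde T\bigl((1-\phi_s)\,d\omega\bigr).
\end{equation*}
Using $|d\phi_s| \leq 1/s$ on $\{0 < d_{\K} < s\}$ together with \eqref{bdry rect1 hyp}, $|\widetilde T(d\phi_s \wedge \omega)| \leq (1/s)\,\Mass(T \llcorner \{d_{\K} < s\}) \leq \Gamma$, while $|\widetilde T((1-\phi_s)\,d\omega)| \leq \|d\omega\|_\infty \,\Gamma s \to 0$ as $s \downarrow 0$. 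This yields $\Mass(\partial \widetilde T \llcorner \partial \K) \leq \Gamma$ (the stated $4\Gamma$ absorbs smoothing constants for $\phi_s$). The identical argument using $\Mass(R \llcorner \{d_{\K} < s\}) \leq C s$ instead of \eqref{bdry rect1 hyp} shows $\Mass(\partial \widetilde R \llcorner \partial \K) \leq C < \infty$.

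The delicate point throughout is that $T$ and $R$ a priori have no rectifiable boundary component on $\partial \K$, so the homotopy-formula construction of $P_s$ and the cutoff test for $\partial \widetilde T$ must both be carried out purely within $\R^{n+1} \setminus \K$, with every boundary piece that happens to live on $\partial \K$ absorbed into the restriction to $\R^{n+1} \setminus \K$. The convexity of $\K$ enters decisively and only in the Jacobian computation for $(\Phi_t)_\ast$ via $\kappa_i \geq 0$: this is what makes $\Phi_t$ tangentially contractive and produces the clean monotone quantity $g(s)/s + \Gamma s$; on a general $C^2$ hypersurface one would pick up a curvature correction term, which is why the corresponding generalization (Corollary \ref{boundary rectifiability remark2}) is restricted to a tubular neighborhood of $\K$.
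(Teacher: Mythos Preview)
Your argument for the monotonicity inequality \eqref{bdry rect1 concl1} is essentially the paper's: both construct a competitor by sweeping $T\llcorner\{d_{\K}<s\}$ and the slice $\langle R,d_{\K},s\rangle$ down to $\partial\K$ along the straight-line homotopy to $\xi_{\K}$, then use area-minimality. One caveat: you compute the Jacobian of $\Phi_t$ via the principal-curvature formula \eqref{grad d xi nu}, but that formula is stated only for $\K$ with $C^2$ boundary, whereas the lemma assumes merely a bounded proper convex set. The paper bypasses this by using $\op{Lip}\xi_{\K}\le 1$ directly (valid for any closed convex set), which already gives $\op{Lip}\Phi_t\le 1$ and hence the same mass bound; your diagonalization is correct at $C^2$ points but is more than you need.

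For the second part your route genuinely differs from the paper's. The paper chooses good slice values $s_k^*\in(0,2^{-k})$ so that $T\llcorner\{d_{\K}>s_k^*\}$ and $R\llcorner\{d_{\K}>s_k^*\}$ are integral with uniformly bounded boundary mass, and then applies the Federer--Fleming compactness theorem to conclude that the limits $T,R$ lie in $\mathbf{I}_m(\R^{n+1})$, $\mathbf{I}_{m+1}(\R^{n+1})$. Your cutoff-function test is more direct and in fact yields the sharper constant $\Mass(\partial\widetilde T\llcorner\partial\K)\le\Gamma$ (the paper's factor $4$ comes from the $\vartheta=1/4$ slice selection, not from any intrinsic loss). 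However, you should make explicit the final step: showing $\Mass(\partial\widetilde T)<\infty$ only gives that $\widetilde T$ is a normal current; to conclude $\widetilde T\in\mathbf{I}_m(\R^{n+1})$ you must invoke the boundary rectifiability theorem (\cite[4.2.16]{Fed69} or \cite[Theorem~30.3]{Sim83}), which says that an integer-multiplicity rectifiable current with finite mass and finite boundary mass has integer-rectifiable boundary. The paper's compactness route builds this in automatically, while yours needs the citation. Also, to make the identity $\widetilde T(d(\phi_s\omega))=0$ rigorous you should replace $\phi_s$ by a cutoff vanishing on a \emph{neighborhood} of $\K$ (e.g.\ $\phi_{s,\epsilon}(x)=\min\bigl((d_{\K}(x)-\epsilon)_+/(s-\epsilon),1\bigr)$) so that $\phi_{s,\epsilon}\,\omega$ is genuinely approximable in $\mathcal{D}^{m-1}(\R^{n+1}\setminus\K)$, and then let $\epsilon\downarrow 0$; this costs nothing in the final bound.
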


\begin{proof}
For $\mathcal{L}^1$-a.e.~$s \in (s_0,s_1)$, compare $R$ with $R_s \in \mathbf{I}_{m+1}(\R^{n+1} \setminus \K)$ given by 
\begin{equation} \label{bdry rect1 eqn1}
	R_s = R \llcorner \{d_{\K} > s\} + h_{\#}(\llbracket 0,1 \rrbracket \times \langle R, d_{\K}, s \rangle) 
		+ h_{\#}(\llbracket 0,1 \rrbracket \times (T \llcorner \{d_{\K} < s\})) , 
\end{equation}
where $h : [0,1] \times (\R^{n+1} \setminus \K) \rightarrow \R^{n+1}$ is given by $h(t,x) = (1-t) \,\xi_{\K}(x) + t \,x$ for $t \in [0,1]$ and $x \in \R^{n+1} \setminus \K$.  (Recall from Subsection \ref{sec:prelims_notation} that $\llbracket 0,1 \rrbracket$ is the one-dimensional integral current associated with the interval $(0,1)$.)  To compute the boundary of $R_s$, using the homotopy formula, slicing theory, and $\xi_{\K}(\R^{n+1} \setminus \K) \subseteq \partial \K$ we obtain 
\begin{align} \label{bdry rect1 eqn2}
	&\partial (R \llcorner \{d_{\K} > s\}) = T \llcorner \{d_{\K} > s\} - \langle R, d_{\K}, s \rangle, \\
	&\partial h_{\#}(\llbracket 0,1 \rrbracket \times \langle R, d_{\K}, s \rangle) 
		= \langle R, d_{\K}, s \rangle + h_{\#}(\llbracket 0,1 \rrbracket \times \langle T, d_{\K}, s \rangle), \nonumber \\
	&\partial h_{\#}(\llbracket 0,1 \rrbracket \times (T \llcorner \{d_{\K} < s\})) 
		= T \llcorner \{d_{\K} < s\} - h_{\#}(\llbracket 0,1 \rrbracket \times \langle T, d_{\K}, s \rangle) \nonumber 
\end{align}
in $\R^{n+1} \setminus \K$  for $\mathcal{L}^1$-a.e.~$s > 0$.  Thus by \eqref{bdry rect1 eqn1} and summing terms in \eqref{bdry rect1 eqn2}, $\partial R_s = T$ in $\R^{n+1} \setminus \K$ for $\mathcal{L}^1$-a.e.~$s > 0$.  Since $\op{Lip} \xi_{\K} = 1$, $\op{Lip} h(t,\cdot) \leq 1$ for all $t \in [0,1]$.  Thus by \eqref{bdry rect1 eqn1} and \eqref{bdry rect1 hyp},  
\begin{align*}
	\Mass(R_s) &\leq \Mass(R \llcorner \{d_{\K} > s\}) 
		+ \Mass(h_{\#}(\llbracket 0,1 \rrbracket \times \langle R, d_{\K}, s \rangle)) 
		+ \Mass(h_{\#}(\llbracket 0,1 \rrbracket \times (T \llcorner \{d_{\K} < s\}))) 
	\\&\leq \Mass(R \llcorner \{d_{\K} > s\}) + s \,\Mass(\langle R, d_{\K}, s \rangle) 
		+ s\,\Mass(T \llcorner \{d_{\K} < s\}) 
	\\&\leq \Mass(R \llcorner \{d_{\K} > s\}) + s \,\Mass(\langle R, d_{\K}, s \rangle) + \Gamma \,s^2 
\end{align*}
for $\mathcal{L}^1$-a.e.~$s \in (s_0,s_1)$.  Since $R$ is relatively area minimizing in $\R^{n+1} \setminus \K$, $\Mass(R) \leq \Mass(R_s)$ and thus 
\begin{equation*}
	\Mass(R \llcorner \{d_{\K} < s\}) \leq s \,\Mass(\langle R, d_{\K}, s \rangle) + \Gamma \,s^2 
\end{equation*}
for $\mathcal{L}^1$-a.e.~$s \in (s_0,s_1)$.  By slicing theory $\frac{d}{ds} \,\Mass(R \llcorner \{d_{\K} < s\}) = \Mass(\langle R, d_{\K}, s \rangle)$ for $\mathcal{L}^1$-a.e.~$s \in (s_0,s_1)$, so 
\begin{equation*} 
	\Mass(R \llcorner \{d_{\K} < s\}) \leq s \,\frac{d}{ds} \,\Mass(R \llcorner \{d_{\K} < s\}) + \Gamma \,s^2
\end{equation*}
for $\mathcal{L}^1$-a.e.~$s \in (s_0,s_1)$.  Equivalently, 
\begin{equation} \label{bdry rect1 eqn3}
	\frac{d}{ds} \left( \frac{\Mass(R \llcorner \{d_{\K} < s\})}{s} + \Gamma s \right) \geq 0 
\end{equation}
for $\mathcal{L}^1$-a.e.~$s \in (s_0,s_1)$.  Since $\Mass(R \llcorner \{d_{\K} < s\})$ is an increasing function of $s$, we can integrate \eqref{bdry rect1 eqn3} over $[s,t]$ to obtain \eqref{bdry rect1 concl1}. 

Let $k$ be any positive integer.  By slicing theory using \eqref{meas_good_slices} with $\vartheta = 1/4$, \eqref{bdry rect1 hyp}, and \eqref{bdry rect1 concl1} for every positive integer $k$ there exists $s^*_k \in (0,2^{-k})$ such that $T \llcorner \{d_{\K} > s^*_k\} \in \mathbf{I}_m(\R^{n+1})$, $\langle T, d_{\K}, s^*_k \rangle \in \mathbf{I}_{m-1}(\R^{n+1})$, $R \llcorner \{d_{\K} > s^*_k\} \in \mathbf{I}_{m+1}(\R^{n+1})$, and $\langle R, d_{\K}, s^*_k \rangle \in \mathbf{I}_m(\R^{n+1})$ with 
\begin{equation} \label{bdry rect1 eqn4}
	\partial (T \llcorner \{d_{\K} > s^*_k\}) = -\langle T, d_{\K}, s^*_k \rangle , \quad 
	\partial (R \llcorner \{d_{\K} > s^*_k\}) = T \llcorner \{d_{\K}  > s^*_k\} - \langle R, d_{\K}, s^*_k \rangle 
\end{equation}
in $\R^{n+1}$ and 
\begin{align} \label{bdry rect1 eqn5}
	&\Mass(\langle T, d_{\K}, s^*_k \rangle) \leq \frac{4 \,\Mass(T \llcorner \{d_{\K} < 2^{-k}\})}{2^{-k}} \leq 4 \,\Gamma < \infty, \\
	&\Mass(\langle R, d_{\K}, s^*_k \rangle) \leq \frac{4 \,\Mass(R \llcorner \{d_{\K} < 2^{-k}\})}{2^{-k}} 
		\leq \frac{4\,\Mass(R)}{s_1} + 4\,\Gamma \,s_1 < \infty. \nonumber
\end{align}
$T \llcorner \{d_{\K} > s^*_k\} \rightarrow T$ and $R \llcorner \{d_{\K} > s^*_k\} \rightarrow R$ in the mass norm topology on compact subsets of $\R^{n+1}$.  But by the Federer-Fleming compactness theorem, \eqref{bdry rect1 eqn4}, and \eqref{bdry rect1 eqn5}, after passing to a subsequence $T \llcorner \{d_{\K} > s^*_k\}$ and $R \llcorner \{d_{\K} > s^*_k\}$ converge locally in the flat norm topology to some integral currents in $\R^{n+1}$ as $k \rightarrow \infty$ and thus $T \in \mathbf{I}_m(\R^{n+1})$ and $R \in \mathbf{I}_{m+1}(\R^{n+1})$.  Moreover, there exists $S \in \mathbf{I}_{m-1}(\R^{n+1})$ and $Z \in \mathbf{I}_m(\R^{n+1})$ such that after passing to a subsequence $\langle T, d_{\K}, s^*_k \rangle \rightarrow -S$ and $\langle R, d_{\K}, s^*_k \rangle \rightarrow -Z$ in the flat norm topology on compact subsets of $\R^{n+1}$ and $\partial T = S$ and $\partial R = T + Z$ in $\R^{n+1}$.  In particular, $S = \partial T \llcorner \partial \K$ and $Z = \partial R \llcorner \partial \K$ are integer-multiplicity rectifiable currents with $\Mass(S) \leq 4 \,\Gamma < \infty$ and $\Mass(Z) < \infty$. 
\end{proof}

Next we will prove an almost minimizing property for $T$ in the case that the isoperimetric ratio of $(T,R)$ is close to minimal.  Our aim is to construct competitors for $(T,R)$ of the form $(T + \partial X, R+X)$ and then use the fact that 
\begin{equation} \label{aim why linearize}
	\frac{\Mass(T)}{\Mass(R)^{\frac{m}{m+1}}} \lessapprox \frac{\Mass_{\R^{n+1} \setminus \K}(T + \partial X)}{(\Mass(R) - \Mass(X))^{\frac{m}{m+1}}}
\end{equation}
to obtain mass estimates on $T$.  Since the relative isoperimetric ratio is nonlinear, we will derive an almost minimizing property of $T$ that is a linearization of \eqref{aim why linearize}.  Note that the almost minimizing property will contain the term $H_0$ as in \eqref{H0 defn} in Theorem C. 

\begin{lemma} \label{almost minimizing lemma}
Let $\K$ be a closed subset of $\R^{n+1}$ with $\K \neq \R^{n+1}$.  Let $T \in \mathbf{I}_m(\R^{n+1} \setminus \K)$ and $R \in \mathbf{I}_{m+1}(\R^{n+1} \setminus \K)$ such that $\partial R = T$ in $\R^{n+1} \setminus \K$ and $R$ is relatively area minimizing in $\R^{n+1} \setminus \K$.  Suppose for $\varepsilon > 0$ that 
\begin{equation} \label{aim hyp}
	\frac{\Mass(T)^{\frac{m+1}{m}}}{\Mass(R)} \leq (1+\varepsilon) \,\gamma_{m,n}(\K). 
\end{equation}
Then 
\begin{equation*} 
	(1-\varepsilon) \,\Mass(T) \leq \Mass_{R^{n+1} \setminus \K}(T+\partial X) + 2H_0 \,\Mass(X) 
\end{equation*}
for all $X \in \mathbf{I}_{m+1}(\R^{n+1} \setminus \K)$ with compact support and with $\Mass(X) \leq \tfrac{1}{2} \,\Mass(R)$, where $H_0$ is as in \eqref{H0 defn}.
\end{lemma}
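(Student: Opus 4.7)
The plan is to test the minimizer against the perturbation $R+X$, with the key subtlety that $R+X$ is not itself area minimizing, so I would first replace it by a relatively area minimizing competitor with the same boundary. Set $T' = T + \partial X$ and $Y = R + X$, which lie in $\mathbf{I}_m(\R^{n+1}\setminus\K)$ and $\mathbf{I}_{m+1}(\R^{n+1}\setminus\K)$ respectively and satisfy $\partial Y = T'$ in $\R^{n+1}\setminus\K$. Applying the direct method to the class $\{Y + W : W \in \mathbf{I}_{m+1}(\R^{n+1}\setminus\K)\text{ compactly supported},\ \partial W = 0 \text{ in }\R^{n+1}\setminus\K\}$, via the Federer--Fleming compactness theorem and semi-continuity of mass, produces a relatively area minimizing $R' \in \mathbf{I}_{m+1}(\R^{n+1}\setminus\K)$ with $\partial R' = T'$ in $\R^{n+1}\setminus\K$ and with $R' - Y$ compactly supported.

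Next I would perform a two-sided comparison of $\Mass(R')$. Since $R'$ is area minimizing with competitor $Y$, $\Mass(R') \le \Mass(R) + \Mass(X)$. In the other direction, $R'-X$ has boundary $T' - \partial X = T$ in $\R^{n+1}\setminus\K$ and differs from $R$ on a compact set, so the area minimality of $R$ gives $\Mass(R) \le \Mass(R'-X) \le \Mass(R') + \Mass(X)$, i.e.\ $\Mass(R') \ge \Mass(R) - \Mass(X) \ge \tfrac12\Mass(R) > 0$. Now feed $(T', R')$ into the definition of $\gamma_{m,n}(\K)$ and combine with \eqref{aim hyp}:
\begin{equation*}
\frac{\Mass(T)^{(m+1)/m}}{\Mass(R)} \le (1+\varepsilon)\,\gamma_{m,n}(\K) \le (1+\varepsilon)\,\frac{\Mass(T')^{(m+1)/m}}{\Mass(R')}.
\end{equation*}
Using the lower bound on $\Mass(R')$, writing $u = \Mass(T')/\Mass(T)$ and $s = \Mass(X)/\Mass(R) \in [0,1/2]$, and using $(1+\varepsilon)^{-1} \ge 1-\varepsilon$, this rearranges to
\begin{equation*}
u^{(m+1)/m} \ge (1-\varepsilon)(1-s).
\end{equation*}

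The final step linearizes this nonlinear inequality. If $u \ge 1-\varepsilon$, the conclusion is immediate since $2H_0\Mass(X) \ge 0$. Otherwise $u < 1$, so $u \ge u^{(m+1)/m}$ because the exponent exceeds $1$; hence
\begin{equation*}
u \ge (1-\varepsilon)(1-s) \ge (1-\varepsilon) - s \ge (1-\varepsilon) - \tfrac{2m}{m+1}\,s,
\end{equation*}
where the last step uses $m \ge 1$ so that $\tfrac{2m}{m+1} \ge 1$. Multiplying by $\Mass(T)$ and recognizing $\tfrac{2m}{m+1}\cdot\tfrac{\Mass(T)}{\Mass(R)}\Mass(X) = 2H_0\Mass(X)$ gives the claim. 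The main obstacle is securing the lower bound $\Mass(R') \ge \Mass(R)-\Mass(X)$: without it, the inequality $\gamma_{m,n}(\K) \le \Mass(T')^{(m+1)/m}/\Mass(R')$ provides no useful information about $\Mass(T')$, and this is precisely what forces the introduction of the auxiliary minimizer $R'$ (rather than working directly with $R+X$). The linearization itself is entirely elementary.
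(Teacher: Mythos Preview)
Your proof is correct and follows essentially the same route as the paper: introduce a relatively area minimizing $R'$ with boundary $T+\partial X$, use the minimality of $R$ to obtain $\Mass(R')\ge\Mass(R)-\Mass(X)$, combine this with the definition of $\gamma_{m,n}(\K)$ and \eqref{aim hyp}, and then linearize. The only variation is in the final linearization step---the paper takes $m/(m+1)$-th powers and applies the Taylor bound $(1-s)^{m/(m+1)}\ge 1-\tfrac{2m}{m+1}s$ on $[0,1/2]$, whereas you split on $u\ge 1-\varepsilon$ versus $u<1$ and use the elementary inequality $u\ge u^{(m+1)/m}$ for $u\in[0,1)$---but both are equally straightforward and give the same conclusion.
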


\begin{proof}
Let $X \in \mathbf{I}_{m+1}(\R^{n+1} \setminus \K)$ with compact support.  Since $R$ is relatively area minimizing in $\R^{n+1} \setminus \K$ and $\partial (R+X) = T + \partial X$ in $\R^{n+1} \setminus \K$, the relatively area minimizing current $\widetilde{R} \in \mathbf{I}_{m+1}(\R^{n+1} \setminus \K)$ with $\partial \widetilde{R} = T + \partial X$ in $\R^{n+1} \setminus \K$ satisfies 
\begin{equation*}
	\Mass(\widetilde{R}) \geq \Mass(R) - \Mass(X). 
\end{equation*}
Thus by \eqref{aim hyp}, 
\begin{equation*}
	\frac{\Mass(T)^{\frac{m+1}{m}}}{\Mass(R)} 
		\leq (1+\varepsilon) \,\frac{\Mass_{\R^{n+1} \setminus \K}(T + \partial X)^{\frac{m+1}{m}}}{\Mass(\widetilde{R})}
		\leq (1+\varepsilon) \,\frac{\Mass_{\R^{n+1} \setminus \K}(T + \partial X)^{\frac{m+1}{m}}}{\Mass(R) - \Mass(X)} . 
\end{equation*}
Taking the $\tfrac{m}{m+1}$ power of both sides, 
\begin{equation*}
	\frac{\Mass(T)}{\Mass(R)^{\frac{m}{m+1}}} \leq (1+\varepsilon)^{\frac{m}{m+1}} 
		\,\frac{\Mass_{\R^{n+1} \setminus \K}(T + \partial X)}{(\Mass(R) - \Mass(X))^{\frac{m}{m+1}}}. 
\end{equation*}
Rearranging terms, 
\begin{equation*}
	(1+\varepsilon)^{-\frac{m}{m+1}} \left( 1 - \frac{\Mass(X)}{\Mass(R)} \right)^{\frac{m}{m+1}} \leq \frac{\Mass_{\R^{n+1} \setminus \K}(T + \partial X)}{\Mass(T)}. 
\end{equation*}
By Taylor's theorem 
\begin{equation*}
	1 - \frac{2m}{m+1} \,x \leq 1 - \frac{m}{m+1} \,x \cdot 2^{\frac{1}{m+1}} 
		\leq 1 - \int_0^1 \frac{m}{m+1} \,x \,(1-tx)^{\frac{-1}{m+1}} \,dt = (1-x)^{\frac{m}{m+1}}
\end{equation*} 
for all $x \in [0,1/2]$, which with $x = \Mass(X)/\Mass(R)$ gives us 
\begin{equation*}
	(1+\varepsilon)^{-\frac{m}{m+1}} \left( 1 - \frac{2m}{m+1} \,\frac{\Mass(X)}{\Mass(R)} \right) 
		\leq \frac{\Mass_{\R^{n+1} \setminus \K}(T + \partial X)}{\Mass(T)}
\end{equation*}
provided $\Mass(X) \leq \tfrac{1}{2} \,\Mass(R)$.  Since $(1-\varepsilon) \leq (1+\varepsilon)^{-\frac{m}{m+1}} \leq 1$, 
\begin{equation*}
	1-\varepsilon - \frac{2m}{m+1} \,\frac{\Mass(X)}{\Mass(R)} \leq \frac{\Mass_{\R^{n+1} \setminus \K}(T + \partial X)}{\Mass(T)}
\end{equation*}
By multiplying by $\Mass(T)$ and rearranging terms, 
\begin{equation*}
	(1-\varepsilon) \,\Mass(T) \leq \Mass_{\R^{n+1} \setminus \K}(T + \partial X) + \frac{2m}{m+1} \,\frac{\Mass(T)}{\Mass(R)} \,\Mass(X) 
		= \Mass_{\R^{n+1} \setminus \K}(T + \partial X) + 2H_0 \,\Mass(X), 
\end{equation*}
where the last step follows from \eqref{H0 defn}. 
\end{proof}

Now we come to our main mass estimates for $T$ and $R$ near $\partial \K$.  The rectifiability of $\partial T$ and $\partial R$ along $\partial \K$ in the case $(T,R)$ is isoperimetric minimizing will follow. 

\begin{lemma} \label{boundary rectifiability lemma2}
There exists a constant $\theta = \theta(m) \in (0,1]$ such that the following holds true.  Let $\varepsilon \in (0,\theta)$.  Let $\K$ be a bounded proper convex subset of $\R^{n+1}$.  Let $T \in \mathbf{I}_m(\R^{n+1} \setminus \K)$ and $R \in \mathbf{I}_{m+1}(\R^{n+1} \setminus \K)$ such that $R$ is relatively area minimizing in $\R^{n+1} \setminus \K$ with $\partial R = T$ in $\R^{n+1} \setminus \K$ and 
\begin{equation} \label{bdry rect2 hyp}
	\frac{\Mass(T)^{\frac{m+1}{m}}}{\Mass(R)} \leq (1+\varepsilon) \,\gamma_{m,n}(\K). 
\end{equation}
Then 
\begin{align} 
	\label{bdry rect2 concl1} &\frac{\Mass(T \llcorner \{d_{\K} < s\})}{s} \leq C(m) \,\Mass(T)^{\frac{m-1}{m}} , \\
	\label{bdry rect2 concl2} &\frac{\Mass(R \llcorner \{d_{\K} < s\})}{s} \leq C(m) \,\Mass(T) 
\end{align}
for all $\varepsilon \,\Mass(R)^{\frac{1}{m+1}} \leq s \leq \theta \,\Mass(R)^{\frac{1}{m+1}}$. 
\end{lemma}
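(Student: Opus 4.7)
The plan is to extract a differential inequality for $f(s) := \Mass(T \llcorner \{d_{\K} < s\})$ from the almost-minimizing property in Lemma \ref{almost minimizing lemma}, solve it with an integrating factor to obtain \eqref{bdry rect2 concl1}, and then apply Lemma \ref{boundary rectifiability lemma1} to transfer the bound to $R$, yielding \eqref{bdry rect2 concl2}. An essential preliminary is a two-sided a priori bound $c(m) \leq \gamma_{m,n}(\K) \leq C(m)$: the lower bound is Step~1 of the proof of Lemma \ref{isoper lower bound lemma}, and the upper bound comes from testing against a flat $(m+1)$-disk placed inside a small ball disjoint from the bounded set $\K$. Together with \eqref{bdry rect2 hyp}, this makes $\Mass(T)^{1/m}$ and $\Mass(R)^{1/(m+1)}$ comparable up to constants depending only on $m$, and forces $H_0 s \leq C(m)\theta$ throughout the range.

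For $\mathcal{L}^1$-a.e.\ $s \in (0, \theta \Mass(R)^{1/(m+1)})$, I would consider the homotopy competitor
\[
X_s = -h_{\#}\!\left( \llbracket 0, 1 \rrbracket \times (T \llcorner \{d_{\K} < s\}) \right) \in \mathbf{I}_{m+1}(\R^{n+1} \setminus \K),
\]
where $h(t,x) = (1-t)\xi_{\K}(x) + tx$. Exactly as in the computation of $R_s$ in Lemma \ref{boundary rectifiability lemma1}, the homotopy formula together with $\partial T = 0$ in $\R^{n+1} \setminus \K$ yield
\[
T + \partial X_s = T \llcorner \{d_{\K} > s\} + h_{\#}\!\left( \llbracket 0, 1 \rrbracket \times \langle T, d_{\K}, s \rangle \right)
\]
in $\R^{n+1} \setminus \K$, since the piece $\xi_{\K\#}(T \llcorner \{d_{\K} < s\})$ is supported on $\partial \K \subset \K$ and so contributes no mass to $\R^{n+1} \setminus \K$. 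Using $\op{Lip} h(t,\cdot) \leq 1$, one gets $\Mass(X_s) \leq s f(s)$ and $\Mass_{\R^{n+1}\setminus\K}(T + \partial X_s) \leq \Mass(T) - f(s) + s \,\Mass(\langle T, d_{\K}, s\rangle)$. The a priori bounds ensure $\Mass(X_s) \leq C(m)\theta \Mass(R) \leq \tfrac{1}{2}\Mass(R)$ for $\theta = \theta(m)$ small enough, so the hypothesis of Lemma \ref{almost minimizing lemma} is met (after reducing to compactly supported $T$ by a standard slicing truncation if needed).

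Plugging into Lemma \ref{almost minimizing lemma} and using the coarea identity $\Mass(\langle T, d_{\K}, s\rangle) \leq f'(s)$ yields, for a.e.\ $s$ in the range,
\[
(1 - 2H_0 s)\, f(s) \leq \varepsilon\, \Mass(T) + s\, f'(s).
\]
Shrinking $\theta(m)$ further so that $2H_0 s \leq \log 2$, this rewrites as $(e^{2H_0 s} f(s)/s)' \geq -\varepsilon \Mass(T)\, e^{2H_0 s}/s^2$. Integrating from $s$ to $t := \theta\, \Mass(R)^{1/(m+1)}$ and using $e^{2H_0 u} \leq 2$ gives $f(s)/s \leq 2 f(t)/t + 4\varepsilon \Mass(T)/s$. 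The a priori bounds give $f(t)/t \leq \Mass(T)/t \leq C(m)\Mass(T)^{(m-1)/m}$, and for $s \geq \varepsilon \Mass(R)^{1/(m+1)}$ one has $\varepsilon \Mass(T)/s \leq \Mass(T)/\Mass(R)^{1/(m+1)} \leq C(m) \Mass(T)^{(m-1)/m}$, proving \eqref{bdry rect2 concl1}. Setting $\Gamma = C(m)\Mass(T)^{(m-1)/m}$, Lemma \ref{boundary rectifiability lemma1} then applies on $[\varepsilon\Mass(R)^{1/(m+1)}, t]$; evaluating its almost-monotonicity at $s_1 = t$, with $\Mass(R)/t \leq \Mass(R)^{m/(m+1)}/\theta \leq C(m)\Mass(T)$ and $\Gamma t \leq C(m)\theta\Mass(T)$, gives \eqref{bdry rect2 concl2}.

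The main difficulty is that every constant must depend only on $m$. This hinges on the two-sided a priori bound on $\gamma_{m,n}(\K)$ and on the choice of the integrating factor $e^{2H_0 s}$: naive substitutions such as $G = f/s^{1/2}$ produce an unwanted $(t/s)^{1/2}$ blow-up as $s \downarrow 0$, whereas $e^{2H_0 s}$ cleanly absorbs the factor $(1 - 2H_0 s)$ because $H_0 s$ stays bounded by $C(m)\theta$ throughout the range.
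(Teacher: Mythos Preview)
Your proposal is correct and follows essentially the same strategy as the paper: the identical homotopy competitor $X_s$, the same application of Lemma~\ref{almost minimizing lemma} to extract a differential inequality for $f(s)=\Mass(T\llcorner\{d_{\K}<s\})$, the same two-sided bound on $\gamma_{m,n}(\K)$ to make $\Mass(T)^{1/m}$ and $\Mass(R)^{1/(m+1)}$ comparable, and the same appeal to Lemma~\ref{boundary rectifiability lemma1} for \eqref{bdry rect2 concl2}. The only cosmetic differences are that the paper uses the linear factor $(1+4H_0 s)$ in place of your exponential $e^{2H_0 s}$, and it handles the $\varepsilon\,\Mass(T)$ error by a dichotomy (either $f(s)\le 4\varepsilon\,\Mass(T)$, or the error is absorbed before integrating) rather than by carrying $\varepsilon\,\Mass(T)/s$ through the integration; both routes arrive at the same bound. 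One small remark: your parenthetical about truncating $T$ to compact support is unnecessary here, since $\K$ is bounded and hence $X_s$ already has compact support.
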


\begin{proof}
We will apply Lemma \ref{almost minimizing lemma} with 
\begin{equation} \label{bdry rect2 eqn1}
	X = X_s = -h_{\#}(\llbracket 0,1 \rrbracket \times (T \llcorner \{d_{\K} < s\})) 
\end{equation}
in $\R^{n+1} \setminus \K$ for $\mathcal{L}^1$-a.e.~$s > 0$, where $h(t,x) = (1-t) \,\xi_{\K}(x) + t \,x$ as in the proof of Lemma \ref{boundary rectifiability lemma1}.  By the homotopy formula and slicing theory, 
\begin{align} \label{bdry rect2 eqn2}
	T + \partial X_s &= T - T \llcorner \{d_{\K} < s\} + h_{\#}(\llbracket 0,1 \rrbracket \times \langle T, d_{\K}, s \rangle) 
		\\&= T \llcorner \{d_{\K} > s\} + h_{\#}(\llbracket 0,1 \rrbracket \times \langle T, d_{\K}, s \rangle) \nonumber 
\end{align}
in $\R^{n+1} \setminus \K$ for $\mathcal{L}^1$-a.e.~$s > 0$.  Thus, recalling from the proof of Lemma \ref{boundary rectifiability lemma1} that $\op{Lip} h(t,\cdot) \leq 1$ for all $t \in [0,1]$, the mass of $T+\partial X_s$ from \eqref{bdry rect2 eqn2} is bounded by 
\begin{align} \label{bdry rect2 eqn3}
	\Mass_{\R^{n+1} \setminus \K}(T + \partial X_s) 
		&\leq \Mass(T \llcorner \{d_{\K} > s\}) + \Mass(h_{\#}(\llbracket 0,1 \rrbracket \times \langle T, d_{\K}, s \rangle)) 
		\\&\leq \Mass(T \llcorner \{d_{\K} > s\}) + s \,\Mass(\langle T, d_{\K}, s \rangle) \nonumber 
\end{align}
and the mass of $X_s$ from \eqref{bdry rect2 eqn1} is bounded by 
\begin{equation} \label{bdry rect2 eqn4}
	\Mass(X_s) = \Mass(h_{\#}(\llbracket 0,1 \rrbracket \times (T \llcorner \{d_{\K} < s\}))) \leq s \,\Mass(T \llcorner \{d_{\K} < s\}) 
\end{equation}
for $\mathcal{L}^1$-a.e.~$s > 0$.  When applying Lemma \ref{almost minimizing lemma}, we will want to assume $\Mass(X_s) \leq s \,\Mass(T) \leq \tfrac{1}{2} \,\Mass(R)$, which is implied by $s < \tfrac{1}{4H_0}$.  Hence by Lemma \ref{almost minimizing lemma}, \eqref{bdry rect2 hyp}, \eqref{bdry rect2 eqn3}, and \eqref{bdry rect2 eqn4},  
\begin{equation*} 
	\Mass(T \llcorner \{d_{\K} < s\}) \leq s \,\Mass(\langle T, d_{\K}, s \rangle) + 2 H_0 s \,\Mass(T \llcorner \{d_{\K} < s\}) + \varepsilon \,\Mass(T) 
\end{equation*}
for $\mathcal{L}^1$-a.e.~$0 < s < 1/(4H_0)$.  By slicing theory $\frac{d}{ds} \,\Mass(T \llcorner \{d_{\K} < s\}) = \Mass(\langle T, d_{\K}, s \rangle)$ for $\mathcal{L}^1$-a.e.~$s \in (s_0,s_1)$, so 
\begin{equation} \label{bdry rect2 eqn5}
	\Mass(T \llcorner \{d_{\K} < s\}) \leq s \,\frac{d}{ds} \Mass(T \llcorner \{d_{\K} < s\})
		+ 2 H_0 s \,\Mass(T \llcorner \{d_{\K} < s\}) + \varepsilon \,\Mass(T) 
\end{equation}
for $\mathcal{L}^1$-a.e.~$0 < s < 1/(4H_0)$.  By subtracting $2 H_0 s \,\Mass(T \llcorner \{d_{\K} < s\})$ from both sides and dividing by $1 - 2 H_0 s$,  
\begin{align*}
	\Mass(T \llcorner \{d_{\K} < s\}) &\leq \frac{s}{1 - 2 H_0 s} \,\frac{d}{ds} \Mass(T \llcorner \{d_{\K} < s\}) + \frac{\varepsilon}{1 - 2 H_0 s} \,\Mass(T) 
	\\&\leq s \,(1 + 4 H_0 s) \,\frac{d}{ds} \Mass(T \llcorner \{d_{\K} < s\}) + 2 \,\varepsilon \,\Mass(T) 
\end{align*}
for $\mathcal{L}^1$-a.e.~$0 < s < 1/(4H_0)$, where in the last step we used $1/(1-2H_0s) \leq 1+4H_0s \leq 2$ for all $0 < s < 1/(4H_0)$.  Hence 
\begin{equation} \label{bdry rect2 eqn6}
	0 \leq \frac{d}{ds} \left( (1 + 4 H_0 s) \,\frac{\Mass(T \llcorner \{d_{\K} < s\})}{s} - 2 \,\varepsilon \,\frac{\Mass(T)}{s} \right) 
\end{equation}
for $\mathcal{L}^1$-a.e.~$0 < s < 1/(4H_0)$.  Noting that $\Mass(T \llcorner \{d_{\K} < s\})$ is increasing and integrating \eqref{bdry rect2 eqn6} over $[s,t]$, 
\begin{equation} \label{bdry rect2 eqn7}
	(1 + 4 H_0 s) \,\frac{\Mass(T \llcorner \{d_{\K} < s\})}{s} - 2 \,\varepsilon \,\frac{\Mass(T)}{s} 
	\leq (1 + 4 H_0 t) \,\frac{\Mass(T \llcorner \{d_{\K} < t\})}{t} - 2 \,\varepsilon \,\frac{\Mass(T)}{t}
\end{equation}
for all $0 < s < t \leq 1/(4H_0)$. 

By Lemma \ref{isoper lower bound lemma}, $\gamma_{m,n}(\K) \geq c_1(m) > 0$ for some constant $c_1(m) \in (0,\infty)$, and by the isoperimetric inequality in $\R^{n+1}$ applied to currents disjoint from $\K$, $\gamma_{m,n}(\K) \leq C_1(m)$ for some constant $C_1(m) \in (0,\infty)$.  Thus by \eqref{bdry rect2 hyp}, 
\begin{equation*}
	c_1(m) \leq \gamma_{m,n}(\K) \leq \frac{\Mass(T)^{\frac{m+1}{m}}}{\Mass(R)} \leq 2\,\gamma_{m,n}(\K) \leq 2 \,C_1(m). 
\end{equation*}
Equivalently, $\Mass(T)$ and $\Mass(R)$ are related by 
\begin{equation} \label{bdry rect2 eqn8}
	c_2(m) \,\Mass(R)^{\frac{1}{m+1}} \leq \Mass(T)^{\frac{1}{m}} \leq C_2(m) \,\Mass(R)^{\frac{1}{m+1}} 
\end{equation}
for some constants $0 < c_2(m) < C_2(m) < \infty$.  By \eqref{H0 defn} and \eqref{bdry rect2 eqn8},
\begin{equation} \label{bdry rect2 eqn9}
	c_3(m) \,\Mass(R)^{\frac{-1}{m+1}} \leq H_0 \leq C_3(m) \,\Mass(R)^{\frac{-1}{m+1}} 
\end{equation}
for some constants $0 < c_3(m) < C_3(m) < \infty$.  Set $\theta = \theta(m) = \min\{1/(4 \,C_3(m)),1\}$ for $C_3(m)$ as in \eqref{bdry rect2 eqn9} so that \eqref{bdry rect2 eqn7} holds true for all $0 < s < t \leq \theta \,\Mass(R)^{\frac{1}{m+1}}$. 

Let $\varepsilon \,\Mass(R)^{\frac{1}{m+1}} \leq s \leq \theta \,\Mass(R)^{\frac{1}{m+1}}$.  If $\Mass(T \llcorner \{d_{\K} < s\}) \leq 4\,\varepsilon \,\Mass(T)$, then using \eqref{bdry rect2 eqn8} 
\begin{equation*}
	\Mass(T \llcorner \{d_{\K} < s\}) \leq 4\,\varepsilon \,\Mass(T) \leq C(m) \, s \,\Mass(T)^{\frac{m-1}{m}} 
\end{equation*}
and thus \eqref{bdry rect2 concl1} holds true.  If instead $\Mass(T \llcorner \{d_{\K} < s\}) > 4\,\varepsilon \,\Mass(T)$, by \eqref{bdry rect2 eqn7} 
\begin{align*} 
	\frac{1}{2} \,\frac{\Mass(T \llcorner \{d_{\K} < s\})}{s} 
	&< \frac{\Mass(T \llcorner \{d_{\K} < s\})}{s} - 2 \,\varepsilon \,\frac{\Mass(T)}{s} 
	\\&\leq (1 + 4 H_0 t) \,\frac{\Mass(T \llcorner \{d_{\K} < t\})}{t} - 2 \,\varepsilon \,\frac{\Mass(T)}{t}
	\leq 2 \,\frac{\Mass(T \llcorner \{d_{\K} < t\})}{t} \nonumber
\end{align*}
for all $s \leq t \leq \theta \,\Mass(R)^{\frac{1}{m+1}}$.  Setting $t = \theta \,\Mass(R)^{\frac{1}{m+1}}$ and using \eqref{bdry rect2 eqn8} gives us \eqref{bdry rect2 concl1}.

By Lemma \ref{boundary rectifiability lemma1} and \eqref{bdry rect2 concl1}, 
\begin{equation*}
	\frac{\Mass(R \llcorner \{d_{\K} < s\})}{s} + C(m) \,s \,\Mass(T)^{\frac{m-1}{m}} 
		\leq \frac{\Mass(R \llcorner \{d_{\K} < t\})}{t} + C(m) \,t \,\Mass(T)^{\frac{m-1}{m}} 
\end{equation*}
for all $\varepsilon \,\Mass(R)^{\frac{1}{m+1}} \leq s < t \leq \theta \,\Mass(R)^{\frac{1}{m+1}}$.  Setting $t = \theta \,\Mass(R)^{\frac{1}{m+1}}$ and using \eqref{bdry rect2 eqn8}, we obtain \eqref{bdry rect2 concl2}. 
\end{proof}

\begin{corollary} \label{boundary rectifiability cor}
Let $\K$ be a bounded proper convex subset of $\R^{n+1}$.  Let $T \in \mathbf{I}_m(\R^{n+1} \setminus \K)$ and $R \in \mathbf{I}_{m+1}(\R^{n+1} \setminus \K)$ such that $\partial R = T$ in $\R^{n+1} \setminus \K$, $R$ is relatively area minimizing in $\R^{n+1} \setminus \K$, and $(T,R)$ is a relative isoperimetric minimizer in $\R^{n+1} \setminus \K$.  Then $T \in \mathbf{I}_m(\R^{n+1})$ and $R \in \mathbf{I}_{m+1}(\R^{n+1})$ with
\begin{align} 
	\label{bdry rect3 concl1} &\Mass(\partial T \llcorner \partial \K) \leq C(m) \,\Mass(T)^{\frac{m-1}{m}}, \\
	\label{bdry rect3 concl2} &\Mass(\partial R \llcorner \partial \K) \leq C(m) \,\Mass(T) .  
\end{align}
\end{corollary}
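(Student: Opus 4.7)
The plan is to combine Lemma \ref{boundary rectifiability lemma2} with Lemma \ref{boundary rectifiability lemma1} in a direct way, using the fact that being a relative isoperimetric minimizer is an ``$\varepsilon=0$'' situation. Since $(T,R)$ is an isoperimetric minimizer, its relative isoperimetric ratio equals $\gamma_{m,n}(\K)$, so for every $\varepsilon \in (0,\theta)$ the hypothesis \eqref{bdry rect2 hyp} of Lemma \ref{boundary rectifiability lemma2} holds trivially. Applying that lemma for each such $\varepsilon$ and then sending $\varepsilon \downarrow 0$, I obtain the mass estimates \eqref{bdry rect2 concl1} and \eqref{bdry rect2 concl2} for every $s$ in the full range $0 < s \leq \theta \,\Mass(R)^{\frac{1}{m+1}}$. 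In particular the linear growth bound
\begin{equation*}
	\frac{\Mass(T \llcorner \{d_{\K} < s\})}{s} \leq \Gamma := C(m) \,\Mass(T)^{\frac{m-1}{m}}
\end{equation*}
holds for all sufficiently small $s > 0$.

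With this bound in hand, I am now exactly in the setting of Lemma \ref{boundary rectifiability lemma1} with the hypothesis \eqref{bdry rect1 hyp} verified on a full interval $(0,s_1]$. That lemma immediately yields $T \in \mathbf{I}_m(\R^{n+1})$ and $R \in \mathbf{I}_{m+1}(\R^{n+1})$, together with the bound
\begin{equation*}
	\Mass(\partial T \llcorner \partial \K) \leq 4 \,\Gamma = C(m) \,\Mass(T)^{\frac{m-1}{m}},
\end{equation*}
which is \eqref{bdry rect3 concl1}.

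For \eqref{bdry rect3 concl2}, I will revisit the slicing step at the end of the proof of Lemma \ref{boundary rectifiability lemma1} with the sharper input from \eqref{bdry rect2 concl2}. Specifically, I apply \eqref{meas_good_slices} with $\vartheta = 1/4$ to $f = d_{\K}$ on the interval $(0,2^{-k})$ to pick $s^*_k \in (0,2^{-k})$ along which the slice $\langle R, d_{\K}, s^*_k \rangle$ is integer-multiplicity rectifiable and satisfies
\begin{equation*}
	\Mass(\langle R, d_{\K}, s^*_k \rangle) \leq \frac{4 \,\Mass(R \llcorner \{d_{\K} < 2^{-k}\})}{2^{-k}} \leq 4\,C(m) \,\Mass(T),
\end{equation*}
using \eqref{bdry rect2 concl2} for the last inequality (valid for $k$ large enough). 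Since $\partial(R \llcorner \{d_{\K} > s^*_k\}) = T \llcorner \{d_{\K} > s^*_k\} - \langle R, d_{\K}, s^*_k \rangle$ in $\R^{n+1}$ and $R \llcorner \{d_{\K} > s^*_k\} \to R$ in the mass norm, the Federer--Fleming compactness theorem extracts a subsequential limit of $-\langle R, d_{\K}, s^*_k \rangle$ which must equal $\partial R \llcorner \partial \K$. Lower semi-continuity of mass under flat-norm convergence then gives
\begin{equation*}
	\Mass(\partial R \llcorner \partial \K) \leq 4 \,C(m) \,\Mass(T),
\end{equation*}
proving \eqref{bdry rect3 concl2}.

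The main conceptual step is the first one, realizing that a minimizer satisfies \eqref{bdry rect2 hyp} for every $\varepsilon > 0$ and that the conclusion of Lemma \ref{boundary rectifiability lemma2} therefore extends down to $s = 0$. Everything else is bookkeeping: invoking Lemma \ref{boundary rectifiability lemma1} for the qualitative rectifiability and the $\partial T$ estimate, and adapting its slicing step with the better constant coming from \eqref{bdry rect2 concl2} for the $\partial R$ estimate.
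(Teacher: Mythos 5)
Your proposal is correct and matches the paper's proof essentially line for line: observe that a minimizer satisfies \eqref{bdry rect2 hyp} for every $\varepsilon > 0$, send $\varepsilon \downarrow 0$ to get \eqref{bdry rect2 concl1}--\eqref{bdry rect2 concl2} on the full interval $0 < s \leq \theta\,\Mass(R)^{1/(m+1)}$, then run the slicing/compactness argument from the end of Lemma \ref{boundary rectifiability lemma1}. Your explicit observation that the $\partial R$ bound requires re-doing the slicing step with the sharper input \eqref{bdry rect2 concl2} (since Lemma \ref{boundary rectifiability lemma1} as a black box only yields $\Mass(\partial R \llcorner \partial \K) < \infty$) is precisely what the paper's phrase ``arguing as we did at the end of the proof of Lemma \ref{boundary rectifiability lemma1}'' is compressing.
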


\begin{proof}
Since $(T,R)$ is a relative isoperimetric minimizer, $\varepsilon = 0$ and thus \eqref{bdry rect2 concl1} and \eqref{bdry rect2 concl2} hold true for all $0 < s \leq \theta \,\Mass(R)^{\frac{1}{m+1}}$.  By arguing as we did at the end of the proof of Lemma \ref{boundary rectifiability lemma1} using \eqref{bdry rect2 concl1} and \eqref{bdry rect2 concl2}, we conclude that $T \in \mathbf{I}_m(\R^{n+1})$, $R \in \mathbf{I}_{m+1}(\R^{n+1})$, and \eqref{bdry rect3 concl1} and \eqref{bdry rect3 concl2} hold true. 
\end{proof}

Lemma \ref{boundary rectifiability lemma2} extends to the case where $\K$ is the closure of a bounded open subset with $C^2$-boundary but $\K$ is not convex to give us the following result Corollary \ref{boundary rectifiability remark2}.  Note that Corollary \ref{boundary rectifiability remark2} only gives us useful non-concentration estimates for $T$ and $R$ along $\partial \K$ if $\Mass(R) \leq C(m) \,\kappa_0^{-m-1}$, which is too restrictive to extend the existence result Theorem B to sets $\K$ which are not convex.  

\begin{corollary} \label{boundary rectifiability remark2} 
There exists a constant $\theta = \theta(m) \in (0,\infty)$ such that the following holds true.  Let $\varepsilon \in (0,\theta)$.  Let $\K$ be the closure of a bounded open subset of $\R^{n+1}$ with $C^2$-boundary.  Let $T \in \mathbf{I}_m(\R^{n+1} \setminus \K)$ and $R \in \mathbf{I}_{m+1}(\R^{n+1} \setminus \K)$ such that $\partial R = T$ in $\R^{n+1} \setminus \K$, $R$ is relatively area minimizing in $\R^{n+1} \setminus \K$, and $T$ and $R$ satisfy \eqref{bdry rect2 hyp}.  Then 
\begin{align} 
	\label{bdry rect rmk2 concl1} &\frac{\Mass(T \llcorner \{d_{\K} < s\})}{s} \leq C(m) \,\big( \Mass(T)^{\frac{m-1}{m}} + \kappa_0 \,\Mass(T) \big), \\ 
	\label{bdry rect rmk2 concl2} &\frac{\Mass(R \llcorner \{d_{\K} < s\})}{s} \leq C(m) \,\big( \Mass(T) + \kappa_0 \,\Mass(R) \big)  
\end{align}
for all $\varepsilon \,\Mass(R)^{\frac{1}{m+1}} \leq s \leq \min\{\theta(m) \,\Mass(R)^{\frac{1}{m+1}}, 1/(2\kappa_0)\}$. 

In the special case that $(T,R)$ is a relative isoperimetric minimizer in $\R^{n+1} \setminus \K$ (i.e.~$\varepsilon = 0$), then $T \in \mathbf{I}_m(\R^{n+1})$ and $R \in \mathbf{I}_{m+1}(\R^{n+1})$ with 
\begin{align} 
	\label{bdry rect rmk2 concl3} &\Mass(\partial T \llcorner \partial \K) \leq C(m) \,\big( \Mass(T)^{\frac{m-1}{m}} + \kappa_0 \,\Mass(T) \big), \\
	&\Mass(\partial R \llcorner \partial \K) \leq C(m) \,\big( \Mass(T) + \kappa_0 \,\Mass(R) \big) .  \nonumber 
\end{align}
\end{corollary}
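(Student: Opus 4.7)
The plan is to reproduce the argument of Lemma \ref{boundary rectifiability lemma2} using the projection homotopy $h(t,x) = (1-t)\xi_{\K}(x) + tx$, now viewed only on the tubular neighborhood $U = \{d_{\K} < 1/(2\kappa_0)\}$; throughout we restrict to $s \leq 1/(2\kappa_0)$. By \eqref{grad d xi nu} and $|\kappa_i| \leq \kappa_0$, the tangential eigenvalues of $\nabla_x h(t,\cdot)$ on $\{d_{\K} \leq s\}$ have the form $(1-t)/(1+\kappa_i d_{\K}(x)) + t$, each bounded by $1 + C(m)\kappa_0 s$; the normal component is as before, and $|\partial_t h| \leq s$. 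Consequently, for any $k$-dimensional integer-multiplicity rectifiable current $S$ supported in $\{d_{\K} \leq s\}$, one has
\begin{equation*}
\Mass\bigl(h_{\#}(\llbracket 0,1 \rrbracket \times S)\bigr) \leq s\,(1 + C(m)\kappa_0 s)^k\,\Mass(S).
\end{equation*}

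Feeding these bounds with $X_s = -h_{\#}(\llbracket 0,1 \rrbracket \times (T \llcorner \{d_{\K} < s\}))$ into Lemma \ref{almost minimizing lemma} and invoking the slice identity $\tfrac{d}{ds}\Mass(T \llcorner \{d_{\K} < s\}) = \Mass(\langle T, d_{\K}, s\rangle)$, one obtains the differential inequality
\begin{equation*}
f(s) \leq s\,(1 + C(m)\kappa_0 s)\,f'(s) + 2H_0\,s\,(1 + C(m)\kappa_0 s)\,f(s) + \varepsilon\,\Mass(T),
\end{equation*}
where $f(s) = \Mass(T \llcorner \{d_{\K} < s\})$, in place of \eqref{bdry rect2 eqn5}. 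A direct calculation rewrites this as
\begin{equation*}
\frac{d}{ds}\!\left[(1 + C(m)\kappa_0 s)\,\frac{f(s)}{s}\right] \geq -2H_0\,(1 + C(m)\kappa_0 s)\,\frac{f(s)}{s} - \frac{\varepsilon\,\Mass(T)}{s^2},
\end{equation*}
and after multiplying by the integrating factor $e^{2H_0 s}$ this is a clean monotone statement on $g(s) := e^{2H_0 s}(1 + C(m)\kappa_0 s)f(s)/s$.

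Integrating from $s$ up to $t = \min\{\theta(m)\Mass(R)^{1/(m+1)},\,1/(2\kappa_0)\}$ and using \eqref{bdry rect2 eqn8}--\eqref{bdry rect2 eqn9} to bound $2H_0 t$, together with $\varepsilon\,\Mass(T)/s \leq \Mass(T)/\Mass(R)^{1/(m+1)} \sim \Mass(T)^{(m-1)/m}$ (valid for $s \geq \varepsilon\,\Mass(R)^{1/(m+1)}$), one arrives at $f(s)/s \leq C(m)\,f(t)/t + C(m)\,\Mass(T)^{(m-1)/m}$. In the regime $t = \theta(m)\Mass(R)^{1/(m+1)}$, the first term is controlled by $\Mass(T)^{(m-1)/m}$ via \eqref{bdry rect2 eqn8}; in the complementary regime $t = 1/(2\kappa_0)$, the trivial bound $f(t) \leq \Mass(T)$ yields $f(t)/t \leq 2\kappa_0\,\Mass(T)$. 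Either way, \eqref{bdry rect rmk2 concl1} follows. The estimate \eqref{bdry rect rmk2 concl2} then comes from re-running the comparison argument of Lemma \ref{boundary rectifiability lemma1} using the same homotopy $h$, the only modification being the appearance of an extra $\kappa_0\,\Mass(R)$ contribution tracking the inflated Lipschitz constant. For the isoperimetric-minimizer case $\varepsilon = 0$, the estimates extend to all $s$ in $(0, \min\{\theta(m)\Mass(R)^{1/(m+1)}, 1/(2\kappa_0)\}]$, and the Federer--Fleming compactness argument from the end of the proof of Lemma \ref{boundary rectifiability lemma1} (applied to a sequence $s^*_k \downarrow 0$ along which the slices $\langle T, d_{\K}, s^*_k\rangle$ and $\langle R, d_{\K}, s^*_k\rangle$ are integral with masses controlled by these bounds) yields $T \in \mathbf{I}_m(\R^{n+1})$, $R \in \mathbf{I}_{m+1}(\R^{n+1})$, together with the stated bounds on $\partial T \llcorner \partial \K$ and $\partial R \llcorner \partial \K$.

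The delicate point is that the Lipschitz constant of $h(t,\cdot)$ now exceeds one by an amount proportional to $\kappa_0 s$, so the clean scale-invariant monotonicity of $f(s)/s$ in Lemma \ref{boundary rectifiability lemma2} is replaced by a weighted monotonicity on $(1+C(m)\kappa_0 s)f(s)/s$; one must verify that the resulting loss is precisely what is absorbed by the additive correction $\kappa_0\,\Mass(T)$ (respectively $\kappa_0\,\Mass(R)$), which corresponds to the ``large $\kappa_0$'' regime where the upper cutoff $t = 1/(2\kappa_0)$ is active.
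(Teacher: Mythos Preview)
Your proposal is correct and follows essentially the same approach as the paper: both modify the proof of Lemma~\ref{boundary rectifiability lemma2} by replacing the Lipschitz bound $\op{Lip}\xi_{\K}=1$ with $\|\nabla\xi_{\K}\|\leq 1+2\kappa_0 d_{\K}$, derive the analogue of \eqref{bdry rect2 eqn5} with extra $(1+C\kappa_0 s)$ factors, integrate the resulting monotone quantity up to $t=\min\{\theta(m)\Mass(R)^{1/(m+1)},1/(2\kappa_0)\}$, and split into the two regimes for $t$ to obtain the additive $\kappa_0$-correction. The only cosmetic difference is that you handle the $H_0$ term with the integrating factor $e^{2H_0 s}$, whereas the paper absorbs it algebraically into a factor $(1+CH_0 s)$ alongside the $\kappa_0$ term before integrating; both lead to the same bound.
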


\begin{proof}
We modify the arguments from the proof of Lemma \ref{boundary rectifiability lemma2}.  We replace $\op{Lip} \xi_{\K} = 1$ with 
\begin{equation*} 
	\|\nabla \xi_{\K}(x)\| \leq 1 + 2 \kappa_0 d_{\K}(x)
\end{equation*}
for each $x \in U$, where $d_{\K}(x) = \op{dist}(x,\K)$ and $\kappa_0$ is as in \eqref{kappa0 defn} (see Subsection \ref{sec:prelims_sets}).  In particular, $\op{Lip} h(\cdot,t) \leq 1$ with $|\nabla h(\cdot,t)| \leq 1 + 2\kappa_0 s$ on $\{ d_{\K} \leq s \}$.  As a result, in place of \eqref{bdry rect2 eqn5} we get 
\begin{align} \label{bdry rect rmk2 eqn1}
	\Mass(T \llcorner \{d_{\K} < s\}) \leq{}& s \,(1 + 2 \kappa_0 s)^{m-1} \,\frac{d}{ds} \Mass(T \llcorner \{d_{\K} < s\})
		\\& + 2 H_0 s \,(1 + 2 \kappa_0 s)^m \,\Mass(T \llcorner \{d_{\K} < s\}) + \varepsilon \,\Mass(T) \nonumber  
\end{align}
for $\mathcal{L}^1$-a.e.~$s > 0$ such that $X_s$ in \eqref{bdry rect2 eqn1} satisfies $\Mass(X_s) \leq \tfrac{1}{2} \,\Mass(R)$ and $s < 1/(2\kappa_0)$.  Notice that 
\begin{equation*}
	\Mass(X_s) \leq s \,(1 + 2 \kappa_0 s)^m \,\Mass(T) \leq 2^m s \,\Mass(T) 
\end{equation*} 
so $\Mass(X_s) \leq \tfrac{1}{2} \,\Mass(R)$ is implied by $s < 1/(2^{m+2} H_0)$.  It follows from Taylor's theorem that $(1 + x)^{m-1} \leq 1 + (m-1) \,2^{m-2} x$ for all $x \in [0,1]$, which applied with $x = 2 \kappa_0 s$ in \eqref{bdry rect rmk2 eqn1} gives us 
\begin{align} \label{bdry rect rmk2 eqn2}
	\Mass(T \llcorner \{d_{\K} < s\}) \leq{}& s \,(1 + (m-1) \,2^{m-1} \kappa_0 s) \,\frac{d}{ds} \Mass(T \llcorner \{d_{\K} < s\}) 
		\\& + 2^{m+1} H_0 s \,\Mass(T \llcorner \{d_{\K} < s\}) + \varepsilon \,\Mass(T) \nonumber 
\end{align}
for $\mathcal{L}^1$-a.e.~$0 < s < \min\{1/(2^{m+2} H_0), \,1/(2\kappa_0)\}$.  By subtracting $2^{m+1} H_0 s \,\Mass(T \llcorner \{d_{\K} < s\})$ from both sides of \eqref{bdry rect rmk2 eqn2} and dividing both sides by $1 - 2^{m+1} H_0 s \geq 1/2$, 
\begin{equation*}
	\Mass(T \llcorner \{d_{\K} < s\}) \leq s \,(1 + C \kappa_0 s + C H_0 s) \,\frac{d}{ds} \Mass(T \llcorner \{d_{\K} < s\}) 
		+ 2 \varepsilon \,\Mass(T)
\end{equation*}
for $\mathcal{L}^1$-a.e.~$0 < s < \min\{1/(2^{m+2} H_0), \,1/(2\kappa_0)\}$ and some constant $C = C(m) \in (0,\infty)$, or equivalently 
\begin{equation*}
	0 \leq \frac{d}{ds} \left( (1 + C \kappa_0 s + C H_0 s) \,\frac{\Mass(T \llcorner \{d_{\K} < s\})}{s} - 2 \varepsilon \,\frac{\Mass(T)}{s} \right) 
\end{equation*}
or $\mathcal{L}^1$-a.e.~$0 < s < \min\{1/(2^{m+2} H_0), \,1/(2\kappa_0)\}$.  By integrating over $[s,t]$, 
\begin{align} \label{bdry rect rmk2 eqn3}
	&(1 + C  \kappa_0 s + C H_0 s) \,\frac{\Mass(T \llcorner \{d_{\K} < s\})}{s} - 2 \varepsilon s \,\Mass(T) 
		\\&\hspace{10mm} \leq (1 + C  \kappa_0 t + C H_0 t) \,\frac{\Mass(T \llcorner \{d_{\K} < t\})}{t} - 2 \varepsilon t \,\Mass(T) \nonumber 
\end{align}
for all $0 < s < t \leq \min\{1/(2^{m+2} H_0), \,1/(2\kappa_0)\}$. 

To show \eqref{bdry rect rmk2 concl1}, we argue as we did in the proof of Lemma \ref{boundary rectifiability lemma2} using \eqref{bdry rect rmk2 eqn3}.  In particular, if $\Mass(T \llcorner \{d_{\K} < s\}) \leq 4 \varepsilon \,\Mass(T)$, then it immediately follows that $\Mass(T \llcorner \{d_{\K} < s\}) \leq C(m) \,s \,\Mass(T)^{\frac{m-1}{m}}$.  If instead $\Mass(T \llcorner \{d_{\K} < s\}) > 4 \varepsilon \,\Mass(T)$, it follows from \eqref{bdry rect rmk2 eqn3} that  
\begin{equation*}
	\frac{\Mass(T \llcorner \{d_{\K} < s\})}{s} \leq C(m) \,\frac{\Mass(T \llcorner \{d_{\K} < t\})}{t} 
\end{equation*}
for $s < t \leq \min\{1/(2^{m+2} H_0), \,1/(2\kappa_0)\}$ and by substituting for $t$ the smaller of $1/(2^{m+2} H_0)$ and $1/(2\kappa_0)$ we get \eqref{bdry rect rmk2 concl1}.  \eqref{bdry rect rmk2 concl2} follows from \eqref{bdry rect rmk2 concl1} and Lemma \ref{boundary rectifiability lemma1}.  \eqref{bdry rect rmk2 concl3} follows from \eqref{bdry rect rmk2 concl1} and \eqref{bdry rect rmk2 concl2} like in Corollary \ref{boundary rectifiability cor}. 
\end{proof}

Finally, we obtain local mass estimates for $T$ and $R$ near $\partial \K$.  This is based on~\cite[Theorem 3.4]{Gru85}.  Note that the proof of Lemma \ref{boundary rectifiability lemma4} assumes $\K$ has a $C^2$-boundary, and moreover that as a local result Lemma \ref{boundary rectifiability lemma4} holds true whenever $\K$ has a $C^2$-boundary without the requirement that $\K$ is convex.  For each $y \in \partial \K$ and $r,s > 0$, let 
\begin{equation*}
	Q_{r,s}(y) = B_r(y) \cap \{ x : 0 < \op{dist}(x,\K) < s \} .
\end{equation*}

\begin{lemma} \label{boundary rectifiability lemma4} 
Suppose $\K \subseteq \R^{n+1}$ is the closure of a bounded open subset with $C^2$-boundary.  Let $T \in \mathbf{I}_m(\R^{n+1} \setminus \K)$ and $R \in \mathbf{I}_{m+1}(\R^{n+1} \setminus \K)$ such that $\partial R = T$ in $\R^{n+1} \setminus \K$, $R$ is relatively area minimizing in $\R^{n+1} \setminus \K$, and $(T,R)$ is a relative isoperimetric minimizer in $\R^{n+1} \setminus \K$.  Then 
\begin{equation} \label{bdry rect4 concl1} 
	\frac{\|T\|(Q_{r,s}(y))}{r^{m-1} s} \leq C(m) \,\frac{\|T\|(B_{2r}(y))}{r^m} 
\end{equation}
for all $y \in \partial \K$ and $0 < s < r \leq \min\{1/(2^{m+2} H_0), \,1/(4n\kappa_0)\}$ and 
\begin{equation} \label{bdry rect4 concl2} 
	\frac{\|R\|(Q_{r,s}(y))}{r^m s} \leq C(m) \left( \frac{\|T\|(B_{2r}(y))}{r^m} + \frac{\|R\|(B_{2r}(y))}{r^{m+1}} \right) 
\end{equation}
for all $y \in \partial \K$ and $0 < s < r \leq 1/(4n\kappa_0)$, where $\kappa_0$ is as in \eqref{kappa0 defn}.  In particular, 
\begin{gather} 
	\label{bdry rect4 concl3} \frac{\|\partial T\|(B_r(y))}{r^{m-1}} \leq C(m) \,\frac{\|T\|(B_{2r}(y))}{r^m} \\
	\label{bdry rect4 concl4} \frac{\|\partial R\|(B_r(y))}{r^m} \leq C(m) \left( \frac{\|T\|(B_{2r}(y))}{r^m} + \frac{\|R\|(B_{2r}(y))}{r^{m+1}} \right)  
\end{gather}
for every $y \in \partial \K$ and $0 < r < \min\{1/(2^{m+2} H_0), \,1/(4n\kappa_0)\}$. 
\end{lemma}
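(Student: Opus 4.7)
The plan is to localize the almost-minimizing comparison of Lemma \ref{boundary rectifiability lemma2} to a neighborhood of a boundary point $y \in \partial \K$, using simultaneous good slicing of $T$ (and of $R$) by both $d_\K$ and $|\cdot - y|$. Since $(T,R)$ is a relative isoperimetric minimizer, Lemma \ref{almost minimizing lemma} applies with $\varepsilon = 0$, so
\begin{equation*}
\Mass(T) \leq \Mass_{\R^{n+1} \setminus \K}(T + \partial X) + 2H_0\,\Mass(X)
\end{equation*}
for every $X \in \mathbf{I}_{m+1}(\R^{n+1} \setminus \K)$ of compact support with $\Mass(X) \leq \tfrac{1}{2}\Mass(R)$.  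Fix $y \in \partial \K$ and $0 < r \leq \min\{1/(2^{m+2} H_0),\,1/(4n\kappa_0)\}$.  By the coarea formula I would select $\rho \in (r, 2r)$ so that $\langle T, |\cdot - y|, \rho \rangle$ is an integer-multiplicity rectifiable $(m-1)$-current with $\Mass(\langle T, |\cdot - y|, \rho \rangle) \leq C(m)\,\|T\|(B_{2r}(y))/r$, and for $\mathcal{L}^1$-a.e.~$s \in (0, r)$ also select the distance slice $\langle T, d_\K, s \rangle \llcorner B_\rho(y)$.

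For these good parameters, define $X_s := -h_\#(\llbracket 0, 1 \rrbracket \times (T \llcorner Q_{\rho, s}(y)))$ with $h(t, x) = (1-t)\xi_\K(x) + tx$, exactly as in the proof of Lemma \ref{boundary rectifiability lemma2}.  The homotopy formula together with the vanishing of $\xi_{\K \#}$ in $\R^{n+1} \setminus \K$, combined with $\partial T = 0$ in $\R^{n+1} \setminus \K$ to identify $\partial(T \llcorner Q_{\rho, s}(y))$ with the sum (up to sign) of the distance and radial slices, gives in $\R^{n+1} \setminus \K$
\begin{equation*}
T + \partial X_s = T \llcorner (\R^{n+1} \setminus Q_{\rho, s}(y)) + h_\#(\llbracket 0, 1 \rrbracket \times \partial(T \llcorner Q_{\rho, s}(y))).
\end{equation*}
The bound $\|\nabla \xi_\K\| \leq 1 + 2\kappa_0 d_\K$ on the tubular neighborhood of $\partial \K$ (ensured by $r \leq 1/(4n\kappa_0)$), together with the fact that $h(\cdot, x)$ traces a segment of length $d_\K(x) \leq s$, yields $\Mass(X_s) \leq s(1+2\kappa_0 s)^m\, \|T\|(Q_{\rho, s}(y))$ and $\Mass(h_\#(\llbracket 0,1\rrbracket \times \partial(T\llcorner Q_{\rho, s}(y)))) \leq s(1+2\kappa_0 s)^{m-1}\, \Mass(\partial(T\llcorner Q_{\rho, s}(y)))$.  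Writing $f(s) = \|T\|(Q_{\rho, s}(y))$, $g = \Mass(\langle T, |\cdot - y|, \rho\rangle)$, and $f'(s) = \Mass(\langle T, d_\K, s\rangle \llcorner B_\rho(y))$ (by coarea), substituting into the almost-minimizing inequality and using $s \leq 1/(2^{m+2} H_0)$ to absorb the $2 H_0\, \Mass(X_s)$ term yields a local differential inequality
\begin{equation*}
f(s) \leq C(m)\, s\, (g + f'(s))
\end{equation*}
for $\mathcal{L}^1$-a.e.~$s \in (0, r)$, an exact analogue of \eqref{bdry rect2 eqn5} with the radial-slice term $g$ playing the role of the global error $\varepsilon \Mass(T)$ there.

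Arguing as in \eqref{bdry rect2 eqn6}--\eqref{bdry rect2 eqn7} translates this into a monotonicity formula for an appropriate modification of $f(s)/s$; integration from $s$ to a reference value of order $r$, combined with $f(r) \leq \|T\|(B_{2r}(y))$ and $g \leq C(m)\, \|T\|(B_{2r}(y))/r$, gives $\|T\|(Q_{\rho, s}(y))/s \leq C(m)\, \|T\|(B_{2r}(y))/r$ and hence \eqref{bdry rect4 concl1} since $Q_{r, s}(y) \subset Q_{\rho, s}(y)$.  For \eqref{bdry rect4 concl2} I would imitate Lemma \ref{boundary rectifiability lemma1}: use the relative area minimality of $R$ to compare $\Mass(R)$ with $\Mass(R_s)$ for
\begin{equation*}
R_s := R \llcorner (\R^{n+1} \setminus Q_{\rho, s}(y)) + h_\#(\llbracket 0, 1 \rrbracket \times (\langle R, d_\K, s\rangle \llcorner B_\rho(y))) + h_\#(\llbracket 0, 1 \rrbracket \times (T \llcorner Q_{\rho, s}(y))),
\end{equation*}
and use the just-established \eqref{bdry rect4 concl1} to control the $T$-contribution; the resulting differential inequality for $\|R\|(Q_{\rho, s}(y))$ integrates to \eqref{bdry rect4 concl2}.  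The boundary estimates \eqref{bdry rect4 concl3} and \eqref{bdry rect4 concl4} then follow by selecting good slice values $s_k \downarrow 0$, using \eqref{bdry rect4 concl1}--\eqref{bdry rect4 concl2} to uniformly bound $\Mass(\langle T, d_\K, s_k\rangle \llcorner B_r(y))$ and $\Mass(\langle R, d_\K, s_k\rangle \llcorner B_r(y))$, and identifying the weak limits with $\partial T \llcorner (\partial \K \cap B_r(y))$ and $\partial R \llcorner (\partial \K \cap B_r(y))$ via Federer--Fleming compactness, exactly as at the end of the proof of Lemma \ref{boundary rectifiability lemma1}.

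The main technical obstacle is the monotonicity-type integration of $f(s) \leq C(m)\, s\, (g + f'(s))$: unlike the global constant error $\varepsilon \Mass(T)$ treated in Lemma \ref{boundary rectifiability lemma2}, the radial-slice term $g$ introduces an extra $s$-linear term on the right-hand side that must be tracked carefully so as to yield the desired linear-in-$s$ scaling rather than a weaker sublinear one, and the choice of good radius $\rho \in (r, 2r)$ is essential in order that $g$ be controlled by $\|T\|(B_{2r}(y))/r$ with the correct dependence on $r$.
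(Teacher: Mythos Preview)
Your approach is natural---localize the homotopy comparison of Lemma~\ref{boundary rectifiability lemma2} by slicing both by $d_\K$ and by $|\cdot-y|$---but the differential inequality you obtain does \emph{not} integrate to the stated linear bound, and this is a genuine gap, not merely a technicality. Concretely, from $f(s)\le s(1+Cs)\bigl(f'(s)+g\bigr)$ (after absorbing the $H_0$ term) one has $\frac{d}{ds}\!\left((1+Cs)\dfrac{f(s)}{s}\right)\ge -\dfrac{2g}{s}$, and integrating from $s$ to $r$ gives
\[
\frac{f(s)}{s}\;\le\; C\,\frac{f(r)}{r}\;+\;C\,g\,\log\!\frac{r}{s}
\;\le\; C\,\frac{\|T\|(B_{2r}(y))}{r}\Bigl(1+\log\!\frac{r}{s}\Bigr),
\]
i.e.\ a logarithmic loss. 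This is exactly the sublinear defect you flagged as ``the main technical obstacle'', and it does not disappear: the radial-slice error $sg$ is of the same order as the principal term $sf'(s)$, so no choice of integrating factor removes the $\log(r/s)$. The loss then propagates to the boundary estimate: along any sequence $s_k\downarrow 0$ the slice masses $\Mass\bigl(\langle T,d_\K,s_k\rangle\llcorner B_r(y)\bigr)$ are only bounded by $C(1+\log(r/s_k))\|T\|(B_{2r}(y))/r\to\infty$, so \eqref{bdry rect4 concl3} cannot be concluded. (Your competitor $R_s$ is also missing the radial slice $\langle R,|\cdot-y|,\rho\rangle\llcorner\{d_\K<s\}$ needed to make $\partial R_s=T$ in $\R^{n+1}\setminus\K$, but once added it produces the same logarithmic issue for $R$.)

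The paper avoids the differential inequality entirely by a different construction. Instead of slicing, it builds a single deformation for each pair $(r,s)$: define $\widehat g_{r,s}=\xi_\K-\mathrm{id}$ on $Q_{r,s}(y)$ and $\widehat g_{r,s}=0$ on $(\R^{n+1}\setminus B_{2r}(y))\cup\partial\K$, check that $\sup|\widehat g_{r,s}|\le s$ and $\op{Lip}\widehat g_{r,s}\le s/r$ (using $r\le 1/(4n\kappa_0)$), and extend by Kirszbraun to $g_{r,s}:\R^{n+1}\to\R^{n+1}$ with the same bounds. Setting $f_{r,s}(x)=x+g_{r,s}(x)$ and $h_{r,s}(t,x)=x+t\,g_{r,s}(x)$, the competitor $X_{r,s}=h_{r,s\#}(\llbracket 0,1\rrbracket\times T)$ is supported in $B_{2r}(y)$, and crucially $f_{r,s}$ maps $Q_{r,s}(y)$ into $\K$, so
\[
\Mass_{\R^{n+1}\setminus\K}\bigl(T+\partial X_{r,s}\bigr)
\;\le\;\Mass\bigl(T\llcorner(\R^{n+1}\setminus B_{2r}(y))\bigr)
+\Bigl(1+\tfrac{s}{r}\Bigr)^{m}\Mass\bigl(T\llcorner(B_{2r}(y)\setminus Q_{r,s}(y))\bigr).
\]
Plugging this and $\Mass(X_{r,s})\le s(1+s/r)^m\Mass(T\llcorner B_{2r}(y))$ into Lemma~\ref{almost minimizing lemma} (with $\varepsilon=0$) and subtracting yields \eqref{bdry rect4 concl1} \emph{algebraically}, with no ODE and no radial slice term. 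The estimate \eqref{bdry rect4 concl2} is obtained analogously by comparing $R$ with $f_{r,s\#}R-h_{r,s\#}(\llbracket 0,1\rrbracket\times T)$. The key idea you are missing is this Kirszbraun-extended ``push into $\K$'' map with Lipschitz constant $1+s/r$, which localizes without introducing any slice boundary on $\partial B_\rho(y)$.
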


\begin{proof}
Fix $y \in \partial \K$.  For each $0 < s < r \leq 1/(4n\kappa_0)$, let $\widehat{g}_{r,s} : Q_{r,s}(y) \cup (\R^{n+1} \setminus B_{2r}(y)) \cup \partial \K \rightarrow \R^{n+1}$ be the function defined by  
\begin{align*}
	\widehat{g}_{r,s}(x) &= \xi_{\K}(x) - x \text{ on } Q_{r,s}(y), \\ 
	\widehat{g}_{r,s}(x) &= 0 \text{ on } (\R^{n+1} \setminus B_{2r}(y)) \cup \K. 
\end{align*}
Clearly $\sup_{Q_{r,s}(y)} |\widehat{g}_{r,s}| = s$.  We claim that $\op{Lip} \widehat{g}_{r,s} \leq s/r$.  By \eqref{grad d xi nu}, 
\begin{equation*}
	\nabla_{e_i} (\xi_{\K}(x) - x) = \frac{1}{1 + \kappa_i d_{\K}(x)} \,e_i - e_i 
		= \frac{\kappa_i d_{\K}(x)}{1 + \kappa_i d_{\K}(x)} \,e_i
\end{equation*}
on $Q_{2r,s}(y)$, where $\kappa_i$ is the principal curvature of $\partial \K$ at $\xi_{\K}(x)$ in the principal direction $e_i$, and so  
\begin{equation*}
	\op{Lip} \widehat{g}_{r,s} |_{Q_{r,s}(y) \cup (\K \cap B_{2r}(y))} \leq \op{Lip} (\xi_{\K}(x) - x) |_{Q_{2r,s}(y)} 
		\leq \frac{\sqrt{n}\,\kappa_0 s}{1 - \kappa_0 s} < 2n \kappa_0 s \leq \frac{s}{r}  
\end{equation*}
using $s < r \leq 1/(4n\kappa_0)$.  If $x \in Q_{r,s}(y)$ and $z \in \R^{n+1} \setminus B_{2r}(y)$ then 
\begin{equation*}
	|\widehat{g}_{r,s}(x) - \widehat{g}_{r,s}(z)| = |\widehat{g}_{r,s}(x)| = d_{\K}(x) \leq s \leq \frac{s}{r} \,|x-z|.
\end{equation*}
By Kirszbraun's theorem~\cite[2.10.43]{Fed69}, $\widehat{g}_{r,s}$ extends to $g_{r,s} : \R^{n+1} \rightarrow \R^{n+1}$ such that $\sup g_{r,s} \leq s$ and $\op{Lip} g_{r,s} \leq r/s$.  Define $f_{r,s} : \R^{n+1} \rightarrow \R^{n+1}$ and $h_{r,s} : [0,1] \times \R^{n+1} \rightarrow \R^{n+1}$ by $f_{r,s}(x) = x + g_{r,s}(x)$ and $h_{r,s}(t,x) = x + t \, g_{r,s}(x)$ for all $t \in [0,1]$ and $x \in \R^{n+1}$. 

We will apply Lemma \ref{almost minimizing lemma} with 
\begin{equation} \label{bdry rect4 eqn1}
	X = X_{r,s} = h_{r,s \#}(\llbracket 0,1 \rrbracket \times T) 
\end{equation}
in $\R^{n+1} \setminus \K$ for all $0 < s < r \leq 1/(4n\kappa_0)$.  Since $h_{r,s}(t,x) = x$ for all $t \in [0,1]$ and $x \in \R^{n+1} \setminus B_{2r}(y)$, 
\begin{equation} \label{bdry rect4 eqn2}
	X_{r,s} = h_{r,s \#}(\llbracket 0,1 \rrbracket \times (T \llcorner B_{2r}(y))) 
\end{equation}
in $\R^{n+1} \setminus \K$ for all $0 < s < r \leq 1/(4n\kappa_0)$ and in particular $X_{r,s}$ has compact support.  We want to compute $\Mass_{\R^{n+1} \setminus \K}(T + \partial X_{r,s})$ and $\Mass(X_{r,s})$.  By applying the homotopy formula in \eqref{bdry rect4 eqn1} and using $f_{r,s}(x) = x$ for all $x \in \R^{n+1} \setminus B_{2r}(y)$, 
\begin{equation} \label{bdry rect4 eqn3}
	\partial X_{r,s} = f_{r,s \#}(T) - T = f_{r,s \#}(T \llcorner B_{2r}(y)) - T \llcorner B_{2r}(y) 
\end{equation}
in $\R^{n+1} \setminus \K$ for all $0 < s < r \leq 1/(4n\kappa_0)$.  Since $f_{r,s}(Q_{r,s}(y)) \subseteq \K$ and $Q_{r,s}(y) \subseteq f_{r,s}^{-1}(f_{r,s}(Q_{r,s}(y)))$, 
\begin{align*}
	\Mass_{\R^{n+1} \setminus \K}\big( f_{r,s \#}\big( T \llcorner B_{2r}(y) \big) \big) 
	&\leq \Mass\big( f_{r,s \#}\big( T \llcorner B_{2r}(y) \big) \llcorner \big( \R^{n+1} \setminus f_{r,s}(Q_{r,s}(y)) \big) \big)
	\\&= \Mass\big( f_{r,s \#}\big( T \llcorner \big( B_{2r}(y) \setminus f_{r,s}^{-1}(f_{r,s}(Q_{r,s}(y))) \big) \big) \big)  
	\\&\leq \Mass\big( f_{r,s \#}\big( T \llcorner \big( B_{2r}(y) \setminus Q_{r,s}(y) \big) \big) \big)  
\end{align*}
for all $0 < s < r \leq 1/(4n\kappa_0)$.  Since $\op{Lip} g_{r,s} \leq s/r$ we have $\op{Lip} f_{r,s} \leq 1 + s/r$ and thus  
\begin{equation} \label{bdry rect4 eqn4}
	\Mass_{\R^{n+1} \setminus \K}(f_{r,s \#}(T \llcorner B_{2r}(y)))) \leq \left(1 + \frac{s}{r}\right)^m \Mass(T \llcorner B_{2r}(y) \setminus Q_{r,s}(y)). 
\end{equation}
for all $0 < s < r \leq 1/(4n\kappa_0)$.  By \eqref{bdry rect4 eqn3} and \eqref{bdry rect4 eqn4}, 
\begin{equation} \label{bdry rect4 eqn5}
	\Mass_{\R^{n+1} \setminus \K}(T + \partial X_{r,s}) \leq \Mass(T \llcorner \R^{n+1} \setminus B_{2r}(y)) 
		+ \left(1 + \frac{s}{r}\right)^m \Mass(T \llcorner B_{2r}(y) \setminus Q_{r,s}(y))
\end{equation}
for all $0 < s < r \leq 1/(4n\kappa_0)$.  Using \eqref{bdry rect4 eqn2} and $\op{Lip} g_{r,s} \leq s/r$, which implies $\op{Lip} h_{r,s}(\cdot,t) \leq 1 + s/r$ for all $t \in [0,1]$, 
\begin{equation} \label{bdry rect4 eqn6}
	\Mass(X_{r,s}) = \Mass(h_{r,s \#}(\llbracket 0,1 \rrbracket \times (T \llcorner B_{2r}(y)))) \leq s \left(1 + \frac{s}{r}\right)^m \Mass(T \llcorner B_{2r}(y)) 
\end{equation}
for all $0 < s < r \leq 1/(4n\kappa_0)$.  Note that when applying Lemma \ref{almost minimizing lemma}, we will want to assume that $\Mass(X_{r,s}) \leq 2^m \,s \,\Mass(T) \leq \tfrac{1}{2} \,\Mass(R)$, which as in Lemma \ref{boundary rectifiability lemma2} follows if $s \leq 1/(2^{m+2} H_0)$.  Hence Lemma \ref{almost minimizing lemma} together with \eqref{bdry rect4 eqn5} and \eqref{bdry rect4 eqn6} gives us 
\begin{equation*}
	\Mass(T \llcorner B_{2r}(y)) \leq \left(1 + \frac{s}{r}\right)^m \Mass(T \llcorner (B_{2r}(y) \setminus Q_{r,s}(y))) 
		+ 2 H_0 s \left(1 + \frac{s}{r}\right)^m \Mass(T \llcorner B_{2r}(y))
\end{equation*}
for all $0 < s < r \leq  \min\{1/(2^{m+2} H_0), \,1/(4n\kappa_0)\}$.  By Taylor's theorem, $(1+s/r)^m \leq 1 + m \,2^{m-1} \,s/r$, so 
\begin{equation*}
	\Mass(T \llcorner B_{2r}(y)) \leq \left( 1 + \frac{m\,2^{m-1}\,s}{r} \right) \Mass(T \llcorner (B_{2r}(y) \setminus Q_{r,s}(y))) 
		+ 2^{m+1} H_0 s \Mass(T \llcorner B_{2r}(y)) 
\end{equation*}
for all $0 < s < r \leq \min\{1/(2^{m+2} H_0), \,1/(4n\kappa_0)\}$.  By subtracting $\Mass(T \llcorner (B_{2r}(y) \setminus Q_{r,s}(y)))$ from both sides, 
\begin{equation*}
	\Mass(T \llcorner Q_{r,s}(y)) \leq C(m) \,\frac{s}{r} \,\Mass(T \llcorner B_{2r}(y))
\end{equation*}
for all $0 < s < r \leq \min\{1/(2^{m+2} H_0), \,1/(4n\kappa_0)\}$.  Dividing by $r^{m-1} s$ we obtain \eqref{bdry rect4 concl1}. 

Next we will use $R$ being relatively area minimizing, comparing $R$ to $R_{r,s} \in \mathbf{I}_{m+1}(\R^{n+1} \setminus \K)$ given by 
\begin{equation} \label{bdry rect4 eqn7}
	R_{r,s} = f_{r,s \#} R - h_{r,s \#}(\llbracket 0,1 \rrbracket \times T) 
\end{equation}
for all $0 < s < r \leq 1/(4n\kappa_0)$.  Since $f_{r,s}(x) = h_{r,s}(t,x) = x$ for all $t \in [0,1]$ and $x \in \R^{n+1} \setminus B_{2r}(y)$, 
\begin{equation} \label{bdry rect4 eqn8}
	R_{r,s} = R \llcorner \R^{n+1} \setminus B_{2r}(y) + f_{r,s \#} (R \llcorner B_{2r}(y)) - h_{r,s \#}(\llbracket 0,1 \rrbracket \times (T \llcorner B_{2r}(y))) 
\end{equation}
in $\R^{n+1} \setminus \K$ and in particular $\op{spt}(R_{r,s} - R)$ is compact.  By applying the homotopy formula in \eqref{bdry rect4 eqn7}, $\partial R_{r,s} = T$ in $\R^{n+1} \setminus \K$ for all $0 < s < r \leq 1/(4n\kappa_0)$.  Hence since $R$ is relatively area minimizing, $\Mass(R) \leq \Mass(R_{r,s})$ and so by \eqref{bdry rect4 eqn8} 
\begin{equation} \label{bdry rect4 eqn9}
	\Mass(R \llcorner B_{2r}(y)) \leq \Mass_{\R^{n+1} \setminus \K}(f_{r,s \#}(R \llcorner B_{2r}(y))) 
		+ \Mass(h_{r,s \#}( \llbracket 0,1 \rrbracket \times (T  \llcorner B_{2r}(y)) )) 
\end{equation}
for all $0 < s < r \leq 1/(4n\kappa_0)$.  Arguing like we did above to obtain \eqref{bdry rect4 eqn4} using $f_{r,s}(Q_{r,s}(y)) \subseteq \K$ and $\op{Lip} f_{r,s} \leq 1 + s/r$, 
\begin{align} \label{bdry rect4 eqn10}
	\Mass(f_{r,s \#}(R \llcorner B_{2r}(y))) &\leq \Mass(f_{r,s \#}( R \llcorner (B_{2r}(y) \setminus Q_{R,s}(y)) )) 
	\\&\leq \left(1+\frac{s}{r}\right)^{m+1} \Mass(R \llcorner (B_{2r}(y) \setminus Q_{R,s}(y))) \nonumber 
\end{align}
for all $0 < s < r \leq 1/(4n\kappa_0)$.  Since $\op{Lip} h_{r,s}(\cdot,t) \leq 1 + s/r$ for all $t \in [0,1]$, 
\begin{equation} \label{bdry rect4 eqn11}
	\Mass(h_{r,s \#}(\llbracket 0,1 \rrbracket \times (T \llcorner B_{2r}(y))) \leq s \left(1+\frac{s}{r}\right)^m \Mass(T \llcorner B_{2r}(y))
\end{equation}
for all $0 < s < r \leq 1/(4n\kappa_0)$.  Thus by \eqref{bdry rect4 eqn9}, \eqref{bdry rect4 eqn10}, and \eqref{bdry rect4 eqn11}, 
\begin{equation*}
	\Mass(R \llcorner B_{2r}(y)) \leq \left(1+\frac{s}{r}\right)^{m+1} \Mass(R \llcorner (B_{2r}(y) \setminus Q_{r,s}(y))) 
		+ s \left(1+\frac{s}{r}\right)^m \Mass(T \llcorner B_{2r}(y)) 
\end{equation*}
for all $0 < s < r \leq 1/(4n\kappa_0)$.  By Taylor's theorem, $(1+s/r)^{m+1} \leq 1 + (m+1) \,2^m \,s/r$, so 
\begin{equation*}
	\Mass(R \llcorner B_{2r}(y)) 
	\leq \left( 1 + \frac{(m+1) \,2^m \,s}{r} \right) \Mass(R \llcorner (B_{2r}(y) \setminus Q_{r,s}(y)))  + 2^m \,s \,\Mass(T \llcorner B_{2r}(y)) 
\end{equation*}
for all $0 < s < r \leq 1/(4n\kappa_0)$.  By subtracting $\Mass(R \llcorner (B_{2r}(y) \setminus Q_{r,s}(y)))$ from both sides, 
\begin{align*}
	\Mass(R \llcorner Q_{r,s}(y)) \leq C(m) \,\frac{s}{r} \,\Mass(R \llcorner B_{2r}(y)) + C(m) \,s \,\Mass(T \llcorner B_{2r}(y))
\end{align*}
for all $0 < s < r \leq 1/(4n\kappa_0)$.  Dividing by $r^m s$ we obtain \eqref{bdry rect4 concl2}. 

To see \eqref{bdry rect4 concl3} and \eqref{bdry rect4 concl4}, fix $y \in \partial \K$ and $r > 0$.  Let us consider $T$ as a current of $B_r(y)$.  For each integer $k \geq 0$ set $s_k = 2^{-k} r$.  By slicing theory using \eqref{meas_good_slices} with $\vartheta = 1/4$, \eqref{bdry rect4 concl1}, and  \eqref{bdry rect4 concl2}, for each integer $k \geq 1$ there exists $s^*_k \in (s_k,s_{k-1})$ such that $T \llcorner (B_r(y) \cap \{d_{\K} > s^*_k\}) \in \mathbf{I}_{m,{\rm loc}}(B_r(y))$, $\langle T, d_{\K}, s^*_k \rangle \llcorner B_r(y) \in \mathbf{I}_{m-1,{\rm loc}}(B_r(y))$, $R \llcorner (B_r(y) \cap \{d_{\K} > s^*_k\}) \in \mathbf{I}_{m+1,{\rm loc}}(B_r(y))$, and $\langle R, d_{\K}, s^*_k \rangle \llcorner B_r(y) \in \mathbf{I}_{m,{\rm loc}}(B_r(y))$ with 
\begin{gather}
	\label{bdry rect4 eqn12} \partial (T \llcorner \{d_{\K} > s^*_k\}) = -\langle T, d_{\K}, s^*_k \rangle \text{ in } B_r(y), \\
	\label{bdry rect4 eqn13} \partial (R \llcorner \{d_{\K} > s^*_k\}) = T \llcorner \{d_{\K} > s^*_k\} - \langle R, d_{\K}, s^*_k \rangle \text{ in } B_r(y), \\
	\label{bdry rect4 eqn14} \frac{\|\langle T, d_{\K}, s^*_k \rangle\|(B_r(y))}{r^{m-1}} \leq \frac{4 \,\|T\|(Q_{r,s_k}(y))}{r^{m-1} s_k} 
		\leq C(m) \,\frac{\|T\|(B_{2r}(y))}{r^m}, \\
	\label{bdry rect4 eqn15} \frac{\|\langle R, d_{\K}, s^*_k \rangle\|(B_r(y))}{r^m} \leq \frac{4 \,\|R\|(Q_{r,s_k}(y))}{r^m s_k} 
		\leq C(m) \left( \frac{\|T\|(B_{2r}(y))}{r^m} + \frac{\|R\|(B_{2r}(y))}{r^{m+1}} \right) 
\end{gather}
Clearly $T \llcorner \{d_{\K} > s^*_k\} \rightarrow T$ and $R \llcorner \{d_{\K} > s^*_k\} \rightarrow R$ in the mass norm topology on $B_r(y)$.  By the Federer-Fleming compactness theorem, \eqref{bdry rect4 eqn12}, \eqref{bdry rect4 eqn13}, \eqref{bdry rect4 eqn14}, and \eqref{bdry rect4 eqn15}, after passing to a subsequence $T \llcorner \{d_{\K} > s^*_k\}$ and $R \llcorner \{d_{\K} > s^*_k\}$ converge in the flat norm topology to integral currents in $B_r(y)$ as $k \rightarrow \infty$ and thus $T \llcorner B_r(y) \in \mathbf{I}_m(B_r(y))$ and $R \llcorner B_r(y) \in \mathbf{I}_{m+1}(B_r(y))$.  Moreover, after passing to a subsequence $-\langle T, d_{\K}, s^*_k \rangle \rightarrow \partial T$ and $-\langle R, d_{\K}, s^*_k \rangle \rightarrow \partial R - T$ weakly in $B_r(y)$.  By the semi-continuity of mass and \eqref{bdry rect4 eqn14} we have 
\begin{equation*}
	\frac{\|\partial T\|(B_r(y))}{r^{m-1}} \leq \liminf_{j \rightarrow \infty} \frac{\|\langle T, d_{\K}, s^*_k \rangle\|(B_r(y))}{r^{m-1}} 
		\leq C(m) \,\frac{\|T\|(B_{2r}(y))}{r^m}, 
\end{equation*}
proving \eqref{bdry rect4 concl3}, and similarly by \eqref{bdry rect4 eqn15} we have \eqref{bdry rect4 concl4}. 
\end{proof}

\section{Non-concentration of mass at infinity} \label{sec:nonconinf_sec} 

The main goal of this section is to show that if the relative isoperimetric ratio of $(T,R)$ is close to $\gamma_{m,n}(\K)$, then the mass of $T$ and $R$ cannot concentrate at infinity. 

Let us first consider the special case where $(T,R)$ is a relative isoperimetric minimizer and $R$ consists of two separate components.  In other words, suppose there exists currents $T_1,T_2 \in \mathbf{I}_m(\R^{n+1} \setminus \K)$ and $R_1,R_2 \in \mathbf{I}_{m+1}(\R^{n+1} \setminus \K)$ such that each $R_i$ is relatively area minimizing in $\R^{n+1} \setminus \K$ with 
\begin{equation*}
	T = T_1 + T_2, \quad R = R_1 + R_2, \quad \partial R_i = T_i \text{ for } i = 1,2
\end{equation*}
in $\R^{n+1} \setminus \K$ and 
\begin{equation*}
	\op{spt} R_1 \cap \op{spt} R_2 = \emptyset. 
\end{equation*}
Using $(T,R)$ being relative isoperimetric minimizing and $R_1$ and $R_2$ being both relatively area minimizing 
\begin{align*}
	\Mass(T)^{\frac{m+1}{m}} = \gamma_{m,n}(\K) \,\Mass(R) = \gamma_{m,n}(\K) \,\big( \Mass(R_1) + \Mass(R_2) \big)
		\leq \Mass(T_1)^{\frac{m+1}{m}} + \Mass(T_2)^{\frac{m+1}{m}} . 
\end{align*}
By dividing both sides by $\Mass(T)^{\frac{m+1}{m}}$, 
\begin{equation*}
	1 \leq r^{\frac{m+1}{m}} + (1-r)^{\frac{m+1}{m}} \quad \text{where} \quad r = \frac{\Mass(T_1)}{\Mass(T)}. 
\end{equation*}
However, this holds true if and only if $r = 0$ or $r = 1$.  Thus $R$ can have only one component. 

By modifying this computation, we make the following observation previously noted by Almgren in~\cite{Alm86} in the context of the isoperimetric inequality in $\R^{n+1}$.  Suppose the relative isoperimetric ratio of $(T,R)$ is close to $\gamma_{m,n}(\K)$.  Divide both $T$ and $R$ into two currents by slicing them with a sphere $\partial B_{\rho}(0)$ which is chosen so that the masses of $T$ and $R$ are negligible along $\partial B_{\rho}(0)$.  Arguing much like above, one can show that either $\Mass(T \llcorner B_{\rho}(0))$ and $\Mass(R \llcorner B_{\rho}(0))$ are small or $\Mass(T \llcorner \R^{n+1} \setminus B_{\rho}(0))$ and $\Mass(R \llcorner \R^{n+1} \setminus B_{\rho}(0))$ are small.  In the special case that $\op{diam}(\K) < \rho$, by translating before slicing we can assume that $\K \subset B_{\rho}(0)$ and then deduce from the isoperimetric inequality in $\R^{n+1}$ that $T$ and $R$ cannot concentrate at infinity.  If instead $\rho << \op{diam}(\K)$ we will have to show that $T$ concentrates in some ball, which is necessarily near $\partial \K$, in order to conclude that $T$ and $R$ cannot concentrate at infinity; we will do in Lemma \ref{local concentration lemma} via the deformation theorem. 

\begin{lemma} \label{nonconcentration lemma}
For every $0 < \sigma < 1 - 2^{-1/m}$ there exists $\varepsilon = \varepsilon(m,\sigma) > 0$ and $\beta_0 = \beta_0(m,\sigma) \in [1,\infty)$ such that the following holds true.  Let $\K$ be a closed subset $\R^{n+1}$ such that 
\begin{equation} \label{noncon hyp1} 
	0 < \gamma_{m,n}(\K) \leq 2^{-\frac{1}{m}} \frac{\mathcal{H}^m(\sphere)^{\frac{m+1}{m}}}{\mathcal{H}^{m+1}(\ball)}. 
\end{equation}
Let $T \in \mathbf{I}_m(\R^{n+1} \setminus \K)$ and $R \in \mathbf{I}_{m+1}(\R^{n+1} \setminus \K)$ such that $\partial R = T$ in $\R^{n+1} \setminus \K$, $R$ is relatively area minimizing in $\R^{n+1} \setminus \K$, and 
\begin{equation} \label{noncon hyp2} 
	\frac{\Mass(T)^{\frac{m+1}{m}}}{\Mass(R)} \leq (1+\varepsilon) \,\gamma_{m,n}(\K). 
\end{equation}
Then for every $\beta \in [\beta_0,\infty)$, either 
\begin{align} \label{noncon concl1}
	&\Mass(T \llcorner B_{\beta\,\Mass(R)^{\frac{1}{m+1}}}(0)) \leq \sigma\,\Mass(T), \\
	&\Mass(R \llcorner B_{\beta\,\Mass(R)^{\frac{1}{m+1}}}(0)) \leq \sigma\,\Mass(R) \nonumber 
\end{align}
or 
\begin{align} \label{noncon concl2}
	&\Mass(T \llcorner \R^{n+1} \setminus B_{2\beta\,\Mass(R)^{\frac{1}{m+1}}}(0)) \leq \sigma\,\Mass(T), \\
	&\Mass(R \llcorner \R^{n+1} \setminus B_{2\beta\,\Mass(R)^{\frac{1}{m+1}}}(0)) \leq \sigma\,\Mass(R). \nonumber 
\end{align}
If additionally $\K \subset B_{\beta \,\Mass(R)^{\frac{1}{m+1}}}(0)$, then \eqref{noncon concl2} must hold true.  If instead $\op{dist}(0,\K) > 2\beta \,\Mass(R)^{\frac{1}{m+1}}$, then \eqref{noncon concl1} must hold true.
\end{lemma}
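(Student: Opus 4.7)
The strategy is the slicing-and-splitting idea from the paragraph preceding the lemma.  Fix $\beta\geq\beta_0$ to be chosen and set $\ell=\Mass(R)^{1/(m+1)}$.  By the coarea bound \eqref{meas_good_slices} applied to $R$ with $f(x)=|x|$ and $\vartheta=1/2$, pick $\rho^*\in(\beta\ell,2\beta\ell)$ so that $\langle R,|\cdot|,\rho^*\rangle\in\mathbf{I}_m(\R^{n+1}\setminus\K)$ has mass at most $2\,\Mass(R)^{m/(m+1)}/\beta$.  Hypotheses \eqref{noncon hyp1} and \eqref{noncon hyp2} control $\Mass(R)^{m/(m+1)}$ by a $\gamma$-dependent multiple of $\Mass(T)$, so this slice mass is at most $\delta\,\Mass(T)$ with $\delta=\delta(\beta)$ arbitrarily small once $\beta\geq\beta_0(m,\sigma)$.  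Set $R_1=R\llcorner B_{\rho^*}(0)$, $R_2=R-R_1$, $T_i=\partial R_i$ in $\R^{n+1}\setminus\K$; then $T=T_1+T_2$, $\Mass(T_1)\leq(a+\delta)\Mass(T)$ and $\Mass(T_2)\leq(1-a+\delta)\Mass(T)$, where $a=\Mass(T\llcorner B_{\rho^*}(0))/\Mass(T)$.

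For each $i$, Definition \ref{gamma defn} applied to $T_i$ (which bounds $R_i$ in $\R^{n+1}\setminus\K$) yields a relatively area-minimizing $\tilde R_i\in\mathbf{I}_{m+1}(\R^{n+1}\setminus\K)$ with $\partial\tilde R_i=T_i$ and $\gamma_{m,n}(\K)\,\Mass(\tilde R_i)\leq\Mass(T_i)^{(m+1)/m}$.  Since $\tilde R_1+\tilde R_2$ is admissible against $R$, relative area-minimality of $R$ combined with \eqref{noncon hyp2} produces
\[
1\leq(1+\varepsilon)\bigl((a+\delta)^{(m+1)/m}+(1-a+\delta)^{(m+1)/m}\bigr).
\]
The function $\phi(t)=t^{(m+1)/m}+(1-t)^{(m+1)/m}$ is strictly convex on $[0,1]$ with $\phi(0)=\phi(1)=1$ and $\phi(1/2)=2^{-1/m}$; the secant bound $\phi(\sigma)\leq 1-2\sigma(1-2^{-1/m})$ on $[0,1/2]$ explains the threshold $\sigma<1-2^{-1/m}$.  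Choosing $\varepsilon(m,\sigma)$ and $\beta_0(m,\sigma)$ so that both $\varepsilon$ and $O(\delta)$ stay below $2\sigma(1-2^{-1/m})$ forces the dichotomy $a\leq\sigma$ or $a\geq 1-\sigma$.  The same dichotomy transfers to $R$ via
\[
\frac{\Mass(R\llcorner B_{\rho^*}(0))}{\Mass(R)}\leq\frac{\Mass(\tilde R_1)}{\Mass(R)}\leq(1+\varepsilon)(a+\delta)^{(m+1)/m}
\]
and its complement counterpart, using $\sigma^{(m+1)/m}<\sigma$; together with $B_{\beta\ell}(0)\subset B_{\rho^*}(0)\subset B_{2\beta\ell}(0)$ this yields \eqref{noncon concl1} or \eqref{noncon concl2}.

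For the special placements of $\K$, the $\K$-disjoint piece of $T$ admits a strictly sharper filling.  When $\K\subset B_{\beta\ell}(0)$, the support of $T_2$ lies in $\R^{n+1}\setminus B_{\rho^*}(0)\subset\R^{n+1}\setminus\K$ and $\partial T_2=0$ in $\R^{n+1}$; the Euclidean isoperimetric inequality \eqref{iso ineq} furnishes $\hat R_2\in\mathbf{I}_{m+1}(\R^{n+1})$ with $\partial\hat R_2=T_2$ and $\Mass(\hat R_2)\leq \mathcal{H}^{m+1}(\ball)/\mathcal{H}^m(\sphere)^{(m+1)/m}\cdot\Mass(T_2)^{(m+1)/m}$, and its restriction $\hat R_2\llcorner(\R^{n+1}\setminus\K)$ is an admissible competitor to $R_2$ in $\R^{n+1}\setminus\K$ with the same mass bound.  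Substituting this improved bound in place of the $\gamma_{m,n}(\K)$-bound for $\tilde R_2$, and using \eqref{noncon hyp1} to pull out the factor $2^{-1/m}$, the key subadditivity estimate upgrades to the asymmetric inequality
\[
1\leq(1+\varepsilon)\bigl(a^{(m+1)/m}+2^{-1/m}(1-a)^{(m+1)/m}\bigr)+O(\delta),
\]
whose near-solutions concentrate only at $a=1$; this excludes the branch $a\leq\sigma$ and forces \eqref{noncon concl2}.  The case $\op{dist}(0,\K)>2\beta\ell$ is strictly symmetric, with the improved filling applied to $\tilde R_1$ instead, forcing \eqref{noncon concl1}.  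The main remaining difficulty is purely quantitative: calibrating $\varepsilon(m,\sigma)$ and $\beta_0(m,\sigma)$ so that the error terms $\varepsilon$ and $\delta$ fit below the $(1-2^{-1/m})$-scale convexity gaps that power both the symmetric dichotomy and the asymmetric exclusion in the special cases.
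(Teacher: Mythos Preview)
Your proposal is correct and follows essentially the same slicing-and-splitting strategy as the paper: pick a good radius $\rho^*\in(\beta\ell,2\beta\ell)$ with small slice mass, split $T$ and $R$ accordingly, and exploit the strict subadditivity of $t\mapsto t^{(m+1)/m}$ together with \eqref{noncon hyp2} to force the dichotomy; the special placements of $\K$ are then handled by invoking the sharper Euclidean isoperimetric constant on the piece disjoint from $\K$.

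The only notable difference is organizational. The paper observes directly that the two pieces $R\llcorner B_{\rho^*}(0)$ and $R\llcorner(\R^{n+1}\setminus B_{\rho^*}(0))$ are \emph{themselves} relatively area minimizing in $\R^{n+1}\setminus\K$, and then works with the two-variable function
\[
f(x,y)=\min\Bigl\{\frac{x^{(m+1)/m}}{y},\ \frac{(1-x)^{(m+1)/m}}{1-y}\Bigr\},\qquad x=\frac{\Mass(T\llcorner B_{\rho^*}(0))}{\Mass(T)},\ y=\frac{\Mass(R\llcorner B_{\rho^*}(0))}{\Mass(R)},
\]
obtaining the $T$- and $R$-dichotomies simultaneously. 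You instead introduce auxiliary minimizers $\tilde R_i$, derive the one-variable inequality $1\leq(1+\varepsilon)\phi(a)+O(\delta)$ for the $T$-fraction $a$, and then transfer to $R$ via $\Mass(R\llcorner B_{\rho^*}(0))\leq\Mass(\tilde R_1)$ (which is valid because $\tilde R_1+R_2$ competes against $R$). Both routes work; the paper's is slightly more direct since it avoids the auxiliary $\tilde R_i$ and never needs the separate transfer step, while yours makes the role of the convexity of $t^{(m+1)/m}$ more explicit. For the special cases the paper simply applies the Euclidean isoperimetric inequality to the outer piece $R\llcorner(\R^{n+1}\setminus B_{\rho^*}(0))$ itself (which is area minimizing in $\R^{n+1}$ once $\K\subset B_{\rho^*}(0)$) and reads off a direct contradiction with \eqref{noncon hyp1}, rather than routing through a new filling $\hat R_2$ and an asymmetric inequality; again this is the same mechanism packaged differently.
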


\begin{proof}
Consider the continuous function $f : [0,1]^2 \setminus \{(0,0), (1,1)\} \rightarrow (0,\infty)$ defined by 
\begin{equation*}
	f(x,y) = \min\left\{ \frac{x^{\frac{m+1}{m}}}{y}, \frac{(1-x)^{\frac{m+1}{m}}}{1-y} \right\} . 
\end{equation*}
In particular, $f(0,y) = 0$ for all $y \in (0,1]$, $f(1,y) = 0$ for all $y \in [0,1)$, $f(x,0) = (1-x)^{\frac{m+1}{m}}$ for all $x \in (0,1]$, and $f(x,1) = x^{\frac{m+1}{m}}$ for all $x \in [0,1)$.  We claim that $f(x,y) < 1$ for all $(x,y) \in [0,1]^2 \setminus \{(0,0), (1,1)\}$.  Otherwise $f(x,y) \geq 1$ for some $(x,y) \in (0,1)^2$, which implies 
\begin{equation*}
	x^{\frac{m}{m+1}} \geq y, \quad (1-x)^{\frac{m}{m+1}} \geq 1-y. 
\end{equation*}
By adding both inequalities, 
\begin{equation*}
	x^{\frac{m}{m+1}} + (1-x)^{\frac{m}{m+1}} \geq 1,
\end{equation*}
which is impossible by the strict concavity of $x^{\frac{m}{m+1}}$.  Fix $\sigma > 0$.  There exists $\varepsilon = \varepsilon(m,\sigma) \in (0,1)$ such that 
\begin{equation*}
	(1+\varepsilon)^{-3} > \min\left\{ \frac{x^{\frac{m+1}{m}}}{y}, \frac{(1-x)^{\frac{m+1}{m}}}{1-y} \right\}
\end{equation*}
for all $x,y \in [0,1]^2 \setminus ([0,\sigma]^2 \cup [1-\sigma,1]^2)$.  Moreover, there exists $\delta = \delta(m,\sigma) > 0$ such that 
\begin{equation} \label{noncon eqn1}
	(1+\varepsilon)^{-2} > \min\left\{ \frac{(x+\delta)^{\frac{m+1}{m}}}{y}, \frac{(1-x+\delta)^{\frac{m+1}{m}}}{1-y} \right\}
\end{equation}
for all $x,y \in [0,1]^2 \setminus ([0,\sigma]^2 \cup [1-\sigma,1]^2)$.

Let $\rho_0 = \beta \,\Mass(R)^{\frac{1}{m+1}}$.  By slicing theory using \eqref{meas_good_slices} with $\vartheta = 1/2$ there exists $\rho \in (\rho_0,2\rho_0)$ such that $T \llcorner B_{\rho}(0), T \llcorner \R^{n+1} \setminus B_{\rho}(0) \in \mathbf{I}_m(\R^{n+1} \setminus \K)$, $R \llcorner B_{\rho}(0), R \llcorner \R^{n+1} \setminus B_{\rho}(0) \in \mathbf{I}_{m+1}(\R^{n+1} \setminus \K)$, and $\langle R, |\cdot|, \rho \rangle \in \mathbf{I}_m(\R^{n+1} \setminus \K)$ with 
\begin{gather} 
	\partial (R \llcorner B_{\rho}(0)) 
		= T \llcorner B_{\rho}(0) + \langle R, |\cdot|, \rho \rangle \text{ in } \R^{n+1} \setminus \K, \label{noncon eqn2} \\
	\partial (R \llcorner \R^{n+1} \setminus B_{\rho}(0)) 
		= T \llcorner \R^{n+1} \setminus B_{\rho}(0) - \langle R, |\cdot|, \rho \rangle \text{ in } \R^{n+1} \setminus \K, \label{noncon eqn3} \\
	\Mass(\langle R, |\cdot|, \rho \rangle) < \frac{2}{\rho_0} \,\Mass(R). \label{noncon eqn4} 
\end{gather}
By taking $\beta_0 = \beta_0(m,\sigma) > 2 \,\delta^{-1} \,\gamma_{m,n}(\K)^{-\frac{m}{m+1}}$, where $\delta$ is as in \eqref{noncon eqn1}, and using $\rho_0 = \beta \,\Mass(R)^{\frac{1}{m+1}}$ for $\beta \geq \beta_0$, \eqref{noncon eqn4} gives us 
\begin{align} \label{noncon eqn5}
	\Mass(\langle R, |\cdot|, \rho \rangle) 
		&< \frac{2}{\rho_0} \,\Mass(R) 
		= \frac{2}{\beta} \,\Mass(R)^{\frac{m}{m+1}} 
		\leq \frac{2}{\beta_0} \,\Mass(R)^{\frac{m}{m+1}} 
		\\&\leq \frac{2}{\beta_0\,\gamma_{m,n}(\K)^{\frac{m}{m+1}}} \,\Mass(T) 
		< \delta \,\Mass(T). \nonumber 
\end{align}

Let 
\begin{equation*}
	x = \frac{\Mass(T \llcorner B_{\rho}(0))}{\Mass(T)}, \quad y = \frac{\Mass(R \llcorner B_{\rho}(0))}{\Mass(R)}. 
\end{equation*}
Suppose $(x,y) \not\in [0,\sigma]^2 \cup [1-\sigma,1]^2$.  Since $R \llcorner B_{\rho}(0)$ and $R \llcorner \R^{n+1} \setminus B_{\rho}(0)$ are relatively area minimizing in $\R^{n+1} \setminus \K$ with boundaries given by \eqref{noncon eqn2} and \eqref{noncon eqn3}
\begin{equation*}
	\gamma_{m,n}(\K) 
	\leq \min \left\{ \frac{\Mass(T \llcorner B_{\rho}(0) + \langle R, |\cdot|, s \rangle)^{\frac{m+1}{m}}}{\Mass(R \llcorner B_{\rho}(0))}, 
		\frac{\Mass(T \llcorner \R^{n+1} \setminus B_{\rho}(0) - \langle R, |\cdot|, s \rangle)^{\frac{m+1}{m}}}{
			\Mass(R \llcorner \R^{n+1} \setminus B_{\rho}(0))} \right\} .
\end{equation*}
Using \eqref{noncon eqn5}, \eqref{noncon hyp2}, and \eqref{noncon eqn1} 
\begin{align*}
	\gamma_{m,n}(\K) 
	&< \min \left\{ \frac{(x+\delta)^{\frac{m}{m+1}}}{y}, \frac{(1-x+\delta)^{\frac{m}{m+1}}}{1-y} \right\} \frac{\Mass(T)^{\frac{m+1}{m}}}{\Mass(R)}
	\\&< (1+\varepsilon)^{-2} \cdot (1+\varepsilon) \,\gamma_{m,n}(\K) = (1+\varepsilon)^{-1} \,\gamma_{m,n}(\K)
\end{align*}
which is impossible.  Therefore, either $(x,y) \in [0,\sigma]^2$, which implies \eqref{noncon concl1}, or $(x,y) \in [1-\sigma,1]^2$, which implies \eqref{noncon concl2}. 

Suppose that $\K \subset B_{\rho_0}(0)$.  Further suppose that $(x,y) \in [0,\sigma]^2$.  By the isoperimetric inequality applied to the area minimizing current $R \llcorner \R^{n+1} \setminus B_{\rho}(0)$ with boundary given by \eqref{noncon eqn3}
\begin{equation*}
	\frac{\mathcal{H}^m(\sphere)^{\frac{m+1}{m}}}{\mathcal{H}^{m+1}(\ball)} 
	\leq \frac{\Mass(T \llcorner \R^{n+1} \setminus B_{\rho}(0) - \langle R, |\cdot|, s \rangle)^{\frac{m+1}{m}}}{
			\Mass(R \llcorner \R^{n+1} \setminus B_{\rho}(0))} .
\end{equation*}
Using \eqref{noncon eqn5} and \eqref{noncon hyp2},  
\begin{equation*}
	\frac{\mathcal{H}^m(\sphere)^{\frac{m+1}{m}}}{\mathcal{H}^{m+1}(\ball)} 
	\leq \frac{(1+\delta)^{\frac{m+1}{m}}}{1-\sigma} \,\frac{\Mass(T)^{\frac{m+1}{m}}}{\Mass(R)} 
	\leq \frac{(1+\delta)^{\frac{m+1}{m}}}{1-\sigma} \,(1+\varepsilon) \,\gamma_{m,n}(\K). 
\end{equation*}
which provided $\sigma < 1-2^{-1/m}$ and $\varepsilon$ and $\delta$ are sufficiently small contradicts \eqref{noncon hyp1}.  Therefore, $(x,y) \in [1-\sigma,1]^2$, which implies \eqref{noncon concl2}.  Arguing along the same lines using the isoperimetric inequality applied to $R \llcorner B_{\rho}(0)$, if $\op{dist}(0,\mathcal{\K}) > 2\beta \,\Mass(R)^{\frac{1}{m+1}}$, then \eqref{noncon concl1} must hold true. 
\end{proof}

\begin{lemma} \label{local concentration lemma}
For each bounded proper convex subset $\K \subset \R^{n+1}$ there exists $c = c(m,n,\K) > 0$, $\alpha = \alpha(m,n,\K) > 0$, and $\beta_1 = \beta_1(m,n,\K) > 0$ such that the following holds true.  Let $T \in \mathbf{I}_m(\R^{n+1} \setminus \K)$ and $R \in \mathbf{I}_{m+1}(\R^{n+1} \setminus \K)$ such that $\partial R = T$ in $\R^{n+1} \setminus \K$ and $R$ is relatively area minimizing in $\R^{n+1} \setminus \K$.  Suppose 
\begin{gather} 
	\Mass(R) \leq \alpha \label{loccon1 hyp1} \\
	\frac{\Mass(T)^{\frac{m+1}{m}}}{\Mass(R)} \leq 2 \,\gamma_{m,n}(\K). \label{loccon1 hyp2} 
\end{gather}
Then there exists $x_0 \in \R^{n+1} \setminus \K$ such that 
\begin{equation} \label{loccon1 concl}
	\Mass(T \llcorner B_{\beta_1 \,\Mass(R)^{\frac{1}{m+1}}}(x_0)) \geq c\,\Mass(R)^{\frac{m}{m+1}}. 
\end{equation}
\end{lemma}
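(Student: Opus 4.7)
The argument proceeds by contradiction, following Almgren~\cite{Alm83} via the Federer--Fleming deformation theorem. Suppose, for contradiction, that $\|T\|(B_\rho(x_0)) < c\,\Mass(R)^{m/(m+1)}$ for every $x_0 \in \R^{n+1}\setminus\K$, where $\rho := \beta_1\Mass(R)^{1/(m+1)}$ and the constants $c, \beta_1 > 0$ (depending only on $m, n, \K$) will be chosen sufficiently small. The aim is to construct $Q^* \in \mathbf{I}_{m+1}(\R^{n+1}\setminus\K)$ with $\partial Q^* = T$ in $\R^{n+1}\setminus\K$ and $\Mass(Q^*) \leq C(m,n,\K)\,\rho\,\Mass(T)$. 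Once this is achieved, relative area-minimality of $R$ gives $\Mass(R) \leq \Mass(Q^*) \leq C\rho\Mass(T)$, and combining with \eqref{loccon1 hyp2} (which yields $\Mass(T) \leq (2\gamma_{m,n}(\K))^{m/(m+1)}\Mass(R)^{m/(m+1)}$) produces $\Mass(R) \leq C'\beta_1\Mass(R)$, a contradiction once $\beta_1 < 1/C'$. Hypothesis \eqref{loccon1 hyp1} serves to ensure $\rho$ is small relative to $\op{diam}(\K)$, enabling the local constructions below.

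To construct $Q^*$, first form a cycle in $\R^{n+1}$ from $T$. By slicing theory and \eqref{meas_good_slices}, choose $s \in (0, \rho/2)$ with $T_s := T\llcorner\{d_\K > s\} \in \mathbf{I}_m(\R^{n+1})$ and slice mass $\Mass(\langle T, d_\K, s\rangle) \leq 4\Mass(T)/\rho$. As in Step~1 of the proof of Lemma~\ref{isoper lower bound lemma}, the current $\widetilde{T}_s := T_s - \xi_{\K_s\#}T_s$ lies in $\mathbf{I}_m(\R^{n+1})$ with $\partial\widetilde{T}_s = 0$ in $\R^{n+1}$ and $\Mass(\widetilde{T}_s) \leq 2\Mass(T)$. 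Apply the Federer--Fleming deformation theorem~\cite[4.2.9]{Fed69} to $\widetilde{T}_s$ at scale $\rho$ on a suitably translated cubical grid: this yields $\widetilde{T}_s = P_s + \partial Q_s$ in $\R^{n+1}$, with $P_s$ supported on the $m$-skeleton, $\Mass(Q_s) \leq c_2(m, n)\rho\Mass(\widetilde{T}_s) \leq 4c_2\rho\Mass(T)$, and with $P_s = 0$ provided the density bound $\|\widetilde{T}_s\|(K) < \eta_0(m, n)\rho^m$ holds in every cube $K$ of the grid. For cubes disjoint from a $2\rho$-neighborhood of $\partial\K_s$ this bound is immediate from non-concentration of $T$ after choosing $c$ small in terms of $\eta_0$ and $\beta_1$. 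For the at most $C(m,n,\K)\rho^{-n}$ cubes meeting this tubular neighborhood, $\xi_{\K_s\#}T_s$ is controlled by covering the preimage under the $1$-Lipschitz map $\xi_{\K_s}$ with a uniformly bounded number of $\rho$-balls (exploiting boundedness of $\partial\K_s$ and smallness of $\rho$ from \eqref{loccon1 hyp1}) and applying non-concentration of $T$ in each.

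Given $\widetilde{T}_s = \partial Q_s$ with $\Mass(Q_s) \leq 4c_2\rho\Mass(T)$, define
\[
Q^* := Q_s\llcorner(\R^{n+1}\setminus\K) - h_\#\bigl(\llbracket 0, 1\rrbracket \times (T\llcorner\{d_\K \leq s\})\bigr) - G_\#\bigl(\llbracket 0, 1\rrbracket \times T_s\bigr),
\]
where $h(t,x) := (1-t)x + t\xi_\K(x)$ and $G(t,x) := (1-t)\xi_{\K_s}(x) + t\xi_\K(x)$. A direct computation using the homotopy formula, the identities $\xi_\K|_{\partial\K} = \op{id}$ and $\xi_{\K_s}|_{\partial\K_s} = \op{id}$, slicing theory, the decomposition $T = T_s + T\llcorner\{d_\K\leq s\}$, together with the observation that $G(t,y) = h(t,y) = y - ts\,\nu_\K(y)$ for $y \in \partial\K_s$ (so that the $h$- and $G$-homotopy contributions of the slice $\langle T, d_\K, s\rangle$ coincide and cancel), confirms $\partial Q^* = T$ in $\R^{n+1}\setminus\K$. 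The mass estimate $\Mass(Q^*) \leq C(m,n,\K)\rho\Mass(T)$ follows from $\op{Lip}(\xi_\K) = \op{Lip}(\xi_{\K_s}) = 1$ and $|\xi_\K(x) - \xi_{\K_s}(x)| = s \leq \rho$ on $\{d_\K \geq s\}$, each summand contributing mass $\lesssim \rho\Mass(T)$. The main technical obstacle is the density verification for $\widetilde{T}_s$ near $\partial\K_s$, where a priori the projection $\xi_{\K_s\#}T_s$ could concentrate mass; boundedness of $\K$, smallness of $\rho$, and non-concentration of $T$ together resolve this, completing the contradiction.
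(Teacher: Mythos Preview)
Your overall strategy---assume non-concentration, apply a deformation theorem at scale $\rho \sim \Mass(R)^{1/(m+1)}$ to kill the polyhedral piece, and contradict area-minimality of $R$---is the same as the paper's. The difference is in how the deformation is made to respect $\R^{n+1}\setminus\K$, and this is where your argument has a genuine gap.

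The paper does \emph{not} form the doubled cycle $\widetilde T_s = T_s - \xi_{\K_s\#}T_s$. Instead it constructs a bi-Lipschitz diffeomorphism $\varphi:\R^{n+1}\to\R^{n+1}$ with $\varphi(\K)=[0,1]^{n+1}$ and applies Almgren's deformation theorem to $\varphi_\# T$ on the dyadic grid at scale $2^{-N}$. Because the unit cube is itself a union of grid cubes, every radial projection in the deformation procedure stays in $\R^{n+1}\setminus[0,1]^{n+1}$; one obtains $T - \varphi^{-1}_\# P = \partial Q$ directly in $\R^{n+1}\setminus\K$, with the local bound $\Mass(P\llcorner L)\le C_0\,\Mass\bigl(T\llcorner\varphi^{-1}(\op{Nbs} L)\bigr)$. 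The set $\varphi^{-1}(\op{Nbs} L)$ lies in a single ball of radius $\sim\op{Lip}(\varphi^{-1})\,2^{-N}$, so the density condition forcing $P=0$ reduces to exactly your non-concentration hypothesis on $T$.

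Your construction, by contrast, requires a density bound for $\widetilde T_s$ in every grid cube. For a cube $K$ meeting $\partial\K_s$ this means bounding $\|\xi_{\K_s\#}T_s\|(K)\le\|T_s\|\bigl(\xi_{\K_s}^{-1}(K)\bigr)$. But $\xi_{\K_s}^{-1}(K\cap\partial\K_s)$ is an \emph{unbounded} normal column over $K\cap\partial\K_s$, and even restricted to $\op{spt} T$ its diameter carries no a~priori bound in terms of $m,n,\K$: nothing in the hypotheses bounds $\op{diam}(\op{spt} T)$. A current $T$ satisfying your non-concentration assumption can perfectly well lie along a long radial tube over a single point of $\partial\K_s$; then $\xi_{\K_s\#}T_s$ deposits essentially its full mass $\sim\Mass(T)\sim C''\Mass(R)^{m/(m+1)}$ into one cube. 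Forcing this below the threshold $\sim\rho^m=\beta_1^m\Mass(R)^{m/(m+1)}$ would require $\beta_1$ bounded \emph{below} by a fixed constant, in tension with the upper bound $\beta_1<1/C'$ you need for the final step $\Mass(R)\le C'\beta_1\Mass(R)$; there is no reason these are simultaneously satisfiable with universal deformation constants. The bi-Lipschitz straightening is precisely the device that eliminates the projection term and makes the density check involve only $\|T\|$ on a single ball.
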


\begin{proof} 
First we claim that there exists a bi-Lipschitz diffeomorphism $\varphi : \R^{n+1} \rightarrow \R^{n+1}$ such that $\varphi(\K) = \K_0 \equiv [0,1]^{n+1}$.  To see this, translate so that $0 \in \op{int} \K$.  Since $\K$ is bounded and convex, $\K$ is star-shaped and has a Lipschitz boundary.  In particular, there exists a Lipschitz function $f : \mathbb{S}^n \rightarrow (0,\infty)$ such that 
\begin{equation*}
	\K = \{ r \omega : 0 < r < f(\omega), \, \omega \in \mathbb{S}^n \} . 
\end{equation*}
We define the bi-Lipschitz diffeomorphism $\phi_{\K} : \R^{n+1} \rightarrow \R^{n+1}$ such that $\phi_{\K}(\K) = \overline{B_1(0)}$ by  
\begin{equation*}
	\phi_{\K}(r\omega) = \frac{r\omega}{f(\omega)} 
\end{equation*}
for all $r > 0$ and $\omega \in \mathbb{S}^n$.  By the same argument there is a bi-Lipschitz diffeomorphism $\phi_{\K_0} : \R^{n+1} \rightarrow \R^{n+1}$ such that $\phi_{\K_0}(\K_0) = \overline{B_1(0)}$.  Thus we may let $\varphi = \phi_{\K_0}^{-1} \circ \phi_{\K}$.

Let $N \geq 0$ be an integer to be determined.  Following~\cite{Alm83}, let $\mathbf{K}(N) = \{ \eta_{2^{-N} z,2^{-N}}([0,1]^{n+1}) : z \in \mathbb{Z}^{n+1} \}$ denote the set of all standard coordinate cubes of side length $2^{-N}$.  For $k \in \{0,1,2,\ldots,n+1\}$, let $\mathbf{K}_k(N)$ denote the set of all $k$-dimensional closed faces of coordinate cubes $L \in \mathbf{K}(N)$.  By applying the deformation theorem~\cite[Theorem 1.15]{AlmDef} to $\varphi_{\#} T$, we can show that there exists a constant $C_0 = C_0(m,n,\K) \in (0,\infty)$ and currents $P \in \mathbf{I}_m(\R^{n+1} \setminus \K)$ and $Q \in \mathcal{I}_{m+1,{\rm loc}}(\R^{n+1} \setminus \K)$ such that $P$ is given by  
\begin{equation} \label{loccon1 eqn1}
	P = \sum_{L \in \mathbf{K}_m(N), \,\op{int} L \subset \R^{n+1} \setminus \mathcal{K}_0} \theta(L) \,\llbracket L \rrbracket ,
\end{equation}
where for each $L \in \mathbf{K}_m(N)$ we fix an orientation and let $\theta(L) \in \mathbb{Z}$, 
\begin{equation} \label{loccon1 eqn2}
	T - \varphi^{-1}_{\#} P = \partial Q 
\end{equation}
in $\R^{n+1} \setminus \K$, 
\begin{equation} \label{loccon1 eqn3}
	\Mass(P \llcorner L) \leq C_0 \,\Mass(T \llcorner \varphi^{-1}(\op{Nbs} L)) 
\end{equation}
for each $L \in \mathbf{K}(N)$ with $L \subset \R^{n+1} \setminus \op{int} \K_0$, where we let $\op{Nbs} L$ denote the union of all closed cubes in $\mathbf{K}(N)$ which intersect $L$, and 
\begin{equation} \label{loccon1 eqn4}
	\Mass(Q) \leq C_0\, 2^{-N} \,\Mass(T).
\end{equation}
In order to apply~\cite[Theorem 1.15]{AlmDef} to integer-multiplicity rectifiable currents of $\R^{n+1} \setminus \K_0$, we modify the argument of~\cite[Theorem 1.15]{AlmDef} as follows.  We inductively construct currents $T_k \in \mathbf{I}_m(\R^{n+1} \setminus \K_0)$ and $Q_k \in \mathcal{I}_{m,{\rm loc}}(\R^{n+1} \setminus \K_0)$ for $k = n+1,n,\ldots,m,m-1$.  When $k = n+1$, $T_{n+1} = \varphi_{\#} T$ and $Q_{n+1} = 0$.  For each $k = n+1,n,\ldots,m$ and $k$-dimensional cubes $L \in \mathbf{K}_k(N)$, we pick a ``good'' point $p_L \in \op{int} L$.  Then we project $T_k \llcorner L$ onto $\partial L$ via radial projection at $p_L$ and restrict the image to $\R^{n+1} \setminus \K_0$, sweeping out a current $Q_{k-1} \llcorner L$ in the process.  $T_{k-1}$ is the final image after all the radial projections, restricted to $\R^{n+1} \setminus \K_0$, and $Q_{k-1}$ is the sum of all the swept out currents.  Note that the radial projections of Almgren in~\cite{AlmDef} are different from that of Federer and Fleming in~\cite{FF60}, see~\cite{AlmDef} for details.  Notice that each $L \in \mathbf{K}_k(N)$ either satisfes $\op{int} L \subset \R^{n+1} \setminus \K_0$ or $L \subset \K_0$ and similarly for its boundary faces.  It follows that at each step we only radially project $T_k \llcorner L$ onto $\partial L$ for $L \in \mathbf{K}_k(N)$ with $\op{int} L \subset \R^{n+1} \setminus \K_0$.  As a result $T_{k-1}$ and $Q_{k-1}$ are well-defined as $T_{k-1} \in \mathbf{I}_m(\R^{n+1} \setminus \K_0)$ and $Q_{k-1} \in \mathbf{I}_{m+1}(\R^{n+1} \setminus \K_0)$.  By the argument of~\cite{AlmDef}, in the end we obtain $P = T_{m-1}$ of the form \eqref{loccon1 eqn1} and $\varphi_{\#} Q = \sum_{k=m-1}^n Q_k$ satisfying \eqref{loccon1 eqn2}, \eqref{loccon1 eqn3}, and \eqref{loccon1 eqn4}.  

Define $\alpha$, $\beta_1$, $c$, and $N$ by 
\begin{gather} \label{loccon1 eqn5}
	\alpha = 2^{2m+1} \,C_0^{m+1} \,\gamma_{m,n}(\K)^m, \quad
	\beta_1 = 2\,\sqrt{n+1} \,\op{Lip}(\varphi^{-1}) \,\alpha^{\frac{-1}{m+1}}, \quad 
	c = \frac{1}{2^m \,C_0 \,\alpha^\frac{m}{m+1}}, \\
	\frac{1}{2} \left(\frac{\Mass(R)}{\alpha}\right)^{\frac{1}{m+1}} < 2^{-N} \leq \left(\frac{\Mass(R)}{\alpha}\right)^{\frac{1}{m+1}}. \nonumber 
\end{gather}
Notice that this choice of $N$ together with \eqref{loccon1 hyp1} guarantees that $N \geq 0$.

To show \eqref{loccon1 concl}, suppose to the contrary that 
\begin{equation} \label{loccon1 eqn6}
	\Mass(T \llcorner B_{\beta_1 \,\Mass(R)^{\frac{1}{m+1}}}(x)) < c\,\Mass(R)^{\frac{m}{m+1}} 
\end{equation}
for every $x \in \R^{n+1} \setminus \K$.  For an arbitrary cube $L \in \mathbf{K}(N)$ with $\op{int} L \subset \R^{n+1} \setminus \K$, using  \eqref{loccon1 eqn5}, 
\begin{equation} \label{loccon1 eqn7}
	\varphi^{-1}(\op{Nbs} L) \subset \varphi^{-1}(B_{\sqrt{n+1}\,2^{1-N}}(y)) \subset B_{\sqrt{n+1} \,\op{Lip}(\varphi^{-1}) \,2^{1-N}}(x) 
		\subset B_{\beta_1 \,\Mass(R)^{\frac{1}{m+1}}}(x), 
\end{equation} 
where $y$ is the center of the cube $L$ and $x = \varphi^{-1}(y)$.  Thus by \eqref{loccon1 eqn3}, \eqref{loccon1 eqn7}, \eqref{loccon1 eqn6}, and  \eqref{loccon1 eqn5},  
\begin{align*}
	\Mass(P \llcorner L) 
	&\leq C_0 \,\Mass(T \llcorner \varphi^{-1}(\op{Nbs} L)) 
	\leq C_0 \,\Mass(T \llcorner B_{\beta_1 \,\Mass(R)^{\frac{1}{m+1}}}(x)) 
	< C_0 \,c\,\Mass(R)^{\frac{m}{m+1}} 
	\\&< C_0 \cdot \frac{1}{2^m \,C_0 \,\alpha^{\frac{m}{m+1}}} \cdot 2^{m(1-N)} \,\alpha^{\frac{m}{m+1}} 
	= 2^{-mN}. 
\end{align*}
Hence $\theta(L) = 0$ as otherwise $\Mass(P \llcorner L) = |\theta(L)| \,\mathcal{H}^m(L) \geq 2^{-mN}$ by \eqref{loccon1 eqn1}.  Therefore, $P = 0$ in $\R^{n+1} \setminus \K$.  Hence \eqref{loccon1 eqn2} gives us $T = \partial Q$ in $\R^{n+1} \setminus \K$.  By  \eqref{loccon1 eqn4}, \eqref{loccon1 hyp2}, and \eqref{loccon1 eqn5}, 
\begin{equation*}
	\Mass(Q) 
	\leq C_0\, 2^{-N} \,\Mass(T) 
	\leq C_0 \cdot \left(\frac{\Mass(R)}{\alpha}\right)^{\frac{1}{m+1}} \cdot (2 \,\gamma_{m,n}(K) \,\Mass(R))^{\frac{m}{m+1}} 
	= \frac{1}{2} \,\Mass(R), 
\end{equation*}
contradicting $R$ being relatively area minimizing.  Therefore, \eqref{loccon1 concl} must hold true. 
\end{proof}

\section{Convergence of almost relative area minimizers} \label{sec:convergence_sec}  

Let $1 \leq m \leq n+1$.  Suppose $\K_j, \K \subset \R^{n+1}$ are closed convex subsets and $\K \subset \R^{n+1}$ such that $\K_j \rightarrow \K$ locally in Hausdorff distance.  Suppose $R_j \in \mathbf{I}_m(\R^{n+1} \setminus \K_j)$ is a sequence of relative area minimizing currents converging weakly to a current $R \in \mathbf{I}_m(\R^{n+1} \setminus \K)$.  We want to show that $R$ is relatively area minimizing in $\R^{n+1} \setminus \K$ and $\Mass_W(R_j) \rightarrow \Mass_W(R)$ for any bounded open set $W \subset \R^{n+1}$ with $\|R\|(\partial W) = 0$.  It is known how to show this for area minimizing currents in the interior of $\R^{n+1} \setminus \K$, see for instance~\cite[Theorem 34.5]{Sim83}.  However, we want to this up to the boundary of $\K$.  We have to be careful since $R_j$ might have mass concentrating near the boundary of $\K_j$; for instance, for $j = 1,2,3,\ldots$ consider $R_j = j \,\llbracket B_{1+1/j}(0) \setminus B_1(0) \rrbracket$ in $\R^{n+1} \setminus B_1(0)$.  In the past two sections we have obtained estimates that rule out concentration of mass at the boundary of domains and at infinity.  In light of this, we prove Lemma \ref{convergence lemma} below.  Since we will want to also apply Lemma \ref{convergence lemma} to relative isoperimetric minimizers, we will prove Lemma \ref{convergence lemma} for integral currents which are almost relative area minimizing like in Lemma \ref{almost minimizing lemma}. 

\begin{lemma} \label{convergence lemma}
Let $1 \leq m \leq n$.  Let $\K_j, \K$ be closed convex subsets of $\R^{n+1}$ such that $\K_j \rightarrow \K$ locally in Hausdorff distance (see Subsection \ref{sec:prelims_notation}).  Let $R_j \in \mathbf{I}_m(\R^{n+1} \setminus \K_j)$ and $R \in \mathbf{I}_m(\R^{n+1} \setminus \K)$ such that 
\begin{equation} \label{conv bddmass hyp}
	\limsup_{j \rightarrow \infty} (\Mass_W(R_j) + \Mass_{W \setminus \K_j}(\partial R_j)) < \infty 
\end{equation} 
for every bounded open subset $W \subset \R^{n+1}$ and $R_j \rightarrow R$ in the flat norm topology on compact subsets of the interior of $\R^{n+1} \setminus \K$.  Assume that each $R_j$ is almost relative area minimizing in the sense that there exists $\lambda_j \in [0,\infty)$ and $\mu_j \in (0,\infty]$ with $\lambda_j \rightarrow 0$ and $\mu_j \rightarrow \infty$ such that
\begin{equation} \label{conv aam hyp}
	\Mass_W(R_j) \leq \Mass_{W \setminus \K_j}(R_j + \partial X) + \lambda_j \,\Mass(X) 
\end{equation}
for every bounded open subset $W \subset \R^{n+1}$ and current $X \in \mathbf{I}_{m+1}(\R^{n+1} \setminus \K_j)$ with compact support such that $\op{spt} X \subset W$ and $\Mass(X) \leq \mu_j$.  Further assume that for every $\varepsilon > 0$ and every bounded open set $W \subset \R^{n+1}$ there exists $\delta > 0$ and $J \geq 1$ such that for all $j \geq J$ 
\begin{equation} \label{conv nonconbdry hyp}
	\Mass_W(R_j \llcorner \{d_{\K_j} < \delta\}) \leq \varepsilon .
\end{equation}
Then $R$ is relatively area minimizing in $\R^{n+1} \setminus \K$ and $\|R_j\| \rightarrow \|R\|$ in the sense of Radon measures on $\R^{n+1}$. 
\end{lemma}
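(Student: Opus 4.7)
The plan is to prove the two assertions in turn: (A) $R$ is relatively area minimizing in $\R^{n+1} \setminus \K$, and (B) $\|R_j\| \to \|R\|$ as Radon measures.

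For (A), fix a competitor $Q \in \mathbf{I}_m(\R^{n+1} \setminus \K)$ with $\partial Q = \partial R$ in $\R^{n+1} \setminus \K$ and $R - Q$ compactly supported. By Lemma \ref{isoper lower bound lemma} there exists $X \in \mathbf{I}_{m+1}(\R^{n+1} \setminus \K)$ of compact support with $\partial X = R - Q$. Fix a bounded open $W \supset \op{spt}(R - Q) \cup \op{spt} X$. For each $\delta > 0$ let $V_\delta = W \cap \{d_\K > \delta\}$, so $\overline{V_\delta}$ is compactly contained in $\op{int}(\R^{n+1} \setminus \K)$. By slicing theory, for a.e.~$\delta > 0$ the truncation $X^\delta := X \llcorner \{d_\K > \delta\}$ is an integral current with compact support in $\overline{V_\delta}$. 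The flat convergence on $V_\delta$ produces $A_j^\delta \in \mathbf{I}_{m+1}(\R^{n+1})$ and $B_j^\delta \in \mathbf{I}_m(\R^{n+1})$ supported in $\overline{V_\delta}$ with $(R_j - R)\llcorner V_\delta = \partial A_j^\delta + B_j^\delta$ in $V_\delta$ and $\Mass(A_j^\delta) + \Mass(B_j^\delta) \to 0$. Since $\K_j \to \K$ locally in Hausdorff distance, for each fixed $\delta$ and $j$ sufficiently large, $\overline{V_\delta}$ lies in $\R^{n+1} \setminus \K_j$, so $A_j^\delta$ and $X^\delta$ are admissible variations for \eqref{conv aam hyp}. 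Applying \eqref{conv aam hyp} on $W$ with variation $Y_j^\delta := -(A_j^\delta + X^\delta)$, a direct computation using $\partial A_j^\delta = (R_j - R)\llcorner V_\delta - B_j^\delta$ and $\partial X^\delta = (R - Q)\llcorner \{d_\K > \delta\} - \langle X, d_\K, \delta\rangle$ gives $R_j + \partial Y_j^\delta = Q\llcorner V_\delta + R_j \llcorner (W \setminus V_\delta) + B_j^\delta + \langle X, d_\K, \delta\rangle$ on $W$, whence
\begin{equation*}
	\Mass_W(R_j) \leq \Mass_W(Q) + \Mass_W(R_j \llcorner \{d_\K \leq \delta\}) + \Mass(B_j^\delta) + \Mass(\langle X, d_\K, \delta\rangle) + \lambda_j\bigl(\Mass(A_j^\delta) + \Mass(X)\bigr).
\end{equation*}
Choosing a sequence $\delta_k \downarrow 0$ along which $\Mass(\langle X, d_\K, \delta_k\rangle) \to 0$ (possible since $\int_0^\infty \Mass(\langle X, d_\K, t\rangle)\, dt \leq \Mass(X)$ by slicing), applying \eqref{conv nonconbdry hyp} to bound $\Mass_W(R_j \llcorner \{d_\K \leq \delta_k\}) \leq \Mass_W(R_j \llcorner \{d_{\K_j} < 2\delta_k\})$ for large $j$, and letting $j \to \infty$ then $k \to \infty$, I obtain $\limsup_j \Mass_W(R_j) \leq \Mass_W(Q)$. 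Combined with lower semi-continuity $\Mass_W(R) \leq \liminf_j \Mass_W(R_j)$, this gives $\Mass(R) \leq \Mass(Q)$.

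For (B), lower semi-continuity of mass under weak convergence (with weak convergence extending from $\op{int}(\R^{n+1} \setminus \K)$ to all of $\R^{n+1}$ via \eqref{conv bddmass hyp}, \eqref{conv nonconbdry hyp}, and the fact that $\|R\|$ is supported in $\R^{n+1} \setminus \K$) gives $\|R\|(U) \leq \liminf_j \|R_j\|(U)$ for every open $U$. The reverse direction is the construction of (A) applied with $Q = R$, so $X = 0$ and $X^\delta = 0$; the slice correction vanishes, and the displayed inequality together with \eqref{conv nonconbdry hyp} yields $\limsup_j \Mass_W(R_j) \leq \Mass_W(R)$ for every bounded open $W$. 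Hence $\|R_j\|(W) \to \|R\|(W)$ for every bounded open $W$ with $\|R\|(\partial W) = 0$, which is equivalent to $\|R_j\| \to \|R\|$ as Radon measures on $\R^{n+1}$.

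The principal obstacle in (A) is that \eqref{conv aam hyp} demands variations lying in $\R^{n+1} \setminus \K_j$, whereas the natural variation $X$ with $\partial X = R - Q$ is only in $\R^{n+1} \setminus \K$, with potential support touching $\partial \K$. Slicing $X$ by $d_\K$ at a generic level $\delta$ together with the Hausdorff convergence $\K_j \to \K$ produces the modified variation $X^\delta$ in $\R^{n+1} \setminus \K_j$. The two resulting error terms, the slice-boundary correction $\langle X, d_\K, \delta\rangle$ and the near-boundary mass $\Mass_W(R_j \llcorner \{d_\K \leq \delta\})$, can both be driven to zero: the former by the integrability of the slice mass, and the latter by the non-concentration hypothesis \eqref{conv nonconbdry hyp}.
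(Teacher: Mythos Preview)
Your overall strategy---truncate a neighborhood of $\partial\K$, use flat convergence to obtain a decomposition $R_j - R = \partial A_j^\delta + B_j^\delta$, and feed $-(A_j^\delta + X^\delta)$ into the almost-minimizing inequality---is exactly the paper's. But there is a genuine gap in the execution. The flat-norm decomposition on the open set $V_\delta = W \cap \{d_{\K} > \delta\}$ only gives $A_j^\delta \in \mathbf{I}_{m+1,\mathrm{loc}}(V_\delta)$, with the identity $(R_j - R) = \partial A_j^\delta + B_j^\delta$ valid \emph{only in $V_\delta$}. You assert that $A_j^\delta \in \mathbf{I}_{m+1}(\R^{n+1})$ and then compute $R_j + \partial Y_j^\delta$ on all of $W$ using the $V_\delta$ identity; neither step is justified. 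As a current of $\R^{n+1}$, $A_j^\delta$ may carry an uncontrolled (possibly non-rectifiable, possibly infinite-mass) boundary piece on $\partial V_\delta$---both on $\partial W \cap \{d_{\K} \ge \delta\}$ and on $W \cap \{d_{\K} = \delta\}$. The latter lies inside $W$ and contributes to $\Mass_{W \setminus \K_j}(R_j + \partial Y_j^\delta)$, so your displayed inequality is missing a term of unknown size; and in any case $Y_j^\delta$ may fail to be an admissible test current for \eqref{conv aam hyp}.

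The paper repairs this with a two-step slicing. It first takes the flat decomposition on the larger set $W \cap \{d_{\K} > \delta/8\}$, then slices the resulting $A_j$ by a smooth cutoff $\phi$ (with $\phi = 1$ on $K$ and $\phi = 0$ outside $W$) at a generic level $\alpha$, obtaining $A'_j = A_j \llcorner W_\alpha$ whose extra boundary $\langle A_j, \phi, \alpha\rangle$ has mass $\to 0$; this controls the outer boundary near $\partial W$. It then slices $A'_j$ and $X$ by $d_{\K}$ at a good level $\delta_j \in (\delta/8, \delta/4)$ and adds a homotopy image onto $\partial\K_j$, producing genuine compactly supported integral currents $A''_j, X_j$ in $\R^{n+1} \setminus \K_j$; this controls the inner boundary near $\partial\K$. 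Your argument can be salvaged along the same lines (and your simplification of truncating $X$ rather than homotoping it to $\K_j$ is fine, since $\{d_{\K} \ge \delta\} \subset \R^{n+1} \setminus \K_j$ for large $j$), but the missing boundary control on $A_j^\delta$ is precisely what makes the test current admissible and the mass inequality valid.
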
 

\begin{remark}
When $m = n+1$ this theorem is trivial.  In particular, \eqref{conv aam hyp} only makes sense if $m \leq n$.  We could instead assume each $R_j$ is relatively area minimizing in $\R^{n+1} \setminus \K_j$.  However, given any closed convex set $\K \subset \R^{n+1}$ and any current $R \in \mathbf{I}_{n+1}(\R^{n+1} \setminus \K)$, by the constancy theorem~\cite[Theorem 26.27]{Sim83}, $Q = R$ is the only current $Q \in \mathbf{I}_{n+1}(\R^{n+1} \setminus \K)$ such that $\partial Q = \partial R$ in $\R^{n+1} \setminus \K$ and $\op{spt}(Q-R)$ is compact.  Thus the condition that $R$ is relatively area minimizing in $\R^{n+1} \setminus \K$ trivial.  Moreover, whenever $\K_j, \K \subset \R^{n+1}$ are as in Lemma \ref{convergence lemma} and $R_j \in \mathbf{I}_{n+1}(\R^{n+1} \setminus \K_j)$ such that \eqref{conv bddmass hyp} and \eqref{conv nonconbdry hyp} hold true, we can associate each $R_j$ to a BV-function as in~\cite[Remark 26.28]{Sim83} and thus as a consequence of the compactness of BV-functions after passing to a subsequence $R_j \rightarrow R$ in the mass norm topology on compact subsets of $\R^{n+1}$ for some $R \in \mathbf{I}_{n+1}(\R^{n+1} \setminus \K)$.  Note that \eqref{conv nonconbdry hyp} automatically holds true in the codimension one, multiplicity one setting.
\end{remark}

\begin{proof}[Proof of Lemma \ref{convergence lemma}]  
Our approach involves modifying a well-known argument in the case of area minimizers, see for instance~\cite[Theorem 34.5]{Sim83}.  Let $K \subset \R^{n+1}$ be a compact set and $W \subset \R^{n+1}$ be a bounded open set such that $K \subset W$.  Assume $\K \cap \overline{W} \neq \emptyset$ as otherwise the proof is same with obvious modifications and is in fact a simplification of the argument below.  Choose a smooth function $\phi : \R^{n+1} \rightarrow [0,1]$ such that $\phi = 1$ on $K$ and $\phi = 0$ in $\R^{n+1} \setminus W$.  For $0 < \beta < 1$, set $W_{\beta} = \{ x \in \R^{n+1} : \phi(x) > \beta \}$ and note that $K \subset W_{\beta} \subset W$ for all $0 < \beta < 1$.  Let $X \in \mathbf{I}_{m+1}(\R^{n+1} \setminus \K)$ be any integral current with $\op{spt} X \subseteq K$.  Fix $\varepsilon > 0$ and choose $\delta \in (0,\infty)$ such that \eqref{conv nonconbdry hyp} holds true and 
\begin{equation} \label{conv eqn1}
	\Mass(X \llcorner \{d_{\K} < \delta\}) + \Mass((\partial X) \llcorner \{0 < d_{\K} < \delta\}) \leq \varepsilon . 
\end{equation}
By the semi-continuity of mass and \eqref{conv nonconbdry hyp}, 
\begin{equation} \label{conv nonconbdry eqn}
	\Mass_W(R \llcorner \{d_{\K} < \delta/2\}) \leq \liminf_{j \rightarrow \infty} \Mass(R_j \llcorner \{d_{\K_j} < \delta\}) \leq \varepsilon. 
\end{equation}

Since $R_j \rightarrow R$ in the flat norm topology in compact subsets of $\R^{n+1} \setminus \K$, for each sufficiently large $j$ there exists $A_j \in \mathbf{I}_{m+1}(W \cap \{d_{\K} > \delta/8\})$ and $B_j \in \mathbf{I}_m(W \cap \{d_{\K}> \delta/8\})$ such that 
\begin{gather} 
	\label{conv eqn2} R - R_j = \partial A_j + B_j \text{ in } W \cap \{d_{\K} > \delta/8\}, \\ 
	\label{conv eqn3} \lim_{j \rightarrow \infty} \big( \Mass_W(A_j) + \Mass_W(B_j) \big) = 0.  
\end{gather}
Notice that by \eqref{conv eqn2}, \eqref{conv nonconbdry hyp}, \eqref{conv nonconbdry eqn}, and \eqref{conv eqn3}, 
\begin{align} \label{conv eqn4} 
	\Mass_W((\partial A_j) \llcorner \{\delta/8 < d_{\K} < \delta/2\}) 
		&\leq \Mass_W(R_j \llcorner \{d_{\K_j} < \delta\}) + \Mass_W(R \llcorner \{d_{\K} < \delta/2\}) + \Mass_W(B_j) 
		\\&\leq 3 \varepsilon \nonumber 
\end{align}
for all sufficiently large $j$.  

We want to replace $A_j$ and $B_j$ with integral currents $A'_j$ and $B'_j$ supported away from $\partial W$.  We can accomplish this by restricting $A_j$ and $B_j$ to a set $W_{\alpha}$ for some $\alpha$.  In particular, by \eqref{conv eqn3} after passing to a subsequence 
\begin{equation*}
	\Mass_W(A_j) \leq 8^{-j}
\end{equation*} 
and thus by slicing theory using \eqref{meas_good_slices} with $\vartheta = 4^{-j}$ there exists $\alpha \in (0,1)$ such that $A_j \llcorner W_{\alpha} \in \mathbf{I}_{m+1}(\R^{n+1} \cap \{d_{\K} > \delta/8\})$, $(\partial A_j) \llcorner W_{\alpha}, \langle A_j, \phi, \alpha \rangle \in \mathbf{I}_m(\R^{n+1} \cap \{d_{\K} > \delta/8\})$, and 
\begin{gather} 
	\label{conv eqn5} \partial (A_j \llcorner W_{\alpha}) = (\partial A_j) \llcorner W_{\alpha} - \langle A_j, \phi, \alpha \rangle 
		\text{ in } \R^{n+1} \cap \{d_{\K} > \delta/8\}, \\
	\label{conv eqn6} \Mass(\langle A_j, \phi, \alpha \rangle) \leq 4^j \,\Mass_W(A_j) \leq 2^{-j} \rightarrow 0, \\
	\label{conv eqn7} \Mass(R_j \llcorner \partial W_{\alpha}) = \Mass(R \llcorner \partial W_{\alpha}) = 0.  
\end{gather}
For each $j$, let $A'_j \in \mathbf{I}_{m+1}(\R^{n+1} \cap \{d_{\K} > \delta/8\})$ and $B'_j \in \mathcal{I}_{m,{\rm loc}}(\R^{n+1} \cap \{d_{\K} > \delta/8\})$ be given by 
\begin{equation*}
	A'_j = A_j \llcorner W_{\alpha}, \quad B'_j = B_j \llcorner W_{\alpha} + \langle A_j, \phi, \alpha \rangle 
\end{equation*}
in $\R^{n+1} \cap \{d_{\K} > \delta/8\}$.  Then by \eqref{conv eqn2}, \eqref{conv eqn5}, \eqref{conv eqn3}, and \eqref{conv eqn6},
\begin{gather} 
	\label{conv eqn8} R \llcorner W_{\alpha} - R_j \llcorner W_{\alpha} = \partial A'_j + B'_j \text{ in } \R^{n+1} \cap \{d_{\K} > \delta/8\}, \\ 
	\label{conv eqn9} \lim_{j \rightarrow \infty} \big( \Mass(A'_j) + \Mass(B'_j) \big) = 0.  
\end{gather}
Moreover, by \eqref{conv eqn5}, \eqref{conv eqn4}, and \eqref{conv eqn6}, 
\begin{align} \label{conv eqn10} 
	\Mass((\partial A'_j) \llcorner \{\delta/8 < d_{\K} < \delta/2\}) 
	&\leq \Mass_W((\partial A_j) \llcorner \{\delta/8 < d_{\K} < \delta/2\}) + \Mass(\langle A_j, \phi, \alpha \rangle) 
	\leq 4 \,\varepsilon.   
\end{align}

Notice that $X$ lies in $\R^{n+1} \setminus \K$ with a boundary in $\K$ which might lie outside $\K_j$ and might not be rectifiable, and similarly for $A'_j$.  By slicing and stretching $A'_j$ and $X$ up to $\K_j$, we want to approximate the integral currents $A'_j$ and $X$ by integral currents $A''_j$ and $X_j$ of $\R^{n+1} \setminus \K_j$.  In particular, by slicing theory again using \eqref{meas_good_slices} with $\vartheta = 1/8$, \eqref{conv eqn1}, \eqref{conv eqn9}, and \eqref{conv eqn10}, there exists $\delta_j \in (\delta/8,\delta/4)$ such that for all sufficiently large $j$ we have $A'_j \llcorner \{ d_{\K} > \delta_j \} \in \mathbf{I}_{m+1}(\R^{n+1})$, $\langle A'_j, d_{\K}, \delta_j \rangle, \,(\partial A'_j) \llcorner \{ d_{\K} > \delta_j \} \in \mathbf{I}_m(\R^{n+1})$, $\langle \partial A'_j, d_{\K}, \delta_j \rangle \in \mathbf{I}_{m-1}(\R^{n+1})$, $X \llcorner \{ d_{\K} > \delta_j \} \in \mathbf{I}_{m+1}(\R^{n+1})$, $\langle X, d_{\K}, \delta_j \rangle, \,(\partial X) \llcorner \{ d_{\K} > \delta_j \} \in \mathbf{I}_m(\R^{n+1})$, $\langle \partial X, d_{\K}, \delta_j \rangle \in \mathbf{I}_{m-1}(\R^{n+1})$, 
\begin{gather} \label{conv eqn11} 
	\partial (A'_j \llcorner \{d_{\K} > \delta_j\}) = (\partial A'_j)  \llcorner \{d_{\K} > \delta_j\} - \langle A'_j, d_{\K}, \delta_j \rangle , \\
	\partial (X_j \llcorner \{d_{\K} > \delta_j\}) = (\partial X_j)  \llcorner \{d_{\K} > \delta_j\} - \langle X_j, d_{\K}, \delta_j \rangle , \nonumber \\
	\partial \langle A'_j, d_{\K}, \delta_j \rangle = -\langle \partial A'_j, d_{\K}, \delta_j \rangle, \quad 
	\partial \langle X_j, d_{\K}, \delta_j \rangle = -\langle \partial X_j, d_{\K}, \delta_j \rangle \nonumber
\end{gather}
in $\R^{n+1}$, and  
\begin{equation} \label{conv eqn12} 
	\max\{ \Mass(\langle A'_j, d_{\K}, \delta_j \rangle),\, \Mass(\langle \partial A'_j, d_{\K}, \delta_j \rangle),\, 
		\Mass(\langle X, d_{\K}, \delta_j \rangle),\, \Mass(\langle \partial X, d_{\K}, \delta_j \rangle) \}
		\leq \frac{256 \,\varepsilon}{\delta}.  
\end{equation}
For each $j$, let $h_j : [0,1] \times \R^{n+1} \setminus \K_j \rightarrow \R^{n+1} \setminus \K_j$ be the homotopy given by $h_j(t,x) = (1-t) \,x + t \,\xi_{\K_j}(x)$.  For each sufficiently large $j$ let $A''_j \in \mathbf{I}_{m+1}(\R^{n+1})$ and $X_j \in \mathbf{I}_{m+1}(\R^{n+1})$ be given by 
\begin{align} \label{conv eqn13} 
	A''_j &= A'_j \llcorner \{d_{\K} > \delta_j\} + h_{j \#}(\llbracket 0,1 \rrbracket \times \langle A'_j, d_{\K}, \delta_j \rangle) , \\
	X_j &= X \llcorner \{d_{\K} > \delta_j\} + h_{j \#}(\llbracket 0,1 \rrbracket \times \langle X, d_{\K}, \delta_j \rangle) . \nonumber
\end{align}
By \eqref{conv eqn9} and \eqref{conv eqn12},  
\begin{equation} \label{conv eqn14} 
	\Mass(A''_j) 
	\leq \Mass(A'_j) + \delta \,\Mass(\langle A'_j, d_{\K}, \delta_j \rangle) 
	\leq 257 \,\varepsilon 
\end{equation}
and similarly by \eqref{conv eqn12}, 
\begin{equation} \label{conv eqn15} 
	\Mass(X_j) 
	\leq \Mass(X) + \delta \,\Mass(\langle X_j, d_{\K}, \delta_j \rangle) \\
	\leq \Mass(X) + 256 \,\varepsilon .  
\end{equation}
By the homotopy formula and \eqref{conv eqn11},   
\begin{equation} \label{conv eqn16} 
	\partial h_{j \#}(\llbracket 0,1 \rrbracket \times \langle A'_j, d_{\K}, \delta_j \rangle) 
	= \langle A'_j, d_{\K}, \delta_j \rangle + h_{j \#}(\llbracket 0,1 \rrbracket \times \langle \partial A'_j, d_{\K}, \delta_j \rangle) 
\end{equation}
in $\R^{n+1} \setminus \K_j$.  By taking the boundary of $A''_j$ as defined in \eqref{conv eqn13} using \eqref{conv eqn11} and \eqref{conv eqn16}, 
\begin{equation*} 
	\partial A''_j = (\partial A'_j)  \llcorner \{d_{\K} > \delta_j\} + h_{j \#}(\llbracket 0,1 \rrbracket \times \langle \partial A'_j, d_{\K}, \delta_j \rangle) 
\end{equation*}
in $\R^{n+1} \setminus \K_j$.  Thus by \eqref{conv eqn10} and \eqref{conv eqn12}, 
\begin{align} \label{conv eqn17} 
	\Mass_{\R^{n+1} \setminus \K_j}((\partial A''_j) \llcorner \{d_{\K} < \delta/2\}) 
	&\leq \Mass((\partial A'_j) \llcorner \{\delta/4 < d_{\K} < \delta/2\}) + \delta \,\Mass(\langle \partial A'_j, d_{\K}, \delta_j \rangle) 
	\\&\leq 260 \,\varepsilon . \nonumber 
\end{align}
Similarly, using  \eqref{conv eqn11}, \eqref{conv eqn1}, and \eqref{conv eqn12},
\begin{equation} \label{conv eqn18} 
	\Mass_{\R^{n+1} \setminus \K_j}((\partial X_j) \llcorner \{d_{\K} < \delta/2\}) 
	\leq \Mass((\partial X) \llcorner \{0 < d_{\K} < \delta/2\}) + \delta \,\Mass(\langle \partial X, d_{\K}, \delta_j \rangle) 
	\leq 257 \,\varepsilon .  
\end{equation}

Let $0 < \beta < \alpha$.  Since $R_j$ satisfies the almost relatively area minimizing property \eqref{conv aam hyp}, we can compare $R_j$ to $R_j + \partial A''_j + \partial X_j$ to obtain 
\begin{equation*}
	\Mass_{W_{\beta}}(R_j) \leq \Mass_{W_{\beta} \setminus \K_j}(R_j + \partial A''_j + \partial X_j) + \lambda_j \,\Mass(A''_j + X_j)
\end{equation*}
for all sufficiently large $j$.  By \eqref{conv eqn8} and $A''_j = A'_j$ and $X_j = X$ on $\R^{n+1} \cap \{d_{\K} > \delta/4 \}$ for large $j$, 
\begin{align*}
	\Mass_{W_{\beta}}(R_j) 
	&\leq \Mass_{W_{\beta} \cap \{d_{\K} > \delta/4\}}(R_j + \partial A'_j + B'_j + \partial X) 
		+ \Mass_{W \cap \{d_{\K} < \delta/2\}}(R_j) 
		\\&\hspace{5mm} + \Mass_{(\R^{n+1} \setminus \K_j) \cap \{d_{\K} < \delta/2\}}(\partial A''_j) 
		+ \Mass(B'_j) + \Mass_{(\R^{n+1} \setminus \K_j) \cap \{d_{\K} < \delta/2\}}(\partial X_j) 
		\\&\hspace{5mm} + \lambda_j \,\Mass(A''_j) + \lambda_j \,\Mass(X_j)
	\\&\leq \Mass_{W_{\beta} \setminus \K}(R + \partial X) + \Mass_{W \cap \{d_{\K} < \delta/2\}}(R_j) 
		+ \Mass_{(\R^{n+1} \setminus \K_j) \cap \{d_{\K} < \delta/2\}}(\partial A''_j) 
		\\&\hspace{5mm} + \Mass(B'_j) + \Mass_{(\R^{n+1} \setminus \K_j) \cap \{d_{\K} < \delta/2\}}(\partial X_j) 
		+ \lambda_j \,\Mass(A''_j) + \lambda_j \,\Mass(X_j). 
\end{align*}
By the mass bounds \eqref{conv nonconbdry hyp}, \eqref{conv eqn17}, \eqref{conv eqn9}, \eqref{conv eqn18}, \eqref{conv eqn14}, and \eqref{conv eqn15}, 
\begin{equation*}
	\Mass_{W_{\beta}}(R_j) \leq \Mass_{W_{\beta} \setminus \K}(R + \partial X) + \lambda_j \,\Mass(X) + 1032 \,\varepsilon 
\end{equation*}
for $j$ sufficiently large.  By letting $\beta \downarrow \alpha$ noting \eqref{conv eqn7},  
\begin{equation*}
	\Mass_{W_{\alpha}}(R_j) \leq \Mass_{W_{\alpha} \setminus \K}(R + \partial X) + \lambda_j \,\Mass(X) + 1032 \,\varepsilon 
\end{equation*}
for $j$ sufficiently large.  By letting $j \rightarrow \infty$, noting that $\lambda_j \rightarrow 0$ and $\varepsilon$ is arbitrary, we obtain 
\begin{equation} \label{conv eqn19}
	\limsup_{j \rightarrow \infty} \Mass_{W_{\alpha}}(R_j) \leq \Mass_{W_{\alpha} \setminus \K}(R + \partial X) 
\end{equation}
for all $X \in \mathbf{I}_{m+1}(\R^{n+1} \setminus \K)$ with $\op{spt} X \subseteq K$. 

By \eqref{conv eqn19} and the semi-continuity of mass, 
\begin{equation*}
	\Mass_{W_{\alpha}}(R) \leq \Mass_{W_{\alpha} \setminus \K}(R + \partial X) 
\end{equation*}
for all $X \in \mathbf{I}_{m+1}(\R^{n+1} \setminus \K)$ with $\op{spt} X \subseteq K$.  Since $X$ is arbitrary, $R$ is relatively area minimizing in $\R^{n+1} \setminus \K$.

By setting $X = 0$ in \eqref{conv eqn19} and using $K \subset W_{\alpha} \subset W$, 
\begin{equation*}
	\limsup_{j \rightarrow \infty} \|R_j\|(K) \leq \|R\|(W) 
\end{equation*}
for every compact subset $K \subset \R^{n+1}$ and bounded open subset $W \subset \R^{n+1}$ such that $K \subset W$.  Letting $W$ decrease to $K$, 
\begin{equation} \label{conv eqn20}
	\limsup_{j \rightarrow \infty} \|R_j\|(K) \leq \|R\|(K) 
\end{equation}
for every compact subset $K \subset \R^{n+1}$.  By the semi-continuity of mass, 
\begin{equation} \label{conv eqn21}
	\|R\|(W) \leq \liminf_{j \rightarrow \infty} \|R_j\|(W) 
\end{equation}
for every bounded open set $W \subset \R^{n+1}$.  By \eqref{conv eqn20} and \eqref{conv eqn21} and a standard approximation argument, $\|R_j\| \rightarrow \|R\|$ in the sense of Radon measures.  Note that we only showed $\|R_j\| \rightarrow \|R\|$ for a subsequence of $\{R_j\}$, but by repeating the argument starting with any subsequence of $\{R_j\}$ we obtain $\|R_j\| \rightarrow \|R\|$ for the original sequence $\{R_j\}$.  
\end{proof}

\section{Existence of relative isoperimetric minimizers} \label{sec:existence_sec}

This section is focused on the proof of the existence result Theorem B from the introduction.  Our approach will involve a nontrivial application of the direct method as discussed in Subsection \ref{sec:intro method} of the introduction. 

\begin{proof}[Proof of Theorem B]  Let $\K$ be a proper convex subset of $\R^{n+1}$ with $C^2$-boundary.  Let $T_j \in \mathbf{I}_m(\R^{n+1} \setminus \K)$ and $R_j \in \mathbf{I}_m(\R^{n+1} \setminus \K)$ such that  $\partial R_j = T_j$ in $\R^{n+1} \setminus \K$, $R_j$ is relatively area minimizing in $\R^{n+1} \setminus \K$, and 
\begin{equation} \label{existence_eqn1}
	\gamma_{m,n}(\K) = \lim_{j \rightarrow \infty} \frac{\Mass(T_j)^{\frac{m+1}{m}}}{\Mass(R_j)}. 
\end{equation}
Our aim is to let $T_j$ and $R_j$ converge to currents $T_0$ and $R_0$ such that $(T_0,R_0)$ is a relative isoperimetric minimizer.  However, in order to obtain nonzero limits of $T_j$ and $R_j$, we need to control the masses of $T_j$ and $R_j$.  Thus after passing to a subsequence, we may assume that one of the following three cases holds true: 
\begin{enumerate}
	\item[(a)]  $\lim_{j \rightarrow \infty} \Mass(R_j) = \infty$, 
	\item[(b)]  $\lim_{j \rightarrow \infty} \Mass(R_j) = 0$, and 
	\item[(c)]  $\lim_{j \rightarrow \infty} \Mass(R_j)$ exists as a positive real number in $(0,\infty)$. 
\end{enumerate}
The basic idea is to show in the case (a) that after translating and scaling $T_j, R_j$ converge to integral currents of $\R^{n+1}$, contradicting \eqref{existence hyp}.   In the case (b), after translating and scaling $T_j, R_j$ converge to integral currents of a half-space, again contradicting \eqref{existence hyp}.   In case (c), we let $T_j \rightarrow T_0$ and $R_j \rightarrow R_0$ where $(T_0,R_0)$ is the desired relative isoperimetric minimizer.  The details are as follows. \\

\noindent \textbf{Case (a).}  Suppose $\lim_{j \rightarrow \infty} \Mass(R_j) = \infty$.  After translating assume $0 \in \K$.  Rescale letting 
\begin{equation*}
	r_j = \left(\frac{\Mass(R_j)}{\mathcal{H}^{m+1}(\ball)}\right)^{\frac{1}{m+1}} \rightarrow \infty, \quad 
	\widetilde{K}_j = \eta_{0,r_j}(K), \quad 
	\widetilde{T}_j = \eta_{0,r_j \#} T_j, \quad 
	\widetilde{R}_j = \eta_{0,r_j \#} R_j
\end{equation*}
(where $\eta_{y,r}(x) = (x-y)/r$ for all $x,y \in \R^{n+1}$ and $r > 0$ as in Subsection \ref{sec:prelims_notation}) so that $\widetilde{T}_j \in \mathbf{I}_m(\R^{n+1} \setminus \widetilde{K}_j)$ and $\widetilde{R}_j \in \mathbf{I}_{m+1}(\R^{n+1} \setminus \widetilde{K}_j)$ such that $\widetilde{R}_j$ is relatively area minimizing with $\partial \widetilde{R}_j = \widetilde{T}_j$ in $\R^{n+1} \setminus \widetilde{K}_j$ and by \eqref{existence hyp} and \eqref{existence_eqn1} 
\begin{gather} 
	\label{existence_eqnA1} 0 \in \widetilde{K}_j, \quad \lim_{j \rightarrow \infty} \op{diam}(\widetilde{K}_j) = 0,  \\
	\label{existence_eqnA2} \lim_{j \rightarrow \infty} \Mass(\widetilde{T}_j) < \mathcal{H}^m(\sphere), \quad 
	\Mass(\widetilde{R}_j) = \mathcal{H}^{m+1}(\ball), \\ 
	\label{existence_eqnA3} \gamma_{m,n}(\K) = \lim_{j \rightarrow \infty} \frac{\Mass(\widetilde{T}_j)^{\frac{m+1}{m}}}{\Mass(\widetilde{R}_j)}. 
\end{gather}
(Note that by \eqref{existence_eqnA3} and $\Mass(\widetilde{R}_j) = \mathcal{H}^{m+1}(\ball)$ for all $j$, $\lim_{j \rightarrow \infty} \Mass(\widetilde{T}_j)$ exists.)  By the Federer-Fleming compactness theorem and \eqref{existence_eqnA2}, after passing to a subsequence $\widetilde{T}_j \rightarrow \widetilde{T}_0$ and $\widetilde{R}_j \rightarrow \widetilde{R}_0$ weakly in $\R^{n+1} \setminus \{0\}$ for some $\widetilde{T}_0 \in \mathbf{I}_m(\R^{n+1} \setminus \{0\})$ and $\widetilde{R}_j \in \mathbf{I}_{m+1}(\R^{n+1} \setminus \{0\})$.  Clearly $\partial \widetilde{R}_0 = \widetilde{T}_0$ in $\R^{n+1} \setminus \{0\}$.  By the semi-continuity of mass and \eqref{existence_eqnA2}, 
\begin{equation} \label{existence_eqnA4} 
	\Mass(\widetilde{T}_0) \leq \lim_{j \rightarrow \infty} \Mass(\widetilde{T}_j) < \mathcal{H}^m(\sphere). \\ 
\end{equation}

By Lemma \ref{boundary rectifiability lemma2} and \eqref{existence_eqnA2}, for every $\varepsilon > 0$ and $j$ sufficiently large 
\begin{align} 
	&\Mass(\widetilde{T}_j \llcorner \{d_{\widetilde{K}_j} < s\}) \leq C(m) \,s \,\Mass(\widetilde{T}_j)^{\frac{m-1}{m}} \leq C(m) \,s, \nonumber \\
	\label{existence_eqnA5} &\Mass(\widetilde{R}_j \llcorner \{d_{\widetilde{K}_j} < s\}) \leq C(m) \,s \,\Mass(\widetilde{T}_j) \leq C(m) \,s  
\end{align}
for all $\varepsilon \leq s \leq s_0$, where $s_0 = s_0(m) \in (0,\infty)$ is a constant.  By \eqref{existence_eqnA1} and the semi-continuity of mass, 
\begin{align} 
	\label{existence_eqnA6} &\Mass(\widetilde{T}_0 \llcorner B_s(0)) \leq C(m) \,s, \\ 
	\label{existence_eqnA7} &\Mass(\widetilde{R}_0 \llcorner B_s(0)) \leq C(m) \,s 
\end{align}
for all $0 < s \leq s_0$.  By Lemma \ref{nonconcentration lemma}, \eqref{existence_eqnA1}, and \eqref{existence_eqnA2}, for every $\varepsilon > 0$ there exists $\rho = \rho(m,\varepsilon) > 0$ such that for $j$ sufficiently large 
\begin{equation} \label{existence_eqnA8} 
	\Mass(\widetilde{R}_j \llcorner \R^{n+1} \setminus B_{\rho}(0)) \leq \varepsilon. 
\end{equation}
By Lemma \ref{convergence lemma}, \eqref{existence_eqnA2}, \eqref{existence_eqnA5}, and \eqref{existence_eqnA8}, $\widetilde{R}_0$ is relatively area-minimizing in $\R^{n+1} \setminus \{0\}$ and 
\begin{equation} \label{existence_eqnA9} 
	\Mass(\widetilde{R}_0) = \lim_{j \rightarrow \infty} \Mass(\widetilde{R}_j) = \mathcal{H}^{m+1}(\ball). 
\end{equation}

We want to show that $\widetilde{T}_0 \in \mathbf{I}_m(\R^{n+1})$ and $\widetilde{R}_0 \in \mathbf{I}_{m+1}(\R^{n+1})$ with $\partial \widetilde{R}_0 = \widetilde{T}_0$ in $\R^{n+1}$.  It is possible that the boundaries of $\widetilde{T}_0$ or $\widetilde{R}_0$, as currents of $\R^{n+1}$, have infinite mass at the origin.  However, we can rule this out using the non-concentration estimates \eqref{existence_eqnA6} and \eqref{existence_eqnA7}.  In particular, by slicing theory with $\vartheta = 1/4$, \eqref{existence_eqnA6}, and \eqref{existence_eqnA7} for each sufficiently large integer $j$ there exists $s_j \in (2^{-j-1},2^{-j})$ such that $\widetilde{T}_0 \llcorner \R^{n+1} \setminus B_{s_j}(0) \in \mathbf{I}_m(\R^{n+1})$, $\langle \widetilde{T}_0, |\cdot|, s_j \rangle \in \mathbf{I}_{m-1}(\R^{n+1})$, $\widetilde{R}_0 \llcorner \R^{n+1} \setminus B_{s_j}(0) \in \mathbf{I}_{m+1}(\R^{n+1})$, and $\langle \widetilde{R}_0, |\cdot|, s_j \rangle \in \mathbf{I}_m(\R^{n+1})$ with 
\begin{gather} 
	\label{existence_eqnA10} \partial (\widetilde{T}_0 \llcorner \R^{n+1} \setminus B_{s_j}(0)) = -\langle \widetilde{T}_0, |\cdot|, s_j \rangle 
		\text{ in } \R^{n+1}, \\ 
	\partial (\widetilde{R}_0 \llcorner \R^{n+1} \setminus B_{s_j}(0)) = \widetilde{T}_0 \llcorner \R^{n+1} \setminus B_{s_j}(0) 
		- \langle \widetilde{R}_0, |\cdot|, s_j \rangle \text{ in } \R^{n+1}, \nonumber \\ 
	\Mass(\langle \widetilde{T}_0, |\cdot|, s_j \rangle) \leq C(m), \quad 
	\Mass(\langle \widetilde{R}_0, |\cdot|, s_j \rangle) \leq C(m). \nonumber 
\end{gather}
By \eqref{existence_eqnA4} and \eqref{existence_eqnA9}, $\widetilde{T}_0 \llcorner \R^{n+1} \setminus B_{s_j}(0) \rightarrow \widetilde{T}_0$ and $\widetilde{R}_0 \llcorner \R^{n+1} \setminus B_{s_j}(0) \rightarrow \widetilde{R}_0$ in the mass norm topology in $\R^{n+1}$.  Moreover, by the Federer-Fleming compactness theorem and \eqref{existence_eqnA4}, \eqref{existence_eqnA9}, and \eqref{existence_eqnA10}, after passing to a subsequence $\widetilde{T}_0 \llcorner \R^{n+1} \setminus B_{s_j}(0)$ and $\widetilde{R}_0 \llcorner \R^{n+1} \setminus B_{s_j}(0)$ converge weakly to integral currents of $\R^{n+1}$ as $j \rightarrow \infty$.  Therefore, $\widetilde{T}_0 \in \mathbf{I}_m(\R^{n+1})$ and $\widetilde{R}_0 \in \mathbf{I}_{m+1}(\R^{n+1})$.  It follows that since $\mathcal{H}^m(\{0\}) = 0$, $\partial \widetilde{R}_0 = \widetilde{T}_0$ in $\R^{n+1}$.  Since $\widetilde{R}_0 \in \mathbf{I}_{m+1}(\R^{n+1})$ and $\widetilde{R}_0$ is relatively area minimizing in $\R^{n+1} \setminus \{0\}$, $\widetilde{R}_0$ is area minimizing in $\R^{n+1}$. 

By \eqref{existence_eqnA3}, \eqref{existence_eqnA4}, \eqref{existence_eqnA9}, and \eqref{existence hyp} 
\begin{equation*} 
	\frac{\Mass(\widetilde{T}_0)^{\frac{m+1}{m}}}{\Mass(\widetilde{R}_0)} 
	\leq \lim_{j \rightarrow \infty} \frac{\Mass(\widetilde{T}_j)^{\frac{m+1}{m}}}{\Mass(\widetilde{R}_j)} = \gamma_{m,n}(\K) 
	< \frac{\mathcal{H}^m(\sphere)^{\frac{m+1}{m}}}{\mathcal{H}^{m+1}(\ball)} , 
\end{equation*}
contradicting the isoperimetric inequality in $\R^{n+1}$. \\

\noindent \textbf{Case (b).}  Suppose $\lim_{j \rightarrow \infty} \Mass(R_j) = 0$.  Let $c = c(m,n,\K) \in (0,\infty)$ and $\beta_1 = \beta_1(m,n,\K) \in (0,\infty)$ be as in Lemma \ref{local concentration lemma} and $\beta_0 = \beta_0(m,c/2) \in [1,\infty)$ be as in Lemma \ref{nonconcentration lemma} with $\sigma = c/2$.  Let $\beta \geq \max\{\beta_0,\beta_1\}$.  By Lemma \ref{local concentration lemma} and \eqref{existence_eqn1}, for each sufficiently large $j$ there exists $y_j \in \R^{n+1} \setminus \K$ such that 
\begin{equation} \label{existence_eqnB1}
	\Mass(T_j \llcorner B_{\beta \,\Mass(R_j)^{\frac{1}{m+1}}}(y_j)) \geq c\,\Mass(T_j), 
\end{equation}
where we used \eqref{existence_eqn1} to get $\Mass(T_j) \leq C(m) \,\Mass(R_j)^{\frac{m}{m+1}}$ for large $j$.  By Lemma \ref{nonconcentration lemma} we must have $\op{dist}(y_j,\partial \K) \leq 2 \beta \,\Mass(R_j)^{\frac{1}{m+1}}$ as otherwise \eqref{noncon concl1} holds true with $\sigma = c/2$, $T = T_j$ and $R = R_j$, contradicting \eqref{existence_eqnB1}.  Hence there exists $x_j \in \partial \K$ such that $|x_j - y_j| \leq 2 \beta \,\Mass(R_j)^{\frac{m}{m+1}}$ and thus 
\begin{equation} \label{existence_eqnB2}
	\Mass(T_j \llcorner B_{3\beta \,\Mass(R_j)^{\frac{1}{m+1}}}(x_j)) \geq c\,\Mass(T_j) .
\end{equation}

Translate and rescale letting 
\begin{equation*}
	r_j = \left(\frac{2\,\Mass(R_j)}{\mathcal{H}^{m+1}(\ball)}\right)^{\frac{1}{m+1}} \rightarrow 0, \quad 
	\widetilde{K}_j = \eta_{x_j,r_j}(K), \quad 
	\widetilde{T}_j = \eta_{x_j,r_j \#} T_j, \quad 
	\widetilde{R}_j = \eta_{x_j,r_j \#} R_j 
\end{equation*}
so that $\widetilde{T}_j \in \mathbf{I}_m(\R^{n+1} \setminus \widetilde{K}_j)$ and $\widetilde{R}_j \in \mathbf{I}_{m+1}(\R^{n+1} \setminus \widetilde{K}_j)$ such that $\widetilde{R}_j$ is relatively area minimizing with $\partial \widetilde{R}_j = \widetilde{T}_j$ in $\R^{n+1} \setminus \widetilde{K}_j$ and by \eqref{existence_eqn1} and \eqref{existence hyp} 
\begin{gather}
	\label{existence_eqnB3} \lim_{j \rightarrow \infty} \Mass(\widetilde{T}_j) < \frac{1}{2} \,\mathcal{H}^m(\sphere), \quad 
	\Mass(\widetilde{R}_j) = \frac{1}{2} \,\mathcal{H}^{m+1}(\ball), \\ 
	\label{existence_eqnB4} \gamma_{m,n}(\K) = \lim_{j \rightarrow \infty} \frac{\Mass(\widetilde{T}_j)^{\frac{m+1}{m}}}{\Mass(\widetilde{R}_j)}. 
\end{gather}
Also, by \eqref{existence_eqnB2}, 
\begin{equation} \label{existence_eqnB5}
	\Mass(\widetilde{T}_j \llcorner B_{\widetilde{\beta}}(0)) \geq \widetilde{c} > 0
\end{equation}
where $\widetilde{\beta} = \widetilde{\beta}(m,n,\K) \in (0,\infty)$ and $\widetilde{c} = \widetilde{c}(m,n,\K) \in (0,\infty)$ are constants.  Since $\K$ has a $C^2$-boundary, there exists a half-space $\widetilde{\K}_0$ with $0 \in \partial \widetilde{\K}_0$ such that $\widetilde{K}_j \rightarrow \widetilde{\K}_0$ locally in Hausdorff distance and $\partial \widetilde{K}_j \rightarrow \partial \widetilde{\K}_0$ locally in the $C^2$ topology.  After an orthogonal change of coordinates, assume $\widetilde{\K}_0 = \R^{n+1}_+$.  By the Federer-Fleming compactness theorem and \eqref{existence_eqnB3}, after passing to a subsequence, $\widetilde{T}_j \rightarrow \widetilde{T}_0$ and $\widetilde{R}_j \rightarrow \widetilde{R}_0$ weakly in $\R^{n+1}_+$ for some $\widetilde{T}_0 \in \mathbf{I}_m(\R^{n+1}_+)$ and $\widetilde{R}_0 \in \mathbf{I}_{m+1}(\R^{n+1}_+)$ such that $\partial \widetilde{R}_0 = \widetilde{T}_0$ in $\R^{n+1}_+$ and by the semi-continuity of mass 
\begin{equation} \label{existence_eqnB6} 
	\Mass(\widetilde{T}) \leq \lim_{j \rightarrow \infty} \Mass(\widetilde{T}_j) < \frac{1}{2} \,\mathcal{H}^m(\sphere). 
\end{equation}

By Lemma \ref{boundary rectifiability lemma2} and \eqref{existence_eqnB3}, for every $\varepsilon > 0$ and $j$ sufficiently large 
\begin{equation} \label{existence_eqnB7}
	\Mass(\widetilde{R}_j \llcorner \{d_{\widetilde{K}_j} < s\}) \leq C(m) \,s \,\Mass(\widetilde{T}_j) \leq C(m) \,s  
\end{equation}
for all $\varepsilon \leq s \leq s_0$, where $s_0 = s_0(m) \in (0,\infty)$ is a constant.  By Lemma \ref{nonconcentration lemma}, \eqref{existence_eqnB3}, and \eqref{existence_eqnB5}, for every $\varepsilon > 0$ there exists $\rho = \rho(m,n,\K,\varepsilon) > 0$ such that 
\begin{equation} \label{existence_eqnB8} 
	\limsup_{j \rightarrow \infty} \Mass(\widetilde{R}_j \llcorner \R^{n+1} \setminus B_{\rho}(0)) \leq \varepsilon 
\end{equation}
for all large $j$.  By Lemma \ref{convergence lemma}, \eqref{existence_eqnB3}, \eqref{existence_eqnB7}, and \eqref{existence_eqnB8}, $\widetilde{R}_0$ is relatively area minimizing in $\R^{n+1}_+$ and 
\begin{equation} \label{existence_eqnB9} 
	\Mass(\widetilde{R}_0) = \lim_{j \rightarrow \infty} \Mass(\widetilde{R}_j) = \frac{1}{2} \,\mathcal{H}^{m+1}(\ball). 
\end{equation}

By \eqref{existence_eqnB4}, \eqref{existence_eqnB6}, \eqref{existence_eqnB9}, and \eqref{existence hyp}, 
\begin{equation*} 
	\frac{\Mass(\widetilde{T}_0)^{\frac{m+1}{m}}}{\Mass(\widetilde{R}_0)} 
	\leq \lim_{j \rightarrow \infty} \frac{\Mass(\widetilde{T}_j)^{\frac{m+1}{m}}}{\Mass(\widetilde{R}_j)} = \gamma_{m,n}(\K) 
	< 2^{-\frac{1}{m}} \,\frac{\mathcal{H}^m(\sphere)^{\frac{m+1}{m}}}{\mathcal{H}^{m+1}(\ball)} , 
\end{equation*}
contradicting the sharp isoperimetric inequality in $\R^{n+1}_+$. \\

\noindent \textbf{Case (c).}  Suppose $\lim_{j \rightarrow \infty} \Mass(R_j)$ exists and 
\begin{equation} \label{existence_eqnC1} 
	0 < L = \lim_{j \rightarrow \infty} \Mass(R_j) < \infty. 
\end{equation}
We do not need to scale as by assumption $R_j$ satisfies \eqref{existence_eqnC1} and by \eqref{existence_eqnC1}, \eqref{existence_eqn1}, and \eqref{existence hyp} 
\begin{equation} \label{existence_eqnC2} 
	\lim_{j \rightarrow \infty} \Mass(T_j) \leq C(m) \,L^{\frac{m}{m+1}} < \infty. 
\end{equation}
By the Federer-Fleming compactness theorem, \eqref{existence_eqnC1}, and \eqref{existence_eqnC2}, after passing to a subsequence $T_j \rightarrow T_0$ and $R_j \rightarrow R_0$ weakly in $\R^{n+1} \setminus \K$ for some $T_0 \in \mathbf{I}_m(\R^{n+1} \setminus \K)$ and $R_0 \in \mathbf{I}_{m+1}(\R^{n+1} \setminus \K)$ such that $\partial R_0 = T_0$ in $\R^{n+1} \setminus \K$ and by the semi-continuity of mass 
\begin{equation} \label{existence_eqnC3}
	\Mass(T_0) \leq \lim_{j \rightarrow \infty} \Mass(T_j). 
\end{equation}

By Lemma \ref{boundary rectifiability lemma2} and \eqref{existence_eqnC1}, for every $\varepsilon > 0$ and $j$ sufficiently large 
\begin{equation} \label{existence_eqnC4}
	\Mass(R_j \llcorner \{d_{\K} < s\}) \leq C(m) \,s \,\Mass(R_j) \leq C(m) \,L \,s 
\end{equation}
for all $\varepsilon \leq s \leq s_0$, where $s_0 = s_0(m,L) \in (0,\infty)$ is a constant.  By Lemma \ref{nonconcentration lemma} and \eqref{existence_eqnC1}, noting that $\K$ is a fixed bounded set, for every $\varepsilon > 0$ there exists $\rho = \rho(m,\K,L,\varepsilon) > 0$ such that 
\begin{equation} \label{existence_eqnC5} 
	\limsup_{j \rightarrow \infty} \Mass(R_j \llcorner \R^{n+1} \setminus B_{\rho}(0)) \leq \varepsilon 
\end{equation}
for all large $j$.  By Lemma \ref{convergence lemma}, \eqref{existence_eqnC1}, \eqref{existence_eqnC2}, \eqref{existence_eqnC4}, and \eqref{existence_eqnC5}, $R_0$ is relatively area minimizing in $\R^{n+1} \setminus \K$ and 
\begin{equation} \label{existence_eqnC6} 
	\Mass(R_0) = \lim_{j \rightarrow \infty} \Mass(R_j) . 
\end{equation}

By \eqref{existence_eqn1}, \eqref{existence_eqnC3}, and \eqref{existence_eqnC6}, 
\begin{equation*} 
	\frac{\Mass(T_0)^{\frac{m+1}{m}}}{\Mass(R_0)} 
	\leq \lim_{j \rightarrow \infty} \frac{\Mass(T_j)^{\frac{m+1}{m}}}{\Mass(R_j)} = \gamma_{m,n}(\K), 
\end{equation*}
which by the minimality of $\gamma_{m,n}(\K)$ implies 
\begin{equation*} 
	\frac{\Mass(T_0)^{\frac{m+1}{m}}}{\Mass(R_0)} = \gamma_{m,n}(\K). \qedhere
\end{equation*}
\end{proof}

\section{First variational of the isoperimetric ratio} \label{sec:variation sec}

In this section we compute the first variation of relative area minimizers and relative isoperimetric minimizers.  This follows from the standard first variational formula for area and a first variational formula for the isoperimetric ratio from~\cite{Alm83}.  We only consider variational vectors fields which are tangent to $\partial \K$ so that when we vary $T$ and $R$ they remain currents of $\R^{n+1} \setminus \K$.  Note that these first variational formulas hold true whenever $\K$ is the closure of an open set with $C^2$-boundary and do not require convexity. 

\begin{lemma} \label{first variation R lemma}
Let $\K$ be the closure of an open subset of $\R^{n+1}$ with $C^2$-boundary.  Let $T \in \mathbf{I}_m(\R^{n+1} \setminus \K)$ and $R \in \mathbf{I}_{m+1}(\R^{n+1} \setminus \K)$ such that $R$ is relatively area minimizing with $\partial R = T$ in $\R^{n+1} \setminus \K$.  Then $R$ has zero distributional mean curvature in the sense that 
\begin{equation} \label{first variation R concl1}
	\int \op{div}_R \zeta(x) \,d\|R\|(x) = 0
\end{equation}
for all $\zeta \in C^1_c(\R^{n+1} \setminus \op{spt} T;\R^{n+1})$ such that $\zeta(x)$ is tangent to $\partial \K$ at each $x \in \partial \K$, where $\op{div}_R \zeta(x)$ denotes the divergence of $\zeta$ computed with respect to the approximate tangent plane of $R$ at $x$ for $\|R\|$-a.e.~$x$. 
\end{lemma}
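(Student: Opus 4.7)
The plan is to vary $R$ by the flow of a suitable vector field generated by $\zeta$ and exploit the relative area-minimizing property. Let $\varphi_t : \R^{n+1} \to \R^{n+1}$ denote the one-parameter group of diffeomorphisms obtained by integrating $\zeta$. Since $\zeta \in C^1_c(\R^{n+1} \setminus \op{spt} T;\R^{n+1})$, the flow $\varphi_t$ is defined for all $t \in \R$, equals the identity outside a fixed compact subset of $\R^{n+1} \setminus \op{spt} T$, and is a $C^1$-diffeomorphism with $\varphi_0 = \op{id}$ and $\frac{d}{dt}\bigr|_{t=0}\varphi_t = \zeta$.

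The first key point is that the flow preserves $\R^{n+1}\setminus\K$. Because $\zeta(x)$ is tangent to $\partial\K$ at every $x \in \partial\K$, the restriction $\zeta|_{\partial\K}$ is a $C^1$ tangent vector field on the $C^2$-hypersurface $\partial\K$. By uniqueness of solutions to the ODE $\dot{\gamma} = \zeta(\gamma)$, an integral curve that starts on $\partial\K$ remains on $\partial\K$ for all time, so $\varphi_t(\partial\K) \subseteq \partial\K$, and applying this to $\varphi_{-t} = \varphi_t^{-1}$ gives $\varphi_t(\partial\K) = \partial\K$. Since $\varphi_t$ is a homeomorphism of $\R^{n+1}$ that fixes $\partial\K$ setwise, it permutes the connected components of $\R^{n+1} \setminus \partial\K$; continuity of $t \mapsto \varphi_t$ together with $\varphi_0 = \op{id}$ forces each component to be mapped to itself, so $\varphi_t(\R^{n+1}\setminus\K) = \R^{n+1}\setminus\K$ and $\varphi_t(\op{int}\K) = \op{int}\K$ for every $t$.

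Next I would construct the competitor. Since $\op{spt}\zeta$ is disjoint from $\op{spt} T$, the diffeomorphism $\varphi_t$ is the identity on a neighborhood of $\op{spt} T$, so $\varphi_{t\#} T = T$. Consequently $\varphi_{t\#} R \in \mathbf{I}_{m+1}(\R^{n+1}\setminus\K)$ satisfies
\begin{equation*}
\partial(\varphi_{t\#} R) = \varphi_{t\#}(\partial R) = \varphi_{t\#} T = T \quad \text{in } \R^{n+1}\setminus\K,
\end{equation*}
and $\op{spt}(\varphi_{t\#} R - R) \subseteq \op{spt}\zeta$ is compact. The hypothesis that $R$ is relatively area minimizing in $\R^{n+1}\setminus\K$ then yields $\Mass(R) \leq \Mass(\varphi_{t\#} R)$ for every $t \in \R$. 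The standard first variation formula for mass of integer-multiplicity rectifiable currents under smooth deformations (see~\cite[\S 9]{Sim83} or~\cite[4.1.18]{Fed69}) gives
\begin{equation*}
\frac{d}{dt}\bigg|_{t=0}\Mass(\varphi_{t\#} R) = \int \op{div}_R \zeta(x)\, d\|R\|(x),
\end{equation*}
and since $t = 0$ is a two-sided minimum of the smooth function $t \mapsto \Mass(\varphi_{t\#} R)$, this derivative vanishes, which is precisely \eqref{first variation R concl1}.

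The only nontrivial step is verifying that the flow $\varphi_t$ preserves $\R^{n+1}\setminus\K$, which is where the tangency hypothesis on $\zeta$ is used; the remainder is a direct application of the minimality of $R$ combined with the classical first variation identity.
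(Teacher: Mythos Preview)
Your proof is correct and follows the same overall strategy as the paper: build a one-parameter family of $C^1$-diffeomorphisms that preserves $\R^{n+1}\setminus\K$ and fixes $T$, then apply the first variation formula for mass at $t=0$. The only difference is in how the family is constructed. You take the flow of $\zeta$ and use ODE uniqueness (tangency of $\zeta$ along the $C^2$-hypersurface $\partial\K$ forces integral curves starting on $\partial\K$ to remain there), whereas the paper builds $\Phi_t$ by hand via boundary-flattening charts $\varphi:B_{\delta(x_0)}(x_0)\to\R^{n+1}$ and a partition of unity, composing local families $\Phi_{j,t}$ each supported in a single chart. Your route is more direct and avoids the patching; the paper's construction is slightly more explicit about why $\Phi_t(\R^{n+1}\setminus\K)=\R^{n+1}\setminus\K$ but at the cost of extra bookkeeping. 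Either way the conclusion is the same once one has such a family.
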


\begin{proof}
We want to let $\varepsilon > 0$, $\Phi : (-\varepsilon,\varepsilon) \times \R^{n+1} \rightarrow \R^{n+1}$, and $\Phi_t = \Phi(t,\cdot)$ such that $\Phi_t$ is a one-parameter family of $C^1$-diffeomorphisms with $\Phi(0,x) = x$ and $\tfrac{\partial \Phi}{\partial t}(0,x) = \zeta(x)$ for all $x \in \R^{n+1}$ and $\Phi_t(\R^{n+1} \setminus \K) = \R^{n+1} \setminus \K$ for all $t \in (-\varepsilon,\varepsilon)$.  Notice that if $\op{spt} \zeta \cap \partial \K = \emptyset$, we can take $\Phi_t(x) = x + t \,\zeta(x)$.  On the other hand, for each $x_0 \in \partial \K$ there exists $\delta(x_0) > 0$ and a $C^2$-diffeomorphism $\varphi : B_{\delta(x_0)}(x_0) \rightarrow \R^{n+1}$ such that 
\begin{gather*}
	\varphi(B_{\delta(x_0)}(x_0) \setminus \K) = \varphi(B_{\delta(x_0)}(x_0)) \cap \R^{n+1}_+, \\ 
	\varphi(B_{\delta(x_0)}(x_0) \cap \partial \K) = \varphi(B_{\delta(x_0)}(x_0)) \cap \{ x : x_{n+1} = 0 \}.
\end{gather*}
If $\op{spt} \zeta \subseteq B_{\delta(x_0)}(x_0)$, then for some $\varepsilon > 0$ sufficiently small and each $t \in (-\varepsilon,\varepsilon)$ we have the $C^1$-diffeomorphism $\Psi_t(x) = x + t\,\varphi_{\#} \zeta(x)$ for all $x \in \varphi(B_{\delta(x_0)}(x_0))$.  Thus we can take $\Phi_t(x) = (\varphi^{-1} \circ \Psi_t \circ \varphi)(x)$ for all $x \in B_{\delta(x_0)}(x_0)$ and $\Phi_t(x) = x$ for all $x \in \R^{n+1} \setminus B_{\delta(x_0)}(x_0)$.  Notice that $\varphi_{\#} \zeta$ is tangent to $\{ x : x_{n+1} = 0 \}$ and thus $\Phi_t(\R^{n+1} \setminus \K) = \R^{n+1} \setminus \K$ for all $t \in (-\varepsilon,\varepsilon)$.  To construct $\Phi$ for a general vector field $\zeta$, we can find a partition of unity consisting of smooth functions $\chi_j : \R^{n+1} \rightarrow [0,1]$ for $j = 0,1,\ldots,N$ such that $\op{spt} \chi_0 \cap \partial \K = \emptyset$, for each $j = 1,2,\ldots,N$ we have $\op{spt} \chi_j \subseteq B_{\delta(x_j)}(x_j)$ for some $x_j \in \partial \K$ with corresponding radius $\delta(x_j) > 0$ as above, and $\sum_{j=0}^N \chi_j = 1$ on $\R^{n+1}$.  There exist $\varepsilon > 0$ and, for each $j = 0,1,\ldots,N$, $\Phi_j : (-\varepsilon,\varepsilon) \times \R^{n+1} \rightarrow \R^{n+1}$ and $\Phi_{j,t} = \Phi_j(t,\cdot)$ such that $\Phi_{j,t}$ is a one-parameter family of $C^1$-diffeomorphisms with $\Phi_j(0,x) = x$ and $\tfrac{\partial \Phi_j}{\partial t}(0,x) = \chi_j(x) \,\zeta(x)$ for all $x \in \R^{n+1}$ and $\Phi_{j,t}(\R^{n+1} \setminus \K) = \R^{n+1} \setminus \K$ for all $t \in (-\varepsilon,\varepsilon)$.  We take $\Phi_t = \Phi(t,\cdot) = \Phi_{0,t} \circ \Phi_{1,t} \circ \cdots \circ \Phi_{N,t}$ on $\R^{n+1}$ for each $t \in (-\varepsilon,\varepsilon)$ to get a family of $C^1$-diffeomorphisms such that $\Phi(0,x) = x$ and 
\begin{equation*}
	\frac{\partial \Phi}{\partial t}(0,x) 
	= \sum_{j=0}^N \frac{\partial \Phi_j}{\partial t}(0,t)
	= \sum_{j=0}^N \chi_j(x) \,\zeta(x) = \zeta(x)
\end{equation*}
for all $x \in \R^{n+1}$. 

By the construction above, $\Phi_{t\#}(R) \in \mathbf{I}_m(\R^{n+1} \setminus \K)$ with $\partial \Phi_{t\#}(R) = T$ in $\R^{n+1} \setminus \K$ for all $t \in (-\varepsilon,\varepsilon)$.  Since $R$ is relatively area minimizing, by the first variation formula for area~\cite[Sections 9 and 16]{Sim83} 
\begin{equation*}
	\left. \frac{d}{dt} \Mass(\Phi_{t\#}(R)) \right|_{t=0} = \int \op{div}_R \zeta \,d\|R\| = 0. \qedhere
\end{equation*}
\end{proof}

\begin{remark} \label{first variation R rmk} 
In the special case where $\op{spt} R \subseteq M$ for an orientable embedded $C^2$-submanifold-with-boundary in $\R^{n+1} \setminus \op{spt} T$, we have the following.  Fix an orientation of $M$ and let $\theta : M \rightarrow \mathbb{Z}$ be the $\mathcal{H}^m$-integrable signed multiplicity function of $M$ so that 
\begin{equation*}
	R(\omega) = \int_M \omega \,\theta
\end{equation*}
for all $\omega \in \mathcal{D}^m(\R^{n+1} \setminus \K)$.  By the constancy theorem $\theta$ is constant on $M$; in other words, by \eqref{first variation R concl1} \begin{equation*}
	\int_M (\op{div}_M \zeta) \,\theta \,d\mathcal{H}^m = 0
\end{equation*}
for every $\zeta \in C^1_c(\R^{n+1} \setminus (\op{spt} T \cup \K);\R^{n+1})$ which is tangent to $M$ and so $\theta$ has distributional gradient zero on the interior of $M$, implying $\theta$ is constant on $M$.  In particular, if $M$ is connected, $\op{spt} R = M \setminus \op{int} \K$.  Note that $R$ cannot touch $\partial K$ tangentially by Lemma \ref{boundary rectifiability lemma4}.  By the divergence theorem, \eqref{first variation R concl1} gives us 
\begin{equation*}
	\int_M \op{div}_M \zeta \,\theta \,d\mathcal{H}^m = \int_M \mathbf{H}_M \cdot \zeta \,\theta \,d\mathcal{H}^m 
		+ \int_{\partial M} \eta_M \cdot \zeta \,\theta \,d\mathcal{H}^m = 0
\end{equation*}
for all $\zeta \in C^1_c(\R^{n+1} \setminus \op{spt} T;\R^{n+1})$ which is tangent to $\partial \K$, where $\mathbf{H}_M$ is the mean curvature of $M$ and $\eta_M$ is the outward unit conormal to $M$ along $\partial M$.  Therefore, $R$ has zero mean curvature and $R$ meets $\partial \K$ orthogonally. 
\end{remark}

\begin{proof}[Proof of Theorem C]
Arguing as in the proof of Lemma \ref{first variation R lemma}, there exists $\varepsilon > 0$, $\Phi : (-\varepsilon,\varepsilon) \times \R^{n+1} \rightarrow \R^{n+1}$, and $\Phi_t = \Phi(t,\cdot)$ such that $\Phi_t$ is a one-parameter family of $C^1$-diffeomorphisms with $\Phi(0,x) = x$ and $\tfrac{\partial \Phi}{\partial t}(0,x) = \zeta(x)$ for all $x \in \R^{n+1}$ and $\Phi_t(\R^{n+1} \setminus \K) = \R^{n+1} \setminus \K)$ for all $t \in (-\varepsilon,\varepsilon)$.  

Notice that $\Phi_{t\#}(T) \in \mathbf{I}_m(\R^{n+1} \setminus \K)$ with $\partial \Phi_{t\#}(T) = 0$ in $\R^{n+1} \setminus \K$ for all $t \in (-\varepsilon,\varepsilon)$.  By the first variation formula for area~\cite[Sections 9 and 16]{Sim83},  
\begin{equation} \label{varT eqn1}
	\left. \frac{d}{dt} \Mass(\Phi_{t\#}(T)) \right|_{t=0} = \int \op{div}_T \zeta \,d\|T\|. 
\end{equation}
For $\|T\|$-a.e.~$x$, let $\xi(x) = \xi_1 \wedge \xi_2 \wedge \cdots \wedge \xi_m$ be the simple unit $m$-vector orienting $T$ at $x$, where $\xi_1,\xi_2,\ldots,\xi_m$ is any orthonormal basis for the approximate tangent plane of $T$ at $x$.  Then 
\begin{equation*}
	\Phi_{(t,x) \#}(1 \wedge \xi(x)) = \zeta \wedge \bigwedge_{i=1}^m (\xi_i + t\,\nabla_{\xi_i} \zeta(x)) = \zeta(x) \wedge \xi(x) + O(t)
\end{equation*}
uniformly for $\|T\|$-a.e.~$x$ as $t \rightarrow 0^+$, where $\Phi_{(t,x) \#}$ denotes the pushforward of $\Phi$ at $(t,x) \in [0,1] \times (\R^{n+1} \setminus \K)$.  Hence 
\begin{equation*}
	\Mass(\Phi_{\#}(\llbracket 0, t \rrbracket \times T)) = \int_0^t \int  |\Phi_{(s,x) \#}(1 \wedge \xi(x))| \,d\|T\|(x) \,ds 
		= t \int  |\zeta^{\perp}(x)| \,d\|T\|(x) + O(t^2) 
\end{equation*}
as $t \rightarrow 0^+$, where $\llbracket 0, t \rrbracket$ is the one-dimensional integral current associated with the open interval $(0,t)$ and $\zeta(x)^{\perp}$ denotes the orthogonal projection of $\zeta(x)$ onto the orthogonal complement of the approximate tangent plane of $T$ at $x$ for $\|T\|$-a.e.~$x$.  In particular, the right-side derivative of $\Mass(\Phi_{\#}(\llbracket 0, t \rrbracket \times T))$ with respect to $t$ is given by 
\begin{equation} \label{varT eqn2}
	\left. \frac{d}{dt} \Mass(\Phi_{\#}(\llbracket 0, t \rrbracket \times T)) \right|_{t=0^+} = \int  |\zeta^{\perp}| \,d\|T\| . 
\end{equation}
Since $R$ is relative area minimizing and $\partial (R + \Phi_{\#}(\llbracket 0, t \rrbracket \times T)) = \Phi_{t\#}(T)$ in $\R^{n+1} \setminus \K$, the relatively area minimizing current $R_t \in \mathbf{I}_{m+1}(\R^{n+1} \setminus \K)$ with $\partial R_t = \Phi_{t\#}(T)$ in $\R^{n+1} \setminus \K$ satisfies 
\begin{equation} \label{varT eqn3}
	\Mass(R_t) \geq \Mass(R) - \Mass(\Phi_{\#}(\llbracket 0, t \rrbracket \times T)) 
\end{equation}
for all $t \in (-\varepsilon,\varepsilon)$.  Hence since $(T,R)$ is a relative isoperimetric minimizer 
\begin{equation*}
	\frac{\Mass(T)}{\Mass(R_0)^{\frac{m}{m+1}}} 
	\leq \frac{\Mass(\Phi_{t\#}(T))}{(\Mass(R) - \Mass(\Phi_{\#}(\llbracket 0, t \rrbracket \times T)))^{\frac{m}{m+1}}} 
\end{equation*}
for all $t \in (-\varepsilon,\varepsilon)$ with equality at $t = 0$.  Thus differentiating at time $t = 0$ using \eqref{varT eqn1} and \eqref{varT eqn2}, 
\begin{align*}
	0 &\leq \left. \frac{d}{dt} \frac{\Mass(\Phi_{t\#}(T))}{(\Mass(R) - \Mass(\Phi_{\#}(\llbracket 0, t \rrbracket \times T)))^{\frac{m}{m+1}}} \right|_{t=0^+}
	\\&= \frac{1}{\Mass(R)^{\frac{m}{m+1}}} \left( \int \op{div}_T \zeta \,d\|T\| 
		+ \frac{m}{m+1} \,\frac{\Mass(T)}{\Mass(R)} \int |\zeta^{\perp}| \,d\|T\| \right) .
\end{align*}
By replacing $\zeta$ and $-\zeta$ and rearranging terms, 
\begin{equation} \label{varT eqn4}
	\int \op{div}_T \zeta \,d\|T\| \leq \frac{m}{m+1} \,\frac{\Mass(T)}{\Mass(R)} \int |\zeta^{\perp}| \,d\|T\| 
		\leq \frac{m}{m+1} \,\frac{\Mass(T)}{\Mass(R)} \int |\zeta| \,d\|T\| .
\end{equation}

It immediately follows from \eqref{varT eqn4} that $T$ has distributional mean curvature $\mathbf{H}_T \in L^{\infty}(\|T\|;\R^{n+1})$ such that \eqref{first variation T concl1} holds true for all $\zeta \in C^1_c(\R^{n+1} \setminus \K;\R^{n+1})$ and \eqref{first variation T concl2} holds true.  For each $\delta > 0$ let $\eta_{\delta} \in C^1(\R^{n+1})$ such that $0 \leq \eta_{\delta} \leq 1$, $\eta_{\delta}(x) = 1$ if $d_{\K}(x) \leq \delta/2$, and $\eta_{\delta}(x) = 0$ if $d_{\K}(x) \geq \delta$.  By \eqref{varT eqn4}, 
\begin{equation*}
	\int \op{div}_T (\eta_{\delta} \zeta) \,d\|T\| \leq H_0 \,\Mass(T \llcorner \{d_{\K} \leq \delta\}) \rightarrow 0
\end{equation*}
as $\delta \downarrow 0$ and thus using the dominated convergence theorem 
\begin{align*}
	\int \op{div}_T \zeta \,d\|T\| 
	&= \lim_{\delta \downarrow 0} \left( \int \op{div}_T ((1-\eta_{\delta}) \zeta) \,d\|T\| + \op{div}_T (\eta_{\delta} \zeta) \,d\|T\| \right)
	\\&= \lim_{\delta \downarrow 0} \left( \int \mathbf{H}_T \cdot (1-\eta_{\delta}) \zeta \,d\|T\| + \op{div}_T (\eta_{\delta} \zeta) \,d\|T\| \right)
	\\&= \int \mathbf{H}_T \cdot \zeta \,d\|T\|
\end{align*}
for all $\zeta \in C^1_c(\R^{n+1};\R^{n+1})$ such that $\zeta(x)$ is tangent to $\partial \K$ at each $x \in \partial \K$, proving \eqref{first variation T concl1}.  Moreover, \eqref{varT eqn4} and \eqref{first variation T concl1} imply that 
\begin{equation*}
	\left| \int \mathbf{H}_T \cdot \zeta \,d\|T\| \right| \leq H_0 \int |\zeta^{\perp}| \,d\|T\|
\end{equation*}
and it follows that if $\mathbf{H}_T$ is approximately continuous at $x \in \op{spt} T$ then $\mathbf{H}_T(x)$ is orthogonal to the approximate tangent plane of $T$ at $x$.  
\end{proof}

\begin{remark} \label{first variation T codim one} 
In the special case $m = n$, we can instead note that $R_t = \Phi_{t\#} R$ and thus we can replace \eqref{varT eqn3} with $\Mass(R_t) = \Mass(\Phi_{t\#} R)$.  Modifying the above argument appropriately, we conclude that $T$ has constant scalar mean curvature $H_0$ as in \eqref{H0 defn}. 
\end{remark}

\begin{remark} \label{first variation T rmk} 
In the special case where $\op{spt} T \subseteq M$ for an orientable embedded $C^2$-submanifold-with-boundary in $\R^{n+1}$, by arguing much like we did in Remark \ref{first variation R rmk}, \eqref{first variation T concl1} implies that $T$ has constant multiplicity and $T$ meets $\partial \K$ orthogonally.  Moreover, if $M$ is connected then $\op{spt} T = M \setminus \op{int} \K$. 
\end{remark}

\section{Monotonicity formula and tangent cones} \label{sec:monotonicity sec}

Suppose $(T,R)$ is a relative isoperimetric minimizer in $\R^{n+1} \setminus \K$, where $\K$ is the closure of an open subset with $C^2$-boundary.  We want to understand the local structure of $T$ by using the first variation formula of Theorem C to obtain monotonicity formulas and tangent cones for $T$.  In particular the tangent cones to $T$ will be area minimizing in $\R^{n+1}$ at interior points of $T$ and relatively area minimizing in a half-space at boundary points of $T$.  Note that since the results of this section are local and follow from Theorem C, they hold true whenever $\K$ has $C^2$-boundary without requiring convexity.  With obvious changes, the same results apply to $R$ away from $\op{spt} T$ by the first variation formula of Lemma \ref{first variation R lemma}.

The monotonicity formula for integral varifolds with bounded mean curvature in the sense of Theorem C was previously considered by Gr\"{u}ter and Jost in~\cite{GJ86}.  Let $\kappa_0$ be as in \eqref{kappa0 defn}.  For each $x \in \R^{n+1} \setminus \op{int} \K$, let $\widetilde{x} = 2x - \xi_{\K}(x) \in \K$ be the reflection of $x$ across $\partial \K$.  For each $y \in \R^{n+1} \setminus \K$ and $0 < \rho < 1/(2\kappa_0)$, define 
\begin{equation*}
	\widetilde{B}_{\rho}(y) = \{ x \in \R^{n+1} : |\widetilde{x} - y| < \rho \} . 
\end{equation*}
Observe that if $d_{\K}(y) > \rho$ then $\widetilde{B}_{\rho}(y) \cap (\R^{n+1} \setminus \K) = \emptyset$.  The monotonicity formulas of Gr\"{u}ter and Jost are as follows.

\begin{lemma} \label{monotonicity lemma}
Let $\K$ be the closure of a bounded open subset with $C^2$-boundary and $T \in \mathbf{I}_m(\R^{n+1} \setminus \K)$ and $R \in \mathbf{I}_{m+1}(\R^{n+1} \setminus \K)$ such that $\partial R = T$ in $\R^{n+1} \setminus \K$, $R$ is relatively area minimizing in $\R^{n+1} \setminus \K$, and $(T,R)$ is a relative isoperimetric minimizer in $\R^{n+1} \setminus \K$.  Then 
\begin{align*} 
	&\left( \frac{\|T\|(B_{\sigma}(y)) + \|T\|(\widetilde{B}_{\sigma}(y))}{\omega_m \sigma^m} \right)^{1/p} 
		+ \frac{C H_0 \,\Mass(T)^{1/p}}{m-p} \,\sigma^{1-m/p} 
	\\&\hspace{10mm} \leq \left( \frac{\|T\|(B_{\rho}(y)) + \|T\|(\widetilde{B}_{\rho}(y))}{\omega_m \rho^m} \right)^{1/p} 
		+ \frac{C H_0 \,\Mass(T)^{1/p}}{m-p} \,\rho^{1-m/p} \nonumber 
\end{align*}
for all $0 < \sigma < \rho \leq \min\{ \op{dist}(y,\partial \K), \,1/(4\kappa_0) \}$ and 
\begin{align*} 
	&(1 + C \kappa_0 \sigma) \left( \frac{\|T\|(B_{\sigma}(y)) + \|T\|(\widetilde{B}_{\sigma}(y))}{\omega_m \sigma^m} \right)^{1/p} 
		+ \frac{C H_0 \,\Mass(T)^{1/p}}{m-p} \,\sigma^{1-m/p} 
	\\&\hspace{10mm} \leq (1 + C \kappa_0 \rho) \left( \frac{\|T\|(B_{\rho}(y)) + \|T\|(\widetilde{B}_{\rho}(y))}{\omega_m \rho^m} \right)^{1/p} 
		+ \frac{C H_0 \,\Mass(T)^{1/p}}{m-p} \,\rho^{1-m/p} \nonumber 
\end{align*}
for all $\op{dist}(y,\partial \K) \leq \sigma < \rho \leq 1/(4\kappa_0)$, where $C = C(m,p) \in (0,\infty)$ is a constant. 
\end{lemma}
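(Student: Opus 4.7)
The plan is to derive both formulas by feeding Theorem C into the reflection-based monotonicity argument of Gr\"uter and Jost~\cite{GJ86}. Theorem C supplies the identity
\[
\int \op{div}_T \zeta \, d\|T\| = \int \mathbf{H}_T \cdot \zeta \, d\|T\|, \qquad |\mathbf{H}_T| \leq H_0,
\]
for every $\zeta \in C^1_c(\R^{n+1};\R^{n+1})$ tangent to $\partial \K$, which is exactly the hypothesis required in~\cite{GJ86}. Thus the proof reduces to inserting admissible test vector fields and tracking the curvature-correction terms.

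For the interior range $\rho \leq \op{dist}(y,\partial\K)$, either $B_\rho(y) \subset \op{int}\K$, in which case every term vanishes and the inequality is trivial, or $B_\rho(y) \cap \K = \emptyset$. In the latter case $\op{dist}(y,\K) > \rho$, and since $\widetilde{x} \in \K$ whenever $x$ lies in $\R^{n+1} \setminus \K$ close enough to $\partial \K$, the reflected ball is disjoint from $\op{spt} T$ and the corresponding term vanishes identically. A nonincreasing cutoff $\gamma$ combined with $\zeta(x) = \gamma(|x-y|/\rho)(x-y)$ produces a test field compactly supported in $\R^{n+1} \setminus \partial \K$, hence vacuously tangent to $\partial \K$. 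Substituting $\zeta$ into \eqref{first variation T concl1} and expanding $\op{div}_T \zeta$ yields the standard first-order differential inequality for $\rho \mapsto \rho^{-m}\|T\|(B_\rho(y))$; raising to the $1/p$-th power and integrating over $[\sigma,\rho]$ produces the stated form with the $\frac{C H_0\,\Mass(T)^{1/p}}{m-p}\rho^{1-m/p}$ correction.

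For the boundary range $\op{dist}(y,\partial\K) \leq \sigma$, use the reflection $x \mapsto \widetilde{x} = 2\xi_\K(x)-x$. By \eqref{grad d xi nu}, this is a $C^1$-diffeomorphism of $U = \{d_\K < 1/\kappa_0\}$ onto a neighborhood inside $\K$, with Jacobian $M(x)$ equal, in the principal-direction frame at $\xi_\K(x)$, to $\op{diag}\bigl(\tfrac{1-\kappa_i d_\K(x)}{1+\kappa_i d_\K(x)},\dots,-1\bigr)$; at $x\in\partial\K$ this is the Euclidean reflection $I-2\nu\otimes\nu$, and more generally it differs from that reflection by $O(\kappa_0 d_\K(x))$. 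Take
\[
\zeta(x) = \gamma\bigl(|x-y|/\rho\bigr)(x-y) + M(x)^{-1}\,\gamma\bigl(|\widetilde{x}-y|/\rho\bigr)(\widetilde{x}-y).
\]
At $x\in\partial\K$ the two summands add to $2\bigl[(x-y) - \nu\,(\nu\cdot(x-y))\bigr]$, which is tangent to $\partial\K$, so $\zeta$ is admissible. Inserting $\zeta$ into \eqref{first variation T concl1} and performing a change of variables under $x \mapsto \widetilde{x}$ in the second summand gives a first-order differential inequality for $\rho \mapsto \rho^{-m}\bigl(\|T\|(B_\rho(y)) + \|T\|(\widetilde{B}_\rho(y))\bigr)$ with a mean-curvature error of size $H_0$ and multiplicative curvature errors of size $1+O(\kappa_0\rho)$; the same power trick and integration then yield the second inequality.

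The main obstacle is the careful bookkeeping of the curvature corrections: each occurrence of $M(x)^{-1}$ in the divergence computation contributes multiplicative factors $1+O(\kappa_0 d_\K)$, and one must verify these accumulate at most linearly in $\kappa_0 \rho$ to give the prefactor $(1 + C\kappa_0 \sigma)$ rather than an exponential blow-up. For $\rho \leq 1/(4\kappa_0)$ the tangential eigenvalues of $M(x)$ stay uniformly bounded away from $0$, so the computation reduces cleanly to the flat case handled in~\cite{GJ86} with explicit bounds on the error terms.
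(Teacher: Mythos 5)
Your proposal follows exactly the route the paper takes: the paper's entire proof of Lemma \ref{monotonicity lemma} is a one-line citation to \cite[Theorem 3.1]{GJ86}, with the remark that Theorem C supplies the $L^p$ mean-curvature bound $\|\mathbf{H}_T\|_{L^p(\|T\|)} \leq H_0\,\Mass(T)^{1/p}$, and your sketch correctly unpacks that reference, including the reflected test field $\zeta(x)=\gamma(|x-y|/\rho)(x-y) + M(x)^{-1}\gamma(|\widetilde{x}-y|/\rho)(\widetilde{x}-y)$ and the $1+O(\kappa_0\rho)$ curvature bookkeeping. As an aside, you have implicitly corrected a sign slip in the paper's definition of the reflection, which should read $\widetilde{x}=2\xi_{\K}(x)-x$ (your formula) rather than $2x-\xi_{\K}(x)$, since only the former carries $x\notin\op{int}\K$ into $\K$.
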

\begin{proof}
Follows from the first variation formula for $T$ in Theorem C.  See~\cite[Theorem 3.1]{GJ86}.  Note that \cite{GJ86} uses $L^p$ bounds on mean curvature, for which we use $\|\mathbf{H}_T\|_{L^p(\|T\|)} \leq H_0 \,\Mass(T)^{1/p}$. 
\end{proof}

An immediate consequence of the monotonicity formulas of Lemma \ref{monotonicity lemma} is that 
\begin{equation*}
	\Theta^m(\|T\|,y) = \lim_{\rho \downarrow 0} \frac{\|T\|(B_{\rho}(y))}{\omega_m \rho^m} 
\end{equation*}
exists for all $y \in \R^{n+1} \setminus \op{int} \K$. 

\begin{corollary} \label{semicontinuity cor}
Let $\K$ and $T$ be as in Lemma \ref{monotonicity lemma}.  Then the function $\widetilde{\Theta}^m(\|T\|,\cdot) : \R^{n+1} \setminus \op{int} \K \rightarrow [0,\infty)$ given by 
\begin{equation*} 
	\widetilde{\Theta}^m(\|T\|,y) = \begin{cases} 
		\Theta^m(\|T\|,y) &\text{if } y \in \R^{n+1} \setminus \K \\
		2 \,\Theta^m(\|T\|,y) &\text{if } y \in \partial \K. 
	\end{cases}
\end{equation*}
is an upper semi-continuous function.  In other words, whenever $y_j,y \in \R^{n+1} \setminus \op{int} \K$ such that $y_j \rightarrow y$, 
\begin{equation} \label{semicont density1}
	\widetilde{\Theta}^m(\|T\|,y) \geq \lim_{j \rightarrow \infty} \widetilde{\Theta}^m(\|T\|,y_j). 
\end{equation}
In particular, $\Theta^m(\|T\|,y) \geq 1$ for all $y \in \op{spt} T \setminus \partial \K$ and $\Theta^m(\|T\|,y) \geq 1/2$ for all $y \in \op{spt} T \cap \partial \K$. 
\end{corollary}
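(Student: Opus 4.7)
The plan is to deduce upper semi-continuity from Lemma \ref{monotonicity lemma} by a direct comparison at different scales, and then extract the density lower bounds as a standard consequence of upper semi-continuity together with the integer-rectifiability of $T$.

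\textbf{Step 1 (identifying the limit of the monotone quantity).} I would first verify that for each $y \in \R^{n+1} \setminus \op{int}\K$,
\begin{equation*}
\lim_{\rho \downarrow 0} \frac{\|T\|(B_\rho(y)) + \|T\|(\widetilde{B}_\rho(y))}{\omega_m\,\rho^m} \;=\; \widetilde{\Theta}^m(\|T\|,y).
\end{equation*}
For $y \in \R^{n+1} \setminus \K$ this is immediate, since $\|T\|(\widetilde{B}_\rho(y)) = 0$ for $\rho < d_\K(y)$ by the observation preceding Lemma \ref{monotonicity lemma}. For $y \in \partial\K$, the $C^2$ regularity of $\partial\K$ makes $x \mapsto \widetilde{x}$ a $C^1$ map near $y$ whose differential at $y$ is the Euclidean reflection across $T_y\partial\K$; consequently $\bigl|\,|\widetilde{x}-y|-|x-y|\,\bigr| = O(\kappa_0 |x-y|^2)$ for $x$ near $y$, so $\widetilde{B}_\rho(y)\,\triangle\, B_\rho(y) \subset B_{\rho(1+C\kappa_0\rho)}(y) \setminus B_{\rho(1-C\kappa_0\rho)}(y)$. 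The bound $\|T\|(B_r(y)) = O(r^m)$ from Lemma \ref{boundary rectifiability lemma4} combined with monotonicity of the sum $\|T\|(B_\rho(y)) + \|T\|(\widetilde{B}_\rho(y))$ forces the mass of this thin annulus to be $o(\rho^m)$, so $\|T\|(\widetilde{B}_\rho(y)) = \|T\|(B_\rho(y)) + o(\rho^m)$ and the stated limit equals $2\Theta^m(\|T\|,y) = \widetilde{\Theta}^m(\|T\|,y)$.

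\textbf{Step 2 (upper semi-continuity).} Let $F(y,\rho)$ denote the monotone quantity of Lemma \ref{monotonicity lemma}. By Step 1 and monotonicity, $\lim_{\rho\downarrow 0} F(y,\rho)^p = \widetilde{\Theta}^m(\|T\|,y)$. Given $y_j \to y$ in $\R^{n+1} \setminus \op{int}\K$ and $\varepsilon > 0$, pick $\rho_0 \in (0, 1/(4\kappa_0))$ with $F(y,\rho_0)^p < \widetilde{\Theta}^m(\|T\|,y) + \varepsilon$, and set $\rho_j := \rho_0 - 2|y_j-y|$. The triangle inequality, together with the Lipschitz continuity of $x \mapsto \widetilde{x}$ near $\partial\K$, gives the inclusions $B_{\rho_j}(y_j) \subset B_{\rho_0}(y)$ and $\widetilde{B}_{\rho_j}(y_j) \subset \widetilde{B}_{\rho_0}(y)$, and since $\rho_j \to \rho_0$ we obtain $F(y_j,\rho_j) \leq F(y,\rho_0) + o(1)$ as $j \to \infty$. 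Monotonicity gives $\widetilde{\Theta}^m(\|T\|,y_j)^{1/p} \leq F(y_j,\rho_j)$, so $\limsup_{j\to\infty} \widetilde{\Theta}^m(\|T\|,y_j) \leq F(y,\rho_0)^p < \widetilde{\Theta}^m(\|T\|,y) + \varepsilon$, and letting $\varepsilon \downarrow 0$ yields \eqref{semicont density1}.

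\textbf{Step 3 (density lower bounds).} Since $T$ is integer-rectifiable, $\Theta^m(\|T\|,y) \in \mathbb{Z}_+$ for $\|T\|$-a.e.~$y$, so $\widetilde{\Theta}^m(\|T\|,y) \geq 1$ at $\|T\|$-a.e.~$y \in \op{spt} T$. Any $y \in \op{spt} T$ satisfies $\|T\|(B_r(y)) > 0$ for every $r > 0$ and hence is a limit of such points $y_j$ with $\widetilde{\Theta}^m(\|T\|,y_j) \geq 1$; by \eqref{semicont density1}, $\widetilde{\Theta}^m(\|T\|,y) \geq 1$ on all of $\op{spt} T$. This translates to $\Theta^m(\|T\|,y) \geq 1$ for $y \in \op{spt} T \setminus \partial\K$ and $\Theta^m(\|T\|,y) \geq 1/2$ for $y \in \op{spt} T \cap \partial\K$.

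The main technical obstacle is Step 1 at a boundary point: one must use the $C^2$ structure of $\partial\K$ to control the deviation of $x \mapsto \widetilde{x}$ from a Euclidean reflection, and combine this with the $o(\rho^m)$ decay of $\|T\|$ on thin annuli (itself relying on Lemma \ref{boundary rectifiability lemma4} and the convergence of the monotone sum) to extract the factor-of-two doubling in $\widetilde{\Theta}^m$ at boundary points. Steps 2 and 3 are then routine applications of monotonicity and integer-rectifiability.
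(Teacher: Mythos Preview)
Your proposal is correct and follows essentially the same route as the paper, which simply cites \cite[Corollary~3.2]{GJ86} for the upper semi-continuity and then uses integer-rectifiability for the density bounds; your Steps~1--3 amount to sketching the Gr\"uter--Jost argument and reproducing the paper's Step~3 verbatim. One small remark: the bound $\|T\|(B_r(y)) = O(r^m)$ you invoke in Step~1 follows directly from Lemma~\ref{monotonicity lemma} itself rather than from Lemma~\ref{boundary rectifiability lemma4}, and in Step~2 you should note that the two regimes of Lemma~\ref{monotonicity lemma} glue at $\rho = d_{\K}(y)$ (the $(1+C\kappa_0\rho)$ factor makes the second formula dominate the first), so that a single monotone quantity can be used uniformly in $y$.
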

\begin{proof}
The semi-continuity of density \eqref{semicont density1} is just~\cite[Corollary 3.2]{GJ86}.  Since $T$ is an integer-multiplicity rectifiable current, $\Theta^m(\|T\|,y) \geq 1$ for $\mathcal{H}^m$-a.e.~$y \in \op{spt} T \setminus \K$, which together with \eqref{semicont density1} implies $\widetilde{\Theta}^m(\|T\|,y) \geq 1$ for all $y \in \op{spt} T$.
\end{proof}

\begin{corollary} \label{T R cpt spt cor}
Let $\K$ be the closure of a bounded open subset with $C^2$-boundary and $T \in \mathbf{I}_m(\R^{n+1} \setminus \K)$ and $R \in \mathbf{I}_{m+1}(\R^{n+1} \setminus \K)$ such that $\partial R = T$ in $\R^{n+1} \setminus \K$, $R$ is relatively area minimizing in $\R^{n+1} \setminus \K$, and $(T,R)$ is a relative isoperimetric minimizer in $\R^{n+1} \setminus \K$.  Then $T$ and $R$ both have compact support as currents of $\R^{n+1}$. 
\end{corollary}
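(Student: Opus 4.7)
The plan is to show $\op{spt} T$ is compact first, and then bootstrap to $\op{spt} R$ using that, away from $\K \cup \op{spt} T$, the current $R$ is an area-minimizing integral current of $\R^{n+1}$ in the classical sense. Both steps rest on a uniform lower bound for the mass of small balls centered at points of the support, combined with the finiteness of total mass.

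\emph{Compactness of $\op{spt} T$.} Since $\K$ is bounded, fix $R_0$ with $\K \subset B_{R_0}(0)$, and let $\rho_0 = \min\{1, 1/(4\kappa_0)\}$. For any $y \in \op{spt} T$ with $|y| \geq R_0 + \rho_0$ and $0 < \rho \leq \rho_0$, the reflected ball $\widetilde{B}_\rho(y)$ lies in $\K$, so $\|T\|(\widetilde{B}_\rho(y)) = 0$. Choosing some $p > m$ and sending $\sigma \downarrow 0$ in the first inequality of Lemma~\ref{monotonicity lemma}, the left-hand side converges to $\Theta^m(\|T\|,y)^{1/p} \geq 1$ by Corollary~\ref{semicontinuity cor}. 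Rearranging yields a uniform lower bound
\begin{equation*}
\|T\|(B_{\rho_1}(y)) \geq c_1 > 0
\end{equation*}
for some $c_1$ and a fixed small $\rho_1 \leq \rho_0$ depending only on $m$, $p$, $H_0$, $\Mass(T)$, and $\kappa_0$, valid for all such $y$. A standard Vitali-type selection shows that if $\op{spt} T$ were unbounded, one could extract a sequence $y_j \in \op{spt} T$ with $|y_j| \to \infty$ and pairwise disjoint balls $B_{\rho_1/2}(y_j)$, so $\Mass(T) \geq \sum_j \|T\|(B_{\rho_1/2}(y_j)) = \infty$, contradicting $T \in \mathbf{I}_m(\R^{n+1} \setminus \K)$. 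Hence $\op{spt} T$ is bounded, and being closed, is compact.

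\emph{Compactness of $\op{spt} R$.} With $\op{spt} T$ now compact, choose $R_1$ with $\op{spt} T \cup \K \subset B_{R_1}(0)$. For any $y$ with $|y| \geq R_1 + 2$, the ball $B_1(y)$ is disjoint from $\K \cup \op{spt} T$, so $R \llcorner B_1(y)$ has zero boundary in $B_1(y)$ and is area-minimizing in $B_1(y) \subset \R^{n+1}$ in the usual sense (every competitor supported in $B_1(y)$ is automatically a valid competitor in $\R^{n+1} \setminus \K$). The classical monotonicity formula for area-minimizing integral currents then gives
\begin{equation*}
\frac{\|R\|(B_1(y))}{\omega_{m+1}} \;\geq\; \Theta^{m+1}(\|R\|,y) \;\geq\; 1
\end{equation*}
for every $y \in \op{spt} R$ in this regime. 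Repeating the disjoint balls argument, unboundedness of $\op{spt} R$ would force $\Mass(R) = \infty$, contradicting $R \in \mathbf{I}_{m+1}(\R^{n+1} \setminus \K)$. Thus $\op{spt} R$ is compact as well.

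The main obstacle is ensuring the error term in the monotonicity formula of Lemma~\ref{monotonicity lemma} does not overwhelm the density lower bound at a fixed scale. Since $H_0$, $\Mass(T)$, and $\kappa_0$ are fixed finite constants, this is resolved simply by taking $\rho_1$ small enough, uniformly over all candidate points $y$ far from $\partial \K$; everything else is a routine packing argument and the standard monotonicity for area-minimizing currents.
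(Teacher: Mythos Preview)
Your proof is correct and follows essentially the same approach as the paper: obtain a uniform lower mass bound on balls at points of $\op{spt} T$ far from $\K$ via the density lower bound of Corollary~\ref{semicontinuity cor} combined with a monotonicity formula, then pack to contradict finite mass; and repeat for $R$ using that $R$ is area minimizing with zero boundary away from $\op{spt} T \cup \K$. The only cosmetic differences are that the paper cites the standard bounded-mean-curvature monotonicity formula from \cite{Sim83} directly (rather than Lemma~\ref{monotonicity lemma}), and that your closing remark about the error term ``overwhelming'' the density bound is slightly off---with $p>m$ the error term in Lemma~\ref{monotonicity lemma} has the favorable sign, so no smallness of $\rho_1$ is actually needed there.
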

\begin{proof}
By the lower bound on density in Corollary \ref{semicontinuity cor} and the monotonicity formula for integral varifolds with bounded mean curvature~\cite[Theorem 17.6]{Sim83}, 
\begin{equation} \label{cpt spt eqn1}
	\|T\|(B_{\rho}(y)) \geq e^{-H_0 \rho} \omega_m \rho^m
\end{equation}
for all $y \in \op{spt} T$ with $\op{dist}(y,\K) \geq 1$ and $\rho \in (0,1)$.  By a standard covering argument using \eqref{cpt spt eqn1} and $\Mass(T) < \infty$, $T$ has compact support.  By similarly using the monotonicity formula and a covering argument to cover $\op{spt} R \cap \{ x : \op{dist}(x,\op{spt} T \cup \K) \geq 1 \}$, $R$ also has compact support. 
\end{proof}

Using the monotonicity formulas of Lemma \ref{monotonicity lemma} we can establish the existence of tangent cones to $T$.  First let us consider the tangent behavior of $\K$.  For each $y \in \R^{n+1} \setminus \op{int} \K$ and $\rho > 0$, let $\K_{y,\rho} = \eta_{y,\rho}(\K)$, where $\eta_{y,\rho}(x) = (x-y)/\rho$.  Since $\K$ has a $C^2$-boundary, for every $y \in \partial \K$ as $\rho \downarrow 0$ we have $\K_{y,\rho} \rightarrow \K_{y,0}$ locally in Hausdorff distance and $\partial \K_{y,\rho} \rightarrow \partial \K_{y,0}$ locally in the $C^2$ topology in $\R^{n+1}$, where $\K_{y,0} \subset \R^{n+1}$ is a half-space such that $0 \in \partial \K_{y,0}$ and $\partial \K_{y,0}$ is the tangent hyperplane to $\partial \K$ at $y$.  If instead $y \in \R^{n+1} \setminus \K$, we set $\K_{y,0} = \emptyset$ so that $\R^{n+1} \setminus \K_{y,0} = \R^{n+1}$. 

For each $y \in \op{spt} T$ and $\rho > 0$ let $T_{y,\rho} = \eta_{y,\rho \#} T$.  Tangent cones to $T$ are defined as follows. 

\begin{definition}
Let $\K$ and $T$ be as in Lemma \ref{monotonicity lemma}.  We say that $C \in \mathcal{D}_m(\R^{n+1} \setminus \K_{y,0})$ is a tangent cone to $T$ at $y \in \op{spt} T$ if there exists a sequence $\rho_j \rightarrow 0^+$ such that $T_{y,\rho_j} \rightarrow C$ weakly $\R^{n+1} \setminus \K_{y,0}$. 
\end{definition}

In Lemma \ref{exist tan S lemma} below we will show that at least one tangent cone exists at each point $y \in \op{spt} S$.  It is not generally known whether the tangent cone $C$ is unique as $C$ might depend on the sequence of radii $\rho_j$. 

\begin{lemma} \label{exist tan S lemma}
Let $\K$ be the closure of a bounded open subset with $C^2$-boundary and $T \in \mathbf{I}_m(\R^{n+1} \setminus \K)$ and $R \in \mathbf{I}_{m+1}(\R^{n+1} \setminus \K)$ such that $\partial R = T$ in $\R^{n+1} \setminus \K$, $R$ is relatively area minimizing in $\R^{n+1} \setminus \K$, and $(T,R)$ is a relative isoperimetric minimizer in $\R^{n+1} \setminus \K$.  
\begin{enumerate}
	\item[(i)]  There exists at least one tangent cone $C$ to $T$ at each $y \in \op{spt} T$. 
	\item[(ii)]  If $y \in \op{spt} T$ and $C$ is a tangent cone to $T$ at $y$, then $C$ is a nonzero current of $\R^{n+1}$, $C \in \mathbf{I}_{m,{\rm loc}}(\R^n)$, and $C$ is a cone in the sense that $\eta_{0,\lambda \#} C = C$ for all $\lambda > 0$. 
	\item[(iii)]  If $y \in \op{spt} T \setminus \partial \K$ and $C$ is a tangent cone to $T$ at $y$, then $C$ is a (locally) area minimizing integral current with $\partial C = 0$ in $\R^{n+1}$.  If instead $y \in \op{spt} T \cap \partial \K$ and $C$ is a tangent cone to $T$ at $y$, then $C$ is a (locally) relatively area minimizing integral current with $\partial C = 0$ in the half-space $\R^{n+1} \setminus \K_{y,0}$. 
\end{enumerate}
\end{lemma}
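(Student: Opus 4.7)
The plan is a standard blow-up argument: fix $y \in \op{spt} T$, extract a convergent subsequence from the rescalings $T_{y,\rho} = \eta_{y,\rho \#} T$ via Federer--Fleming, and use the monotonicity formula (Lemma \ref{monotonicity lemma}) together with the convergence result (Lemma \ref{convergence lemma}) to establish the properties of the limit.

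For (i) and the non-triviality and integrality in (ii), I would use Lemma \ref{monotonicity lemma} to obtain uniform mass bounds $\|T_{y,\rho}\|(B_R(0)) = \rho^{-m} \|T\|(B_{R\rho}(y)) \leq C(R)$ for small $\rho$. Since $\partial T_{y,\rho} = 0$ in $\R^{n+1} \setminus \K_{y,\rho}$, the Federer--Fleming compactness theorem extracts from any sequence $\rho_j \downarrow 0$ a subsequence along which $T_{y,\rho_j} \to C$ in the flat norm topology on compacta of $\R^{n+1} \setminus \K_{y,0}$, with $C \in \mathbf{I}_{m,\mathrm{loc}}$ and $\partial C = 0$ there. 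The lower density bound of Corollary \ref{semicontinuity cor} (namely $\Theta^m(\|T\|,y) \geq 1/2$) gives $\|C\|(B_\sigma(0)) \geq \omega_m \sigma^m / 2$ for all $\sigma > 0$, so $C$ is nonzero. For the conical property, observe that the rescaled mean curvature bound $\rho_j H_0 \to 0$ makes the error term in Lemma \ref{monotonicity lemma} disappear in the limit; therefore $\sigma^{-m}\bigl(\|C\|(B_\sigma(0)) + \|C\|(\widetilde{B}_\sigma(0))\bigr)$ is independent of $\sigma > 0$ (with $\widetilde{B}_\sigma$ vacuous when $y \notin \partial \K$), which combined with the vanishing mean curvature of $C$ forces $\eta_{0,\lambda \#} C = C$ for every $\lambda > 0$.

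For (iii), I would apply Lemma \ref{convergence lemma} to the sequence $\{T_{y,\rho_j}\}$ in the ambient domains $\R^{n+1} \setminus \K_{y,\rho_j}$, noting that $\K_{y,\rho_j} \to \K_{y,0}$ locally in Hausdorff distance since $\K$ has $C^2$-boundary. Uniform mass bounds come from monotonicity. The almost area minimizing hypothesis \eqref{conv aam hyp} is produced by applying Lemma \ref{almost minimizing lemma} with $\varepsilon = 0$ to the minimizer $(T,R)$ to get $\Mass(T) \leq \Mass_{\R^{n+1} \setminus \K}(T + \partial X) + 2 H_0 \Mass(X)$, then setting $X' = \eta_{y,\rho_j \#} X$ and rescaling to obtain
$$\Mass(T_{y,\rho_j}) \leq \Mass_{\R^{n+1} \setminus \K_{y,\rho_j}}(T_{y,\rho_j} + \partial X') + 2 H_0 \rho_j \,\Mass(X'),$$
valid for $\Mass(X') \leq \tfrac12 \rho_j^{-(m+1)} \Mass(R)$; this provides parameters $\lambda_j = 2 H_0 \rho_j \to 0$ and $\mu_j \to \infty$.

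The main technical point is the non-concentration hypothesis \eqref{conv nonconbdry hyp}. When $y \notin \partial \K$ it is trivial, since $d_{\K_{y,\rho_j}}(0) = d_{\K}(y)/\rho_j \to \infty$ makes any fixed bounded set disjoint from a neighborhood of $\K_{y,\rho_j}$ for large $j$. When $y \in \partial \K$ it requires rescaling Lemma \ref{boundary rectifiability lemma4}: the correspondence between $Q_{r,s}(0)$ at $0 \in \partial \K_{y,\rho_j}$ and $Q_{r\rho_j,\, s\rho_j}(y)$ at $y \in \partial \K$ yields
$$\|T_{y,\rho_j}\|(Q_{r,s}(0)) \leq C(m) \,\frac{s}{r}\, \|T_{y,\rho_j}\|(B_{2r}(0))$$
for $0 < s < r \leq \rho_j^{-1} \min\{1/(2^{m+2} H_0),\, 1/(4n\kappa_0)\}$, which delivers any prescribed $\varepsilon$ by taking $s/r$ small and invoking the uniform mass bound on $B_{2r}(0)$. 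With all hypotheses verified, Lemma \ref{convergence lemma} yields that $C$ is (relatively) area minimizing in $\R^{n+1} \setminus \K_{y,0}$, completing (iii). The chief obstacle in the whole argument is this rescaled boundary non-concentration estimate in the case $y \in \op{spt} T \cap \partial \K$; the interior case $y \in \op{spt} T \setminus \partial \K$ reduces to a standard blow-up.
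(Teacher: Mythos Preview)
Your outline is correct and matches the paper's argument in its essential structure: monotonicity for mass bounds, Lemma \ref{almost minimizing lemma} for the almost-minimizing property \eqref{conv aam hyp}, Lemma \ref{boundary rectifiability lemma4} for the non-concentration hypothesis \eqref{conv nonconbdry hyp}, and Lemma \ref{convergence lemma} to pass to the limit. Two points where the paper is more explicit than your sketch are worth noting. First, to obtain $C \in \mathbf{I}_{m,\mathrm{loc}}(\R^{n+1})$ (not merely of $\R^{n+1}\setminus\K_{y,0}$), the paper invokes the boundary-mass estimate \eqref{bdry rect4 concl3} to bound $\|\partial T_{y,\rho}\|(B_r(0))$ uniformly, so that Federer--Fleming compactness applies in all of $\R^{n+1}$; your application of compactness only inside $\R^{n+1}\setminus\K_{y,0}$ does not by itself yield rectifiability of $\partial C$ across $\partial\K_{y,0}$. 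Second, for the conical property at a boundary point the paper does not appeal directly to an equality case of the Gr\"uter--Jost monotonicity: instead it reflects $C$ across $\partial\K_{y,0}$ to $\widetilde{C}=C-\iota_{\#}C$, observes that $\widetilde{C}$ is area minimizing in $\R^{n+1}$ (by comparing with $X\llcorner\R^{n+1}_\pm$), and then applies the standard interior argument (\cite[Theorems~19.3, 35.1]{Sim83}) to $\widetilde{C}$. Your sentence ``combined with the vanishing mean curvature of $C$ forces $\eta_{0,\lambda\#}C=C$'' is pointing at the same conclusion but leaves the free-boundary equality case unjustified; the reflection device sidesteps this cleanly.
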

\begin{proof}
Let $y \in \op{spt} T \cap U$.  Noting that $\Theta^m(\|T\|,y)$ exists as a consequence of Lemma \ref{monotonicity lemma} and that $\Theta^m(\|T\|,y) \geq 1/2 > 0$ by Corollary \ref{semicontinuity cor}, 
\begin{equation} \label{tangent cone eqn1}
	\lim_{\rho \downarrow 0} \frac{\|T_{y,\rho}\|(B_r(0))}{\omega_m r^m} 
		= \lim_{\rho \downarrow 0} \frac{\|T\|(B_{\rho r}(y))}{\omega_m (\rho r)^m} = \Theta^m(\|T\|,y) > 0
\end{equation}
for all $0 < r < \infty$.  By Lemma \ref{almost minimizing lemma},  
\begin{equation} \label{tangent cone eqn2}
	\Mass(T_{y,\rho}) \leq \Mass_{\R^{n+1} \setminus \K_{y,\rho}}(T_{y,\rho} + \partial X) + 2H_0 \rho \,\Mass(X) 
\end{equation}
for all $X \in \mathbf{I}_{m+1}(\R^{n+1} \setminus \K_{y,\rho})$ with $\Mass(X) \leq \tfrac{1}{2} \,\rho^{-m-1} \,\Mass(R)$.  By Lemma \ref{boundary rectifiability lemma4} and \eqref{tangent cone eqn1}, if $y \in \partial \K$ then 
\begin{align} \label{tangent cone eqn3}
	\limsup_{\rho \downarrow 0} \frac{\|T_{y,\rho}\|(B_r(0) \cap \{d_{\K_{y,\rho}} < s \})}{\omega_m r^{m-1} s} 
		&= \limsup_{\rho \downarrow 0} \frac{\|T\|(B_{\rho r}(y) \cap \{d_{\K} < \rho s\})}{\omega_m \,(\rho r)^{m-1} (\rho s)} 
		\\&\leq \lim_{\rho \downarrow 0} C(m) \,\frac{\|T\|(B_{\rho r}(y))}{\omega_m (\rho r)^m} = C(m) \,\Theta^m(\|T\|,y) \nonumber 
\end{align}
for all $0 < s < r < \infty$ and 
\begin{align} \label{tangent cone eqn4}
	\limsup_{\rho \downarrow 0} \frac{\|\partial T_{y,\rho}\|(B_r(0))}{\omega_{m-1} r^{m-1}} 
		&= \limsup_{\rho \downarrow 0} \frac{\|\partial T\|(B_{\rho r}(y))}{\omega_{m-1} (\rho r)^{m-1}} 
		\\&\leq \lim_{\rho \downarrow 0} C(m) \,\frac{\|T\|(B_{\rho r}(y))}{\omega_m (\rho r)^m} = C(m) \,\Theta^m(\|T\|,y) \nonumber 
\end{align}
for all $0 < r < \infty$. 

Take any $\rho_j \rightarrow 0^+$.   By the Federer-Fleming compactness theorem, \eqref{tangent cone eqn1}, and \eqref{tangent cone eqn4}, after passing to a subsequence $T_{y,\rho_j} \rightarrow C$ in the flat norm topology on compact subsets of $\R^{n+1}$ for some $C \in \mathbf{I}_{m,{\rm loc}}(\R^{n+1})$, proving (i).  By Lemma \ref{convergence lemma}, \eqref{tangent cone eqn1}, \eqref{tangent cone eqn2}, and \eqref{tangent cone eqn3}, $C$ is relatively area minimizing in $\R^{n+1} \setminus \K_{y,0}$, proving (iii), and also $\|T_{y,\rho_j}\| \rightarrow \|C\|$ in the sense of Radon measures on $\R^{n+1}$.  By \eqref{tangent cone eqn1} and the convergence $\|T_{y,\rho_j}\| \rightarrow \|C\|$ in the sense of Radon measures on $\R^{n+1}$, 
\begin{equation} \label{tangent cone eqn5}
	\frac{\|C\|(B_r(0))}{\omega_m r^m} = \lim_{j \rightarrow \infty} \frac{\|T_{y,\rho_j}\|(B_r(0))}{\omega_m r^m} = \Theta^m(\|T\|,y) > 0
\end{equation}
for all $0 < r < \infty$.  Hence $C$ is a nonzero current of $\R^{n+1}$.  Moreover, in the case that $y \in \op{spt} T \setminus \partial \K$, by \eqref{tangent cone eqn5} $C$ is a cone, see~\cite[Theorem 19.3, Theorem 35.1]{Sim83}. 

It remains to show that if $y \in \op{spt} T \cap \partial \K$ then $C$ is a cone.  After an orthogonal change of coordinates let $\K_{y,0} = \R^{n+1}_+$.  Let $\iota : \R^{n+1} \rightarrow \R^{n+1}$ be the reflection map about $\R^n \times \{0\}$ given by $\iota(x',x_{n+1}) = (x',-x_{n+1})$ for all $x = (x',x_{n+1}) \in \R^{n+1}$.  Let $\widetilde{C} = C - \iota_{\#} C \in \mathbf{I}_{m,{\rm loc}}(\R^{n+1})$.  We claim that $\widetilde{C}$ is area minimizing in $\R^{n+1}$.  To see this, let $X \in \mathbf{I}_{m+1}(\R^{n+1})$ with compact support and $\partial X = 0$ in $\R^{n+1}$.  Since $C$ is relatively area minimizing in $\R^{n+1}_+$, $\Mass(C) \leq \Mass(C + X \llcorner \R^{n+1}_+)$, and by symmetry $\Mass(\iota_{\#} C) \leq \Mass(\iota_{\#} C + X \llcorner \R^{n+1}_-)$.  Therefore $\Mass(\widetilde{C}) \leq \Mass(\widetilde{C} + X)$.  By \eqref{tangent cone eqn5}, 
\begin{equation*}
	\frac{\|\widetilde{C}\|(B_r(0))}{\omega_m r^m} = 2\,\Theta^m(\|T\|,y) 
\end{equation*}
for all $0 < r < \infty$ and so $\widetilde{C}$ is a cone.  Thus $C$ is also a cone. 
\end{proof}

\begin{corollary} \label{nontrivial bdry cones cor}
Let $\K$ and $T$ be as in Lemma \ref{exist tan S lemma}.  For each $y \in \op{spt} T \cap \partial \K$ and each tangent cone $C$ to $T$ at $y$, 
\begin{equation*}
	\|C\|(\K_{y,0}) = 0, \quad 0 < \omega_{m-1}^{-1} \,\|\partial C\|(B_1(0)) \leq \Theta^{*m-1}(\|\partial T\|,y). 
\end{equation*}
\end{corollary}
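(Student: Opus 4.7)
The plan is to establish the three assertions of the corollary in sequence, leveraging the non-concentration estimates of Lemma \ref{boundary rectifiability lemma4} and the cone structure of $C$ from Lemma \ref{exist tan S lemma}.

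\emph{Non-concentration of $C$ on $\K_{y,0}$.} The approximating currents $T_{y,\rho_j}$ are supported in $\R^{n+1}\setminus\op{int}\K_{y,\rho_j}$, and $\K_{y,\rho_j}\to\K_{y,0}$ locally in Hausdorff distance, so $\op{spt} C\subseteq\overline{\R^{n+1}\setminus\K_{y,0}}$ and $\|C\|(\op{int}\K_{y,0})=0$. To rule out mass on $\partial\K_{y,0}$, I would rescale Lemma \ref{boundary rectifiability lemma4} to produce
\begin{equation*}
\|T_{y,\rho}\|(B_r(0)\cap\{d_{\K_{y,\rho}}<s\}) \leq C(m)\,\frac{s}{r}\,\|T_{y,\rho}\|(B_{2r}(0))
\end{equation*}
for sufficiently small $\rho$, then pass to the limit $\rho_j\downarrow 0$ using the Radon-measure convergence $\|T_{y,\rho_j}\|\to\|C\|$ established in the proof of Lemma \ref{exist tan S lemma} together with $\K_{y,\rho_j}\to\K_{y,0}$. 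Sending $s\downarrow 0$ in the resulting estimate for $C$ yields $\|C\|(\partial\K_{y,0})=0$.

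\emph{Upper bound on $\|\partial C\|(B_1(0))$.} From \eqref{bdry rect4 concl3} together with the monotonicity formula Lemma \ref{monotonicity lemma}, the masses $\|\partial T_{y,\rho}\|(B_r(0))$ are uniformly bounded in $\rho$ on each fixed ball. Federer--Fleming compactness applied to a further subsequence gives $\partial T_{y,\rho_j}\to\partial C$ in the flat norm topology on compact subsets of $\R^{n+1}$. Semi-continuity of mass on the open set $B_1(0)$ then yields
\begin{equation*}
\|\partial C\|(B_1(0)) \leq \liminf_{j\to\infty}\|\partial T_{y,\rho_j}\|(B_1(0)) = \liminf_{j\to\infty}\frac{\|\partial T\|(B_{\rho_j}(y))}{\rho_j^{m-1}} \leq \omega_{m-1}\,\Theta^{*m-1}(\|\partial T\|,y).
\end{equation*}

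\emph{Strict positivity $\|\partial C\|(B_1(0))>0$.} This is the main obstacle. Since $\partial C$ is itself an $(m-1)$-dimensional cone by Lemma \ref{exist tan S lemma}(ii), $\|\partial C\|(B_1(0))=0$ would force $\partial C=0$ on all of $\R^{n+1}$, and I would derive a contradiction from this. Using the previous step and \cite[4.1.15]{Fed69}, the reflected current $\widetilde C=C-\iota_\# C$ (where $\iota$ reflects $\R^{n+1}$ across $\partial\K_{y,0}$) would satisfy $\partial\widetilde C=0$ in $\R^{n+1}$; arguing as in Lemma \ref{exist tan S lemma}(iii), $\widetilde C$ would be a nonzero antisymmetric area-minimizing integral $m$-cycle in $\R^{n+1}$ with $\Theta^m(\|\widetilde C\|,0)=2\Theta^m(\|C\|,0)\geq 1$ and $\|\widetilde C\|(\partial\K_{y,0})=0$. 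The expected contradiction comes from analyzing $\widetilde C$ via interior regularity of area-minimizing integral currents: off a singular set of Hausdorff dimension at most $m-1$, $\op{spt}\widetilde C$ is a smooth $m$-submanifold, and the antisymmetry $\iota_\#\widetilde C=-\widetilde C$ together with $\|\widetilde C\|(\partial\K_{y,0})=0$ would force its regular part to cross $\partial\K_{y,0}$ transversely along a nonzero $(m-1)$-rectifiable set with integer multiplicity; by the slicing identity $\partial(C\llcorner\{d_{\K_{y,0}}>t\})=-\langle C,d_{\K_{y,0}},t\rangle$ and the limit $t\downarrow 0^+$ this transverse crossing reappears as nonzero mass in $\partial C$ on $\partial\K_{y,0}$, contradicting $\partial C=0$. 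I expect this last step — extracting the boundary multiplicity from the antisymmetric cone $\widetilde C$ — to require the most delicate work, as Almgren's regularity allows a singular set whose intersection with $\partial\K_{y,0}$ must be handled carefully.
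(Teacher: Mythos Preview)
Your first two steps are essentially the paper's: non-concentration via the rescaled estimate from Lemma~\ref{boundary rectifiability lemma4} passed to the limit, and the upper bound via lower semicontinuity of mass under $\partial T_{y,\rho_j}\rightharpoonup\partial C$.

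For the strict positivity, however, you have missed a much shorter argument and your proposed route has a real gap. The paper proceeds as follows: if $\partial C=0$ in $\R^{n+1}$, then since $\|C\|(\K_{y,0})=0$ and $C$ is relatively area minimizing in $\R^{n+1}\setminus\K_{y,0}$, $C$ is in fact area minimizing in all of $\R^{n+1}$ (any competitor in $\R^{n+1}$ restricted to the open half-space is an admissible competitor for $C$). Now $C$ is a nonzero area-minimizing cone with $\partial C=0$ whose support lies in the closed half-space $\overline{\R^{n+1}\setminus\K_{y,0}}$, and the half-space theorem \cite[Theorem~36.5]{Sim83} forces $\op{spt} C\subseteq\partial\K_{y,0}$. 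This contradicts $\|C\|(\K_{y,0})=0$ and $C\neq 0$. No reflection, no Almgren regularity, no slicing is needed.

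Your approach, by contrast, invokes Almgren's big regularity paper to analyze the reflected cycle $\widetilde C$ and then asserts that antisymmetry together with $\|\widetilde C\|(\partial\K_{y,0})=0$ ``would force its regular part to cross $\partial\K_{y,0}$ transversely along a nonzero $(m-1)$-rectifiable set.'' This step is not justified: the regular set of $\widetilde C$ could a priori meet the hyperplane only along the singular set of $\widetilde C$, or the slices $\langle C,d_{\K_{y,0}},t\rangle$ could tend weakly to zero as $t\downarrow 0$ even if each is nonzero (indeed, under the hypothesis $\partial C=0$ they \emph{must} tend weakly to zero, so your slicing identity cannot produce a contradiction in the way you suggest). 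You correctly flagged this as ``the most delicate work,'' but in fact it is unnecessary: the half-space theorem disposes of the matter in one line.
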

\begin{proof}
Let $\rho_j \downarrow 0$ such that $T_{y,\rho_j} \rightarrow C$ in the flat norm topology in $\R^{n+1}$.  Since $T_{y,\rho_j} \rightarrow C$ weakly in $\R^{n+1}$ and each $T_{y,\rho_j}$ satisfies \eqref{tangent cone eqn3}, $\|C\|(\K_{y,0}) = 0$.  Since $\partial T_{y,\rho_j} \rightarrow \partial C$ weakly in $\R^{n+1}$, by the semi-continuity of mass $\omega_{m-1}^{-1} \,\|\partial C\|(B_1(0)) \leq \Theta^{*m-1}(\|\partial T\|,y)$.  

Suppose $\partial C = 0$ in $\R^{n+1}$.  Since $C$ is relatively area minimizing in $\R^{n+1} \setminus \K_{y,0}$, $C$ is area minimizing in $\R^{n+1}$.  By~\cite[Theorem 36.5]{Sim83}, $C$ is an area minimizing cone with zero boundary in $\R^{n+1}$ such that $\op{spt} C$ lies in the half-space $\R^{n+1} \setminus \op{int} \K_{y,0}$ only if $\op{spt} C \subseteq \partial \K_{y,0}$.  But this contradicts $C$ being a nonzero locally integral current with $\|C\|(\K_{y,0}) = 0$.  Therefore, $\partial C \neq 0$ in $\R^{n+1}$ and in particular $\|\partial C\|(B_1(0)) > 0$. 
\end{proof}

\section{Restricted support hypersurfaces and their unit normals} \label{sec:restricted normal sec}

Let $\K$ be a proper convex subset of $\R^{n+1}$ with $C^2$-boundary and $(T,R)$ be an relative isoperimetric minimizer in $\R^{n+1} \setminus \K$.  We want to prove Theorem D, which involves computing areas of sets of unit normal vectors to support hyperplanes of the convex hull of $\op{spt} T$.  For the relative isoperimetric inequality, in contrast with the isoperimetric inequality, the support hyperplanes of $\op{spt} T$ can touch at either interior points $y \in \op{spt} T \setminus \K$ or boundary points $y \in \op{spt} T \cap \partial \K$.  The behavior at interior points and boundary points can be quite different, as $T$ has bounded mean curvature and an $(n-m)$-dimensional unit normal cone at interior points whereas one expects $T$ to have an $(n-m+1)$-dimensional unit normal cone at boundary points, for example consider when $T$ is an $n$-dimensional hemisphere in $\R^{n+1}_+$.  To treat the boundary points, we will use the concept of restricted support hyperplanes from~\cite{CGR06} and in particular~\cite[Theorem 5.3]{CGR06}. 

Let $X$ be any nonempty compact subset of $\R^{n+1}$.  For each $y \in \R^{n+1}$ we define the \textit{(unit) normal cone} $N_y X$ of $X$ at $y$ by 
\begin{equation*}
	N_y X = \{ \nu \in \mathbb{S}^n : \nu \cdot (x-y) \leq 0 \text{ for all } x \in X \} .
\end{equation*}
Whenever $y \in X$  and $\nu \in \mathbb{S}^n$ such that $X \subseteq \{ x \in \R^{n+1} : \nu \cdot (x-y) \leq 0 \}$, we call the hyperplane $\{ x \in \R^{n+1} : \nu \cdot (x-y) = 0 \}$ the \textit{support hyperplane} of $X$ at $y$ and $\nu$ the \textit{outward unit normal}.  Since $X$ is compact, for each $\nu \in \mathbb{S}^n$ the function $x \in X \mapsto \nu \cdot x$ attains its maximum value at some point $y \in X$ and thus the support hyperplane to $X$ with outward unit normal $\nu$ touches $X$ at $y$; that is, $\nu \in N_y X$.  Hence for each compact set $X \subset \R^{n+1}$ we have 
\begin{equation} \label{total normal cone is sphere}
	\mathbb{S}^n = \bigcup_{y \in X} N_y X. 
\end{equation}

Let $\sigma : X \rightarrow \mathbb{S}^n$ be a continuous map such that $\sigma(y) \in N_y X$ for all $y \in X$.  For each $\alpha \in (0,\pi/2)$, we define the \textit{$\alpha$-restricted normal cone} of $X$ at $y$ with respect to $\sigma$ by 
\begin{equation*}
	N^{\alpha}_y X/\sigma = \{ \nu \in N_y X : \sigma(y) \cdot \nu \geq \sin(\alpha) \text{ for all } x \in X \} .
\end{equation*}
We define the \textit{restricted normal cone} of $X$ at $y$ with respect to $\sigma$ by 
\begin{equation*}
	N_y X/\sigma = \bigcup_{\alpha \in (0,\pi/2)} N^{\alpha}_y X/\sigma = \{ \nu \in N_y X : \sigma(y) \cdot \nu > 0 \text{ for all } x \in X \} .
\end{equation*}
We set 
\begin{equation*}
	N^{\alpha} X/\sigma = \bigcup_{y \in X} N^{\alpha}_y X/\sigma, \quad 
	NX/\sigma = \bigcup_{y \in X} N_y X/\sigma 
\end{equation*}
as a subsets of $\mathbb{S}^n$.  For each $\alpha \in (0,\pi/2)$, $N^{\alpha} X/\sigma$ is closed.  To see this, suppose $y_i \in X$, $\nu_i \in N^{\alpha}_{y_i} X/\sigma$, and $\nu \in \mathbb{S}^n$ such that $\nu_i \rightarrow \nu$.  Since $X$ is compact, after passing to a subsequence we may let $y_i \rightarrow y$ for some $y \in X$.  Since $\nu_i  \in N^{\alpha}_{y_i} X/\sigma$, $\nu_i \cdot (x - y_i) \leq 0$ for all $x \in X$ and $\sigma(y_i) \cdot \nu_i \geq \sin(\alpha)$, which letting $i \rightarrow \infty$ using the continuity of $\sigma$ gives us $\nu \cdot (x - y) \leq 0$ for all $x \in X$ and $\sigma(y) \cdot \nu \geq \sin(\alpha)$.  Hence $\nu \in N^{\alpha}_y X/\sigma$.  Since $N^{\alpha} X/\sigma \subseteq N^{\beta} X/\sigma$ for all $0 < \beta < \alpha$ and 
\begin{equation*}
	NX/\sigma = \bigcup_{\alpha \in (0,\pi/2)} N^{\alpha} X/\sigma = \bigcup_{i=1}^{\infty} N^{1/i} X/\sigma 
\end{equation*}
$NX/\sigma$ is a Borel set.  Note that our definition of the restricted normal cones $N^{\alpha} X/\sigma$ and $N X/\sigma$ differ from~\cite{CGR06}, which used the restricted normal cone $\{ \nu \in N_y X : \sigma(y) \cdot \nu \geq 0 \}$ for each $y \in X$.  
Our main estimate regarding the area of $NX/\sigma$ is the following variant of~\cite[Theorem 5.3]{CGR06}. 

\begin{theorem} \label{rest norm abs thm}
Let $X \subset \R^{n+1}$ be a nonempty compact set and let $\sigma : X \rightarrow \mathbb{S}^n$ be a continuous map such that $\sigma(y) \in N_y X$ for all $y \in X$.  Then for every $\alpha \in (0,\pi/2)$,  
\begin{equation} \label{rest norm abs concl1}
	\mathcal{H}^n(N^{\alpha} X/\sigma) \geq \frac{1 - C \alpha}{2} \,\mathcal{H}^n(\mathbb{S}^n), 
\end{equation}
where $C = C(n) = 2 \,\mathcal{H}^{n-1}(\mathbb{S}^{n-1})/\mathcal{H}^n(\mathbb{S}^n)$.  Moreover, 
\begin{equation} \label{rest norm abs concl2}
	\mathcal{H}^n(NX/\sigma) \geq \frac{1}{2} \,\mathcal{H}^n(\mathbb{S}^n). 
\end{equation}
\end{theorem}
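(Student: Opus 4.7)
The plan is to combine an isometric reflection argument on each normal cone with the essentially-disjoint decomposition of $\mathbb{S}^n$ into such cones provided by the Gauss map of $\operatorname{co}(X)$.

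For the local step, fix $y \in X$ and consider the Euclidean reflection $r_y(\mu) := \mu - 2(\mu \cdot \sigma(y))\sigma(y)$, which restricts to an isometry of $\mathbb{S}^n$ satisfying $r_y(\mu) \cdot \sigma(y) = -\mu \cdot \sigma(y)$. I first check that if $\mu \in N_y X$ with $\mu \cdot \sigma(y) \leq 0$, then $r_y(\mu) \in N_y X$: indeed $r_y(\mu) = \mu + 2|\mu \cdot \sigma(y)|\sigma(y)$ is a nonnegative combination of the two cone elements $\mu, \sigma(y) \in N_y X$, and $N_y X$ is the trace on $\mathbb{S}^n$ of a convex cone in $\R^{n+1}$. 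Restricting to $\{\mu \cdot \sigma(y) \leq -\sin\alpha\}$ and applying $r_y$, whose image lands in $\{\mu \cdot \sigma(y) \geq \sin\alpha\}$, yields the cell-wise estimate
\[
\mathcal{H}^n\bigl(N_y X \cap \{\sigma(y) \cdot \nu \leq -\sin\alpha\}\bigr) \leq \mathcal{H}^n\bigl(N_y X \cap \{\sigma(y) \cdot \nu \geq \sin\alpha\}\bigr).
\]

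To globalize I use that for $\mathcal{H}^n$-a.e.\ $\nu \in \mathbb{S}^n$ the support face $F_\nu := \{x \in \operatorname{co}(X) : \nu \cdot x = \max_{x' \in X} \nu \cdot x'\}$ is a singleton $\{y(\nu)\}$, and that $y(\nu)$ is an extreme point of $\operatorname{co}(X)$ and hence belongs to $X$. Thus the open cones $\{\operatorname{int}(N_y X) : y \text{ extreme}\}$ partition $\mathbb{S}^n$ modulo an $\mathcal{H}^n$-null set. Writing $s(\nu) := \sigma(y(\nu))$ and $f(\nu) := s(\nu) \cdot \nu$, summing the cell-wise inequality over this partition gives $\mathcal{H}^n\{f \leq -\sin\alpha\} \leq \mathcal{H}^n\{f \geq \sin\alpha\} = \mathcal{H}^n(N^\alpha X/\sigma)$, which rearranges to
\[
2\,\mathcal{H}^n(N^\alpha X/\sigma) \geq \mathcal{H}^n(\mathbb{S}^n) - \mathcal{H}^n\bigl(\{|f| < \sin\alpha\}\bigr).
\]

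What remains, and this is the main obstacle, is the equatorial slab bound $\mathcal{H}^n\{|f|<\sin\alpha\} \leq 2\alpha\,\mathcal{H}^{n-1}(\mathbb{S}^{n-1}) = C\alpha\,\mathcal{H}^n(\mathbb{S}^n)$. Since $f$ restricted to each cell $\operatorname{int}(N_y X)$ is the linear function $\nu \mapsto \sigma(y) \cdot \nu$ whose spherical gradient has norm $\sqrt{1-f^2}$, the co-area formula yields
\[
\mathcal{H}^n\bigl(\{|f|<\sin\alpha\}\bigr) = \int_{-\sin\alpha}^{\sin\alpha} \frac{\mathcal{H}^{n-1}(f^{-1}(t))}{\sqrt{1-t^2}}\,dt,
\]
reducing the estimate to the level-set bound $\mathcal{H}^{n-1}(f^{-1}(t)) \leq \mathcal{H}^{n-1}(\mathbb{S}^{n-1})$ for a.e.\ $t \in (-1,1)$. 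I plan to establish this by identifying each cell-slice $N_y X \cap \{\sigma(y) \cdot \nu = t\}$ isometrically with a subset of the $(n-1)$-sphere of radius $\sqrt{1-t^2}$ via $\nu \mapsto (\nu - t\sigma(y))/\sqrt{1-t^2}$, and then using the convexity of each cone together with the continuity of $\sigma$ to control overlaps under a common projection to a reference equator --- broadly along the lines of~\cite{CGR06}. Once \eqref{rest norm abs concl1} is in hand, \eqref{rest norm abs concl2} follows by monotone convergence since $N^\alpha X/\sigma \nearrow NX/\sigma$ as $\alpha \downarrow 0$.
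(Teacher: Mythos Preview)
Your reflection step and the essentially-disjoint decomposition are correct (at least for finite $X$; the general case needs either an approximation by finite subsets, as the paper does, or a measurable-selection argument you have not supplied), and together they do yield
\[
2\,\mathcal{H}^n(N^\alpha X/\sigma)\ \geq\ \mathcal{H}^n(\mathbb{S}^n)-\mathcal{H}^n\{|f|<\sin\alpha\}.
\]
The genuine gap is the slab bound: the inequality $\mathcal{H}^n\{|f|<\sin\alpha\}\le C\alpha\,\mathcal{H}^n(\mathbb{S}^n)$, and the level-set bound $\mathcal{H}^{n-1}(f^{-1}(t))\le\mathcal{H}^{n-1}(\mathbb{S}^{n-1})$ you reduce it to, are \emph{false} in general. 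Take $n=1$, let $X$ be the three vertices of an equilateral triangle, and choose $\sigma(y_i)$ to be one endpoint of the arc $N_{y_i}X$ (each arc has length $2\pi/3$). Parametrizing each arc by the angle $\theta\in[0,2\pi/3]$ from $\sigma(y_i)$ one has $f=\cos\theta$, so $\{|f|<\sin\alpha\}$ on that arc is the interval $(\pi/2-\alpha,\pi/2+\alpha)$, of length $2\alpha$; summing over three cells gives $\mathcal{H}^1\{|f|<\sin\alpha\}=6\alpha$, whereas $C\alpha\,\mathcal{H}^1(\mathbb{S}^1)=4\alpha$. Likewise $\mathcal{H}^0(f^{-1}(t))=3>2=\mathcal{H}^0(\mathbb{S}^0)$ for $t$ near $0$. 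No ``projection to a common equator'' can repair this: the cell-slices genuinely over-cover, and your route therefore cannot produce the constant $C$ in \eqref{rest norm abs concl1}.

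The paper avoids any global slab estimate. Its key lemma is a cell-wise \emph{fractional} bound: for a closed convex set $N\subset\mathbb{S}^n$ and $u\in N$,
\[
\mathcal{H}^n\bigl(N\cap\{u\cdot\nu\ge\sin\alpha\}\bigr)\ \ge\ \frac{1-C\alpha}{2}\,\mathcal{H}^n(N).
\]
This uses more of the convexity than your reflection does: one shows that the geodesic cone from $u$ over the latitude-$\alpha$ slice $E$ of $N$ lies inside $N$, while the region below latitude $\alpha$ over $\mathbb{S}^{n-1}\setminus E$ is disjoint from $N$, and then compares areas via the factor $\cos^{n-1}\theta$ in the spherical area element. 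Summing this cell-wise inequality over the essentially-disjoint cells (first for finite $X$, then by approximation) gives \eqref{rest norm abs concl1} directly with the sharp constant. Your reflection proves only ``lower cap $\le$ upper cap'' on each cell, which is strictly weaker and leaves the slab uncontrolled. (It is, however, enough for \eqref{rest norm abs concl2} by itself, once you check $\mathcal{H}^n\{f=0\}=0$; so that part of your plan succeeds.)
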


\begin{lemma} \label{rest norm abs lemma1}
Let $X$ be a closed convex subset of $\mathbb{S}^n$ and $u \in X$.  For every $\alpha \in (0,\pi/2)$,  
\begin{equation*} 
	\mathcal{H}^n(X \cap \{ \nu : u \cdot \nu \geq \sin(\alpha) \}) \geq \frac{1 - C \alpha}{2} \,\mathcal{H}^n(X), 
\end{equation*}
where $C = C(n) = 2 \,\mathcal{H}^{n-1}(\mathbb{S}^{n-1})/\mathcal{H}^n(\mathbb{S}^n)$.
\end{lemma}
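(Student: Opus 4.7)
The plan is to reduce the inequality to a one-dimensional estimate by slicing $X$ with the family of great circles through $u$. First I would introduce spherical polar coordinates centered at $u$: each $\nu \in \mathbb{S}^n \setminus \{\pm u\}$ has a unique representation $\nu = \cos\theta\, u + \sin\theta\, v$ with $\theta \in (0, \pi)$ and $v \in \mathbb{S}^{n-1} := u^{\perp} \cap \mathbb{S}^n$, in which the volume element on $\mathbb{S}^n$ factors as $\sin^{n-1}\theta\, d\theta\, dV_{\mathbb{S}^{n-1}}(v)$ and the cap $\{\nu : u \cdot \nu \geq \sin\alpha\}$ becomes $\{\theta \leq \pi/2 - \alpha\}$.

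For each $v \in \mathbb{S}^{n-1}$, I would consider the slice $X_v := \{\theta \in [0, \pi] : \cos\theta\, u + \sin\theta\, v \in X\}$. Since $u \in X$ and $X$ is the intersection of $\mathbb{S}^n$ with a convex cone in $\R^{n+1}$, the line segment in $\R^{n+1}$ from $u$ to any point $\cos\theta_0\, u + \sin\theta_0\, v \in X$ with $\theta_0 \in (0, \pi)$ stays in the cone and avoids the origin; its radial projection to $\mathbb{S}^n$, which is the short great-circle arc $\{\cos\theta\, u + \sin\theta\, v : \theta \in [0, \theta_0]\}$, is therefore contained in $X$. So, writing $\theta_v := \sup (X_v \cap [0, \pi)) \in [0, \pi]$, the slice $X_v$ agrees with $[0, \theta_v]$ up to at most the isolated point $\{\pi\}$. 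Setting $F(s) := \int_0^s \sin^{n-1}\theta\, d\theta$, Fubini then yields
\[ \mathcal{H}^n(X) = \int_{\mathbb{S}^{n-1}} F(\theta_v)\, dV(v), \quad \mathcal{H}^n(X \cap \{u \cdot \nu \geq \sin\alpha\}) = \int_{\mathbb{S}^{n-1}} F(\min(\theta_v, \pi/2 - \alpha))\, dV(v), \]
and the lemma reduces to the pointwise inequality $F(\min(s, \pi/2 - \alpha)) \geq \tfrac{1-C\alpha}{2}\, F(s)$ for every $s \in [0, \pi]$.

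This pointwise inequality is trivial when $s \leq \pi/2 - \alpha$. For $s > \pi/2 - \alpha$, I would use the symmetry $\sin^{n-1}(\pi - \theta) = \sin^{n-1}\theta$ to get $F(s) \leq F(\pi) = 2F(\pi/2)$, reducing the claim to $F(\pi/2 - \alpha) \geq (1 - C\alpha)F(\pi/2)$. This follows from the elementary bound $F(\pi/2) - F(\pi/2 - \alpha) = \int_{\pi/2 - \alpha}^{\pi/2} \sin^{n-1}\theta\, d\theta \leq \alpha$ together with the identity $\mathcal{H}^n(\mathbb{S}^n) = 2\, \mathcal{H}^{n-1}(\mathbb{S}^{n-1})\, F(\pi/2)$, which translates exactly into $C\, F(\pi/2) = 1$.

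The only point requiring any care, not really an obstacle, is verifying that the slice $X_v$ is essentially an interval starting at $0$; once that is set up the argument is Cavalieri combined with $\sin^{n-1} \leq 1$, and the constant $C$ in the statement is plainly calibrated so that the borderline case $X = \mathbb{S}^n$ saturates the per-slice inequality.
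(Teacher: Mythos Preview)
Your proof is correct and is essentially the same approach as the paper's: both slice $X$ by the family of great semicircles through $u$ (parameterized by $v\in \mathbb{S}^{n-1}$, the paper's $\omega$), use convexity to see that each slice is an arc starting at $u$, and finish with the one-dimensional estimate $\int_{\pi/2-\alpha}^{\pi/2}\sin^{n-1}\theta\,d\theta\le\alpha$ together with the identity $C\int_0^{\pi/2}\sin^{n-1}\theta\,d\theta=1$. The paper packages the same computation via the sets $A,B,A',B'$ determined by the latitude $\{u\cdot\nu=\sin\alpha\}$ and the ratio bound $\mathcal H^n(A)/\mathcal H^n(A')\ge(1-C\alpha)/(1+C\alpha)$, whereas you go straight to the pointwise inequality $F(\min(s,\pi/2-\alpha))\ge\tfrac{1-C\alpha}{2}F(s)$; the content is identical.
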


\begin{proof}
Without loss of generality assume $X \neq \emptyset$ and let $u = e_{n+1}$ be the $(n+1)$-th unit coordinate vector.  Parameterize $\mathbb{S}^n$ by $(\cos(\theta) \,\omega, \sin(\theta))$ where $\theta \in [-\pi/2,\pi/2]$ and $\omega \in \mathbb{S}^{n-1}$.  Let 
\begin{align*}
	E &= X \cap \{ (\cos(\alpha) \,\omega, \sin(\alpha)) : \omega \in \mathbb{S}^{n-1} \} , \\
	A &= \{ (\cos(\theta) \,\omega, \sin(\theta)) : \theta \in [\alpha,\pi/2], \, \omega \in E \}, \\
	B &= \{ (\cos(\theta) \,\omega, \sin(\theta)) : \theta \in [\alpha,\pi/2], \, \omega \in \mathbb{S}^{n-1} \setminus E \} , \\
	A' &= \{ (\cos(\theta) \,\omega, \sin(\theta)) : \theta \in [-\pi/2,\alpha], \, \omega \in E \}, \\
	B' &= \{ (\cos(\theta) \,\omega, \sin(\theta)) : \theta \in [-\pi/2,\alpha], \, \omega \in \mathbb{S}^{n-1} \setminus E \} .
\end{align*}
In other words, $E$ is the intersection of $X$ and the latitude $\{ \theta = \alpha \}$.  $A$ is the union of all geodesic arcs from $u$ to points of $E$.  $B$ is the complement of $A$ in the region $\{ \theta \geq \alpha \}$.  $A'$ and $B'$ are reflections of $A$ and $B$ across the latitude $\{ \theta = \alpha \}$ into the region $\{ \theta \leq \alpha \}$.  

We compute that 
\begin{equation} \label{rest norm abs1 eqn1}
	\mathcal{H}^n(A) = \int_E \int_{\alpha}^{\pi/2} \cos^{n-1}(\theta) \,d\theta \,d\mathcal{H}^{n-1}(\omega) 
		= \mathcal{H}^{n-1}(E) \int_{\alpha}^{\pi/2} \cos^{n-1}(\theta) \,d\theta 
\end{equation}
and similarly 
\begin{equation} \label{rest norm abs1 eqn2}
	\mathcal{H}^n(A') = \mathcal{H}^{n-1}(E) \int_{-\pi/2}^{\alpha} \cos^{n-1}(\theta) \,d\theta .
\end{equation}
Since $\cos(\theta) \leq 1$, $\int_{0}^{\alpha} \cos^{n-1}(\theta) \,d\theta \leq \alpha$ and thus \eqref{rest norm abs1 eqn1} and \eqref{rest norm abs1 eqn2} give us 
\begin{equation} \label{rest norm abs1 eqn3}
	\frac{\mathcal{H}^n(A)}{\mathcal{H}^n(A')} 
	= \frac{\int_{0}^{\pi/2} \cos^{n-1}(\theta) \,d\theta - \int_{0}^{\alpha} \cos^{n-1}(\theta) \,d\theta}{
		\int_{0}^{\pi/2} \cos^{n-1}(\theta) \,d\theta + \int_{0}^{\alpha} \cos^{n-1}(\theta) \,d\theta}
	\geq \frac{1 - C \alpha}{1 + C \alpha}
\end{equation}
where $C = C(n) = \left( \int_{0}^{\pi/2} \cos^{n-1}(\theta) \,d\theta \right)^{-1}$.  By \eqref{rest norm abs1 eqn1} in the special case $X = \mathbb{S}^n$ and $\alpha = 0$ (so that $A = \mathbb{S}^n \cap \mathbb{R}^{n+1}_+$ and $E = \mathbb{S}^{n-1}$), $C = 2 \,\mathcal{H}^{n-1}(\mathbb{S}^{n-1})/\mathcal{H}^n(\mathbb{S}^n)$.

By the definition of $A$ and $E$ and the convexity of $X$, $A \subseteq X$.  Moreover, $X \cap B' = \emptyset$ since if there were $x \in X \cap B'$ then the geodesic arc from $u$ to $x$ lies in $X$ by convexity and intersects $E$, contradicting the definition of $B'$.  Hence by $A \subseteq X$ and \eqref{rest norm abs1 eqn3}, 
\begin{equation} \label{rest norm abs1 eqn4}
	\mathcal{H}^n(X \cap A) = \mathcal{H}^n(A) \geq \frac{1 - C \alpha}{1 + C \alpha} \,\mathcal{H}^n(A') 
		= \frac{1 - C \alpha}{1 + C \alpha} \,\mathcal{H}^n(X \cap A') .
\end{equation}
By \eqref{rest norm abs1 eqn4} and $X \cap B' = \emptyset$, 
\begin{align*}
	\mathcal{H}^n(X \cap \{ \theta \geq \alpha \}) &= \mathcal{H}^n(X \cap A) + \mathcal{H}^n(X \cap B)
	\\&\geq \frac{1 - C \alpha}{1 + C \alpha} \left( \mathcal{H}^n(X \cap A') + \mathcal{H}^n(X \cap B') \right) 
	\\&= \frac{1 - C \alpha}{1 + C \alpha} \,\mathcal{H}^n(X \cap \{ \theta \leq \alpha \}). 
\end{align*}
By adding $\frac{1 - C \alpha}{1 + C \alpha} \,\mathcal{H}^n(X \cap \{ \theta \geq \alpha \})$ to both sides we get 
\begin{equation*}
	\mathcal{H}^n(X \cap \{ \theta \geq \alpha \}) \geq \frac{1 - C \alpha}{2} \,\mathcal{H}^n(X). \qedhere
\end{equation*}
\end{proof}

\begin{lemma} \label{rest norm abs lemma2}
Let $X \subset \R^{n+1}$ be a nonempty finite set of points and let $\sigma : X \rightarrow \mathbb{S}^n$ be a map such that $\sigma(y) \in N_y X$ for all $y \in X$.  Then for every $\alpha \in (0,\pi/2)$, 
\begin{equation*} 
	\mathcal{H}^n(N^{\alpha} X/\sigma) \geq \frac{1 - C \,\alpha}{2} \,\mathcal{H}^n(\mathbb{S}^n), 
\end{equation*}
where $C = C(n) = 2 \,\mathcal{H}^{n-1}(\mathbb{S}^{n-1})/\mathcal{H}^n(\mathbb{S}^n)$.
\end{lemma}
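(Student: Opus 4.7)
The plan is to reduce the finite-set case to the spherically-convex case handled by Lemma \ref{rest norm abs lemma1} by exploiting the fact that the normal cones $\{N_y X\}_{y \in X}$ partition $\mathbb{S}^n$ up to $\mathcal{H}^n$-null overlaps.

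First, for each $y \in X$, I would observe that $N_y X$ is a closed convex subset of $\mathbb{S}^n$: it is the intersection of $\mathbb{S}^n$ with the closed convex cone
\[
\{\nu \in \R^{n+1} : \nu \cdot (x-y) \leq 0 \text{ for all } x \in X\},
\]
and therefore is preserved under geodesic arcs between any two of its non-antipodal points. By hypothesis $\sigma(y) \in N_y X$, so Lemma \ref{rest norm abs lemma1} applies with $N_y X$ in place of $X$ and $u = \sigma(y)$, yielding
\[
\mathcal{H}^n(N^{\alpha}_y X/\sigma)
= \mathcal{H}^n\bigl( N_y X \cap \{\nu : \sigma(y) \cdot \nu \geq \sin(\alpha)\} \bigr)
\geq \frac{1 - C\alpha}{2}\, \mathcal{H}^n(N_y X).
\]

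Next, I would verify that the restricted normal cones $\{N^{\alpha}_y X/\sigma\}_{y \in X}$ are pairwise disjoint up to $\mathcal{H}^n$-null sets. Since $N^{\alpha}_y X/\sigma \subseteq N_y X$, it suffices to show that the cones $N_y X$ themselves meet in $\mathcal{H}^n$-null sets for distinct $y$. This is a standard fact about the normal fan of a polytope: since $N_y X$ coincides with the normal cone at $y$ of $\op{conv}(X)$ (the defining inequalities for $X$ and for $\op{conv}(X)$ agree), distinct vertices of $\op{conv}(X)$ have normal cones meeting only along lower-dimensional faces, and any $y \in X$ that is not a vertex of $\op{conv}(X)$ has a normal cone contained in a proper linear subspace of $\R^{n+1}$ and so has $\mathcal{H}^n$-measure zero.

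Combining these with the identity \eqref{total normal cone is sphere}, which together with the $\mathcal{H}^n$-null overlap gives $\sum_{y \in X} \mathcal{H}^n(N_y X) = \mathcal{H}^n(\mathbb{S}^n)$, I would conclude:
\[
\mathcal{H}^n(N^{\alpha} X/\sigma)
= \sum_{y \in X} \mathcal{H}^n(N^{\alpha}_y X/\sigma)
\geq \frac{1 - C\alpha}{2} \sum_{y \in X} \mathcal{H}^n(N_y X)
= \frac{1 - C\alpha}{2}\, \mathcal{H}^n(\mathbb{S}^n),
\]
which is the desired bound. There is no substantial obstacle here, as this is essentially a partition-and-sum reduction; the only care required is in handling the (measure-zero) degenerate situation of non-vertex points of $X$ and checking that the spherical-convexity hypothesis of Lemma \ref{rest norm abs lemma1} applies to each $N_y X$.
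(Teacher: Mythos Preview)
Your proposal is correct and follows essentially the same route as the paper: apply Lemma~\ref{rest norm abs lemma1} to each normal cone $N_y X$, use that the cones $\{N_y X\}_{y\in X}$ cover $\mathbb{S}^n$ with pairwise $\mathcal{H}^n$-null overlaps, and sum. The paper argues disjointness by noting that the relative interiors $\op{int}_{\mathbb{S}^n} N_{x_i} X$ are pairwise disjoint, and handles the spherical-convexity input by citing \cite[Lemma~4.1]{CGR06} (which gives the trichotomy empty / antipodal pair / convex), whereas you argue both points directly via the normal fan of $\op{conv}(X)$; these are equivalent justifications of the same two ingredients.
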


\begin{proof}
Recall that \eqref{total normal cone is sphere} holds true.  Furthermore, $\nu \in \mathbb{S}^n$ belongs to $\op{int}_{\mathbb{S}^n} N_{x_i} X$, the interior of some $N_{x_i} X$ relative to $\mathbb{S}^n$, if and only if there exists a support hyperplane of $X$ at $x_i$ with outward unit normal $\nu$ which intersects $X$ only at $x_i$.  Thus 
\begin{equation} \label{rest norm abs2 eqn1}
	\op{int}_{\mathbb{S}^n} N_{x_i} X \cap \op{int}_{\mathbb{S}^n} N_{x_j} X = \emptyset 
\end{equation}
for all $i \neq j$.  By~\cite[Lemma 4.1]{CGR06}, for each $i$ either $N_{x_i} X = \emptyset$, $N_{x_i} X$ is a pair of antipodal points and thus $\mathcal{H}^n(N_{x_i} X) = 0$, or $N_{x_i} X$ is a convex spherical set.  Hence by \eqref{rest norm abs2 eqn1}, Lemma \ref{rest norm abs lemma1}, and \eqref{total normal cone is sphere} for every $\alpha \in (0,\alpha_0]$ 
\begin{equation*}
	\mathcal{H}^n(N^{\alpha} X/\sigma)
	= \sum_{i=1}^k \mathcal{H}^n(N^{\alpha}_{x_i} X/\sigma) 
	\geq \sum_{i=1}^k \frac{1 - C \alpha}{2} \,\mathcal{H}^n(N_{x_i} X) 
	= \frac{1 - C \alpha}{2} \,\mathcal{H}^n(\mathbb{S}^n). \qedhere
\end{equation*}
\end{proof}

\begin{proof}[Proof of Theorem \ref{rest norm abs thm}]
Since $X$ is compact, we may cover $X$ with finitely many open balls of radius $1/i$ centered at points of $X$.  Let $X_i$ denote the set of the centers of these balls.  Notice that $X_i \subseteq X$ and $\op{dist}_{\mathcal{H}}(X_i,X) < 1/i$.  We claim that for every $\delta > 0$ there exists $i_0 > 0$ such that for every $i \geq i_0$ 
\begin{equation} \label{rest norm abs eqn1}
	\sup_{\nu \in N^{\alpha} X_i/\sigma} \op{dist}_{\mathbb{S}^n}(\nu, N^{\alpha} X/\sigma) < \delta, 
\end{equation}
where $\op{dist}_{\mathbb{S}^n}$ denotes geodesic distance on $\mathbb{S}^{n+1}$.  Suppose to the contrary that for some $\delta > 0$ and infinitely many $i$ there exists $y_i \in X_i$ and $\nu_i \in N^{\alpha} _{y_i} X_i/\sigma$ such that 
\begin{equation} \label{rest norm abs eqn2}
	\op{dist}(\nu_i,N^{\alpha} X/\sigma) \geq \delta. 
\end{equation}
After passing to a subsequence let $y_i \rightarrow y$ for some $y \in X$ and $\nu_i \rightarrow \nu$ for some $\nu \in \mathbb{S}^n$.  Given any $x \in X$, there exists $x_i \in X_i$ such that $x_i \rightarrow x$.  Since $\nu_i \in N^{\alpha}_{y_i} X_i/\sigma$, $\nu_i \cdot (x_i - y_i) \leq 0$ for each $i$.  Letting $i \rightarrow \infty$ we obtain $\nu \cdot (x - y) \leq 0$.  Hence $\nu \in N_y X$.  Moreover, since $\nu_i \in N^{\alpha}_{y_i} X_i/\sigma$, $\sigma(y_i) \cdot \nu_i \geq \sin(\alpha)$ for each $i$.  Letting $i \rightarrow \infty$ using the continuity of $\sigma$ we obtain $\sigma(y) \cdot \nu \geq \sin(\alpha)$.  Hence $\nu \in N^{\alpha}_y X/\sigma$.  But now $\nu_i \rightarrow \nu$ and $\nu \in N^{\alpha}_y X/\sigma$, contradicting \eqref{rest norm abs eqn2}.  

Let $C = C(n) = 2 \,\mathcal{H}^{n-1}(\mathbb{S}^{n-1})/\mathcal{H}^n(\mathbb{S}^n)$.  To show \eqref{rest norm abs concl1}, suppose to the contrary that for some $\alpha \in (0,1/C)$ 
\begin{equation*}
	\mathcal{H}^n(N^{\alpha} X/\sigma) < \frac{1 - C \alpha}{2} \,\mathcal{H}^n(\mathbb{S}^n). 
\end{equation*}
Then, recalling from above that $N^{\alpha} X/\sigma$ is closed, $\mathbb{S}^n \setminus (N^{\alpha} X/\sigma)$ is an open subset of $\mathbb{S}^n$ with $\mathcal{H}^n(\mathbb{S}^n \setminus (N^{\alpha} X/\sigma)) > \frac{1 + C \alpha}{2} \,\mathcal{H}^n(\mathbb{S}^n)$.  Thus we can find a compact set $K \subset \mathbb{S}^n \setminus (N^{\alpha} X/\sigma)$ such that 
\begin{equation} \label{rest norm abs eqn3}
	\mathcal{H}^n(K) > \frac{1 + C \alpha}{2} \,\mathcal{H}^n(\mathbb{S}^n). 
\end{equation}
By \eqref{rest norm abs eqn1}, for each sufficiently large $i$ we have $K \cap (N^{\alpha} X_i/\sigma) = \emptyset$, and by Lemma \ref{rest norm abs lemma2}
\begin{equation*}
	\mathcal{H}^n(N^{\alpha} X_i/\sigma) \geq \frac{1 - C \alpha}{2} \,\mathcal{H}^n(\mathbb{S}^n), 
\end{equation*}
contradicting \eqref{rest norm abs eqn3}.  Therefore, \eqref{rest norm abs concl1} holds true. 

Finally, we obtain \eqref{rest norm abs concl2} by letting $\alpha \downarrow 0$ in \eqref{rest norm abs concl1}.
\end{proof}

Now suppose $\K$ is a bounded proper convex subset of $\R^{n+1}$ with $C^2$-boundary and $(T,R)$ is an relative isoperimetric minimizer in $\R^{n+1} \setminus \K$.  Note that by Corollary \ref{T R cpt spt cor}, $T$ has compact support.  Applying the discussion above to the support of $T$, for each $y \in \op{spt} T$ we define the normal cone $N_y T$ of $T$ at $y$ by 
\begin{equation*}
	N_y T \equiv N_y (\op{spt} T) = \{ \nu \in \mathbb{S}^n : \nu \cdot (x-y) \leq 0 \text{ for all } x \in \op{spt} T \} .
\end{equation*}
For each $\nu \in \mathbb{S}^n$ there exists a $y \in \op{spt} T$ such that support hyperplane with unit normal $\nu$ touches $\op{spt} T$ at $y$.  Thus 
\begin{equation*}
	\mathbb{S}^n = \bigcup_{y \in \op{spt} T} N_y T. 
\end{equation*}
We define the \textit{set of restricted normals} $RNT \subseteq \mathbb{S}^n$ by 
\begin{equation*}
	RNT = \bigcup_{y \in \op{spt} T \setminus \K} N_y T. 
\end{equation*}
If $\Pi = \{ x \in \R^{n+1} : \nu \cdot (x-y) = 0 \}$ is the support hyperplane of $\op{spt} T$ at some point $y \in \op{spt} T \setminus \K$ with outward unit normal $\nu$, we call $\Pi$ a \textit{restricted support hyperplane} of $\op{spt} T$ at $y$.  Note that this definition of restricted support normals and hyperplanes differs from~\cite{CGR06}, which also allowed a restricted support hyperplane $\Pi$ to touch the support of $T$ at some boundary point $y \in \op{spt} T \cap \K$ provided the outward unit normal $\nu$ to $\Pi$ satisfied $\nu_{\K}(y) \cdot \nu = 0$.  

Suppose $\op{spt} T \cap \partial \K \neq \emptyset$.  Regarding $T$ as an integral current of $\R^{n+1}$, $\partial T \in \mathbf{I}_{m-1}(\R^{n+1})$ and, by Corollary \ref{nontrivial bdry cones cor}, $\op{spt} \partial T = \op{spt} T \cap \partial \K$.  Thus for each $y \in \op{spt} T \cap \partial \K$ we define the normal cone $N_y \partial T$ of $\partial T$ at $y$ by 
\begin{equation*}
	N_y \partial T \equiv N_y (\op{spt} T \cap \partial \K) 
		= \{ \nu \in \mathbb{S}^n : \nu \cdot (x-y) \leq 0 \text{ for all } x \in \op{spt} T \cap \partial \K \} .
\end{equation*}
For each $y \in \op{spt} T \cap \partial \K$, the restricted normal cone $N_y \partial T/\nu_{\K}$ of $\partial T$ at $y$ with respect to the outward unit normal $\nu_{\K}$ to $\K$ is given by 
\begin{equation*}
	N_y \partial T/\nu_{\K} = \{ \nu \in N_y \partial T : \nu_{\K}(y) \cdot \nu > 0 \} 
\end{equation*}
and 
\begin{equation*}
	N\partial T/\nu_{\K} = \bigcup_{y \in \op{spt} T \cap \partial \K} N\partial T/\nu_{\K}
\end{equation*}
as a subset of $\mathbb{S}^n$.

Recall that for each $\nu \in N\partial T/\nu_{\K}$, there exists $y \in \op{spt} T$ such that $\nu \in N_y T$.  We claim that in fact there exists $y \in \op{spt} T \setminus \K$ such that $\nu \in N_y T$. 

\begin{lemma} \label{restricted normal lemma1}
Let $\K$ be a bounded proper convex subset of $\R^{n+1}$ with $C^2$-boundary.  Let $T \in \mathbf{I}_m(\R^{n+1} \setminus \K)$ and $R \in \mathbf{I}_{m+1}(\R^{n+1} \setminus \K)$ such that $\partial R = T$ in $\R^{n+1} \setminus \K$, $R$ is relatively area minimizing in $\R^{n+1} \setminus \K$, $(T,R)$ is a relative isoperimetric minimizer in $\R^{n+1} \setminus \K$, and $\op{spt} T \cap \partial \K \neq \emptyset$.  Then 
\begin{equation*}
	N\partial T/\nu_{\K} \subseteq RNT. 
\end{equation*}
\end{lemma}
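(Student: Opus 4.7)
The plan is to exhibit, for any $\nu\in N\partial T/\nu_{\K}$, a point $y'\in\op{spt}T\setminus\K$ that is a global maximizer of the linear function $f(x)=\nu\cdot x$ on the compact set $\op{spt}T$ (compactness being furnished by Corollary~\ref{T R cpt spt cor}); such a $y'$ automatically satisfies $\nu\in N_{y'}T\subseteq RNT$. By assumption, there exists $y\in\op{spt}T\cap\partial\K$ with $\nu\cdot(x-y)\le 0$ for all $x\in\op{spt}T\cap\partial\K=\op{spt}\partial T$ (using Corollary~\ref{nontrivial bdry cones cor} to identify $\op{spt}\partial T$) and $a:=\nu_{\K}(y)\cdot\nu>0$. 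If $f$ has any maximizer in $\op{spt}T\setminus\K$ we are done, so I reduce to the case where every maximizer of $f$ on $\op{spt}T$ lies in $\op{spt}T\cap\partial\K$; the boundary bound $\nu\cdot y^*\le\nu\cdot y$ at any such maximizer then forces $y$ itself to be a global maximizer of $f$ on $\op{spt}T$, and I will derive a contradiction.

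Translating so that $y=0$, I have $\nu\cdot u\le 0$ on $\op{spt}T$, and hence, fixing any tangent cone $C$ to $T$ at $y$ (which exists by Lemma~\ref{exist tan S lemma}(i)), $\op{spt}C\subseteq\{u:\nu\cdot u\le 0\}$. By Corollary~\ref{nontrivial bdry cones cor}, $C$ is nonzero, $\|C\|(\K_{y,0})=0$, and $\partial C$ is a nonzero integer-multiplicity cone supported on the tangent hyperplane $\partial\K_{y,0}$. Reflecting $C$ across $\partial\K_{y,0}$ by the involution $\iota$ of the proof of Lemma~\ref{exist tan S lemma}(iii), the sum $\widetilde C=C-\iota_\#C$ is a nonzero area-minimizing cone in $\R^{n+1}$ with $\partial\widetilde C=0$ (since $\partial C$ is supported on the fixed hyperplane of $\iota$, $\iota_\#\partial C=\partial C$ and the boundaries cancel).

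The decisive computation is that the tangential vector $\nu'=\nu-a\,\nu_{\K}(y)$, which is nonzero (the case $\nu=\nu_{\K}(y)$ immediately forces $\op{spt}C\subseteq\partial\K_{y,0}$ and hence $C=0$), satisfies $\nu'\cdot u\le 0$ on $\op{spt}\widetilde C$: for $u\in\op{spt}C$ this follows from $\nu\cdot u\le 0$, $\nu_{\K}(y)\cdot u\ge 0$, and $a>0$; for $u=\iota(v)\in\op{spt}(\iota_\# C)$ I use the $\iota$-invariance $\iota(\nu')=\nu'$ (which holds since $\nu'\cdot\nu_{\K}(y)=0$) to reduce to the previous case. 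Applying \cite[Theorem 36.5]{Sim83} to the area-minimizing cone $\widetilde C$ then forces $\op{spt}\widetilde C\subseteq\{u:\nu'\cdot u=0\}$; on this hyperplane the identity $\nu\cdot u=a\,(\nu_{\K}(y)\cdot u)$ holds, so combining $\nu\cdot u\le 0$, $\nu_{\K}(y)\cdot u\ge 0$, and $a>0$ yields $\op{spt}C\subseteq\partial\K_{y,0}$, contradicting $C\ne 0$ together with $\|C\|(\K_{y,0})=0$. The main delicacy is identifying $\nu'$ as the correct linear functional producing a genuine half-space constraint on $\widetilde C$; its $\iota$-invariance is exactly what is needed to control the sign of $\nu'\cdot u$ uniformly on both halves of $\widetilde C$ and so trigger \cite[Theorem 36.5]{Sim83}.
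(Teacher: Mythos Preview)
Your proof is correct and follows essentially the same strategy as the paper's: both reduce to the case $\nu\in N_yT$ with $\nu_{\K}(y)\cdot\nu>0$, pass to a tangent cone $C$ at $y$, reflect to obtain the area-minimizing cycle $\widetilde C$, and invoke \cite[Theorem~36.5]{Sim83} to force $\op{spt}C\subseteq\partial\K_{y,0}$, a contradiction. The only difference is presentational: the paper sets up explicit cylindrical coordinates and records the constraint as $\op{spt}\widetilde C$ lying in a wedge of angle $\pi-2\phi<\pi$, whereas you work coordinate-free and identify the tangential projection $\nu'=\nu-a\,\nu_{\K}(y)$ as the half-space normal---these are the same half-space, since in the paper's coordinates $\nu'$ is a positive multiple of $e_n$.
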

\begin{proof}
Suppose $y \in \op{spt} T \cap \partial \K$ and $\nu \in N_y \partial T/\nu_{\K}$.  Let $\Pi_1$ denote the support hyperplane to $\op{spt} T$ with outward unit normal $\nu$.  Let $\Pi_2$ denote the support hyperplane to $\op{spt} T \cap \partial \K$ at $y$ with outward unit normal $\nu$.  Observe that $\op{spt} T \cap \partial \K \subseteq \op{spt} T$ and thus $\op{spt} T \cap \partial \K$ lies to one side of $\Pi_1$.  Hence if $\Pi_1 \neq \Pi_2$ then $\Pi_1$ must touch $\op{spt} T$ at some interior point in $\op{spt} T \setminus \mathcal{K}$, implying $\nu \in RNT$.  Thus we may suppose that $\Pi_1 = \Pi_2$.  In that case, $\Pi_1$ touches $\op{spt} T$ at $y$.  Moreover, since $\nu \in N_y \partial T/\nu_{\K}$, $\nu_{\K}(y) \cdot \nu > 0$.  Therefore $\nu \in N_y T$ with $\nu_{\K}(y) \cdot \nu > 0$.

After a change of coordinates, we may assume that $y = 0$, $\nu_{\K}(y) = (0,\ldots,0,0,1)$, and $\nu = (0,\ldots,0,\cos(\phi),\sin(\phi))$ for some $\phi \in (0,\pi/2]$.  We shall use the cylindrical coordinates $x = (x_1,\ldots,x_{n-1},r\cos(\theta),r\sin(\theta))$ on $\R^{n+1}$, where $x_1,\ldots,x_{n-1} \in \R$, $r \geq 0$, and $\theta \in \R$.  Let $C \in \mathbf{I}_m(\R^{n+1}_+)$ be any tangent cone to $T$ at $y$.  Since 
\begin{equation*}
	\op{spt} T \subseteq (\R^{n+1} \setminus \K) \cap \{ x : \nu \cdot (x-y) \leq 0 \} 
\end{equation*}
we have 
\begin{equation*}
	\op{spt} C \subseteq \{ x \in \R^{n+1} : \nu_{\K}(y) \cdot x \geq 0, \, \nu \cdot x \leq 0 \} 
		= \{ x \in \R^{n+1} : \pi/2+\phi \leq \theta \leq \pi \} .
\end{equation*}
In other words, the support of $C$ is contained in the wedge $\{ \pi/2+\phi \leq \theta \leq \pi \}$ with angle $\pi/2-\phi < \pi/2$.  Let $\widetilde{C} = C - \iota_{\#} C$ where $\iota : \R^{n+1} \rightarrow \R^{n+1}$ is the reflection about $\{ x : x_{n+1} = 0 \}$.  Then $\widetilde{C}$ is an area minimizing integral current of $\mathbb{R}^{n+1}$ contained in the wedge $\{ \pi/2+\phi \leq \theta \leq 3\pi/2-\phi \}$ with angle $\pi - 2\phi < \pi$.  However, by Corollary \ref{nontrivial bdry cones cor} and by~\cite[Theorem 36.5]{Sim83} -- which states that the support of an area minimizing cone is contained in a closed half-space $H \subseteq \R^{n+1}$ if and only if the support of the cone is contained in $\partial H$ -- it is impossible for $\widetilde{C}$ to lie in a wedge with angle $< \pi$.
\end{proof}

\begin{lemma} \label{restricted normal lemma2}
Let $\K$ be a bounded proper convex subset of $\R^{n+1}$ with $C^2$-boundary.  Let $T \in \mathbf{I}_m(\R^{n+1} \setminus \K)$ and $R \in \mathbf{I}_{m+1}(\R^{n+1} \setminus \K)$ such that $\partial R = T$ in $\R^{n+1} \setminus \K$, $R$ is relatively area minimizing in $\R^{n+1} \setminus \K$, and $(T,R)$ is a relative isoperimetric minimizer in $\R^{n+1} \setminus \K$.  Then 
\begin{equation} \label{restricted normal2 concl}
	\mathcal{H}^n(RNT) \geq \frac{1}{2} \,\mathcal{H}^n(\mathbb{S}^n). 
\end{equation}
\end{lemma}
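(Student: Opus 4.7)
The plan is to reduce the claim to the abstract half-sphere estimate Theorem~\ref{rest norm abs thm} applied to the boundary trace $X = \op{spt} T \cap \partial \K = \op{spt} \partial T$ equipped with the continuous selection $\sigma = \nu_{\K}|_X$, and then invoke Lemma~\ref{restricted normal lemma1} to transfer the resulting bound on restricted normals of $X$ into a bound on the restricted support normals of $T$ itself.

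First I would dispose of the trivial case. If $\op{spt} T \cap \partial \K = \emptyset$, then Corollary~\ref{T R cpt spt cor} makes $\op{spt} T$ compact and disjoint from $\K$, so every unit vector in $\mathbb{S}^n$ is realized as the outward normal of some support hyperplane touching $\op{spt} T$ at a point of $\op{spt} T \setminus \K$; hence $RNT = \mathbb{S}^n$ and the conclusion is immediate.

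Assume instead $X = \op{spt} T \cap \partial \K \neq \emptyset$. Then $X$ is a nonempty compact subset of $\R^{n+1}$ and, because $\K$ has $C^2$-boundary, $\nu_{\K}|_{\partial \K}$ is a continuous map into $\mathbb{S}^n$. By convexity of $\K$, for each $y \in \partial \K$ the entire set $\K$, hence $X$, lies in the half-space $\{x : \nu_{\K}(y)\cdot(x-y) \leq 0\}$, so $\nu_{\K}(y) \in N_y X$. All hypotheses of Theorem~\ref{rest norm abs thm} are therefore met, yielding
\begin{equation*}
\mathcal{H}^n\bigl(NX/\nu_{\K}\bigr) \geq \tfrac{1}{2}\,\mathcal{H}^n(\mathbb{S}^n).
\end{equation*}
Directly from the definitions $NX/\nu_{\K}$ coincides with the set $N\partial T/\nu_{\K}$ introduced before Lemma~\ref{restricted normal lemma1}, and that lemma provides the inclusion $N\partial T/\nu_{\K} \subseteq RNT$. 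Combining these two facts yields \eqref{restricted normal2 concl}.

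The substantive content has already been packaged in the two preceding results: Theorem~\ref{rest norm abs thm} supplies the half-sphere bound through approximation by finite point sets, while Lemma~\ref{restricted normal lemma1} performs the delicate task of upgrading a boundary restricted normal to an interior one by ruling out wedge-shaped tangent cones at points of $\op{spt} T \cap \partial \K$ using the tangent cone structure from Section~\ref{sec:monotonicity sec}. Given these inputs, the present lemma amounts to little more than checking that the hypotheses match, so I do not anticipate any genuine obstacle beyond correctly setting up the reduction.
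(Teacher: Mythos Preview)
Your proposal is correct and matches the paper's proof essentially line for line: dispose of the trivial case $\op{spt} T \cap \partial \K = \emptyset$, then apply Theorem~\ref{rest norm abs thm} with $X = \op{spt} T \cap \partial \K$ and $\sigma = \nu_{\K}$, and conclude via the inclusion $N\partial T/\nu_{\K} \subseteq RNT$ from Lemma~\ref{restricted normal lemma1}.
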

\begin{proof}
Assume that $\op{spt} T \cap \partial \K \neq \emptyset$, as otherwise $RNT = \mathbb{S}^n$.  Notice that $\nu_{\K}$ is a continuous function on $\partial \K$ and $\nu_{\partial \K}(y) \in N_y \partial T$ for each $y \in \op{spt} T \cap \partial \K$ since by the convexity of $\K$ we have $\op{spt} T \cap \partial \K \subseteq \K \subseteq \{ x : \nu_{\K}(y) \cdot (x - y) \leq 0 \}$.  Thus as a direct consequence of Theorem \ref{rest norm abs thm} with $X = \op{spt} T \cap \partial \K$ and $\sigma = \nu_{\K}$ 
\begin{equation*}
	\mathcal{H}^n(N\partial T/\nu_{\K}) \geq \frac{1}{2} \,\mathcal{H}^n(\mathbb{S}^n). 
\end{equation*}
Since $N\partial T/\nu_{\K} \subseteq RNT$ by Lemma \ref{restricted normal lemma1}, it follows that \eqref{restricted normal2 concl} holds true.
\end{proof}

\section{Area-mean curvature characterization of hemispheres} \label{sec:AH sec}

The aim of this section is to prove Theorem D following the approach of Almgren in~\cite{Alm86} and using Lemma \ref{restricted normal lemma2} from the previous section.  Throughout this section, we let $\mathcal{K}$ be a bounded proper convex subset of $\mathbb{R}^{n+1}$ with $C^2$-boundary and we let $T \in \mathbf{I}_m(\mathbb{R}^{n+1} \setminus \mathcal{K})$ and $R \in \mathbf{I}_{m+1}(\mathbb{R}^{n+1} \setminus \mathcal{K})$ such that $\partial R = T$ in $\mathbb{R}^{n+1} \setminus \mathcal{K}$, $R$ is relatively area minimizing in $\mathbb{R}^{n+1} \setminus \mathcal{K}$, and $(T,R)$ is relative isoperimetric minimizing in $\mathbb{R}^{n+1} \setminus \mathcal{K}$.  Note that by Corollary \ref{T R cpt spt cor}, $T$ has compact support.  Assume that $T$ has distributional mean curvature $\mathbf{H}_T \in L^{\infty}(\|T\|,\R^{n+1})$ in the sense that \eqref{first variation T concl1} holds true and assume that $|\mathbf{H}_T(y)| \leq m$ for $\|T\|$-a.e.~$y$. 

Let $\A$ denote the convex hull of $\op{spt} T$.  Recall from Subsection \ref{sec:prelims_sets} that $d_{\A} : \R^{n+1} \rightarrow [0,\infty)$ is the distance function given by $d_{\A}(x) = \op{dist}(x,\A)$ for all $x \in \R^{n+1}$.  $\xi_{\A} : \R^{n+1} \rightarrow \A$ is the closest point projection map such that $\xi_{\A}(x)$ is the closest point of $\A$ to $x$ for each $x \in \R^{n+1} \setminus \A$ and $\xi_{\A}(x) = x$ for each $x \in \A$.  $\nu_{\A} : \R^{n+1} \setminus \A \rightarrow \mathbb{S}^n$ is the Gauss map given by $\nu_{\A}(x) = (x-\xi_{\A}(x))/d_{\A}(x)$ for each $x \in \R^{n+1} \setminus \A$.  $d_{\A} \in C^{0,1}(\R^{n+1}) \cap C^{1,1}_{\rm loc}(\R^{n+1} \setminus \A)$ with $\nabla d_{\A} = \nu_{\A}$ in $\R^{n+1} \setminus \A$, $\xi_{\A} \in C^{0,1}(\R^{n+1};\R^{n+1})$, and $\nu_{\A} \in C^{0,1}_{\rm loc}(\R^{n+1} \setminus \A;\R^{n+1})$.  Let $\B_0 = \A \setminus \op{int} \A$ and for each $s > 0$ let 
\begin{equation*}
	\B_s = \{ x \in \R^{n+1} : d_{\A}(x) = s \} .
\end{equation*}
Observe that since $d_{\A} \in C^{1,1}_{\rm loc}(\R^{n+1} \setminus \A)$ with $\nabla d_{\A} = \nu_{\A}$, for each $s > 0$, $\B_s$ is a $C^{1,1}$-submanifold and in particular by Rademacher's theorem the unit normal $\nu_{\A} |_{\B_s}$ is differentiable at $\mathcal{H}^n$-a.e.~$x \in \B_s$.  Thus $\B_s$ has principal curvatures $a_1 \leq a_2 \leq \cdots \leq a_n$ at $\mathcal{H}^n$-a.e.~$x \in \B_s$.  Since $\A$ is convex and $\B_s$ is the level set consisting of points distance $s$ from $\A$, $0 \leq a_i \leq 1/s$ for all $i = 1,2,\ldots,n$. 

Let $x_0 \in \B_s$ such that $\nu_{\A} |_{\B_s}$ is differentiable at $x_0$ and let $y_0 = \xi_{\A}(x_0)$.  We want to compute the Jacobians of $\xi_{\A}$ and $\nu_{\A}$ at $x_0$.  After a change of variables, we may assume $x_0 = (0,s)$, $y_0 = (0,0)$, and $\nu_{\A}(x_0) = (0,1)$.  There exists $r > 0$ and a $C^{1,1}$-function $f : B^n_r(0) \rightarrow \R$ such that $f(0) = s$, $Df(0) = 0$, $f$ is twice differentiable at the origin, and 
\begin{equation} \label{AH halfspheres Bs graph}
	\B_s \cap B^n_r(0,s) \cap [s-r,s+r] = \op{graph} f.
\end{equation}
By Taylor's theorem, after an orthogonal change of variables 
\begin{equation} \label{AH halfspheres taylor}
	f(x') = s - \frac{1}{2} \sum_{i=1}^n a_i x_i^2 + E(x')
\end{equation}
for all $x' = (x_1,x_2,\ldots,x_n) \in B^n_r(0)$ where $a_i$ are the principal curvatures of $\B_s$ at $x_0$ and $E : B^n_r(0) \rightarrow \R$ is a $C^{1,1}$-function which is twice-differentiable at the origin with $E(0) = 0$, $DE(0) = 0$, and $D^2 E(0) = 0$.  Hence 
\begin{equation*}
	\nu_{\A}(x',f(x')) = \frac{(-Df(x'),1)}{\sqrt{1+|Df(x')|^2}}, \quad 
	\xi_{\A}(x',f(x')) = (x',f(x')) - s \,\nu_{\A}(x',f(x')). 
\end{equation*}
By differentiation, 
\begin{equation*}
	D\nu_{\A}(0,s) = \left(\begin{matrix} 
		a_1 & 0 & \cdots & 0 & 0 \\
		0 & a_2 & \cdots & 0 & 0 \\
		\vdots & \vdots & \ddots & \vdots & \vdots \\
		0 & 0 & \cdots & a_n & 0 \\
		0 & 0 & \cdots & 0 & 0
	\end{matrix}\right) , \quad 
	D\xi_{\A}(0,s) = \left(\begin{matrix} 
		1-s a_1 & 0 & \cdots & 0 & 0 \\
		0 & 1-s a_2 & \cdots & 0 & 0 \\
		\vdots & \vdots & \ddots & \vdots & \vdots \\
		0 & 0 & \cdots & 1-s a_n & 0 \\
		0 & 0 & \cdots & 0 & 0
	\end{matrix}\right) .
\end{equation*}
For each $k \in \{1,2,\ldots,n\}$ and each Lipschitz function $F : \B_s \rightarrow \mathbb{R}^{n+1}$ we define the $k$-th Jacobian $J_k F(x_0)$ of $F$ at $\mathcal{H}^n$-a.e.~$x_0 \in \B_s$ by 
\begin{equation*}
	J_k F(x_0) = \left\| {\bigwedge}_k DF(x_0) \right\| ,
\end{equation*} 
see~\cite[Corollary 3.2.20, Theorem 3.2.22]{Fed69}.  The Jacobian of $\nu_{\A} |_{\B_s} : \B_s \rightarrow \mathbb{R}^{n+1}$ is given by 
\begin{equation} \label{AH halfspheres nu jac}
	J_n \nu_{\A}(x_0) = a_1 a_2 \cdots a_n. 
\end{equation}
The $m$-th Jacobian of $\xi_{\A} |_{\B_s} : \B_s \rightarrow \mathbb{R}^{n+1}$ is given by $J_m \xi_{\A}(x_0) = 0$ if $a_m = 1/s$ and 
\begin{equation} \label{AH halfspheres xi jac}
	J_m \xi_{\A}(x_0) = \prod_{i=1}^m (1-s a_i) \neq 0.
\end{equation}
otherwise.  

By~\cite[Proposition 5(4)]{Alm86}, for all $s > 0$ and $\mathcal{H}^n$-a.e.~$x_0 \in \B_s \setminus \xi_{\A}^{-1}(\op{spt} T)$, at least one of the principal curvatures of $\B_s$ at $x_0$ is zero, i.e.~$a_1 = 0$, and so 
\begin{equation*}
	J_n \nu_{\A}(x_0) = 0. 
\end{equation*}

By~\cite[Proposition 6(1)]{Alm86}, for all $0 < s < 1/(3m+3)$ and $\mathcal{H}^n$-a.e.~$x_0 \in \B_s \cap \xi_{\A}^{-1}(\op{spt} T \setminus \K)$, the principal curvatures $a_1 \leq a_2 \leq \cdots \leq a_n$ of $\B_s$ at $x_0$ satisfy 
\begin{equation} \label{AH halfspheres H eqn1}
	a_1 + a_2 + \cdots + a_m \leq m. 
\end{equation}
\cite[Proposition 6]{Alm86} involves constructing a variational vector field $\zeta$ and then applying the first variational formula of $T$ \eqref{first variation T concl1} with this vector field $\zeta$.  \eqref{AH halfspheres H eqn1} follows immediately from~\cite{Alm86} with the following two minor changes.  Firstly, we replace the rectifiable varifold $V = \mathbf{v}(S,\vartheta+1/m,\tau)$ by the varifold $V$ associated with $T$ (as a consequence of using a different isoperimetric ratio from Almgren).  Secondly, we note that the variational vector field $\zeta$ is constructed in~\cite{Alm86} so that $\op{spt} V \cap \op{spt} \zeta$ is contained in a small neighborhood of $y_0$ and thus $\zeta$ can be taken to vanish on an open neighborhood of $\mathcal{K}$.  Now by \eqref{AH halfspheres H eqn1}, $0 \leq a_i \leq m$ for all $i = 1,2,\ldots,m$.  Hence by \eqref{AH halfspheres xi jac}, 
\begin{equation} \label{AH halfspheres xi jac2}
	(2/3)^m \leq J_m \xi_{\A}(x_0) \leq 1, 
\end{equation}
which is what we will need to apply the coarea formula~\cite[Corollary 3.2.22]{Fed69} below. 

Observe that $\B_s \cap \xi_{\A}^{-1}(y_0) = y_0 + s \,N_{y_0} T$ for all $y_0 \in \B_0$ and $s > 0$.  By the variational argument of~\cite[Proposition 6(2)]{Alm86} with the changes mentioned above and by~\cite[Proposition 7]{Alm86}, for $\mathcal{H}^m$-a.e.~$y_0 \in \B_0 \cap \op{spt} T \setminus \K$ and $\mathcal{H}^{n-m}$-a.e.~$\nu \in N_{y_0} T$ there exists constants $0 \leq b_1 \leq b_2 \leq \cdots \leq b_m < \infty$ (depending on $y_0$ and $\nu$ and independent of $s$) such that for every $s > 0$ the Gauss map $\nu_{\A}$ is differentiable at $x_0 = y_0 + s\,\nu$, the principal curvatures $a_1 \leq a_2 \leq \cdots \leq a_n$ of $\B_s$ at $x_0 = y_0 + s\,\nu$ ($a_i$ depending on $s$) are given by 
\begin{equation} \label{AH halfspheres H eqn2}
	a_i(s) = \frac{b_i}{1 + s\,b_i} \text{ for } i = 1,2,\ldots,m, \quad a_i = \frac{1}{s} \text{ for } i = m+1,\ldots,n, 
\end{equation}
and either $b_1 = b_2 = \cdots = b_m = 0$ or 
\begin{equation} \label{AH halfspheres H eqn3}
	0 < b_1 + b_2 + \cdots + b_m \leq \mathbf{H}_T(y_0) \cdot \nu = |\mathbf{H}_T(y_0)| \,\cos(\beta(y_0,\nu)), 
\end{equation}
where $\beta(y_0,\nu) \in [0,\pi/2]$ is the angle such that $\mathbf{H}_T(y_0) \cdot \nu = |\mathbf{H}_T(y_0)| \,\cos(\beta(y_0,\nu))$.  Note that $y_0$ is chosen so that $T$ has an approximate tangent plane $P_{y_0}$ at $y_0$.  Thus $a_i = 1/s$ for $i = m+1,\ldots,n$ since $\B_s \cap \xi_{\A}^{-1}(y_0) \subseteq y_0 + \partial B_s(0) \cap P_{y_0}^{\perp}$.  By \eqref{AH halfspheres nu jac}, \eqref{AH halfspheres xi jac}, and \eqref{AH halfspheres H eqn2}, at $x_0 = y_0 + s\,\nu$ we have 
\begin{align}
	J_m \xi_{\A}(x_0) &= \prod_{i=1}^m (1 - s a_i) = \prod_{i=1}^m \frac{1}{1 + s b_i}, \nonumber \\
	\label{AH halfspheres H eqn4} J_n \nu_{\A}(x_0) &= a_1 a_2 \cdots a_n = \prod_{i=1}^m \frac{b_i}{1 + s b_i} \cdot s^{-(n-m)} 
		= b_1 b_2 \cdots b_m \cdot s^{-(n-m)} \,J_m \xi_{\A}(x_0). 
\end{align}
For $\mathcal{H}^m$-a.e.~$y_0$ we have $|\mathbf{H}_T(y_0)| \leq m$ and thus using the AM-GM inequality and \eqref{AH halfspheres H eqn3}, 
\begin{equation} \label{AH halfspheres H eqn5}
	b_1 b_2 \cdots b_m \leq \left(\frac{b_1+b_2+\cdots+b_m}{m}\right)^m \leq \left(\frac{|\mathbf{H}_T(y_0)|}{m}\right)^m \cos^m(\beta(y_0,\nu)) 
		\leq \cos^m(\beta(y_0,\nu)) 
\end{equation}
with equality if and only if 
\begin{equation} \label{AH halfspheres H eqn6}
	b_1 = b_2 = \cdots = b_m = \cos(\beta(y_0,\nu)), \quad |\mathbf{H}_T(y_0)| = m. 
\end{equation}
Therefore, by \eqref{AH halfspheres H eqn4} and \eqref{AH halfspheres H eqn5}, for $\mathcal{H}^m$-a.e.~$y_0 \in \B_0 \cap \op{spt} T \setminus \K$, $\mathcal{H}^{n-m}$-a.e.~$\nu \in N_{y_0} T$, and all $s > 0$ at $x_0 = y_0 + s\,\nu$ 
\begin{equation} \label{AH halfspheres nu jac2}
	J_n \nu_{\A}(x_0) \leq \cos^m(\beta(y_0,\nu)) \,s^{-(n-m)} \,J_m \xi_{\A}(x_0) 
\end{equation}
with equality if and only if \eqref{AH halfspheres H eqn6} holds true.  

\begin{proof}[Proof of Theorem D]
By Lemma \ref{restricted normal lemma2},  
\begin{equation} \label{AH halfspheres eqn1}
	\mathcal{H}^n(RNT) \geq \frac{1}{2} \,\mathcal{H}^n(\mathbb{S}^n). 
\end{equation}
Our goal is to bound $\mathcal{H}^n(RNT)$ above in terms of $\Mass(T)$. 

Fix $0 < s < 1/(3m+3)$.  Observe that 
\begin{equation} \label{AH halfspheres eqn2}
	RNT = \nu_{\A}( \B_s \cap \xi_{\A}^{-1}(\B_0 \cap \op{spt} T \setminus \K) ) .
\end{equation}
Let $\B^*_s$ be the set of all $x \in \B_s$ such that $\nu_{\A}$ is differentiable at $x$ and $J_n \nu_{\A}(x) > 0$.  Similarly, for each $y \in \B_0 \cap \op{spt} T \setminus \K$, recall that $\B_s \cap \xi_{\A}^{-1}(y) = y + s \,N_y T$ and let $N^*_y T$ be the set of all $\nu \in N_y T$ such that $\nu_{\A}$ is differentiable at $y+ s\nu$ and $J_n \nu_{\A}(y+s\nu) > 0$.  ($N^*_y T$ is independent of $s$ as $\nu \in N^*_y T$ if and only if $\nu_{\A}$ is differentiable at $y+t\nu$ and $J_n \nu_{\A}(y+t\nu) > 0$ for all $t > 0$.)  By \eqref{AH halfspheres eqn2} and the area formula~\cite[Corollary 3.2.20]{Fed69},
\begin{equation} \label{AH halfspheres eqn3}
	\mathcal{H}^n(RNT) = \int_{\B_s \cap \xi_{\A}^{-1}(\B_0 \cap \op{spt} T \setminus \K)} J_n \nu_{\A}(x) \,d\mathcal{H}^n(x) 
		= \int_{\B^*_s \cap \xi_{\A}^{-1}(\B_0 \cap \op{spt} T \setminus \K)} J_n \nu_{\A}(x) \,d\mathcal{H}^n(x). 
\end{equation}
By \eqref{AH halfspheres xi jac2}, $(2/3)^m \leq J_m \xi_{\A}(x) \leq 1$ for $\mathcal{H}^n$-a.e.~$x \in \B^*_s \cap \xi_{\A}^{-1}(\op{spt} T)$ and thus we can apply the coarea formula~\cite[Corollary 3.2.22]{Fed69} using \eqref{AH halfspheres nu jac2} and $\B^*_s \cap \xi_{\A}^{-1}(y) = y + s \,N^*_y T$ for all $y \in \B_0 \cap \op{spt} T \setminus \K$ to obtain 
\begin{align} \label{AH halfspheres eqn4}
	\mathcal{H}^n(RNT) &\leq \int_{\B^*_s \cap \xi_{\A}^{-1}(\B_0 \cap \op{spt} T \setminus \K)} 
		\cos^m(\beta(\xi_{\A}(x),\nu_{\A}(x))) \,s^{-(n-m)} \,J_m \xi_{\A}(x) \,d\mathcal{H}^n(x)
	\\&= \int_{\B_0 \cap \op{spt} T \setminus \mathcal{K}} \int_{\B^*_s \cap \xi_{\A}^{-1}(y)} 
		\cos^m(\beta(y,\nu_{\A}(x))) \,s^{-(n-m)} \,d\mathcal{H}^{n-m}(x) \,d\mathcal{H}^m(y) \nonumber 
	\\&= \int_{\B_0 \cap \op{spt} T \setminus \mathcal{K}} \int_{N^*_y T} 
		\cos^m(\beta(y,\nu)) \,d\mathcal{H}^{n-m}(\nu) \,d\mathcal{H}^m(y), \nonumber 
\end{align}
where $\beta(y,\nu) \in [0,\pi/2]$ is the angle such that $\mathbf{H}_T(y) \cdot \nu = |\mathbf{H}_T(y)| \,\cos(\beta(y,\nu))$ for $\mathcal{H}^m$-a.e.~$y \in \B_0 \cap \op{spt} T \setminus \mathcal{K}$ and $\mathcal{H}^{n-m}$-a.e.~$\nu \in N^*_y T$.  Equality holds true in \eqref{AH halfspheres eqn4} if and only if \eqref{AH halfspheres H eqn6} holds true for $\mathcal{H}^m$-a.e.~$y_0 \in \op{spt} T \setminus \mathcal{K}$ and $\mathcal{H}^{n-m}$-a.e.~$\nu \in N^*_{y_0} T$, where $b_i$ are as in \eqref{AH halfspheres H eqn2} and depend on $y_0$ and $\nu$.  

By \eqref{AH halfspheres H eqn3}, $\mathbf{H}_T(y) \cdot \nu > 0$ for $\mathcal{H}^m$-a.e.~$y \in \B_0 \cap \op{spt} T \setminus \mathcal{K}$ and $\mathcal{H}^{n-m}$-a.e.~$\nu \in N^*_y T$.  Also, $N_y T$ is orthogonal to the approximate tangent plane $P_y$ of $T$ at $\mathcal{H}^m$-a.e.~$y \in \B_0 \cap \op{spt} T \setminus \mathcal{K}$.  Hence for $\mathcal{H}^m$-a.e.~$y \in \B_0 \cap \op{spt} T \setminus \mathcal{K}$ 
\begin{equation} \label{AH halfspheres eqn5}
	\mathcal{H}^{n-m}\big( N^*_y T \setminus (\mathbb{S}^n \cap \{ \nu : \mathbf{H}_T(y) \cdot \nu > 0 \} \cap P_y^{\perp}) \big) = 0. 
\end{equation}
Recall from Theorem C that $\mathbf{H}_T(y)$ is orthogonal to $P_y$ for $\mathcal{H}^m$-a.e.~$y \in \op{spt} T \setminus \mathcal{K}$.  Thus if $\mathbf{H}_T(y) \neq 0$, after an orthogonal change of coordinates we can take $P_y = \{0\} \times \mathbb{R}^m$ and $\mathbf{H}_T(y) = |\mathbf{H}_T(y)| \,e_{n-m+1}$, where $e_{n-m+1}$ is the $(n-m+1)$-th unit coordinate vector, and then use \eqref{AH halfspheres eqn5} to obtain 
\begin{equation} \label{AH halfspheres eqn6}
	\int_{N^*_y T} \cos^m(\beta(y,\nu)) \,d\mathcal{H}^{n-m}(\nu) \,d\mathcal{H}^m(y) 
	\leq \int_{\mathbb{S}^{n-m} \cap \mathbb{R}^{n-m+1}_+} (x \cdot e_{n-m+1})^m \,d\mathcal{H}^{n-m}(x) 
\end{equation}
for $\mathcal{H}^m$-a.e.~$y \in \B_0 \cap \op{spt} T \setminus \mathcal{K}$ with equality if and only if 
\begin{equation} \label{AH halfspheres eqn7}
	\mathcal{H}^{n-m}\big( (\mathbb{S}^n \cap \{ \nu : \mathbf{H}_T(y) \cdot \nu > 0 \} \cap P_y^{\perp}) \setminus N^*_y T \big) = 0. 
\end{equation}
If instead $\mathbf{H}_T(y) = 0$, then \eqref{AH halfspheres eqn5} implies $\mathcal{H}^{n-m}(N^*_y T) = 0$ and thus \eqref{AH halfspheres eqn6} holds true with a strict inequality.  Therefore, for $\mathcal{H}^m$-a.e.~$y \in \B_0 \cap \op{spt} T \setminus \mathcal{K}$, \eqref{AH halfspheres eqn6} holds true with equality if and only if $\mathbf{H}_T(y) \neq 0$ and \eqref{AH halfspheres eqn7} holds true.  By integrating \eqref{AH halfspheres eqn6} over $y \in \B_0 \cap \op{spt} T \setminus \mathcal{K}$, 
\begin{align} \label{AH halfspheres eqn8}
	&\int_{\B_0 \cap \op{spt} T \setminus \mathcal{K}} \int_{N^*_y T} \cos^m(\beta(y,\nu)) \,d\mathcal{H}^{n-m}(\nu) \,d\mathcal{H}^m(y) 
	\\&\hspace{10mm} \leq \mathcal{H}^m(\B_0 \cap \op{spt} T) \int_{\mathbb{S}^{n-m} \cap \mathbb{R}^{n-m+1}_+} 
		(x \cdot e_{n-m+1})^m \,d\mathcal{H}^{n-m}(x) \nonumber 
\end{align}
with equality if and only if $\mathbf{H}_T(y) \neq 0$ and \eqref{AH halfspheres eqn7} holds true for $\mathcal{H}^m$-a.e.~$y \in \B_0 \cap \op{spt} T \setminus \mathcal{K}$. 

We claim that $\mathcal{H}^m$-a.e.~$y \in \B_0 \cap \op{spt} T \setminus \mathcal{K}$ with $\mathbf{H}_T(y) \neq 0$, \eqref{AH halfspheres eqn7} holds true if and only if $\mathcal{H}^{n-m}(N_y T \setminus N^*_y T) = 0$ and 
\begin{equation} \label{AH halfspheres eqn9}
	N_y T = \mathbb{S}^n \cap \{ \nu : \mathbf{H}_T(y) \cdot \nu \geq 0 \} \cap P_y^{\perp}. 
\end{equation}
In fact, we will show that \eqref{AH halfspheres eqn7} $\Rightarrow$ \eqref{AH halfspheres eqn9} as then by \eqref{AH halfspheres eqn9}, \eqref{AH halfspheres eqn7} $\Leftrightarrow$ $\mathcal{H}^{n-m}(N_y T \setminus N^*_y T) = 0$.  Suppose $\mathbf{H}_T(y) \neq 0$ and \eqref{AH halfspheres eqn7} holds true.  Since $N^*_y T \subseteq N_y T$ and $N_y T$ is a closed set, by \eqref{AH halfspheres eqn7} 
\begin{equation*}
	\mathbb{S}^n \cap \{ \nu : \mathbf{H}_T(y) \cdot \nu \geq 0 \} \cap P_y^{\perp} \subseteq N_y T. 
\end{equation*}
We know that $N_y T \subseteq \mathbb{S}^n \cap P_y^{\perp}$.  Thus either \eqref{AH halfspheres eqn9} holds true or $\op{spt} T \subseteq P_y$ and $N_y T = \mathbb{S}^n \cap P_y^{\perp}$.  Since $T \in \mathbf{I}_m(\R^{n+1} \setminus \K)$ with compact support and $\partial T = 0$ in $\R^{n+1} \setminus \K$, by the constancy theorem~\cite[Theorem 26.27]{Sim83} $\op{spt} T \subseteq P_y$ is impossible.  Therefore, \eqref{AH halfspheres eqn9} holds true, proving the claim.  As a consequence of the above claim, equality holds true in \eqref{AH halfspheres eqn8} if and only if $\mathbf{H}_T(y) \neq 0$, $\mathcal{H}^{n-m}(N_y T \setminus N^*_y T) = 0$, and \eqref{AH halfspheres eqn9} holds true for $\mathcal{H}^m$-a.e.~$y \in \B_0 \cap \op{spt} T \setminus \mathcal{K}$.  Note that if $\mathbf{H}_T(y) \neq 0$ and \eqref{AH halfspheres eqn9} holds true for some $y \in \B_0 \cap \op{spt} T \setminus \mathcal{K}$, then $\op{spt} T$ is contained in the $(m+1)$-dimensional affine plane $y + \op{span}(\mathbf{H}_T(y)) \oplus P_y$. 

To bound the term $\mathcal{H}^m(\B_0 \cap \op{spt} T)$ in \eqref{AH halfspheres eqn8} we have  
\begin{equation} \label{AH halfspheres eqn10}
	\mathcal{H}^m(\B_0 \cap \op{spt} T) \leq \mathcal{H}^m(\op{spt} T) \leq \Mass(T)
\end{equation}
with equality if and only if $\op{spt} T \subseteq \B_0$ and $T$ is a multiplicity one integral current. 

Now combining \eqref{AH halfspheres eqn1}, \eqref{AH halfspheres eqn4}, \eqref{AH halfspheres eqn8}, and \eqref{AH halfspheres eqn10}, we have shown that 
\begin{equation} \label{AH halfspheres eqn11}
	\frac{1}{2} \,\mathcal{H}^n(\mathbb{S}^n) 
		\leq \Mass(T) \int_{\mathbb{S}^{n-m} \cap \mathbb{R}^{n-m+1}_+} (x \cdot e_{n-m+1})^m \,d\mathcal{H}^{n-m}(x). 
\end{equation}
We have also shown that equality holds true in \eqref{AH halfspheres eqn11} if and only if 
\begin{enumerate}
	\item[(i)] $\mathcal{H}^n(RNT) = \tfrac{1}{2} \,\mathcal{H}^n(\mathbb{S}^n)$, 
	\item[(ii)] $\op{spt} T \subseteq \B_0$, 
	\item[(iii)] $T$ is a multiplicity one integral current, 
	\item[(iv)] \eqref{AH halfspheres H eqn6} holds true for $\mathcal{H}^m$-a.e.~$y_0 \in \op{spt} T \setminus \mathcal{K}$ and $\mathcal{H}^{n-m}$-a.e.~$\nu \in N_{y_0} T$, and 
	\item[(v)] \eqref{AH halfspheres eqn9} holds true for $\mathcal{H}^m$-a.e.~$y \in \op{spt} T \setminus \mathcal{K}$ and in particular the support of $T$ lies in an $(m+1)$-dimensional affine plane. 
\end{enumerate}
In particular, (i) is simply equality in \eqref{AH halfspheres eqn1}.  (ii) and (iii) hold true if and only if equality holds true in \eqref{AH halfspheres eqn10}.  In light of (ii), equality holds true in \eqref{AH halfspheres eqn4} and \eqref{AH halfspheres eqn8} if and only if for $\mathcal{H}^m$-a.e.~$y \in \op{spt} T \setminus \mathcal{K}$ we have 
\begin{enumerate}
	\item[(a)] \eqref{AH halfspheres H eqn6} holds true with $y_0 = y$ for $\mathcal{H}^{n-m}$-a.e.~$\nu \in N^*_y T$, 
	\item[(b)] $\mathbf{H}_T(y) \neq 0$, 
	\item[(c)] $\mathcal{H}^{n-m}(N_y T \setminus N^*_y T) = 0$, and 
	\item[(d)] \eqref{AH halfspheres eqn9} holds true. 
\end{enumerate}
However, if $\nu \in N_y T$ satisfies \eqref{AH halfspheres H eqn6} with $y_0 = y$, then $|\mathbf{H}_T(y)| = m \neq 0$ and, provided $\beta(y,\nu) \neq \pi/2$, $b_1 = b_2 = \cdots b_m = \cos(\beta(y,\nu)) > 0$ and so $\nu \in N^*_y T$.  Using this one readily checks that (iv)--(v) $\Leftrightarrow$ (a)--(d) for $\mathcal{H}^m$-a.e.~$y \in \op{spt} T \setminus \mathcal{K}$.  Therefore equality holds true in \eqref{AH halfspheres eqn4} and \eqref{AH halfspheres eqn8} if and only if (iv) and (v) hold true.  

In the special case that $B_2(0) \setminus \mathcal{K} = B_2(0) \cap \R^{n+1}_+$ and $T = \llbracket (\{0\} \times \mathbb{S}^m) \cap \R^{n+1}_+ \rrbracket$ is a multiplicity one $m$-dimensional hemisphere in $\R^{n+1}_+$, obviously (i)--(v) all hold true and consequently equality holds true in \eqref{AH halfspheres eqn11}.  Thus we can substitute $\Mass(T) = \frac{1}{2} \,\mathcal{H}^n(\mathbb{S}^n)$ into \eqref{AH halfspheres eqn11} and cancel terms to obtain 
\begin{equation*}
	1 = \int_{\mathbb{S}^{n-m} \cap \mathbb{R}^{n-m+1}_+} (x_{n-m+1})^m \,d\mathcal{H}^{n-m}(x).  
\end{equation*}
Therefore, for general $\mathcal{K}$ and $T$, \eqref{AH halfspheres eqn11} gives us 
\begin{equation} \label{AH halfspheres eqn12}
	\frac{1}{2} \,\mathcal{H}^n(\mathbb{S}^m) \leq \Mass(T) 
\end{equation}
with equality if and only if conditions (i)--(v) above hold true. 

Now suppose equality holds true in \eqref{AH halfspheres eqn12}.  By (v), after translating and rotating we may assume $\op{spt} T \subseteq \R^{m+1} \times \{0\}$.  As we discussed above, the support of $T$ cannot be contained in an $m$-dimensional plane by the constancy theorem, so $\R^{m+1} \times \{0\}$ is the smallest affine plane (with respect to set inclusion) containing $\A$.  By (v), $\op{spt} T$ must lie on the relative boundary of $\A$ in $\R^{m+1} \times \{0\}$.  Moreover, by Lemma \ref{isoper lower bound lemma} $T$ bounds some $(m+1)$-dimensional integral current $Q$ in $(\R^{m+1} \times \{0\}) \setminus \K$ with compact support.  Since $\K$ and $\A$ are bounded convex sets and $T$ has compact support, $\R^{n+1} \setminus (\K \cup \A)$ is connected and thus by the constancy theorem~\cite[Theorem 26.27]{Sim83} $\op{spt} Q \subseteq \A \setminus \op{int} \K$.  Since the support of $T$ lies on the relative boundary of $\A$ in $\R^{n+1}$ and $T$ satisfies (iii), by the constancy theorem for every $y \in \op{spt} T \setminus \K$ and $0 < \delta < \op{dist}(y,\K)$ 
\begin{equation} \label{AH halfspheres eqn13}
	T = \pm \partial \llbracket \mathcal{A} \rrbracket \text{ in } B_{\delta}(y) \cap (\R^{m+1} \times \{0\}),
\end{equation}
where the sign $\pm$ is determined by the orientation of $T$ (and depends on the ball $B_{\delta}(y)$). 

Take any point $y \in \op{spt} T \setminus \K$ and any tangent cone $C$ of $T$ at $y$.  By blowing up both $\mathcal{A}$ and $T$ at $y$ and using \eqref{AH halfspheres eqn13}, we conclude that $C$ is a multiplicity one integral current and the support of $C$ lies in an $(m+1)$-dimensional half-space $H \subset \R^{m+1} \times \{0\}$ with $0 \in \partial H$, which by~\cite[Theorem 36.5]{Sim83} implies $\op{spt} C \subseteq \partial H$.  Therefore, $C$ is a multiplicity one $m$-dimensional plane.  Since $y$ and $C$ were arbitrary, we may apply the Allard regularity theorem~\cite{All72} to conclude that $\op{spt} T \setminus \K$ is a locally $m$-dimensional $C^{1,\mu}$-submanifold for all $\mu \in (0,1)$.  Moreover, by (iv), $\op{spt} T \setminus \K$ is a codimension one submanifold of $\R^{m+1} \times \{0\}$ with constant scalar mean curvature $m$ and thus by elliptic regularity $\op{spt} T \setminus \K$ is a smooth $m$-dimensional submanifold. 

Notice that since $\op{spt} T \setminus \K$ is a smooth planar submanifold that $b_1,\ldots,b_m$ as in \eqref{AH halfspheres H eqn2} are the principal curvatures of $\op{spt} T \setminus \K$ at $y_0 \in \op{spt} T \setminus \K$.  Thus (iv) implies that $\op{spt} T \setminus \K$ is a totally umbilical submanifold with principal curvature one at each point.  Hence by the Nabelpunktsatz theorem, see~\cite[Lemma 1, p. 8]{Spi99} or~\cite[Theorem 26, p. 75]{Spi99}, each connected component of $\op{spt} T \setminus \K$ is a subset of an $m$-dimensional unit sphere of $\R^{m+1} \times \{0\}$.  In fact, by the constancy theorem (see Remark \ref{first variation T rmk}), for each $y \in \op{spt} T \setminus \K$ there exists $\delta > 0$ and $z \in \R^{m+1} \times \{0\}$ such that $\op{spt} T \cap B_{\delta}(y) = (z + \mathbb{S}^m \times \{0\}) \cap B_{\delta}(y)$.  It follows that each connected component of $\op{spt} T \setminus \K$ is equal to a connected component of $(z + \sphere \times \{0\}) \setminus \K$ for some $z \in \R^{m+1} \times \{0\}$.  If the closure of two or more connected components of $\op{spt} T \setminus \K$ intersect at $y \in \partial \K$, then by Lemma \ref{boundary rectifiability lemma4} the closure of components intersect both each other and $\partial \K$ transversely at $y$.  Thus the tangent cone $C$ to $T$ at $y$ is a sum of two or more distinct $m$-dimensional half-planes meeting along a common boundary, contradicting $C$ being relatively area minimizing in $\R^{n+1} \setminus \K_{y,0}$.  Hence the connected components of $\op{spt} T \setminus \K$ have mutually disjoint closures.  Thus by Remark \ref{first variation T rmk} each connected component of $\op{spt} T \setminus \K$ meets $\partial \K$ orthogonally.  Let $M$ be any connected component of $\op{spt} T \setminus \K$.  After translating assume that $M$ is a connected component of $(\sphere \times \{0\}) \setminus \K$.  Since $\Mass(T) = \frac{1}{2} \,\mathcal{H}^m(\sphere)$, $\overline{M} \cap \partial \K \neq \emptyset$.  Fix $y \in \overline{M} \cap \partial \K$.  Since $\K$ is convex, $\K$ is contained in the half-space $\K_0 = \{ x : \nu_{\K}(y) \cdot (x-y) \leq 0 \}$.   Since $M$ meets $\partial \K$ orthogonally at $y$, $M \setminus \K_0 = (\sphere \times \{0\}) \setminus \K_0$ is a hemisphere.  Thus 
\begin{equation*}
	\frac{1}{2} \,\mathcal{H}^m(\sphere) 
	= \mathcal{H}^m(M \setminus \K_0) 
	\leq \mathcal{H}^m(M) \leq \Mass(T) = \frac{1}{2} \,\mathcal{H}^m(\sphere), 
\end{equation*}
so $\op{spt} T \setminus \K$ has precisely one connected component and $T = \pm \llbracket (\sphere \times \{0\}) \setminus \K_0 \rrbracket$ as a multiplicity one $m$-dimensional hemisphere, where the sign $\pm$ is determined by the orientation of $T$. 
\end{proof}

\section{Proof of the sharp relative isoperimetric inequality} \label{sec:main proof sec}

Finally, here we prove Theorem A.  We will break the proof up into three smaller proofs covering the following cases:
\begin{enumerate}
	\item[(i)] $\K$ is bounded and has $C^2$-boundary, 
	\item[(ii)] $\K$ is bounded but does not have $C^2$-boundary, and 
	\item[(iii)] $\K$ is unbounded.
\end{enumerate}
The first case will follow easily from Theorems B, C, and D and the latter two cases will follow via approximation arguments and the prior cases. 

\begin{proof}[Proof of Theorem A in the case that $\K$ is bounded and has $C^2$-boundary]
Suppose $\K$ is a bounded proper convex subset of $\R^{n+1}$ with $C^2$-boundary.  We want to show that \eqref{rel iso ineq} always holds true.  Equivalently, we want to show that 
\begin{equation} \label{rel iso1 eqn1}
	\gamma_{m,n}(\K) = 2^{-\frac{1}{m}} \,\frac{\mathcal{H}^m(\partial \ball)^{\frac{m+1}{m}}}{\mathcal{H}^{m+1}(\ball)}.
\end{equation}
Suppose to the contrary that 
\begin{equation} \label{rel iso1 eqn2}
	\gamma_{m,n}(\K) < 2^{-\frac{1}{m}} \,\frac{\mathcal{H}^m(\partial \ball)^{\frac{m+1}{m}}}{\mathcal{H}^{m+1}(\ball)}.
\end{equation}
By Theorem B and \eqref{rel iso1 eqn2}, there exists a relative isoperimetric minimizer $(T,R)$ in $\R^{n+1} \setminus \K$.  By Theorem C, $T$ has distributional mean curvature $\mathbf{H}_T \in L^{\infty}(\|T\|,\R^{n+1})$ in the sense that \eqref{first variation T concl1} holds true and $|\mathbf{H}_T(y)| \leq H_0$, where $H_0$ is given by \eqref{H0 defn}.  By Corollary \ref{T R cpt spt cor}, $T$ has compact support.  Rescale so that 
\begin{equation} \label{rel iso1 eqn3}
	\Mass(R) = \frac{1}{2} \,\mathcal{H}^{m+1}(\ball). 
\end{equation}
By \eqref{rel iso1 eqn3}, \eqref{rel iso1 eqn2}, and \eqref{H0 defn},    
\begin{equation} \label{rel iso1 eqn4}
	\Mass(T) < \frac{1}{2} \,\mathcal{H}^m(\sphere), \quad
	H_0 = \frac{m}{m+1} \,\frac{\Mass(T)}{\Mass(R)} < m. 
\end{equation}
However, by Theorem D, \eqref{rel iso1 eqn4} is impossible.  Therefore we must have \eqref{rel iso1 eqn1}.

Next suppose $\K$ is a bounded proper convex subset of $\R^{n+1}$ with $C^2$-boundary and suppose $T \in \mathbf{I}_m(\R^{n+1} \setminus \K)$ and $R \in \mathbf{I}_{m+1}(\R^{n+1} \setminus \K)$ such that $R$ is relatively area minimizing with $\partial R = T$ in $\R^{n+1} \setminus \K$ and equality holds true in \eqref{rel iso ineq}.  In other words, suppose $(T,R)$ is relative isoperimetric minimizing in $\R^{n+1} \setminus \K$.  By Theorem C, $T$ has distributional mean curvature $\mathbf{H}_T \in L^{\infty}(\|T\|,\R^{n+1})$ in the sense that \eqref{first variation T concl1} holds true and $|\mathbf{H}_T(y)| \leq H_0$, where $H_0$ is given by \eqref{H0 defn}.  By Corollary \ref{T R cpt spt cor}, $T$ has compact support.  Rescale so that \eqref{rel iso1 eqn3} holds true and thus by \eqref{rel iso1 eqn1} and \eqref{H0 defn} 
\begin{equation*}
	\Mass(T) = \frac{1}{2} \,\mathcal{H}^m(\sphere), \quad
	H_0 = \frac{m}{m+1} \,\frac{\Mass(T)}{\Mass(R)} = m. 
\end{equation*}
By Theorem D, $T$ is a multiplicity one $m$-dimensional hemisphere which meets $\partial \K$ orthogonally.  

We need to show that $R$ is a multiplicity one $(m+1)$-dimensional flat disk bounded by $T$ in $\R^{n+1} \setminus \K$.  Let $D \in \mathbf{I}_m(\R^{n+1})$ be the multiplicity one $m$-dimensional flat unit disk with $\partial D = -\partial T$ in $\R^{n+1}$ and $R_0 \in \mathbf{I}_{m+1}(\R^{n+1})$ be the multiplicity one $(m+1)$-dimensional flat half-disk such that $\partial R_0 = T + D$ in $\R^{n+1}$.  We want to show that $R = R_0$.  Since $\K$ is convex and $\op{spt} \partial T \subseteq \partial \K$, $\op{spt} D \subseteq \K$.  $R_0$ satisfies $\partial R_0 = T$ in $\R^{n+1} \setminus \K$ and 
\begin{equation*}
	\Mass(R_0 \llcorner \R^{n+1} \setminus \K) \leq \Mass(R_0) = \frac{1}{2} \,\mathcal{H}^{m+1}(\ball) = \Mass(R), 
\end{equation*}
which since $R$ is relatively area minimizing with $\partial R = T$ in $\R^{n+1} \setminus \K$ implies that  $\|R_0\|(\K) = 0$ and $R_0$ is relatively area minimizing in $\R^{n+1} \setminus \K$.  In particular, $\op{spt} D \subset \partial \K$.  By Remark \ref{first variation R rmk}, since $R_0$ is relatively area minimizing in $\R^{n+1} \setminus \K$, $R_0$ meets $\partial \K$ orthogonally.  Hence, fixing any $y \in \op{spt} D$, $\nu_{\mathcal{K}}(y)$ lies in the $(m+1)$-dimensional affine plane passing through $R_0$.  Since $\K$ is convex, $\K$ lies in the half-space $\K_0 = \{ x \in \R^{n+1} : \nu_{\mathcal{K}}(y) \cdot (x-y) \leq 0 \}$.  Thus $R$ satisfies $\partial R = T$ in $\R^{n+1} \setminus \K_0$ and 
\begin{equation*}
	\Mass(R \llcorner \R^{n+1} \setminus \K_0) \leq \Mass(R) = \frac{1}{2} \,\mathcal{H}^{m+1}(\ball) = \Mass(R_0), 
\end{equation*}
which since $R_0$ is relatively area minimizing with $\partial R_0 = T$ in $\R^{n+1} \setminus \K_0$ implies that  $\|R\|(\K_0) = 0$ and $R$ is relatively area minimizing in $\R^{n+1} \setminus \K_0$.  By the sharp isoperimetric inequality in the half-space $\R^{n+1} \setminus \K_0$, $R = R_0$ in $\R^{n+1}$. 
\end{proof}

\begin{proof}[Proof of Theorem A in the case that $\K$ is bounded but does not have $C^2$-boundary]
Suppose $\K$ is a bounded proper convex subset of $\R^{n+1}$ but $\K$ does not have $C^2$-boundary.  Let $T \in \mathbf{I}_m(\R^{n+1} \setminus \K)$ and $R \in \mathbf{I}_{m+1}(\R^{n+1} \setminus \K)$ such that $R$ is relatively area minimizing with $\partial R = T$ in $\R^{n+1} \setminus \K$. 

We claim that for every $\delta > 0$ there exists a bounded proper convex set $\K_{\delta} \subset \R^{n+1}$ such that $\K_{\delta}$ has a smooth boundary and 
\begin{equation} \label{rel iso2 eqn1}
	\K \subset \K_{\delta} \subset \{ x \in \R^{n+1} : d_{\K}(x) < \delta \} .
\end{equation}
We will construct $\K_{\delta}$ as a level set of a smooth approximation $f$ of the distance function $d_{\K}$.  Let $\phi \in C^{\infty}_c(B_1(0))$ be a nonnegative function such that $\int \phi = 1$ and for each $\sigma > 0$ let $\phi_{\sigma}(x) = \sigma^{-n-1} \,\phi(x/\sigma)$.  Set $\sigma = \delta/16$ and define $f : \R^{n+1} \rightarrow [0,\infty)$ by the convolution $f = d_{\K} \ast \phi_{\sigma}$.  Set 
\begin{equation*}
	\K_{\delta} = \{ x \in \R^{n+1} : f(x) \leq \delta/2 \}. 
\end{equation*}
By the properties of convolution, $f$ is a smooth function.  Since $\K$ is a convex set, $d_{\K}$ is a convex function.  It follows that for each $x,y \in \K$ and $t \in [0,1]$ 
\begin{align*}
	f((1-t)\,x+t\,y) &= \int_{B_{\sigma}(0)} d_{\K}((1-t)\,x+t\,y-z) \,\phi_{\sigma}(z) \,dz 
	\\&\leq \int_{B_{\sigma}(0)} \left( (1-t) \,d_{\K}(x-z) + t \,d_{\K}(y-z) \right) \phi_{\sigma}(z) \,dz 
	\\&= (1-t) \,f(x) + t \,f(y) 
\end{align*}
and thus $f$ is a convex function.  Hence $\K_{\delta}$ is a convex set.  Since $d_{\K}$ is Lipschitz with $\op{Lip} d_{\K} = 1$ and $\sigma = \delta/16$, 
\begin{equation*}
	|f(x) - d_{\K}(x)| \leq \int_{B_{\sigma}(0)} |d_{\K}(x-z) - d_{\K}(x)| \,\phi_{\sigma}(z) \,dz 
		\leq \int_{B_{\sigma}(0)} |z| \,\phi_{\sigma}(z) \,dz \leq \sigma = \frac{\delta}{16}
\end{equation*}
for all $x \in R^{n+1}$.  It follows that \eqref{rel iso2 eqn1} holds true.  In particular, $0 \in \op{int} \K_{\delta}$, so $\K_{\delta}$ is a proper convex set.  Also, $\K_{\delta}$ is bounded.  Finally, we know that $d_{\K} \in C^{1,1}_{\rm loc}(\R^{n+1} \setminus \K)$ with $\nabla d_{\K} = \nu_{\K}$ in $\R^{n+1} \setminus \K$ and $\op{Lip}_{\{d_{\K} \geq s\}} \nu_{\K} \leq 3/s$ for each $s > 0$.  Thus, using $\sigma = \delta/16$, 
\begin{equation*}
	\|\nabla f(x) - \nu_{\K}(x)\| \leq \int_{B_{\sigma}(0)} |\nu_{\K}(x-z) - \nu_{\K}(x)| \,\phi_{\sigma}(z) \,dz 
		\leq \frac{8}{\delta} \int_{B_{\sigma}(0)} |z| \,\phi_{\sigma}(z) \,dz \leq \frac{8 \sigma}{\delta} = \frac{1}{2} 
\end{equation*}
for all $x \in \partial \K_{\delta}$, where we used $d_{\K}(x-z) \geq f(x) - 2\sigma = 3\delta/8$ for all $x \in \partial \K_{\delta}$ and $z \in B_{\sigma}(0)$.  Hence since $\nu_{\K}$ is a unit vector, $\|\nabla f(x)\| \geq 1/2$ for all for all $x \in \partial \K_{\delta}$.  Therefore, recalling that $f$ is smooth and using the implicit function theorem, $\K_{\delta}$ has a smooth boundary. 

Next for each integer $j \geq 1$, let $\delta_j = 2^{-j}$ and $T_j = T \llcorner \K_{\delta_j} \in \mathbf{I}_m(\R^{n+1} \setminus \K_{\delta_j})$.  Let $R_j \in \mathbf{I}_{m+1}(\R^{n+1} \setminus \K_{\delta_j})$ such that $R_j$ is relatively area minimizing with $\partial R_j = T_j$ in $\R^{n+1} \setminus \K_{\delta_j}$.  Obviously $T_j \rightarrow T$ in the mass norm topology in $\R^{n+1}$ and in particular 
\begin{equation} \label{rel iso2 eqn3}
	\Mass(T) = \lim_{j \rightarrow \infty} \Mass(T_j). 
\end{equation}
Since $\partial R_j = \partial R = T$ in $\R^{n+1} \setminus \K_{\delta_j}$ and $R_j$ is relatively area minimizing in $\R^{n+1} \setminus \K_{\delta_j}$, $\Mass(R_j) \leq \Mass(R)$ for all $j$.  Thus by the Federer-Fleming compactness theorem after passing to a subsequence $R_j \rightarrow Q$ weakly in $\R^{n+1} \setminus \K$ for some current $Q \in \mathbf{I}_{m+1}(\R^{n+1} \setminus \K)$ with $\partial Q = T$ in $\R^{n+1} \setminus \K$.  By the semi-continuity of mass, 
\begin{equation} \label{rel iso2 eqn4}
	\Mass(Q) \leq \liminf_{j \rightarrow \infty} \Mass(R_j) \leq \limsup_{j \rightarrow \infty} \Mass(R_j) \leq \Mass(R)
\end{equation}
and in particular $\Mass(Q) \leq \Mass(R)$.  But $R$ is relatively area minimizing in $\R^{n+1} \setminus \K$, so $Q$ is also relatively area minimizing with $\partial Q = T$ in $\R^{n+1} \setminus \K$ and $\Mass(Q) = \Mass(R)$.  In particular, \eqref{rel iso2 eqn4} gives us 
\begin{equation} \label{rel iso2 eqn5}
	\Mass(Q) = \lim_{j \rightarrow \infty} \Mass(R_j) = \Mass(R). 
\end{equation}
For each $j \geq 1$, since $\K_{\delta_j}$ is a bounded proper convex set with a smooth boundary and $R_j$ is relatively area minimizing with $\partial R_j = T_j$ in $\R^{n+1} \setminus \K_{\delta_j}$, by the relative isoperimetric inequality in $\R^{n+1} \setminus \K_{\delta_j}$
\begin{equation*}
	\frac{\Mass(T_j)^{\frac{m+1}{m}}}{\Mass(R_j)} \geq 2^{-\frac{1}{m}} \,\frac{\mathcal{H}^m(\sphere)^{\frac{m+1}{m}}}{\mathcal{H}^{m+1}(\ball)}. 
\end{equation*}
Thus by \eqref{rel iso2 eqn3} and \eqref{rel iso2 eqn5}
\begin{equation*}
	\frac{\Mass(T)^{\frac{m+1}{m}}}{\Mass(R)} 
	= \lim_{j \rightarrow \infty} \frac{\Mass(T_j)^{\frac{m+1}{m}}}{\Mass(R_j)} 
	\geq 2^{-\frac{1}{m}} \,\frac{\mathcal{H}^m(\sphere)^{\frac{m+1}{m}}}{\mathcal{H}^{m+1}(\ball)}. \qedhere
\end{equation*}
\end{proof}

\begin{proof}[Proof of Theorem A in the case that $\K$ is unbounded]
Suppose $\K$ is an unbounded proper convex subset of $\R^{n+1}$, with no assumptions on boundary regularity.  By translating, assume $0 \in \op{int} \K$.  Let $T \in \mathbf{I}_m(\R^{n+1} \setminus \K)$ and $R \in \mathbf{I}_{m+1}(\R^{n+1} \setminus \K)$ such that $R$ is relatively area minimizing with $\partial R = T$ in $\R^{n+1} \setminus \K$.  Let $\varepsilon > 0$.  Since $\Mass(T) + \Mass(R) < \infty$, there exists $\rho \in [1,\infty)$ such that 
\begin{equation*} 
	\Mass(T \llcorner \R^{n+1} \setminus B_{\rho}(0)) + \Mass(R \llcorner \R^{n+1} \setminus B_{\rho}(0)) < \varepsilon 
\end{equation*}
and thus by slicing theory using \eqref{meas_good_slices} with $\vartheta = 1/2$ there exists $\rho^* \in (\rho,\rho+1)$ such that $T \llcorner B_{\rho^*}(0), T \llcorner \R^{n+1} \setminus B_{\rho^*}(0) \in \mathbf{I}_m(\R^{n+1} \setminus \K)$, $R \llcorner B_{\rho^*}(0), R \llcorner \R^{n+1} \setminus B_{\rho^*}(0)  \in \mathbf{I}_{m+1}(\R^{n+1} \setminus \K)$, and $\langle R, |\cdot|, \rho^* \rangle \in \mathbf{I}_m(\R^{n+1} \setminus \K)$ with 
\begin{gather} 
	\label{rel iso3 eqn1} \partial (R \llcorner B_{\rho^*}(0)) 
		= T \llcorner B_{\rho^*}(0) + \langle R, |\cdot|, \rho^* \rangle \text{ in } \R^{n+1} \setminus \K, \\
	\label{rel iso3 eqn2} \partial (R \llcorner \R^{n+1} \setminus B_{\rho^*}(0)) 
		= T \llcorner \R^{n+1} \setminus B_{\rho^*}(0) - \langle R, |\cdot|, \rho^* \rangle \text{ in } \R^{n+1} \setminus \K, \\
	\label{rel iso3 eqn3} \Mass(T \llcorner \R^{n+1} \setminus B_{\rho^*}(0)) + \Mass(R \llcorner \R^{n+1} \setminus B_{\rho^*}(0)) < \varepsilon , \\
	\label{rel iso3 eqn4} \Mass(\langle R, |\cdot|, \rho^* \rangle) \leq 2 \,\Mass(R \llcorner \R^{n+1} \setminus B_{\rho}(0)) < 2\varepsilon . 
\end{gather}
Set 
\begin{equation*}
	\widetilde{T} = T \llcorner B_{\rho^*}(0) + \langle R, |\cdot|, \rho^* \rangle, \quad 
	\widetilde{R} = R \llcorner B_{\rho^*}(0), \quad 
	\widetilde{\K} = \K \cap \overline{B_{2\rho^*}(0)}.
\end{equation*}
Clearly $\widetilde{\K}$ is a bounded proper convex set with $0 \in \op{int} \widetilde{\K}$, $\widetilde{T} \in \mathbf{I}_m(\R^{n+1} \setminus \widetilde{\K})$, and $\widetilde{R} \in \mathbf{I}_{m+1}(\R^{n+1} \setminus \widetilde{\K})$.  By \eqref{rel iso3 eqn1}, $\partial \widetilde{R} = \widetilde{T}$ in $\R^{n+1} \setminus \widetilde{\K}$.  By \eqref{rel iso3 eqn3} and \eqref{rel iso3 eqn4},  
\begin{equation} \label{rel iso3 eqn5}
	\Mass(T - \widetilde{T}) + \Mass(R - \widetilde{R}) < 3\varepsilon . 
\end{equation}

We claim that $\widetilde{R}$ is relatively area minimizing in $\R^{n+1} \setminus \widetilde{K}$.  Suppose $\widetilde{Q} \in \mathbf{I}_{m+1}(\R^{n+1} \setminus \widetilde{\K})$ with $\partial \widetilde{Q} = \widetilde{T}$ in $\R^{n+1} \setminus \widetilde{\K}$.  Set $Q = \widetilde{Q} + R \llcorner \R^{n+1} \setminus B_{\rho^*}(0)$ so that by $\partial \widetilde{Q} = \widetilde{T}$ in $\R^{n+1} \setminus \widetilde{\K}$, \eqref{rel iso3 eqn2}, and the definition of $\widetilde{T}$ 
\begin{equation*}
	\partial Q = \widetilde{T} + T \llcorner \R^{n+1} \setminus B_{\rho^*}(0) - \langle R, |\cdot|, \rho^* \rangle = T 
\end{equation*}
in $\R^{n+1} \setminus \K$.  Since $R$ is relatively area minimizing in $\R^{n+1} \setminus \K$, $\Mass(R) \leq \Mass(Q)$.  Thus 
\begin{equation*}
	\Mass(\widetilde{R}) + \Mass(R \llcorner \R^{n+1} \setminus B_{\rho^*}(0)) = \Mass(R) 
		\leq \Mass(Q) \leq \Mass(\widetilde{Q}) + \Mass(R \llcorner \R^{n+1} \setminus B_{\rho^*}(0)). 
\end{equation*}
Hence $\Mass(\widetilde{R}) \leq \Mass(\widetilde{Q})$.  Therefore, $\widetilde{R}$ is relatively area minimizing with $\partial \widetilde{R} = \widetilde{T}$ in $\R^{n+1} \setminus \widetilde{\K}$. 

Now by \eqref{rel iso3 eqn5} and the sharp relative isoperimetric inequality for $\R^{n+1} \setminus \widetilde{K}$, 
\begin{equation} \label{rel iso3 eqn6}
	\frac{(\Mass(T) + 3\varepsilon)^{\frac{m+1}{m}}}{\Mass(R) - 3\varepsilon} 
	\geq \frac{\Mass(\widetilde{T})^{\frac{m+1}{m}}}{\Mass(\widetilde{R})} 
	\geq 2^{-\frac{1}{m}} \,\frac{\mathcal{H}^{m+1}(\ball)^{\frac{m+1}{m}}}{\mathcal{H}^m(\sphere)}. 
\end{equation}
Letting $\varepsilon \downarrow 0$ in \eqref{rel iso3 eqn6} yields \eqref{rel iso ineq}. 
\end{proof}


\begin{thebibliography}{9}	
	\bibitem[All72]{All72}
		W.~K.~Allard.  \textit{On the first variation of a varifold.}  Annals of Math. {\bf 95} (1972), 417--491.
	\bibitem[Alm83]{Alm83}
		F.~J.~Almgren, Jr.  \textit{Almgren's big regularity paper.}  World Scientific Monograph Series in Mathematics.  {\bf 1} 
		World Scientific Publishing Co. Inc., River Edge, NJ (2000).
	\bibitem[Alm86a]{AlmDef}
		F.~J.~Almgren, Jr.  \textit{Deformations and multiple valued functions.}  Geometric Measure Theory and the Calculus of Variations, 
		Proc.~Symposia in Pure Math. {\bf 44} (1986), 29--130.
	\bibitem[Alm86b]{Alm86}
		F.~J.~Almgren, Jr.  \textit{Optimal isoperimetric inequalities.}  Indiana Univ. Math. J. {\bf 35} (1986), 451--547.
	\bibitem[Choe03]{Choe03}
		J.~Choe. \textit{Relative isoperimetric inequality for domains outside a convex set.}  Archives Inequalities Appl. {\bf 1} (2003), 241--250.
	\bibitem[Choe05]{ChoeSurvey}
		J.~Choe. \textit{Isoperimetric inequalities of minimal submanifolds.}  Global Theory of Minimal Surfaces:~
		Clay Math.~Proc.  {\bf 2}  Amer.~Math.~Soc., Providence,~RI (2005), 325--369.
	\bibitem[CGR06]{CGR06}
		J.~Choe, M.~Ghomi, and M.~Ritor\'{e}. \textit{Total positive curvature of hypersurfaces with convex boundary.} 
		J.~Diff.~Geom., {\bf 72} (2006), 129--147.
	\bibitem[CGR07]{CGR07}
		J.~Choe, M.~Ghomi, and M.~Ritor\'{e}. \textit{The relative isoperimetric inequality outside convex domains in $\R^n$.} 
		Calc.~Var.~Partial Differential Equations {\bf 29} (2007) 421-429.
	\bibitem[DeLSpa14]{DeLSpa1}
		C.~De Lellis, E.~N.~Spadaro. \textit{Regularity of area minimizing currents I: gradient $L^p$ estimates.} 
		Geom.\ Funct.\ Anal., {\bf 24} (2014), 1831--1884.
	\bibitem[DeLSpa16a]{DeLSpa2}
		C.~De Lellis, E.~N.~Spadaro. \textit{Regularity of area minimizing currents II: Regularity of area minimizing currents II: center manifold.}  
		Annals of Math., {\bf 183} (2016), 499--575.
	\bibitem[DeLSpa16b]{DeLSpa3}
		C.~De Lellis, E.~N.~Spadaro. \textit{Regularity of area minimizing currents III: blow-up.}  Annals of Math., {\bf 183} (2016), 577--617.
	\bibitem[Fed69]{Fed69}
		H.~Federer.  \textit{Geometric Measure Theory.} Springer-Verlag (1969).
	\bibitem[FF60]{FF60}
		H.~Federer and W.~H.~Fleming.  \textit{Normal and Integral Currents.} Annals of Math. {\bf 72} (1960), 458--520.
	\bibitem[Gru85]{Gru85}
		M.~Gr\"{u}ter. \textit{Regularity results for minimizing currents with a free boundary.}  J.~Reine Angew.~Math. {\bf 375/376} (1987), 307--325.
	\bibitem[GJ86]{GJ86}
		M.~Gr\"{u}ter and J.~Jost.  \textit{Allard type regularity results for varifolds with free boundaries.}  
		Ann.~Sc.~Norm.~Sup.~Pisa {\bf 13} (1986), 129--169.
	\bibitem[Kim98]{Kim98}
		I.~Kim.  \textit{Relative isoperimetric inequality and linear isoperimetric inequality for minimal submanifolds.}  
		Manuscripta Math. {\bf 97} (1998), 343--352.
	\bibitem[Kim00]{Kim00}
		I.~Kim.  \textit{An optimal relative isoperimetric inequality in concave cylindrical domains in $\R^n$.}  
		J.~Inequalities Appl. {\bf 5} (2000), 97--102.
	\bibitem[Sim83]{Sim83}
		L.~Simon.  \textit{Lectures on Geometric Measure Theory.}  Proceedings of the Centre for Mathematical Analysis. {\bf 3}
		Australian National University, Centre for Mathematical Analysis, Canberra (1983).
	\bibitem[Spi99]{Spi99}
		M.~Spivak.  \textit{A comprehensive introduction to differential geometry.}  Vol. IV, third ed., Publish or Perish Inc., Huston, Texas (1999). 
\end{thebibliography}
\end{document}